\newtheorem{theorem}{Theorem}[section]
\newtheorem{corollary}[theorem]{Corollary}
\newtheorem{definition}[theorem]{Definition}
\newtheorem{example}[theorem]{Example}
\newtheorem{lemma}[theorem]{Lemma}
\newtheorem{problem}{Problem}
\newtheorem{proposition}[theorem]{Proposition}
\begin{document}

\title[$\mbox{Graded Imbeddings in Finite Dimensional Simple Graded Algebras}$]{Graded Imbeddings in Finite Dimensional Simple Graded Algebras}
\keywords{simple graded algebra, graded imbedding, graded embedding, algebra of finite dimension, grading by group, second cohomology group, restriction homomorphism, graded polynomial identity, $\mathsf{G}$T-ideal.}
\subjclass[2020]{Primary 16W50; Secondary 16S35, 16K20, 16R20, 16R10, 20J06, 15B33}
\date{October 17, 2024}

\author[De Fran\c{c}a]{Antonio de Fran\c{c}a$^\dag$}
\address{Department of Mathematics, Federal University of Campina Grande, 58429-970 Campina Grande, Para\'iba, Brazil}
\email{\href{mailto: a.defranca@yandex.com}{a.defranca@yandex.com}}
\thanks{$^\dag$The author was partially supported by Para\'iba State Research Foundation (FAPESQ), Grant \#2023/2158.}


\begin{abstract}
Let $\mathbb{F}$ be a field and $\mathsf{G}$ a group. This work is inspired in the following problem: ``{\it given a division (simple) $\mathsf{G}$-graded $\mathbb{F}$-algebra, is there any other division (simple) $\mathsf{G}$-graded $\mathbb{F}$-algebra such that the former can be $\mathsf{G}$-imbedded in the latter?}''. In this work, we answer this question affirmatively for $\mathbb{F}$ algebraically closed, $\mathsf{G}$ finite abelian, and associative algebras of finite dimension. To prove this, we apply concepts and properties of Group Cohomology. We show $\mathcal{H}^2(H, \mathbb{F}^*)=\mathsf{res}^\mathsf{G}_H \left(\mathcal{H}^2(\mathsf{G}, \mathbb{F}^*)\right)$, where  $H$ is a subgroup of $\mathsf{G}$ and $\mathsf{res}^\mathsf{G}_H$ is the restriction homomorphism. Posteriorly, we prove that, given any $H_1,H_2\leq\mathsf{G}$ and $\sigma_i\in\mathcal{Z}^2(H_i,\mathbb{F}^*)$, $i=1,2$, are equivalent: i) $\mathbb{F}^{\sigma_1}[H_1] \stackrel{\mathsf{G}}{\hookrightarrow} \mathbb{F}^{\sigma_2}[H_2]$; ii) $H_1\leq H_2$ and $[\sigma_1]=[\sigma_2]_{H_1}$; iii) $\mathsf{T}^{\mathsf{G}}(\mathbb{F}^{\sigma_2}[H_2])\subseteq \mathsf{T}^{\mathsf{G}}(\mathbb{F}^{\sigma_1}[H_1])$, where $\mathsf{T}^{\mathsf{G}}(\mathbb{F}^{\sigma_i}[H_i])$ is the $\mathsf{G}$T-ideal of graded identities of $\mathbb{F}^{\sigma_i}[H_i]$. Furthermore, we prove that, given $\mathfrak{A}$ and $\mathfrak{B}$ two finite dimensional simple $\mathsf{G}$-graded $\mathbb{F}$-algebras, if $\mathbb{F}$ is algebraically closed, $\mathsf{char}(\mathbb{F}) = 0$ or $\mathsf{char} (\mathbb{F})$ is coprime with the order of each finite subgroup of $\mathsf{G}$, and any subgroup of $\mathsf{G}$ is normal, then $\mathsf{T}^{\mathsf{G}}(\mathfrak{A})\subseteq \mathsf{T}^{\mathsf{G}}(\mathfrak{B})$ iff $\mathfrak{B} \stackrel{\mathsf{G}}{\hookrightarrow} \mathfrak{A}$. As a consequence of this last result, we guarantee that $\mathsf{T}^{\mathsf{G}}(\mathfrak{A})= \mathsf{T}^{\mathsf{G}}(\mathfrak{B})$ iff $\mathfrak{A} \cong_{\mathsf{G}} \mathfrak{B}$.
\end{abstract}

\maketitle


\section{Introduction}

Let $\mathbb{F}$ be a field, $\mathsf{G}$ a group, and $\mathfrak{A}$ an associative $\mathbb{F}$-algebra with a $\mathsf{G}$-grading. Recall that $\mathfrak{A}$ is a $\mathsf{G}$-graded\footnote{To more details, as well as an overview, on graded algebras and other structures with gradings by groups, see the works \cite{BahtSehgZaic08}, \cite{ElduKoch13}, \cite{NastOyst04,NastOyst11} and their references, as well as the works \cite{Mardua01,Mardua02} from author of this work.} algebra when there exist subspaces $\mathfrak{A}_\xi$'s of $\mathfrak{A}$, indexed by $\xi\in\mathsf{G}$, such that $\mathfrak{A}=\bigoplus_{\xi\in\mathsf{G}}\mathfrak{A}_\xi$ and $\mathfrak{A}_\xi\mathfrak{A}_\zeta\subseteq\mathfrak{A}_{\xi\zeta}$ for any $\xi\,\zeta\in\mathsf{G}$. We say that $\mathfrak{A}$ is division (resp. simple) graded when $\mathfrak{A}$ is unitary and all nonzero homogeneous element is invertible in $\mathfrak{A}$ (resp. $\mathfrak{A}^2\neq\{0\}$ and $\mathfrak{A}$ has no non-trivial graded ideals). An ($\mathsf{G}$-graded) imbedding\footnote{According to the \textit{Oxford Advanced Learner's Dictionary}, ``imbed'' and ``embed'' are correct spellings with the same meaning. We have adopted the spelling with ``i'', although in some parts of our text, out of respect for the references, we have maintained the spelling with ``e''.} of ($\mathsf{G}$-graded) algebras is a ($\mathsf{G}$-graded) monomorphism of ($\mathsf{G}$-graded) algebras. So, the following problem naturally arises and it is our inspiration in this work:
\begin{problem}\label{prob1}
Given a division (resp. simple) $\mathsf{G}$-graded algebra $\mathfrak{B}$, is there a division (resp. simple) $\mathsf{G}$-graded algebra $\mathfrak{A}$ such that $\mathfrak{B}$ can be $\mathsf{G}$-imbedded into $\mathfrak{A}$?
\end{problem}

The importance of studying the simple and division (graded) algebras, and the relationships between them, comes from classical results and problems of Ring Theory, such as the graded version of Wedderburn-Malcev Theorem (see \cite{Malc42}, Theorem 72.19 in \cite{CurtRein62}, and Lemma 2 in \cite{Svir11}) and Specht's Problem (see works \cite{BeloRoweVish12}, \cite{Geno81}, \cite{Keme87,Keme91} and \cite{Popo81}).

In 1930, van der Waerden posed in your book \cite{Waer30} the following question: ``{\it which commutative rings have a quotient field?}''. This is equivalent to asking which rings can be imbedded in fields. So, in 1937, A.I. Mal'tcev proved in your work \cite{Malc37} that every commutative semigroup can be immersed into a group, and it presented a  noncommutative semigroup which cannot be immersed into a group. Obviously any commutative ring without divisors of zero can be immersed into a field. In this same work, A.I. Mal'tcev exhibits a noncommutative ring without divisors of zero which cannot be immersed into an algebraic field. The latter is one of the first works that addresses what is now known as the ``{\it embedding problem}''. Various mathematicians have studied this problem since 1930. We suggest the works \cite{Malc39,Malc40,Malc60}, due to A.I. Mal'tcev.

In 1956, A.I. Shirshov showed in \cite{Shir56} that any associative algebra $A$ of finite rank over a field $\mathbb{F}$ is isomorphic to a subalgebra of some associative $\mathbb{F}$-algebra $B$ of finite rank with two generators. See also \cite{Shir58.1}, due to A.I. Shirshov. Already in \cite{Boku63.1}, L.A. Bokut' (1963) proved that any associative algebra can be embedded in a simple associative algebra. Posteriorly, in \cite{Boku87}, L.A. Bokut' made a compilation of all (or almost all) works that addresses the embedding problem up to 1987. We also suggest the works \cite{Boku62.1,Boku62,Boku63,Boku67,Boku76}, due to L.A. Bokut'.

In 1974, G.M. Bergman showed in \cite{Berg74} that the endomorphism ring $B=\mathsf{End}_\mathbb{Z}(\mathbb{Z}_{p^2}\oplus\mathbb{Z}_p)$, $p$ prime, which is a finite ring with unity, cannot be embedded in matrices over any commutative ring. Afterwards, in 1985, A. Sychowicz proved in  \cite{Sych85} that the smallest ring which cannot be embedded in matrices over a commutative ring has order $32$. In turn, in \cite{Malc87}, Yu.N. Mal'tcev (1987) constructed an infinite sequence of finite rings $B$ and $B^{(m)}$, $m\geq2$, which cannot be embedded in rings of matrices over a commutative ring.


More recently, in 2004, J.Z. Gon{\c{c}}alves and D.S. Passman proved in \cite{GoncPass04} that if $G$ is a finite group and $p$ is a prime, then $U(\mathbb{Z}[G])$ contains the free product $\mathbb{Z}_p\ast\mathbb{Z}$ iff $G$ has a noncentral element of order $p$, where $\mathbb{Z}[G]$ is the integral group ring of $G$ and $U(\mathbb{Z}[G])$ its group of units. In turn, in 2007, V.M. Petrogradsky, Yu.P. Razmyslov and E.O. Shishkin proved in \cite{PetrRazmShis07} that if there exists an exact sequence $0\rightarrow H\rightarrow L\rightarrow G\rightarrow 0$ of Lie algebras, then $L$ can be embedded into the wreath product $H\wr G$. Already in \cite{RoweSalt13}, L. Rowen and D. Saltman (2013) exhibited a counterexample to the question, due to L. Small, ``{\it are two finite dimensional division algebras containing a common central subfield $\mathbb{F}$ embeddable in a common division algebra?}''. In addition, the authors answered affirmatively this problem for division algebras whose centers are finitely generated over a common perfect subfield. Finally, in \cite{AnquCortShes18}, L.A. Anquela, T. Cort{\'e}s and I. Shestakov (2018) obtained examples of Jordan systems (over arbitrary rings of scalars) that cannot be imbedded in nondegenerate systems.


Another problem which various mathematicians have studied in recent decades is to determine when two algebras which have the same {\it polynomial identities}\footnote{To further details, as well as an overview, on (graded) polynomial identities, we indicate the books \cite{Dren00}, \cite{DrenForm12}, \cite{GiamZaic05}, \cite{Jaco75} and \cite{Rowe80}, and also the work \cite{Mardua03} due to author of this work.} are isomorphic. The graded version of this problem has also been studied.

In 1952, S.A. Amitsur showed in \cite{Amit52} that every PI-ring which has no nilpotent ideals can be embedded in a matrix ring over some commutative ring. See also the works \cite{Amit71,Amit73}, due to A.S. Amitsur. In turn, A.Z. Annan'in (1980) showed in \cite{Anan80} that, given an algebra $R$, if $R^\#$ satisfies the identities $[x,y][z,t][x_1,\dots,x_k]\equiv0$, $[[x,y],z][x_1,\dots,x_k]\equiv0$ and $[x_1,y_1]\cdots[x_l,y_l]\equiv0$, then $R$ is embeddable in $UT_n(K)$, where $R^\#$ is the algebra $R$ with an identity adjoined and $UT_n(K)$ is the algebra of all upper triangular $n\times n$ matrices over some commutative algebra $K$. Recently, O. David (2012) showed in \cite{Davi12} that, given $G$ a finite abelian group, $\mathbb{F}$ an algebraically closed field of characteristic zero, and $A$ and $B$ two finite dimensional simple $G$-graded algebras over $\mathbb{F}$, then there is a $G$-graded embedding of $B$ in $A$ iff the $G$-graded identities of $A$ are contained in the $G$-graded identities of $B$.

In 2010, P. Koshlukov and M.V. Zaicev proved in \cite{KoshZaic10} that, given $G$ an arbitrary abelian group and $A$ and $B$ two finite dimensional simple $G$-graded algebras over an algebraically closed field $\mathbb{F}$ such that the orders of all finite subgroups of $G$ are invertible in $\mathbb{F}$, then $A$ and $B$ are isomorphic if and only if they satisfy the same $G$-graded identities. Afterwards, in \cite{AljaHail14}, E. Aljadeff and D. Haile (2014) generalized this last result for any group and $\mathbb{F}$ with characteristic zero. Already in \cite{ShesZaic11}, I. Shestakov and M. Zaicev (2011) proved that when $\mathbb{F}$ is an algebraically closed field and $A$ and $B$ are arbitrary finite dimensional simple (not necessary associative) algebras over $\mathbb{F}$, then $A$ and $B$ are isomorphic if and only if they satisfy the same identities.

In light of the above, and keeping in mind Problem \ref{prob1}, this work is dedicated to studying and to answering the following problem:
\begin{problem}\label{prob2}
How and when can two simple (division) graded algebras of finite dimension be imbedded into each other? And given a finite dimensional simple (division) graded algebra $\mathfrak{B}$, when is it possible to determine a finite dimensional simple (division) graded algebra $\mathfrak{A}$ such that $\mathfrak{B}$ is $\mathsf{G}$-imbedded in $\mathfrak{A}$?
\end{problem}

We begin the approach to Problem \ref{prob2} by recording in \S\ref{pre} the definitions and concepts necessary for the better development of this work. The reader familiar with objects as $\mathsf{G}$-graded algebras, $\mathsf{G}$-graded homomorphisms, $2$-cocycles on groups, twisted group algebra and matrices over these algebras, as well as its gradings by groups, and graded polynomial identities, can continue reading this work from \S\ref{sec3}, returning if necessary.

We dedicated \S\ref{sec3} of this work to the study of the question ``\textit{given a group $\mathsf{G}$ and two subgroups $H\leq N\leq \mathsf{G}$, if $\sigma$ is a $2$-cocycle on $H$, when can we ensure the existence of a $2$-cocycle on $N$ which extends $\sigma$?}''. Here, we present some notions and properties of the cohomology theory of groups. The reader familiar with these definitions and their main properties can continue from \S\ref{resGH}, page \pageref{resGH}, or even from Theorem \ref{2.07}, page \pageref{2.07}. The main result of this section, which also is our answer to above problem (which motivates this section), is the following:

	\vspace{0.15cm}
	\noindent{\bf Corollary \ref{2.09}}. 
	Let $H$ be a central subgroup of $\mathsf{G}$, $[\mathsf{G}:H]< \infty$, and $\mathsf{M}$ an abelian group with the trivial $\mathsf{G}$-action. If $[\mathsf{G}: H]$ is invertible in $\mathsf{M}$, then 
	\begin{equation}\nonumber
	\mathcal{H}^2(H, \mathsf{M})=\mathsf{res}^\mathsf{G}_H \left(\mathcal{H}^2(\mathsf{G}, \mathsf{M})\right)
	\ .
	\end{equation}

In \S\ref{sec4}, we present a study on $\mathsf{G}$-graded imbeddings of finite dimensional simple $\mathsf{G}$-graded $\mathbb{F}$-algebras, where $\mathbb{F}$ is a field and $\mathsf{G}$ is a group, both arbitrary. Here, the main goal is to answer the first part of Problem \ref{prob2}. In this section, the main results are as follows:

	\vspace{0.15cm}
	\noindent{\bf Lemma \ref{4.16}} (Imbedding). 
	Let $\mathbb{F}$ be a field and $\mathsf{G}$ a group. For any two subgroups $H_1$ and $H_2$ of $\mathsf{G}$ and any two $2$-cocycles $\sigma_1\in\mathcal{Z}^2(H_1,\mathbb{F}^*)$ and $\sigma_2\in\mathcal{Z}^2(H_2,\mathbb{F}^*)$, $\mathbb{F}^{\sigma_1}[H_1] \stackrel{\mathsf{G}}{\hookrightarrow} \mathbb{F}^{\sigma_2}[H_2]$ iff $(H_1,\sigma_1)\preceq (H_2, \sigma_2)$.
	\vspace{0.15cm}

A consequence of the Imbedding Lemma is Corollary \ref{4.05} which ensures (in algebraically closed fields), given a finite dimensional algebra $\mathfrak{B}$ with a division $\mathsf{G}$-grading $\Gamma$, if there exists a subgroup $H$ of $\mathsf{G}$ such that $\mathsf{Supp}(\Gamma)$ is a central subgroup of $H$, with $[H:\mathsf{Supp}(\Gamma)]< \infty$, then there exists a $2$-cocycle $\sigma\in\mathcal{Z}^2(H, \mathbb{F}^*)$ such that $\mathfrak{B}\stackrel{\mathsf{G}}{\hookrightarrow} \mathbb{F}^{\sigma}[H]$. This result partially answers the second part of Problem \ref{prob2}. Another consequence of the Imbedding Lemma is the theorem below:

	\vspace{0.15cm}
	\noindent{\bf Theorem \ref{1.39}} (Imbedding). 
Let $\mathbb{F}$ be a field and $\mathsf{G}$ a group. For $i=1,2$, consider $\mathfrak{B}_i=M_{k_i}(\mathbb{F}^{\sigma_i}[H_i])$ the ${k_i}\times {k_i}$ matrix algebra over $\mathbb{F}^{\sigma_i}[H_i]$ with an elementary-canonical $\mathsf{G}$-grading defined by a $k_i$-tuple $\theta_i=(\theta_{i1},\dots,\theta_{i{k_i}})\in\mathsf{G}^{k_i}$, where $H_i$ is a subgroup of $\mathsf{G}$, $\sigma_i\in \mathcal{H}^2(H_i,\mathbb{F}^*)$ is a $2$-cocycle. Suppose that $\theta_{i1},\dots,\theta_{ik_i}\in \mathsf{N}_{\mathsf{G}}(H_i)$, for $i=1,2$. Then $\mathfrak{B}_1 \stackrel{\mathsf{G}}{\hookrightarrow} \mathfrak{B}_2$ iff $k_1\leq k_2$, $\mathbb{F}^{\sigma_1}[H_1] \stackrel{\mathsf{G}}{\hookrightarrow} \mathbb{F}^{\sigma_2}[H_2]$ and there exist $\alpha\in Sym(k_2)$, $\delta\in \mathsf{N}_{\mathsf{G}}(H_2)$ and $\xi_1,\dots,\xi_{k_1}\in H_2$ such that $\theta_{1j}=\delta\xi_{j}\theta_{2 \alpha(j)}$ for all $j=1,\dots,k_1$.
	\vspace{0.15cm}

And so, combining Imbedding Theorem and Corollary \ref{2.09}, we have the corollary below, which completes our answer to Problem \ref{prob2}:

	\vspace{0.15cm}
	\noindent{\bf Corollary \ref{4.07}}. 
Let $\mathbb{F}$ be an algebraically closed field, $\mathsf{G}$ a group, $H$ a subgroup of $\mathsf{G}$, $\sigma\in \mathcal{H}^2(H,\mathbb{F}^*)$ a $2$-cocycle, and $\mathfrak{B}=M_{k}(\mathbb{F}^{\sigma}[H])$ the ${k}\times {k}$ matrix algebra with an elementary-canonical $\mathsf{G}$-grading defined by a $k$-tuple $\theta\in\mathsf{G}^{k}$. For any subgroup $N$ of $\mathsf{G}$ such that $H$ is a central subgroup of $N$ of finite index, there exists a $2$-cocycle $\rho\in\mathcal{Z}^2(N, \mathbb{F}^*)$ such that 
\begin{equation}\nonumber
   \begin{tikzcd}
\mathfrak{B} \arrow[r,hook]{l}{\mathsf{G}}\arrow[d,hook]{l}{\rotatebox{-90}{$\mathsf{G}$}} &
M_{k}(\mathbb{F}^{\rho}[N]) \arrow[d,hook]{l}{\rotatebox{-90}{$\mathsf{G}$}} \\
M_{t}(\mathbb{F}^{\sigma}[H]) \arrow[r,hook]{l}{\mathsf{G}} & M_{t}(\mathbb{F}^{\rho}[N])  
    \end{tikzcd}
\end{equation}
for all integer $t\geq k$, where $M_{k}(\mathbb{F}^{\rho}[N])$ is graded with the elementary-canonical $\mathsf{G}$-grading defined by $\theta$, and $M_{t}(\mathbb{F}^{\sigma}[H])$ and $M_{t}(\mathbb{F}^{\rho}[N])$ are graded with elementary-canonical $\mathsf{G}$-gradings defined by some $t$-tuple $\phi\in\mathsf{G}^t$.
	\vspace{0.15cm}

We finalize this work with \S\ref{sec5}. In this section, we exhibit some results that, based on the $\mathsf{G}$T-ideals of graded polynomial identities, ensure other conditions for the existence of $\mathsf{G}$-imbeddings between $\mathsf{G}$-simple algebras of finite dimension. Below, the main results of this section.

	\vspace{0.15cm}
	\noindent{\bf Lemma \ref{4.20}} (Imbedding). 
Let $\mathbb{F}$ be field, $\mathsf{G}$ a group, $H_1$ and $H_2$ two subgroups of $\mathsf{G}$ and $\sigma_1$ and $\sigma_2$ their $2$-cocycles, respectively. Then $\mathbb{F}^{\sigma_1}[H_1] \stackrel{\mathsf{G}}{\hookrightarrow} \mathbb{F}^{\sigma_2}[H_2]$ iff $\mathsf{T}^{\mathsf{G}}(\mathbb{F}^{\sigma_2}[H_2])\subseteq \mathsf{T}^{\mathsf{G}}(\mathbb{F}^{\sigma_1}[H_1])$.
	\vspace{0.15cm}

A consequence of the above lemma is Proposition \ref{4.11}, which ensures that, if $\mathfrak{A}$ and $\mathfrak{B}$ are two finite dimensional division $\mathsf{G}$-graded $\mathbb{F}$-algebras, with $\mathbb{F}$ algebraically closed, then $\mathsf{T}^{\mathsf{G}}(\mathfrak{A})\subseteq \mathsf{T}^{\mathsf{G}}(\mathfrak{B})$ iff $\mathfrak{B} \stackrel{\mathsf{G}}{\hookrightarrow} \mathfrak{A}$. In addition, $\mathsf{T}^{\mathsf{G}}(\mathfrak{A})= \mathsf{T}^{\mathsf{G}}(\mathfrak{B})$ iff $\mathfrak{A} \cong_{\mathsf{G}} \mathfrak{B}$.

	\vspace{0.15cm}
	\noindent{\bf Theorem \ref{4.21}} (Imbedding). 
Let $\mathsf{G}$ be a group, $\mathbb{F}$ an algebraically closed field, and $\mathfrak{A}$ and $\mathfrak{B}$ two finite dimensional simple $\mathsf{G}$-graded $\mathbb{F}$-algebras. Suppose that either $\mathsf{char}(\mathbb{F}) = 0$ or $\mathsf{char} (\mathbb{F})$ is coprime with the order of each finite subgroup of $\mathsf{G}$, and that any subgroup of $\mathsf{G}$ is normal. Then $\mathsf{T}^{\mathsf{G}}(\mathfrak{A})\subseteq \mathsf{T}^{\mathsf{G}}(\mathfrak{B})$ iff $\mathfrak{B} \stackrel{\mathsf{G}}{\hookrightarrow} \mathfrak{A}$.
	\vspace{0.15cm}

Therefore, the previous results are contributions to Problem \ref{prob2}. To conclude, it is important to note that Theorem \ref{4.21} guarantees $\mathsf{T}^{\mathsf{G}}(\mathfrak{A})= \mathsf{T}^{\mathsf{G}}(\mathfrak{B})$ iff $\mathfrak{A} \cong_{\mathsf{G}} \mathfrak{B}$.


\section{Preliminaries}\label{pre}

In this work, the word ``algebra'' always denotes an associative algebra over field $\mathbb{F}$, and $\mathsf{G}$ denotes a group.

In this section, we recall some concepts, although basic, which are essential for our study. We talk a little bit about the definitions of ``algebra graded by a group'', ``graded homomorphisms'' and ``graded imbeddings'', as well as other definitions and some results. Later, we recall the definitions and properties of ``twisted group algebra'' and ``matrices over a twisted group algebra''. We dedicate a part of the work to introduce ``free graded associative algebra'', and ``graded polynomial identities'', and some results connecting these. The reader familiar with these definitions and their main properties can continue from \S\ref{sec3}.

%

\subsection{Gradings on Algebras}

Let $\mathsf{G}$ be a group, $\mathbb{F}$ a field, and $\mathfrak{A}$ an (associative) algebra over $\mathbb{F}$. A $\mathsf{G}$-grading\footnote{To further reading, as well as an overview, on graded algebras, see the works \cite{GiamZaic05}, \cite{ElduKoch13}, \cite{NastOyst04,NastOyst11} and their references.} on $\mathfrak{A}$ is a decomposition $\Gamma: \mathfrak{A} = \bigoplus_{\xi\in \mathsf{G}} \mathfrak{A}_\xi$ that satisfies $\mathfrak{A}_\xi \mathfrak{A}_\zeta \subseteq \mathfrak{A}_{\xi\zeta}$, for any $\xi, \zeta \in \mathsf{G}$, where $\mathfrak{A}_\xi$'s are vector subspaces of $\mathfrak{A}$. In this case, we say ``$\mathfrak{A}$ is a $\mathsf{G}$-graded algebra'' or simply ``$\mathfrak{A}$ is $\mathsf{G}$-graded''. Given an ideal (resp. subalgebra) $I$ of $A$, we say that $I$ is a graded ideal (resp. a graded subalgebra) of $\mathfrak{A}$ if $I =\bigoplus_{\xi\in \mathsf{G}} I \cap \mathfrak{A}_\xi$. The support of $\Gamma$ is the subset of $\mathsf{G}$ given by $\mathsf{Supp}(\Gamma)=\{\xi\in\mathsf{G} : \mathfrak{A}_\xi\neq0\}$.

\begin{definition}
Let $\mathfrak{A}$ be a $\mathsf{G}$-graded algebra. We say that $\mathfrak{A}$ is {\bf simple $\mathsf{G}$-graded} (or simply {\bf $\mathsf{G}$-simple}) if $\mathfrak{A}^2\neq\{0\}$, and $\{0\}$ and $\mathfrak{A}$ are the unique graded ideals of $\mathfrak{A}$. Supposing that $\mathfrak{A}$ is unitary, we say that $\mathfrak{A}$ is {\bf division $\mathsf{G}$-graded} if any nonzero homogeneous element of $\mathfrak{A}$ is invertible in $\mathfrak{A}$.
\end{definition}

It is easy to verify that, given an $\mathbb{F}$-algebra $\mathfrak{B}$ with a $\mathsf{G}$-grading $\Gamma$, if $\mathfrak{B}$ is division $\mathsf{G}$-graded, then $\mathsf{Supp}(\Gamma)$ is a subgroup of $\mathsf{G}$, and $\mathsf{dim}_{\mathbb{F}}(\mathfrak{B}_\xi)=\mathsf{dim}_{\mathbb{F}}(\mathfrak{B}_e)$ for any $\xi\in\mathsf{Supp}(\Gamma)$, where $e$ is the neutral element of $\mathsf{G}$ (see the proof of Lemma 3, \cite{BahtSehgZaic08}). On the other hand, when $\mathfrak{A}$ is a simple $\mathsf{G}$-graded $\mathbb{F}$-algebra, not necessarily the support of your $\mathsf{G}$-grading is a subgroup of $\mathsf{G}$. In fact, consider $\mathfrak{A}=M_2(\mathbb{R})$ the $2\times2$ matrix algebra over $\mathbb{R}$ and $\mathsf{G}=\mathbb{Z}_4$ the cyclic group of order $4$. Now, consider the subspaces of $\mathfrak{A}$ given by $\mathfrak{A}_{\bar0}=\mathsf{span}_{\mathbb{R}}\{E_{11},E_{22}\in M_2(\mathbb{R})\}$, $\mathfrak{A}_{\bar1}=\mathsf{span}_{\mathbb{R}}\{E_{12}\in M_2(\mathbb{R})\}$ and $\mathfrak{A}_{\bar3}=\mathsf{span}_{\mathbb{R}}\{E_{21}\in M_2(\mathbb{R})\}$. Note that $\mathfrak{A}=\mathfrak{A}_{\bar0}\oplus\mathfrak{A}_{\bar1}\oplus\mathfrak{A}_{\bar3}$ determines a $\mathsf{G}$-grading on $\mathfrak{A}$ whose the support is not a subgroup of $\mathbb{Z}_4$.

\begin{definition}
Let $\mathbb{F}$ be a field, $\mathsf{G}$ a group and $\mathfrak{A}$ and $\mathfrak{B}$ two $\mathsf{G}$-graded $\mathbb{F}$-algebras. A homomorphism $\psi:\mathfrak{A}\rightarrow \mathfrak{B}$ is called a {\bf $\mathsf{G}$-graded homomorphism} if  $\psi(\mathfrak{A}_\xi)\subseteq\mathfrak{B}_\xi$ for any $\xi\in\mathsf{G}$. We say that $\mathfrak{A}$ and $\mathfrak{B}$ are {\bf $\mathsf{G}$-graded isomorphic}, and we denote this by $\mathfrak{A}\cong_\mathsf{G} \mathfrak{B}$, when there exists an isomorphism $\varphi$ from $\mathfrak{A}$ into $\mathfrak{B}$ such that $\varphi(\mathfrak{A}_\xi)=\mathfrak{B}_\xi$ for any $\xi\in\mathsf{G}$. When $\mathfrak{A}=\mathfrak{B}$, we say that $\varphi$ is a {\bf $\mathsf{G}$-graded automorphism} of $\mathfrak{A}$.
\end{definition}

Let $\mathfrak{A}$ be an $\mathbb{F}$-algebra, $\mathsf{G}$ a group, and $\Gamma: \mathfrak{A}=\sum_{\xi\in\mathsf{G}} \mathfrak{A}_\xi$ and $\widehat{\Gamma}: \mathfrak{A}=\sum_{\zeta\in\mathsf{G}} \widehat{\mathfrak{A}}_\zeta$ two $\mathsf{G}$-gradings on $\mathfrak{A}$. We say that $\Gamma$ and $\widehat{\Gamma}$ are \textbf{equivalent gradings} (on $\mathfrak{A}$) if there exists a $\mathsf{G}$-graded automorphism $\varphi$ of $\mathfrak{A}$ satisfying $\varphi(\mathfrak{A}_\xi) =\widehat{\mathfrak{A}}_\xi$ for any $\xi\in\mathsf{G}$.

The next result is the graded version of Isomorphisms and Correspondence Theorems.

\begin{lemma}\label{4.13}
Let $\mathsf{G}$ be a group, $\mathbb{F}$ a field and $\mathfrak{A}$ and $\mathfrak{B}$ two $\mathsf{G}$-graded $\mathbb{F}$-algebras. If $\varphi:\mathfrak{A}\rightarrow \mathfrak{B}$ is a $\mathsf{G}$-graded homomorphism of algebras, then $\displaystyle\frac{\mathfrak{A}}{\mathsf{ker}(\varphi)}\cong_\mathsf{G} \mathsf{im}(\varphi)$. In addition, if $I\subseteq \mathfrak{B}$ are two $\mathsf{G}$-graded ideals of $\mathfrak{A}$, then $\displaystyle\frac{\mathfrak{A}}{\mathfrak{B}}\cong_\mathsf{G} \displaystyle\frac{{\mathfrak{A}}/{I}}{{\mathfrak{B}}/{I}}$. Moreover, if $I$ is a $\mathsf{G}$-graded ideal of $\mathfrak{A}$, then any $\mathsf{G}$-graded ideal of the $\mathsf{G}$-graded quotient algebra $\mathfrak{A}/I$ is of the form $\mathfrak{B}/I=\{b+I: b\in \mathfrak{B}\}$, where $\mathfrak{B}$ is a $\mathsf{G}$-graded ideal of $\mathfrak{A}$ such that $I\subseteq \mathfrak{B}\subseteq \mathfrak{A}$. 
\end{lemma}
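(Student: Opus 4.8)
The plan is to prove the graded analogue of the three standard ring-theoretic results (First Isomorphism, Correspondence, and the ``third''/quotient isomorphism theorem) by checking that the classical constructions are automatically compatible with the grading once the relevant maps are graded and the relevant ideals are graded. I would proceed in three parts corresponding to the three assertions. For the first isomorphism statement, I would start from the ungraded fact that $\mathfrak{A}/\ker(\varphi)\cong\operatorname{im}(\varphi)$ via the induced map $\bar\varphi(a+\ker(\varphi))=\varphi(a)$, and then argue that this isomorphism respects the grading. The key preliminary observation is that $\ker(\varphi)$ is a $\mathsf{G}$-graded ideal: if $a=\sum_\xi a_\xi$ lies in $\ker(\varphi)$, then since $\varphi(a_\xi)\in\mathfrak{B}_\xi$ and the sum $\mathfrak{B}=\bigoplus_\xi\mathfrak{B}_\xi$ is direct, $\varphi(a)=\sum_\xi\varphi(a_\xi)=0$ forces each $\varphi(a_\xi)=0$, so each $a_\xi\in\ker(\varphi)$. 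This makes $\mathfrak{A}/\ker(\varphi)$ a $\mathsf{G}$-graded algebra with homogeneous components $(\mathfrak{A}_\xi+\ker(\varphi))/\ker(\varphi)$, and a parallel argument shows $\operatorname{im}(\varphi)=\bigoplus_\xi\operatorname{im}(\varphi)\cap\mathfrak{B}_\xi$ is a graded subalgebra. Then I would verify $\bar\varphi$ carries the $\xi$-component into the $\xi$-component, which is immediate from $\varphi(\mathfrak{A}_\xi)\subseteq\mathfrak{B}_\xi$, giving $\mathfrak{A}/\ker(\varphi)\cong_\mathsf{G}\operatorname{im}(\varphi)$.

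For the second assertion (here I read the hypothesis as $I\subseteq\mathfrak{B}$ being two graded ideals of $\mathfrak{A}$, with the claim $\mathfrak{A}/\mathfrak{B}\cong_\mathsf{G}(\mathfrak{A}/I)/(\mathfrak{B}/I)$), I would first note that $\mathfrak{B}/I$ is a well-defined graded ideal of the graded quotient $\mathfrak{A}/I$, since both $\mathfrak{B}$ and $I$ are graded. The natural surjection $\pi:\mathfrak{A}/I\to\mathfrak{A}/\mathfrak{B}$ sending $a+I\mapsto a+\mathfrak{B}$ is a graded homomorphism with kernel exactly $\mathfrak{B}/I$, so applying the already-established graded first isomorphism theorem to $\pi$ yields the result directly. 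This keeps the argument modular: once the first part is in hand, the second is a one-line application rather than a fresh computation.

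For the final (Correspondence) assertion, I would take a graded ideal $\mathcal{J}$ of the graded quotient $\mathfrak{A}/I$ and set $\mathfrak{B}=\{a\in\mathfrak{A}:a+I\in\mathcal{J}\}$, i.e. the preimage under the canonical projection $p:\mathfrak{A}\to\mathfrak{A}/I$. Ungraded theory already gives that $\mathfrak{B}$ is an ideal with $I\subseteq\mathfrak{B}$ and $\mathcal{J}=\mathfrak{B}/I$, so the only genuinely new point is that $\mathfrak{B}$ is \emph{graded}. For this I would take $a\in\mathfrak{B}$, write $a=\sum_\xi a_\xi$ with $a_\xi\in\mathfrak{A}_\xi$, and use that $p$ is graded together with the fact that $\mathcal{J}$, being a graded ideal, satisfies $\mathcal{J}=\bigoplus_\xi\mathcal{J}\cap(\mathfrak{A}/I)_\xi$; decomposing $p(a)\in\mathcal{J}$ into its homogeneous parts $p(a_\xi)\in(\mathfrak{A}/I)_\xi$ shows each $p(a_\xi)\in\mathcal{J}$, hence each $a_\xi\in\mathfrak{B}$, which is exactly gradedness of $\mathfrak{B}$.

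I do not expect a serious obstacle here, since the entire statement is a routine ``decorate the classical proofs with homogeneity checks'' exercise; the one point requiring a little care throughout is the consistent use of the directness of the homogeneous decompositions to pass statements about an element to statements about each of its homogeneous components, and keeping track of which objects (kernels, images, preimages, quotients) inherit gradings. The mild subtlety worth flagging is the apparent typographical tension in the second assertion between the hypothesis ``$I\subseteq\mathfrak{B}$ are two graded ideals'' and the displayed conclusion involving $\mathfrak{A}/\mathfrak{B}$; I would state explicitly that $\mathfrak{B}$ denotes the larger graded ideal containing $I$ and confirm the quotient $\mathfrak{B}/I$ is taken inside $\mathfrak{A}/I$, so that the isomorphism theorem applies cleanly.
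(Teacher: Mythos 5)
Your proposal is correct and is essentially the paper's own approach: the paper's ``proof'' is merely the remark that the argument is adapted from the classical (ungraded) isomorphism and correspondence theorems in Rotman's \emph{Advanced Modern Algebra}, which is exactly what you carry out. Your write-up simply supplies the homogeneity checks the paper leaves implicit --- that $\mathsf{ker}(\varphi)$, $\mathsf{im}(\varphi)$, and preimages of graded ideals are graded, each via the directness of the homogeneous decomposition --- and these checks are all correct.
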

\begin{proof}
The proof is adapted from Theorems 2.136, 6.19, 6.21 and 6.22, in \cite{Rotm10}, pages 157, 406 and 407.
\end{proof}

%

\begin{definition}\label{1.54}
Let $\mathfrak{A}$ and $\mathfrak{B}$ be two $\mathsf{G}$-graded algebras and $\psi:\mathfrak{B} \rightarrow \mathfrak{A}$ a $\mathsf{G}$-graded homomorphism of algebras. We say that $\psi$ is a \textbf{$\mathsf{G}$-graded imbedding} (or {\bf$\mathsf{G}$-imbedding}) from $\mathfrak{B}$ to $\mathfrak{A}$ if $\psi$ is injective. We denote by $\mathfrak{B} \stackrel{\mathsf{G}}{\hookrightarrow} \mathfrak{A}$ when there exists a $\mathsf{G}$-graded imbedding from $\mathfrak{B}$ to $\mathfrak{A}$. Otherwise, we write $\mathfrak{B} \stackrel{\mathsf{G}}{\not\hookrightarrow} \mathfrak{A}$ when $\mathfrak{B}$ cannot be $\mathsf{G}$-imbedded in $\mathfrak{A}$.
\end{definition}

Note that if $\psi$ is a $\mathsf{G}$-graded imbedding from $\mathfrak{B}$ to $\mathfrak{A}$, then there exists a $\mathsf{G}$-graded subalgebra $\hat{\mathfrak{A}}$ of $\mathfrak{A}$, namely $\hat{\mathfrak{A}}=\mathsf{im}(\psi)$, such that $\hat{\mathfrak{A}}$ and $\mathfrak{B}$ are $\mathsf{G}$-graded isomorphic. Therefore, we can see $\mathfrak{B}$ as a $\mathsf{G}$-graded subalgebra of $\mathfrak{A}$ (or assume that $\mathfrak{A}$ has a $\mathsf{G}$-graded ``copy'' of $\mathfrak{B}$). In this case, observe that $\mathsf{Supp}(\Gamma_\mathfrak{B})\subseteq \mathsf{Supp}(\Gamma_{\mathfrak{A}})$.

\subsection{Matrix Algebra over a Twisted Group Algebra}

Here, let us denote by $\mathsf{G}$ a multiplicative group and $\mathbb{F}$ a field, both arbitrary.

\begin{definition}\label{1.05}
The mapping $\sigma: \mathsf{G}\times \mathsf{G} \longrightarrow \mathbb{F}^*$ which satisfies 
	\begin{equation}\nonumber
		\sigma(\xi,\zeta)\sigma(\xi\zeta,\varsigma)=\sigma(\xi,\zeta\varsigma)\sigma(\zeta,\varsigma) \mbox{ for all } \xi,\zeta,\varsigma\in \mathsf{G}
	\end{equation}
is called a \textbf{$2$-cocycle} on $\mathsf{G}$ with values in $\mathbb{F}^*$. The set of all $2$-cocycles on $\mathsf{G}$ with values in $\mathbb{F}^*$ is denoted by $\mathcal{Z}^2(\mathsf{G}, \mathbb{F}^*)$.
\end{definition}

\begin{example}
	The application $\sigma$ from $\mathsf{G}\times \mathsf{G}$ to $\mathbb{F}^*$ given by $\sigma(\xi,\zeta)=1$, for any $\xi,\zeta\in \mathsf{G}$, is a $2$-cocycle called the \textbf{trivial $2$-cocycle}.
\end{example}


%

\begin{example}\label{1.06}
	Let $\mathsf{G}=(\mathbb{Z}_2\times \mathbb{Z}_2, +)$. The map $\sigma:\mathsf{G}\times\mathsf{G}\rightarrow\mathbb{C}^*$ given by the following table 
	\begin{equation}\nonumber
		\begin{tabular}{|c|c|c|c|c|}
		\hline
		$\sigma$ & $(\bar{0},\bar{0})$ & $(\bar{0},\bar{1})$ & $(\bar{1},\bar{0})$ & $(\bar{1},\bar{1})$\\ \hline 
		$(\bar{0},\bar{0})$ & $1$ & $1$ & $1$ & $1$\\ \hline
		$(\bar{0},\bar{1})$ & $1$ & $1$ & $1$ & $1$ \\ \hline
		$(\bar{1},\bar{0})$ & $1$ & $-1$ & $1$ & $-1$ \\ \hline
		$(\bar{1},\bar{1})$ & $1$ & $-1$ & $1$ & $-1$ \\ \hline
		\end{tabular}
	\end{equation}
	defines a $2$-cocycle, i.e. $\sigma$ belongs to $\mathcal{Z}^2(\mathbb{Z}_2\times \mathbb{Z}_2, \mathbb{C}^*)$. 
\end{example}

Now, consider the group algebra $\mathbb{F}\mathsf{G}$. We have that $\mathbb{F}\mathsf{G}$ is an associative algebra with unity, and it has a natural $\mathsf{G}$-grading, defined by $(\mathbb{F}\mathsf{G})_\xi=\mathsf{span}_{\mathbb{F}}\{\eta_\xi\}$, $\xi\in\mathsf{G}$. Let us introduce the algebra $\mathbb{F}\mathsf{G}$ with a new product, called {\it twisted group algebra}, as follows.

\begin{definition}
Let $\mathsf{G}$ be a group, $\mathbb{F}$ a field, and $\sigma: \mathsf{G}\times \mathsf{G}\longrightarrow \mathbb{F}^*$ a $2$-cocycle on $\mathsf{G}$. Consider the $\mathbb{F}$-vector space 
	\begin{equation}\nonumber
	\mathbb{F}^\sigma[\mathsf{G}]=\left\{ \sum_{\xi\in \mathsf{G}} \lambda_\xi \eta_\xi: \lambda_\xi\in\mathbb{F}, \xi \in \mathsf{G} \right\} \ ,
	\end{equation}
where the set $\{\eta_\xi\}_{\xi\in\mathsf{G}}$ is linearly independent over $\mathbb{F}$. Consider on $\mathbb{F}^\sigma[\mathsf{G}]$ the multiplication which extends by linearity the product $\eta_\xi\eta_\zeta=\sigma(\xi,\zeta)\eta_{\xi\zeta}$, for any $\xi,\zeta\in \mathsf{G}$. With this multiplication, $\mathbb{F}^\sigma[\mathsf{G}]$ is an algebra, called a {\bf twisted group algebra}.
\end{definition}

Observe that the equality in Definition \ref{1.05} ensures that $\mathbb{F}^\sigma[\mathsf{G}]$ is an associative algebra. Obviously, if $\sigma$ is the trivial $2$-cocycle on $\mathsf{G}$, then $\mathbb{F}^\sigma[\mathsf{G}]=\mathbb{F} \mathsf{G}$. Furthermore, we have that $\mathfrak{A}=\mathbb{F}^\sigma[\mathsf{G}]$ is $\mathsf{G}$-graded with the natural grading, also called {\bf canonical}, defined by $\mathfrak{A}_\xi = \mathsf{span}_\mathbb{F}\{\eta_\xi\}$, for any $\xi\in\mathsf{G}$. We have that $\mathbb{F}^\sigma[\mathsf{G}]$ is a unitary algebra, where its unity is given by $\sigma(e,e)^{-1} \eta_e$, since $\sigma(e,e)=\sigma(e,\xi)=\sigma(\zeta,e)$ for any $\xi,\zeta\in\mathsf{G}$ (see Proposition \ref{1.07}). Moreover, $\mathbb{F}^{\sigma}[\mathsf{G}]$ is a division graded algebra. Obviously $\mathsf{dim}(\mathbb{F}^{\sigma}[\mathsf{G}])<\infty$ iff $\mathsf{G}$ is a finite group.

Let $\mathbb{F}$ be a field, $H$ a subgroup of group $\mathsf{G}$, $\sigma$ a $2$-cocycle on $H$ with values in $\mathbb{F}^*$ and $\mathfrak{A}=M_k (\mathbb{F}^\sigma [H])$ the $k\times k$ matrix algebra over the twisted group algebra $\mathbb{F}^\sigma [H]$. Let $\theta=(\theta_1,\dots, \theta_k )\in \mathsf{G}^k$ be a $k$-tuple. The $\mathsf{G}$-grading on $\mathfrak{A}$ determined by $\mathfrak{A}_\xi=\mathsf{span}_\mathbb{F}\{E_{ij}\eta_\zeta\in\mathfrak{A}: \xi=\theta_i^{-1}\zeta\theta_j\}$, $\xi\in\mathsf{G}$, is called \textbf{elementary-canonical $\mathsf{G}$-grading} defined by the $k$-tuple $\theta$. When the elementary-canonical $\mathsf{G}$-gradings on $\mathfrak{A}$ defined by $k$-tuples $\theta, \hat\theta \in\mathsf{G}^k$ are equivalent, we say that $\theta$ and $\hat\theta$ define equivalent elementary-canonical $\mathsf{G}$-gradings on $\mathfrak{A}$.

In what follows, we present some results which will be useful in our work.

\begin{lemma}[Theorem 2, \cite{BahtSehgZaic08}, or Theorem 2.13, \cite{ElduKoch13}]\label{teodivgradalg}
	Let $\mathbb{F}$ be an algebraically closed field and $D$ a $\mathsf{G}$-graded $\mathbb{F}$-algebra of finite dimension. Then $D$ is a division graded algebra with support $T\subseteq\mathsf{G}$ iff $D$ is isomorphic to the twisted group algebra $\mathbb{F}^\sigma[T]$ (with its natural $T$-grading regarded as a $\mathsf{G}$-grading) for some $\sigma\in\mathcal{Z}^2(T, \mathbb{F}^*)$, where $T$ is a finite subgroup of $\mathsf{G}$. Two twisted group algebras, $\mathbb{F}^{\sigma_1}[H_1]$ and $\mathbb{F}^{\sigma_2}[H_2]$, are isomorphic as $\mathsf{G}$-graded algebras if and only if $H_1=H_2$ and $[\sigma_1] = [\sigma_2]$. 
\end{lemma}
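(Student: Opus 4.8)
The plan is to split the statement into its two logically independent halves: a structural characterization (every finite-dimensional division graded algebra over an algebraically closed field is a twisted group algebra on a finite subgroup, and conversely) and an isomorphism classification (two twisted group algebras are $\mathsf{G}$-graded isomorphic exactly when their supports coincide and their cocycles are cohomologous). For the converse direction of the first half there is nothing to do, since it was already observed that $\mathbb{F}^\sigma[T]$ is a division graded algebra whose support is the finite subgroup $T$; the content lies in the forward implication and in the classification.

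For the forward implication, suppose $D$ is a finite-dimensional division graded algebra with support $T$. First I would note that $T$ is a subgroup of $\mathsf{G}$: this is exactly the fact recorded earlier (following Lemma 3 of \cite{BahtSehgZaic08}), and it is finite because $\mathsf{dim}_\mathbb{F}(D)<\infty$. The decisive step is to pin down the identity component: $D_e$ is a finite-dimensional associative division algebra over $\mathbb{F}$ (a nonzero $d\in D_e$ has a homogeneous inverse, necessarily of degree $e$), so by algebraic closure $D_e=\mathbb{F}\cdot 1$. Combined with the equality $\mathsf{dim}_\mathbb{F}(D_\xi)=\mathsf{dim}_\mathbb{F}(D_e)$ for all $\xi\in T$ (again recorded earlier), this forces every homogeneous component to be one-dimensional. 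I would then choose a nonzero $\eta_\xi\in D_\xi$ for each $\xi\in T$, normalized so that $\eta_e=1$; since each $\eta_\xi$ is invertible, the product $\eta_\xi\eta_\zeta$ is a nonzero element of the one-dimensional space $D_{\xi\zeta}$, whence $\eta_\xi\eta_\zeta=\sigma(\xi,\zeta)\eta_{\xi\zeta}$ with $\sigma(\xi,\zeta)\in\mathbb{F}^*$. Associativity of $D$ translates verbatim into the $2$-cocycle identity of Definition \ref{1.05}, so $\sigma\in\mathcal{Z}^2(T,\mathbb{F}^*)$ and the linear map determined by $\eta_\xi\mapsto\eta_\xi$ realizes a $\mathsf{G}$-graded isomorphism $D\cong_\mathsf{G}\mathbb{F}^\sigma[T]$.

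For the classification, let $\varphi:\mathbb{F}^{\sigma_1}[H_1]\to\mathbb{F}^{\sigma_2}[H_2]$ be a $\mathsf{G}$-graded isomorphism. Since $\varphi$ carries the $\xi$-component onto the $\xi$-component and is injective, the supports must agree, giving $H_1=H_2=:H$. Writing $\varphi(\eta_\xi^{(1)})=t(\xi)\eta_\xi^{(2)}$ with $t(\xi)\in\mathbb{F}^*$ and applying $\varphi$ to the defining relation $\eta_\xi^{(1)}\eta_\zeta^{(1)}=\sigma_1(\xi,\zeta)\eta_{\xi\zeta}^{(1)}$, I would compare the two sides to obtain $\sigma_1(\xi,\zeta)\,t(\xi\zeta)=t(\xi)t(\zeta)\,\sigma_2(\xi,\zeta)$, i.e. $\sigma_1\sigma_2^{-1}$ is the coboundary of the $1$-cochain $t$, so $[\sigma_1]=[\sigma_2]$. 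Conversely, given $H_1=H_2=H$ and $[\sigma_1]=[\sigma_2]$, I would pick a $1$-cochain $t:H\to\mathbb{F}^*$ with $\sigma_1(\xi,\zeta)\sigma_2(\xi,\zeta)^{-1}=t(\xi)t(\zeta)t(\xi\zeta)^{-1}$ and define $\varphi$ by $\eta_\xi^{(1)}\mapsto t(\xi)\eta_\xi^{(2)}$; the same computation run backwards shows $\varphi$ is a $\mathsf{G}$-graded isomorphism.

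The main obstacle is the forward implication of the first half, and specifically the two places where the hypotheses are indispensable: algebraic closure is what collapses the division algebra $D_e$ to $\mathbb{F}$ (without it one obtains only a division algebra and the components need not be one-dimensional), while finite-dimensionality is what makes $T$ finite and $D_e$ finite-dimensional. The remaining work is bookkeeping, the only subtlety being the consistency of the cocycle/coboundary conventions in the classification step; fixing the normalization $\eta_e=1$ at the outset keeps the cochain manipulations clean.
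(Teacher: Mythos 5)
Your proposal is correct, and it reconstructs exactly the standard argument: since this lemma is quoted by the paper from the literature (Theorem 2 of \cite{BahtSehgZaic08}, Theorem 2.13 of \cite{ElduKoch13}) rather than proved there, the benchmark is that classical proof, which proceeds just as you do --- inverses of homogeneous elements are homogeneous, so $D_e$ is a finite-dimensional division algebra, algebraic closure collapses it to $\mathbb{F}\cdot 1$, all components are then one-dimensional, and associativity turns the structure constants into a $2$-cocycle. Your classification half is, essentially verbatim, the computation the paper itself performs in Proposition \ref{4.04} (there stated for arbitrary fields), including the observation that the scalars $t(\xi)$ relating the two bases assemble into the $2$-coboundary witnessing $[\sigma_1]=[\sigma_2]$.
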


More generally, given $\mathsf{G}$ a group and $\mathbb{F}$ a field, both arbitrary, if $D$ is an $\mathbb{F}$-algebra with a $\mathsf{G}$-grading $\Gamma$ such that $\mathsf{dim}_\mathbb{F}(D_e)=1$, then $D$ is a division graded algebra iff $D$ is isomorphic to the twisted group algebra $\mathbb{F}^\sigma[T]$, $T=\mathsf{Supp}(\Gamma)$ and $\sigma\in \mathcal{H}^2(T,\mathbb{F}^*)$. To prove this fact, it is enough proceed as in the proofs of Lemma 3 and Theorem 2, in \cite{BahtSehgZaic08}. For example, it is well-known that when $\mathbb{F}$ is algebraically closed, the only division algebraic\footnote{An $\mathbb{F}$-algebra $\mathfrak{A}$ is called \textit{an algebraic algebra} (of bounded degree) if any element $a\in\mathfrak{A}$ satisfies a non-trivial equation $a^n+\lambda_{n-1} a^{n-1}+\cdots+\lambda_1a=0$ (for some $n$ fixed) with coefficients in $\mathbb{F}$.} $\mathbb{F}$-algebra is $\mathbb{F}$ itself (see the proof of Theorem 7.6 in Jaco12.1, p.451), and so the hypothesis ``of finite dimension'' in Lemma \ref{teodivgradalg} can be replaced by ``such that $D_e$ is an algebraic $\mathbb{F}$-algebra''.

\begin{lemma}[Theorem 3, \cite{BahtSehgZaic08}] \label{teosimpgradalg}
	Let $\mathfrak{A} =\bigoplus_{g\in \mathsf{G}} \mathfrak{A}_g$ be a finite dimensional algebra, over an algebraically closed field $\mathbb{F}$ that is graded by a group $\mathsf{G}$. Suppose that either $\mathsf{char}(\mathbb{F}) = 0$ or $\mathsf{char} (\mathbb{F})$ is coprime with the order of each finite subgroup of $\mathsf{G}$. Then $\mathfrak{A}$ is a simple graded algebra if and only if $\mathfrak{A}$ is isomorphic to the tensor product  $M_k (\mathbb{F})\otimes\mathbb{F}^\sigma [H] \cong M_k (\mathbb{F}^\sigma [H])$, that is, if and only if $\mathfrak{A}$ is a matrix algebra over the division graded algebra $\mathbb{F}^\sigma [H]$, where $H$ is a finite subgroup of $\mathsf{G}$ and $\sigma\in\mathcal{Z}^2(H,\mathbb{F}^*)$. The $\mathsf{G}$-grading  on $M_k (\mathbb{F}^\sigma [H])$ is an elementary-canonical grading defined by a $k$-tuple $(\theta_1,\dots, \theta_k )\in \mathsf{G}^k$.
\end{lemma}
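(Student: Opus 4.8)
The plan is to prove the two implications separately, isolating the forward (classification) direction as the place where the hypotheses on $\mathbb{F}$ and $\mathsf{char}(\mathbb{F})$ are genuinely used. For the easy direction I would assume $\mathfrak{A}\cong M_k(\mathbb{F}^\sigma[H])$ with the elementary-canonical $\mathsf{G}$-grading defined by $\theta=(\theta_1,\dots,\theta_k)$, so that $E_{ij}\eta_\zeta$ is homogeneous of degree $\theta_i^{-1}\zeta\theta_j$, and write $D=\mathbb{F}^\sigma[H]$, a division graded algebra. Taking a nonzero graded ideal $I$, I would pick a nonzero homogeneous $a=\sum_{i,j}E_{ij}d_{ij}\in I$ and an index $(i_0,j_0)$ with $d_{i_0j_0}\neq 0$; since each $d_{ij}$ is homogeneous in $D$ and $D$ is division graded, $d_{i_0j_0}$ is invertible. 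Multiplying $a$ on the left by the homogeneous unit $E_{p\,i_0}$ and on the right by $E_{j_0 q}$ isolates $E_{pq}d_{i_0j_0}$, and one more multiplication by $E_{qq}d_{i_0j_0}^{-1}$ produces $E_{pq}\in I$ for all $p,q$; hence $I=\mathfrak{A}$ and $\mathfrak{A}$ is graded simple.

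For the converse I would start from a finite-dimensional graded simple $\mathfrak{A}$ and first show it is semisimple as an ordinary algebra. The graded Jacobson radical $J_{\mathrm{gr}}(\mathfrak{A})$ is a nilpotent graded ideal; since $\mathfrak{A}^2\neq\{0\}$ makes $\mathfrak{A}$ non-nilpotent, graded simplicity forces $J_{\mathrm{gr}}(\mathfrak{A})=\{0\}$. The real content is to identify $J_{\mathrm{gr}}(\mathfrak{A})$ with the ordinary radical $J(\mathfrak{A})$, and this is exactly where the arithmetic hypothesis on $\mathsf{char}(\mathbb{F})$ is needed: the support of $\mathfrak{A}$ is finite, so after reducing to the finitely generated subgroup it spans---whose relevant finite subgroups have order invertible in $\mathbb{F}$---a Maschke/averaging-type argument shows that $J(\mathfrak{A})$ is graded. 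Then $J(\mathfrak{A})=J_{\mathrm{gr}}(\mathfrak{A})=\{0\}$ and $\mathfrak{A}$ is semisimple.

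Next I would invoke graded Wedderburn-Artin theory. Using the descending chain condition I would choose a minimal graded left ideal $L$; graded Schur's lemma makes $D:=\mathsf{End}_{\mathfrak{A}}(L)$ a division graded algebra, and graded simplicity makes the action of $\mathfrak{A}$ on $L$ faithful, so the graded density (double-centralizer) theorem gives $\mathfrak{A}\cong\mathsf{End}_D(L)$. Fixing a homogeneous $D$-basis $v_1,\dots,v_k$ of $L$ with $\deg v_i=\theta_i$ then identifies $\mathfrak{A}\cong M_k(D)$ carrying precisely the elementary-canonical grading defined by $\theta=(\theta_1,\dots,\theta_k)$. Finally, $D$ is a finite-dimensional division graded algebra over the algebraically closed $\mathbb{F}$, so Lemma \ref{teodivgradalg} supplies $D\cong\mathbb{F}^\sigma[H]$ with $H=\mathsf{Supp}(D)$ a finite subgroup and $\sigma\in\mathcal{Z}^2(H,\mathbb{F}^*)$; since $M_k(D)\cong M_k(\mathbb{F})\otimes D$, this yields $\mathfrak{A}\cong M_k(\mathbb{F})\otimes\mathbb{F}^\sigma[H]\cong M_k(\mathbb{F}^\sigma[H])$ with the claimed grading.

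The hard part will be the semisimplicity step: without the hypothesis on $\mathsf{char}(\mathbb{F})$ the ordinary radical need not be graded and a graded simple algebra can fail to be semisimple, so the delicate point is controlling $J(\mathfrak{A})$---reducing to a finite subgroup of invertible order and running the averaging argument to conclude that $J(\mathfrak{A})$ is graded. Once semisimplicity is secured, the graded Wedderburn decomposition and the extraction of the elementary-canonical grading from a homogeneous basis are routine, and the division factor is delivered directly by Lemma \ref{teodivgradalg}.
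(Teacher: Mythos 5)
The paper itself gives no proof of Lemma \ref{teosimpgradalg}: it is imported verbatim from Theorem 3 of \cite{BahtSehgZaic08}, so your proposal can only be measured against that source and against standard graded structure theory. Measured that way, your first and third paragraphs already constitute a correct and complete skeleton: the direct verification that $M_k(\mathbb{F}^\sigma[H])$ is graded simple by isolating a matrix unit from a homogeneous element of a nonzero graded ideal is sound, and the converse via graded Wedderburn--Artin theory --- minimal graded left ideal $L$, graded Schur's lemma making $D=\mathsf{End}_{\mathfrak{A}}(L)$ a graded division algebra, faithfulness of $L$ from graded simplicity, graded density giving $\mathfrak{A}\cong\mathsf{End}_D(L)\cong M_k(D)$ with the elementary-canonical grading read off from the degrees of a homogeneous $D$-basis, and finally Lemma \ref{teodivgradalg} to identify $D\cong\mathbb{F}^\sigma[H]$ --- is the route of \cite{ElduKoch13} (Chapter 2) rather than that of \cite{BahtSehgZaic08}.

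The problem is your second paragraph, which you single out as ``the real content'': it is at once the only step with a genuine gap and logically superfluous. Superfluous, because the graded density theorem applies to the graded-simple faithful module $L$ as such; nowhere in your third paragraph do you use $J(\mathfrak{A})=0$ or ordinary semisimplicity. Indeed it cannot be needed: over an algebraically closed field of characteristic $p$, the group algebra $\mathbb{F}[\mathbb{Z}_p]$ with its canonical grading is graded simple and is of the form $M_1(\mathbb{F}^\sigma[H])$, yet it is local and non-semisimple; semisimplicity is exactly what the coprimality hypothesis buys, but the classification itself holds without it, which is why combining the graded Wedderburn--Artin theorem with Lemma \ref{teodivgradalg} (as in \cite{ElduKoch13}) imposes no characteristic restriction. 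Gappy, because ``a Maschke/averaging-type argument'' establishes that $J(\mathfrak{A})$ is graded only when one can average over a \emph{finite} grading group of invertible order; the subgroup generated by the (finite) support is merely finitely generated and may be infinite, and knowing that its finite subgroups have invertible order gives you nothing to average over. Making ``$J(\mathfrak{A})$ is graded'' rigorous under the stated hypothesis is genuinely delicate and cannot be waved through. The clean fix is to delete your second paragraph entirely: your proof is then complete and in fact proves more than the stated lemma, since the characteristic hypothesis is never used. If instead you insist on the semisimplicity route, the second paragraph needs a real argument, and your third paragraph would then have to be replaced by ungraded Wedderburn theory together with an analysis of how homogeneous invertible elements permute the simple components.
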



\subsection{Graded Polynomial Identities}

Let $\mathsf{G}$ be a group, $\mathbb{F}$ a field, and $\mathbb{F}\langle X^{\mathsf{G}} \rangle$ the free $\mathsf{G}$-graded associative algebra\footnote{For more details on the free $\mathsf{G}$-graded associative $\mathbb{F}$-algebra $\mathbb{F}\langle X^{\mathsf{G}} \rangle$, see \cite{GiamZaic05}, p.66, and \cite{NastOyst04}, Proposition 2.3.1, p.22.} freely generated by $X^{\mathsf{G}}$, where $X^{\mathsf{G}}=\bigcup_{\xi\in\mathsf{G}} X_\xi$, $X_\xi=\{x_1^{(\xi)}, x_2^{(\xi)}, \dots\}$. The elements of $\mathbb{F}\langle X^{\mathsf{G}} \rangle$ are called \textit{graded polynomials}. It is well-known that $\mathbb{F}\langle X^{\mathsf{G}} \rangle$ satisfies the following universal property: given any $\mathsf{G}$-graded (associative) algebra $\mathfrak{A}$, any maps $f_\mathfrak{A}: X^{\mathsf{G}} \rightarrow \mathfrak{A}$ such that $f_\mathfrak{A}(X_\xi)\subseteq\mathfrak{A}_\xi$, $\xi\in\mathsf{G}$, can be extended (uniquely) to a $\mathsf{G}$-graded homomorphism $\varphi_\mathfrak{A}: \mathbb{F}\langle X^{\mathsf{G}} \rangle \rightarrow \mathfrak{A}$ of $\mathsf{G}$-graded algebras. The following diagram illustrates this property: 
\begin{equation}\label{4.17}
\begin{tikzcd}
X^{\mathsf{G}} \arrow[swap,""{name=X}]{d}{inc} \arrow{r}{f_{\mathfrak{A}}} & \mathfrak{A} \\
\mathbb{F}\langle X^{\mathsf{G}} \rangle \arrow[swap,dashed]{ru}{\varphi_{\mathfrak{A}}} \arrow[from=1-2,to=X,pos=0.6,phantom, "\circlearrowleft"] &  
\end{tikzcd} \ ,
\end{equation}
where $inc: X^{\mathsf{G}} \rightarrow \mathbb{F}\langle X^{\mathsf{G}} \rangle$ is the inclusion homomorphism.

Let $\mathfrak{A}$ be a $\mathsf{G}$-graded (associative) algebra. We say that a polynomial $f=f(x_{i_1}^{\xi_1},\dots,x_{i_n}^{\xi_n})\in\mathbb{F}\langle X^{\mathsf{G}} \rangle$ is a \textit{graded polynomial identity}\footnote{To more details, as well as an overview, on (graded) polynomial identities, we indicate the books \cite{Dren00}, \cite{DrenForm12}, \cite{GiamZaic05}, \cite{Jaco75} and \cite{Rowe80}, and their references.} of $\mathfrak{A}$, denoted by $f\equiv_{\mathsf{G}}0$ in $\mathfrak{A}$, if $f(b_{\xi_1},\dots,b_{\xi_n})=0$ for any $b_{\xi_1}\in\mathfrak{A}_{\xi_1}$, $\dots$, $b_{\xi_n}\in\mathfrak{A}_{\xi_n}$. We denote by $\mathsf{T}^{\mathsf{G}}(\mathfrak{A})$ the set of all the graded polynomial identities of $\mathfrak{A}$. It is well-known that $\mathsf{T}^{\mathsf{G}}(\mathfrak{A})$ is a $\mathsf{G}$-graded T-ideal of $\mathbb{F}\langle X^{\mathsf{G}} \rangle$, i.e. it is a graded ideal of $\mathbb{F}\langle X^{\mathsf{G}} \rangle$ which is invariant under all $\mathsf{G}$-graded endomorphisms of $\mathbb{F}\langle X^{\mathsf{G}}\rangle$. Clearly $f\in \mathbb{F}\langle X^{\mathsf{G}}\rangle$ is a graded polynomial identity for $\mathfrak{A}$ iff $f$ belongs to kernels of the all $\mathsf{G}$-graded homomorphisms of $\mathbb{F}\langle X^{\mathsf{G}}\rangle$ in $\mathfrak{A}$. Finally, using Lemma \ref{4.13}, being $\varphi_{\mathfrak{A}}$ the graded homomorphism given in (\ref{4.17}), it is easy to see that $\displaystyle\frac{\mathbb{F}\langle X^{\mathsf{G}} \rangle}{\mathsf{T}^{\mathsf{G}}(\mathfrak{A})}\cong_{\mathsf{G}}\mathsf{im}(\varphi_{\mathfrak{A}})$. In particular, if $\mathfrak{A}$ is finitely generated (as $\mathbb{F}$-algebra) (or $\mathfrak{A}$ has a generator set $S=\bigcup_{\xi\in \mathsf{G}} S_\xi$, where each $S_\xi$ is a countable subset of $\mathfrak{A}_\xi$), then $\mathfrak{A}\cong_{\mathsf{G}} \displaystyle\frac{\mathbb{F}\langle X^{\mathsf{G}} \rangle}{\mathsf{T}^{\mathsf{G}}(\mathfrak{A})}$.


\section{On the Restriction Homomorphism \texorpdfstring{$\mathsf{res}^\mathsf{G}_H$}{resGH} in Second Cohomology of Group}\label{sec3}

In this section we present some notions and properties of the cohomology theory of groups. Here, our goal is to determine suitable conditions to ensure that the restriction homomorphism from $\mathcal{H}^2(\mathsf{G}, \mathsf{M})$ into $\mathcal{H}^2(H, \mathsf{M})$ is surjective, where $H$ is a subgroup of a group $\mathsf{G}$. Although the many results in this section are well-known, we add them here in order to make this paper self-contained. The reader familiar with these definitions and their main properties can continue from \S\ref{resGH}, page \pageref{resGH}, or even from Theorem \ref{2.07}, page \pageref{2.07}. Unless otherwise stated, $\mathsf{G}$ denotes a multiplicative group and $\mathsf{M}$ denotes an abelian additive group that has a structure of a left $\mathsf{G}$-module. All modules in this section are assumed to be left modules.
 
\subsection{Definitions and Properties}

Let us define the \textbf{Second Cohomology Group}. Posteriorly, we will exhibit some important results. The following definition is a generalization of Definition \ref{1.05}. For further details, see \cite{Rotm08}, Section $9.1.2$, p.504.

Let $(\mathsf{G}, \cdot)$ be a multiplicative group, and $(\mathsf{M}, +)$ an abelian additive group. We say that $\mathsf{M}$ is a \textit{left $\mathsf{G}$-module} if there is a well-defined map from $\mathsf{G}\times\mathsf{M}$ into $\mathsf{M}$ which satisfies $r(m_1+m_2)=rm_1+rm_2$, $(r_1+r_2)m=r_1m+r_2m$, $(r_1r_2)m=r_1(r_2m)$, for any $r,r_1,r_2\in \mathbb{Z}\mathsf{G}$ and $m, m_1, m_2\in\mathsf{M}$, where $\mathbb{Z}\mathsf{G}$ is a group ring. We denote by $\eta_\xi$ the element of $\mathbb{Z}\mathsf{G}$ which corresponds to an element $\xi\in\mathsf{G}$. For convenience we assume that $\eta_e m=m$ for any $m\in\mathsf{M}$, where $e$ is the neutral element of $\mathsf{G}$ (i.e. $\mathsf{M}$ is a unitary left $\mathbb{Z}\mathsf{G}$-module).

\begin{definition}\label{1.10}
	Let $\mathsf{G}$ be a group and $\mathsf{M}$ a (left) $\mathsf{G}$-module. A map $\sigma:\mathsf{G}\times\mathsf{G}\rightarrow \mathsf{M}$ is said to be a \textbf{$2$-cocycle}\footnote{If we assume that $\mathsf{M}$ has a multiplicative notation, then $\sigma\in \mathcal{Z}^2(\mathsf{G}, \mathsf{M})$ when $\sigma(\xi,\zeta)\sigma(\xi \zeta,\varsigma)=(\eta_\xi \cdot \sigma(\zeta,\varsigma))\sigma(\xi,\zeta \varsigma)$, and $\varrho\in\mathcal{B}^2(\mathsf{G},\mathsf{M})$ when $\varrho(\xi,\zeta)=\displaystyle\frac{(\eta_\xi\cdot f(\zeta))f(\xi)}{f(\xi \zeta)}$ for some map $f:\mathsf{G}\rightarrow\mathsf{M}$.} if it satisfies the following relation:
	\begin{equation}\nonumber
	\sigma(\xi,\zeta)+\sigma(\xi \zeta,\varsigma)= \eta_\xi\sigma(\zeta,\varsigma)+\sigma(\xi,\zeta \varsigma)
	\ ,	\end{equation}
for any $\xi,\zeta,\varsigma\in\mathsf{G}$. We say that a $2$-cocycle $\rho:\mathsf{G}\times\mathsf{G}\rightarrow \mathsf{M}$ is a \textbf{$2$-coboundary} if there exists a function $f:\mathsf{G}\rightarrow \mathsf{M}$ such that
	\begin{equation}\nonumber
	\rho(\xi,\zeta)= \eta_\xi f(\zeta)-f(\xi \zeta)+f(\xi)
	\end{equation}
	for any $\xi,\zeta\in\mathsf{G}$.
\end{definition}

The previous definition generalizes Definition \ref{1.05}. Notice that if $\mathsf{M}$ is a trivial (left) $\mathsf{G}$-module (also we say ``$\mathsf{G}$ acts trivially on $\mathsf{M}$''), i.e. $\eta_\xi m=m$ for any $\xi\in\mathsf{G}$ and $m\in\mathsf{M}$, it follows that
	\begin{equation}\nonumber
\sigma(\xi,\zeta)+\sigma(\xi\zeta,\varsigma)= \sigma(\zeta,\varsigma)+\sigma(\xi,\zeta\varsigma)
	\end{equation}
for any $2$-cocycle $\sigma$ and $\xi,\zeta,\varsigma\in\mathsf{G}$. Similarly for any $2$-coboundary $\rho$, when $\mathsf{G}$ acts trivially on $\mathsf{M}$, we have that $\rho(\xi,\zeta)=f(\zeta)-f(\xi\zeta)+f(\xi)$ for any $\xi,\zeta\in \mathsf{G}$, for some maps $f:\mathsf{G}\rightarrow\mathsf{M}$.

\begin{proposition}\label{1.07}
	Let $\mathsf{G}$ be a group, $\mathsf{M}$ a (left) $\mathsf{G}$-module and $\sigma: \mathsf{G}\times \mathsf{G}\longrightarrow \mathsf{M}$ a $2$-cocycle. If $e\in \mathsf{G}$ is the neutral element (unit of $\mathsf{G}$), then
	\begin{equation}\nonumber
	\sigma(\xi,e)=\eta_\xi\sigma(e,e) \ \mbox{and} \ \sigma(e,\zeta)=\sigma(e,e),
	\end{equation}
for any $\xi,\zeta\in \mathsf{G}$. In particular, if $\mathsf{G}$ acts trivially on $\mathsf{M}$, then $\sigma(\xi,e)=\sigma(e,\zeta)=\sigma(e,e)$ and $\sigma(\xi,\xi^{-1})=\sigma(\xi^{-1},\xi)$ for any $\xi, \zeta\in \mathsf{G}$.
\end{proposition}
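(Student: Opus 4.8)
The plan is to derive everything directly from the defining cocycle relation of Definition \ref{1.10}, namely
\[
\sigma(\xi,\zeta)+\sigma(\xi\zeta,\varsigma)=\eta_\xi\sigma(\zeta,\varsigma)+\sigma(\xi,\zeta\varsigma),
\]
by substituting the neutral element $e$ into well-chosen slots. No machinery beyond this single identity is needed; the whole argument is a short sequence of specializations, exploiting the normalization $\eta_e m = m$.

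First I would establish $\sigma(e,\zeta)=\sigma(e,e)$. Setting $\xi=\zeta=e$ and leaving $\varsigma$ arbitrary, the left-hand side becomes $\sigma(e,e)+\sigma(e,\varsigma)$ while the right-hand side becomes $\eta_e\sigma(e,\varsigma)+\sigma(e,\varsigma)=\sigma(e,\varsigma)+\sigma(e,\varsigma)$. Cancelling one copy of $\sigma(e,\varsigma)$ gives $\sigma(e,e)=\sigma(e,\varsigma)$, which is the claim after renaming $\varsigma$ to $\zeta$. Next I would obtain $\sigma(\xi,e)=\eta_\xi\sigma(e,e)$: setting $\zeta=\varsigma=e$ with $\xi$ arbitrary, both $\sigma(\xi\zeta,\varsigma)$ and $\sigma(\xi,\zeta\varsigma)$ collapse to $\sigma(\xi,e)$, so the relation reads $\sigma(\xi,e)+\sigma(\xi,e)=\eta_\xi\sigma(e,e)+\sigma(\xi,e)$, and cancelling one $\sigma(\xi,e)$ yields exactly $\sigma(\xi,e)=\eta_\xi\sigma(e,e)$.

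The ``in particular'' statements then follow with essentially no extra work. When $\mathsf{G}$ acts trivially, $\eta_\xi\sigma(e,e)=\sigma(e,e)$, so the first formula becomes $\sigma(\xi,e)=\sigma(e,e)$, and together with the already proved $\sigma(e,\zeta)=\sigma(e,e)$ this gives $\sigma(\xi,e)=\sigma(e,\zeta)=\sigma(e,e)$. For the symmetry $\sigma(\xi,\xi^{-1})=\sigma(\xi^{-1},\xi)$ I would specialize the cocycle identity at $\zeta=\xi^{-1}$ and $\varsigma=\xi$, which under the trivial action turns it into $\sigma(\xi,\xi^{-1})+\sigma(e,\xi)=\sigma(\xi^{-1},\xi)+\sigma(\xi,e)$. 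Since $\sigma(e,\xi)=\sigma(\xi,e)=\sigma(e,e)$ by the previous step, the equal terms cancel and the two remaining terms must coincide.

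Honestly, there is no genuine obstacle here: the statement is a foundational but routine computation, and the only thing requiring care is the bookkeeping of which variable is set to $e$ in each specialization. The real value of writing it out is to pin down the normalization conventions (especially $\eta_e m = m$ and the behaviour of $\sigma$ on the identity) that the later sections rely on, for instance when identifying the unit $\sigma(e,e)^{-1}\eta_e$ of the twisted group algebra $\mathbb{F}^\sigma[\mathsf{G}]$.
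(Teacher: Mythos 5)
Your proof is correct and follows essentially the same route as the paper: both arguments are direct specializations of the cocycle identity, with $\zeta=\varsigma=e$ giving $\sigma(\xi,e)=\eta_\xi\sigma(e,e)$, a substitution of $e$ in the first slot giving $\sigma(e,\zeta)=\sigma(e,e)$, and $\zeta=\xi^{-1}$, $\varsigma=\xi$ giving the symmetry $\sigma(\xi,\xi^{-1})=\sigma(\xi^{-1},\xi)$ under the trivial action. The only cosmetic difference is that the paper sets just $\xi=e$ (leaving $\zeta,\varsigma$ free) in the first step where you set $\xi=\zeta=e$; both reduce to the same cancellation in the abelian group $\mathsf{M}$.
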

\begin{proof}
 To prove that $\sigma(e,\zeta)=\sigma(e,e)$ for any $\zeta\in \mathsf{G}$, it is enough to put $\xi=e$ in Definition \ref{1.10}. Now, take $\zeta=\varsigma=e$ in Definition \ref{1.10} to show that $\sigma(\xi,e)=\eta_\xi\sigma(e,e)$ for any $\zeta\in \mathsf{G}$. Finally, assuming that $\mathsf{G}$ acts trivially on $\mathsf{M}$, and putting $\zeta=\xi^{-1}$ and $\varsigma=\xi$ in Definition \ref{1.10}, we can ensure that $\sigma(\xi,\xi^{-1})=\sigma(\xi^{-1},\xi)$, for any $\xi\in \mathsf{G}$.
\end{proof}

\begin{definition}
	Let $\mathsf{G}$ be a group and $\mathsf{M}$ a $\mathsf{G}$-module. We define
	\begin{equation}\nonumber
	\mathcal{Z}^2(\mathsf{G}, \mathsf{M})=\{\mbox{all the 2-cocycles } \sigma:\mathsf{G}\times \mathsf{G} \rightarrow \mathsf{M} \} 
	\end{equation}
and
	\begin{equation}\nonumber
	\mathcal{B}^2(\mathsf{G}, \mathsf{M})=\{\mbox{all the 2-coboundaries } \rho:\mathsf{G}\times \mathsf{G} \rightarrow \mathsf{M} \} \ .
	\end{equation}
\end{definition}
	
Given $\sigma, \rho\in\mathcal{Z}^2(\mathsf{G}, \mathsf{M})$, we define\footnote{If we assume that $\mathsf{M}$ has a multiplicative notation, then $\sigma\rho: (\xi,\zeta)\mapsto\sigma(\xi,\zeta)\rho(\xi,\zeta)$ for any $\xi,\zeta\in\mathsf{G}$.}
	\begin{equation}\nonumber
\sigma + \rho: (\xi,\zeta)\mapsto  \sigma(\xi,\zeta) + \rho(\xi,\zeta)
	\end{equation}
for any $\xi,\zeta\in\mathsf{G}$. In \cite{Rotm08}, Proposition 9.11 ensures that $\mathcal{Z}^2(\mathsf{G}, \mathsf{M})$ is an abelian group, and $\mathcal{B}^2(\mathsf{G}, \mathsf{M})$ is a subgroup of $\mathcal{Z}^2(\mathsf{G}, \mathsf{M})$, with respect to this operation. Observe that, since $\mathcal{B}^2(\mathsf{G},\mathsf{M})$ is a group, the inverse element of a $2$-coboundary $\rho$ defined by $\rho(\xi,\zeta)= \eta_\xi f(\zeta)-f(\xi\zeta)+f(\xi)$ for some $f:\mathsf{G}\times\mathsf{G}\rightarrow \mathsf{M}$ is given by 
	\begin{equation}\nonumber
(-\rho)(\xi,\zeta)= -f(\xi)+ f(\xi\zeta)-\eta_\xi f(\zeta) \ .
	\end{equation}

\begin{definition}
	The \textbf{second cohomology group} of $\mathsf{G}$ is defined as a quotient group
	\begin{equation}\nonumber
	\mathcal{H}^2(\mathsf{G}, \mathsf{M}) \coloneqq \frac{\mathcal{Z}^2(\mathsf{G}, \mathsf{M})}{\mathcal{B}^2(\mathsf{G}, \mathsf{M})}
	\ .
	\end{equation}
\end{definition}

The elements of $\mathcal{H}^2(\mathsf{G}, \mathsf{M})$ are denoted by $[\sigma]$, where $\sigma\in \mathcal{Z}^2(\mathsf{G}, \mathsf{M})$. Note that, given $\sigma, \rho\in\mathcal{Z}^2(\mathsf{G}, \mathsf{M})$, $[\sigma]=[\rho]$ in $\mathcal{H}^2(\mathsf{G}, \mathsf{M})$ when there exists $\varrho\in\mathcal{B}^2(\mathsf{G}, \mathsf{M})$ such that $\sigma=\varrho+\rho$. In this case, we say that $\sigma$ and $\rho$ are \textbf{equivalent}.

Observe that Example \ref{1.06} provides a $2$-cocycle $\sigma\in \mathcal{Z}^2(\mathbb{Z}_2\times\mathbb{Z}_2, \mathbb{C}^*)$ which is not equivalent to the trivial $2$-cocycle, because $\sigma((\bar1,\bar0),(\bar1,\bar1))=-1\neq 1 = \sigma((\bar1,\bar1),(\bar1,\bar0))$. On the other hand, being $H=\{(\bar0,\bar0),(\bar0,\bar1)\}$ and $N=\{(\bar0,\bar0),(\bar1,\bar1)\}$, both subgroups of $\mathbb{Z}_2\times\mathbb{Z}_2$, it is obvious that $[\sigma]_{|H\times H}= [1]_{|H\times H}$ and $[\sigma_{N}]_{|N\times N}\neq [1]_{|N\times N}$.

Let us present some basic results, which relate the second cohomology group and the orders of groups and subgroups. These results and some other facts about the second cohomology group can be found in \cite{Brow12}, \cite{Grue06}, \cite{Jaco12.2}, \cite{Rotm08} and \cite{Verm03}.

\begin{lemma}[Theorem 6.14, \cite{Jaco12.2}]\label{2.11}
	Let $\mathsf{G}$ be a finite group and $\mathsf{M}$ a $\mathsf{G}$-module. Every element of $\mathcal{H}^2(\mathsf{G}, \mathsf{M})$ has finite order, which is a divisor of $|\mathsf{G}|$.
\end{lemma}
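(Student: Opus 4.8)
The plan is to show directly that $|\mathsf{G}|\cdot[\sigma]=0$ for every $[\sigma]\in\mathcal{H}^2(\mathsf{G},\mathsf{M})$; since the order of any element of an abelian group divides every integer that annihilates it, this immediately yields that the order of $[\sigma]$ divides $n:=|\mathsf{G}|$. Concretely, fixing a representative $\sigma\in\mathcal{Z}^2(\mathsf{G},\mathsf{M})$, I would prove that $n\sigma\in\mathcal{B}^2(\mathsf{G},\mathsf{M})$ by exhibiting an explicit function $f:\mathsf{G}\rightarrow\mathsf{M}$ whose associated $2$-coboundary, in the sense of Definition \ref{1.10}, equals $n\sigma$.

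The key construction is the ``averaging'' (summation over the group) map. Since $\mathsf{G}$ is finite, I define
\[
f(\xi)=\sum_{\varsigma\in\mathsf{G}}\sigma(\xi,\varsigma),
\]
which is well defined precisely because the sum is finite. The heart of the argument is to take the cocycle identity from Definition \ref{1.10},
\[
\sigma(\xi,\zeta)+\sigma(\xi\zeta,\varsigma)=\eta_\xi\sigma(\zeta,\varsigma)+\sigma(\xi,\zeta\varsigma),
\]
and sum both sides over $\varsigma\in\mathsf{G}$ with $\xi,\zeta$ held fixed.

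Next I would evaluate each of the four resulting sums. The term $\sum_{\varsigma}\sigma(\xi,\zeta)$ is independent of $\varsigma$, so it equals $n\sigma(\xi,\zeta)$; the term $\sum_{\varsigma}\sigma(\xi\zeta,\varsigma)$ is $f(\xi\zeta)$ by definition; the term $\sum_{\varsigma}\eta_\xi\sigma(\zeta,\varsigma)$ equals $\eta_\xi f(\zeta)$ using the $\mathbb{Z}\mathsf{G}$-linearity of the action; and the term $\sum_{\varsigma}\sigma(\xi,\zeta\varsigma)$ equals $f(\xi)$, because $\varsigma\mapsto\zeta\varsigma$ is a bijection of $\mathsf{G}$ and hence $\zeta\varsigma$ runs over all of $\mathsf{G}$. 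Rearranging the resulting identity gives
\[
n\sigma(\xi,\zeta)=\eta_\xi f(\zeta)-f(\xi\zeta)+f(\xi),
\]
which is exactly the defining form of a $2$-coboundary in Definition \ref{1.10}. Hence $n\sigma\in\mathcal{B}^2(\mathsf{G},\mathsf{M})$, so $n[\sigma]=0$ in $\mathcal{H}^2(\mathsf{G},\mathsf{M})$, and the order of $[\sigma]$ divides $|\mathsf{G}|$.

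I do not expect a genuine obstacle here; the argument is a clean one-cochain computation, and the only points requiring care are where finiteness of $\mathsf{G}$ is invoked (to make $f$ well defined and to read off $\sum_{\varsigma}\sigma(\xi,\zeta)=n\sigma(\xi,\zeta)$) and the single place the module structure enters, namely pulling $\eta_\xi$ out of the third sum. The latter is justified by the additive $\mathbb{Z}\mathsf{G}$-linearity built into the definition of a $\mathsf{G}$-module, so the proof works verbatim whether or not $\mathsf{G}$ acts trivially on $\mathsf{M}$.
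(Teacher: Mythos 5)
Your proof is correct. Note that the paper does not prove this lemma at all: it is quoted as a known result, with the proof delegated to the cited reference (Theorem 6.14 in Jacobson's \emph{Basic Algebra II}). Your argument -- summing the cocycle identity of Definition \ref{1.10} over $\varsigma\in\mathsf{G}$ with the one-cochain $f(\xi)=\sum_{\varsigma\in\mathsf{G}}\sigma(\xi,\varsigma)$, using the bijection $\varsigma\mapsto\zeta\varsigma$ and additivity of the $\eta_\xi$-action to obtain $n\sigma(\xi,\zeta)=\eta_\xi f(\zeta)-f(\xi\zeta)+f(\xi)\in\mathcal{B}^2(\mathsf{G},\mathsf{M})$ -- is precisely the standard averaging proof of that classical fact, and all the points where finiteness and the module structure are used are handled correctly. (For what it is worth, the same conclusion also follows from the paper's Lemma \ref{2.06}\textit{(i)} by taking $H=\{e\}$, since $\mathcal{H}^2(\{e\},\mathsf{M})=0$ forces $|\mathsf{G}|\cdot[\sigma]=\mathsf{cores}^\mathsf{G}_{\{e\}}\mathsf{res}^\mathsf{G}_{\{e\}}[\sigma]=0$; but your self-contained computation is cleaner and does not rely on the existence of the corestriction map.)
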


In \cite{Rotm08}, it is shown that if $\mathsf{M}$ is a finitely generated $\mathsf{G}$-module, then $\mathcal{H}^2(\mathsf{G}, \mathsf{M})$ is finite (see Corollary 9.41). It is also proved (see Corollary 9.90) the following items:
	\begin{enumerate}[{\it i)}]
		\item There is an injection $\psi:\mathcal{H}^2(\mathsf{G}, \mathsf{M}) \longrightarrow \bigoplus_p \mathcal{H}^2(\mathsf{G}_p, \mathsf{M})$, where $\mathsf{G}_p$ is a Sylow $p$-subgroup of $\mathsf{G}$, $p$ is a prime divisor of $|\mathsf{G}|$;
		\item If $\mathcal{H}^2(\mathsf{G}_p, \mathsf{M})=\{0\}$ for all Sylow $p$-subgroups, then $\mathcal{H}^2(\mathsf{G}, \mathsf{M})=\{0\}$.
	\end{enumerate}
Already in \cite{Verm03}, it is proven in Theorem 11.8.18 that if $H\unlhd\mathsf{G}$ with index $[\mathsf{G}: H]=m$ coprime to the order of $H$, then $\mathcal{H}^2(\mathsf{G}, \mathsf{M})\cong \mathcal{H}^2(H, \mathsf{M})^\mathsf{G}\oplus \mathcal{H}^2(\mathsf{G}/H, \mathsf{M}^H)$, where $\mathsf{M}^H=\{m\in\mathsf{M} : \eta_\zeta m=m, \forall \zeta\in H \}$, and $\mathcal{H}^2(H, \mathsf{M})^\mathsf{G}=\{\sigma\in\mathcal{H}^2(H, \mathsf{M}) : \eta_\xi \sigma=\sigma, \forall \xi\in \mathsf{G} \}$. Here, it is also proved (see Theorem 12.1.3) that if $H$ is a subgroup of $\mathsf{G}$ and $\mathsf{M}$ is an $H$-module, then $\mathcal{H}^2(\mathsf{G},\mathsf{Hom}_{\mathbb{Z} H}(\mathbb{Z} \mathsf{G}, \mathsf{M}))$ and $\mathcal{H}^2(H, \mathsf{M})$ are isomorphic, where $\mathsf{Hom}_{\mathbb{Z} H}(\mathbb{Z} \mathsf{G}, \mathsf{M})$ is the group of $\mathbb{Z} H$-homomorphisms from $\mathbb{Z} \mathsf{G}$ into $\mathsf{M}$. This last result is known as Shapiro Lemma (see \cite{Grue06}, Theorem 6.3.2, p.92).
%

The proposition below improves Exercise 6.10.3, in \cite{Jaco12.2}, p.369. Roughly speaking, for all $\lambda\in \mathbb{F}$, a field $\mathbb{F}$ contains $\sqrt[n]{\lambda}$ iff $\mathbb{F}$ contains a root of the polynomial $p_\lambda(x)=x^n-\lambda$. In particular, any algebraically closed field $\mathbb{F}$ contains $\sqrt[n]{\lambda}$ for all $\lambda\in \mathbb{F}$. Hence, we write $\gamma=\sqrt[n]{\lambda}$ to denote that $\gamma^n=\lambda$.

\begin{proposition}\label{2.01}
	Let $\mathsf{G}$ be a finite group of order $n$, $\mathbb{F}$ a field such that $\sqrt[n]{\lambda} \in \mathbb{F}$ for all $\lambda\in \mathbb{F}$, and $\mathcal{H}^2(\mathsf{G}, \mathbb{F}^*)$ the second cohomology group of $\mathsf{G}$ with coefficients in the multiplicative group $\mathbb{F}^*$, where $\mathsf{G}$ acts trivially on $\mathbb{F}^*$. For any $[\varrho]\in\mathcal{H}^2(\mathsf{G}, \mathbb{F}^*)$, the representative $2$-cocycle $\varrho$ can be chosen to have values that are $n$-th roots of unity. Therefore, $\mathcal{H}^2(\mathsf{G}, \mathbb{F}^*)$ is finite. 
\end{proposition}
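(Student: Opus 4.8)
The plan is to exploit the torsion bound on second cohomology furnished by Lemma \ref{2.11} together with the hypothesis that every element of $\mathbb{F}$ admits an $n$-th root. Fix a class $[\varrho] \in \mathcal{H}^2(\mathsf{G}, \mathbb{F}^*)$. Since $|\mathsf{G}| = n$, Lemma \ref{2.11} shows that the order of $[\varrho]$ divides $n$, so $[\varrho]^n = [1]$; in the multiplicative notation this means $\varrho^n \in \mathcal{B}^2(\mathsf{G}, \mathbb{F}^*)$. Because $\mathsf{G}$ acts trivially on $\mathbb{F}^*$, the coboundary formula simplifies, and there is a map $f : \mathsf{G} \to \mathbb{F}^*$ with
\begin{equation}\nonumber
\varrho(\xi,\zeta)^n = \frac{f(\zeta)\, f(\xi)}{f(\xi\zeta)} \qquad \text{for all } \xi,\zeta \in \mathsf{G}.
\end{equation}

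Next I would extract roots termwise. For each $\xi \in \mathsf{G}$ I choose (arbitrarily) an $n$-th root $g(\xi) = \sqrt[n]{f(\xi)} \in \mathbb{F}^*$, which exists by hypothesis since $f(\xi) \neq 0$. Let $\rho_0 \in \mathcal{B}^2(\mathsf{G}, \mathbb{F}^*)$ be the $2$-coboundary $\rho_0(\xi,\zeta) = \frac{g(\zeta)\, g(\xi)}{g(\xi\zeta)}$, and set $\varrho' = \varrho\,\rho_0^{-1}$. Since $\rho_0^{-1} \in \mathcal{B}^2(\mathsf{G}, \mathbb{F}^*)$, the cocycle $\varrho'$ is cohomologous to $\varrho$, i.e. $[\varrho'] = [\varrho]$. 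Raising to the $n$-th power and using $g(\xi)^n = f(\xi)$ yields
\begin{equation}\nonumber
\varrho'(\xi,\zeta)^n = \frac{\varrho(\xi,\zeta)^n}{\rho_0(\xi,\zeta)^n} = \frac{f(\zeta) f(\xi) / f(\xi\zeta)}{g(\zeta)^n g(\xi)^n / g(\xi\zeta)^n} = 1,
\end{equation}
so each value $\varrho'(\xi,\zeta)$ is an $n$-th root of unity in $\mathbb{F}$. This establishes the first assertion, with $\varrho'$ the desired representative.

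Finally, for finiteness I would observe that the polynomial $x^n - 1$ has at most $n$ roots in the field $\mathbb{F}$, so the set $\mu_n$ of $n$-th roots of unity in $\mathbb{F}$ is finite. As $\mathsf{G}$ is finite, there are only finitely many functions $\mathsf{G} \times \mathsf{G} \to \mu_n$, hence only finitely many $2$-cocycles taking values in $\mu_n$; by the first part every class in $\mathcal{H}^2(\mathsf{G}, \mathbb{F}^*)$ is represented by one of these, so the quotient map restricted to this finite set of cocycles is onto $\mathcal{H}^2(\mathsf{G}, \mathbb{F}^*)$, forcing the latter to be finite. The one point demanding care — and the crux of the argument — is the compatibility between the trivial $\mathsf{G}$-action and the multiplicative coboundary formula, which is precisely what makes termwise $n$-th-root extraction respect the coboundary relation; crucially, no consistency among the chosen roots $g(\xi)$ is required, and that is what lets the argument go through without any obstruction.
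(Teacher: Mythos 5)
Your proposal is correct and follows essentially the same route as the paper's own proof: invoke Lemma \ref{2.11} to get $\varrho^n\in\mathcal{B}^2(\mathsf{G},\mathbb{F}^*)$, extract $n$-th roots of the values of the function $f$ defining that coboundary (using the hypothesis on $\mathbb{F}$), adjust $\varrho$ by the resulting coboundary to obtain a cohomologous cocycle whose values are $n$-th roots of unity, and then count. The only difference is cosmetic — you divide by $\rho_0$ where the paper multiplies by a coboundary built with the reciprocal convention — so the two arguments coincide.
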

\begin{proof}
	Let us assume that $\mathcal{H}^2(\mathsf{G}, \mathbb{F}^*)$ is a group with multiplicative notation. Take any $[\sigma]\in\mathcal{H}^2(\mathsf{G}, \mathbb{F}^*)$, where $\sigma\in\mathcal{Z}^2(\mathsf{G}, \mathbb{F}^*)$ is a $2$-cocycle. Since $\mathsf{G}$ is a group of order $n$, by Lemma \ref{2.11}, it follows that $[\sigma]^{n}$ is the neutral element of $\mathcal{H}^2(\mathsf{G}, \mathbb{F}^*)$, and so $[\sigma^{n}]=[\sigma]^{n}=[1]$ which implies $\sigma^n\in\mathcal{B}^2(\mathsf{G}, \mathbb{F}^*)$. Let $f: \mathsf{G}\rightarrow \mathbb{F}^*$ be a map and $\varrho\in\mathcal{B}^2(\mathsf{G}, \mathbb{F}^*)$ a $2$-coboundary such that 
	\begin{equation}\nonumber
\varrho(\xi,\zeta)=\displaystyle\frac{f(\xi\zeta)}{f(\xi)f(\zeta)} \quad\mbox{ and }\quad	\varrho(\xi,\zeta)=\sigma^{n}(\xi,\zeta)=(\sigma(\xi,\zeta))^n
	\end{equation}
for any $\xi,\zeta\in\mathsf{G}$. For each $\xi\in\mathsf{G}$, by hypothesis, it follows that $\sqrt[n]{f(\xi)}\in\mathbb{F}^*$, 
and thus, we can consider the map $\hat{f}:\mathsf{G}\rightarrow\mathbb{F}^*$ defined by $\hat{f}(\xi)=\sqrt[n]{f(\xi)}$ for any $\xi\in\mathsf{G}$. Put $\hat{\varrho}(\xi,\zeta)=\displaystyle\frac{\hat{f}(\xi)\hat{f}(\zeta)}{\hat{f}(\xi\zeta)}$. It is not difficult to see that $\hat{\varrho}\in\mathcal{B}^2(\mathsf{G}, \mathbb{F}^*)$, and
	\begin{equation}\label{2.13}
	((\hat{\varrho}\sigma)(\xi,\zeta))^n=\hat{\varrho}^{n}(\xi,\zeta)\sigma^{n}(\xi,\zeta)=(\varrho(\xi,\zeta))^{-1}\sigma^{n}(\xi,\zeta)=1
	\end{equation}
for any $\xi,\zeta\in\mathsf{G}$. Notice that $\hat{\varrho}\sigma$ is an element of $\mathcal{Z}^2(\mathsf{G}, \mathbb{F}^*)$, and $[\hat{\varrho}\sigma]=[\sigma]$, since $\hat{\varrho}\in\mathcal{B}^2(\mathsf{G}, \mathbb{F}^*)$. Consider now $\hat{\sigma}=\hat{\varrho}\sigma$. By (\ref{2.13}), for each $\xi,\zeta\in\mathsf{G}$, it follows that $\hat{\sigma}(\xi,\zeta)$ is an $n$-th root of unit. Therefore, we have $\hat{\sigma}$ is a representative of $[\sigma]$ which has values that are $n$-th roots of the unit.
	
	Finally, since $\mathbb{F}$ has at most $n$ roots of unit and $\mathsf{G}$ is a finite group, it follows that $\mathcal{H}^2(\mathsf{G}, \mathbb{F}^*)$ is a finite group.
\end{proof}

Under the assumptions of the previous proposition, since there are at most $n^{n^2}$ possibilities for functions from $\mathsf{G}\times\mathsf{G}$ into $\{\mbox{all }n\mbox{-th roots of unity}\}\subset\mathbb{F}^*$, it follows that $|\mathcal{H}^2(\mathsf{G}, \mathbb{F}^*)|\leq n^{n^2}$. In particular, assuming that $\mathsf{G}$ acts trivially on $\mathbb{F}^*$, and using the proof of the previous proposition and Proposition \ref{1.07}, we can improve this estimate to
	\begin{equation}\nonumber
|\mathcal{H}^2(\mathsf{G}, \mathbb{F}^*)|\leq n^{\frac{n(n-1)}{2}+1} \ .
	\end{equation}
In fact, being $\mathsf{G}=\{e, \xi_1,\dots, \xi_{n-1}\}$, fix $\sigma(e,e)=\lambda_0\in\mathbb{F}^*$ and $\sigma(\xi_i,\xi_j)=\lambda_{ij}\in\mathbb{F}^*$ for all $1\leq i\leq j \leq n-1$. By Proposition \ref{1.07}, we have $\sigma(\xi,e)=\sigma(e,\zeta)=\sigma(e,e)$ for any $\xi,\zeta\in \mathsf{G}$, and hence $\sigma(e,\xi_i)=\sigma(\xi_i,e)=\lambda_0$ for all $1\leq i , j \leq n-1$. From this, the multiplication table of $\sigma$ is (partially) filled as
	\begin{equation}\nonumber
		\begin{tabular}{c|ccccc}
$\sigma$ & $e$ & $\xi_1$ & $\xi_2$ & $\cdots$ & $\xi_{n-1}$\\ \hline 
$e$ & $\lambda_0$ & $\lambda_0$ & $\lambda_0$ & $\cdots$ & $\lambda_0$\\ 
$\xi_1$ & $\lambda_0$ & $\lambda_{11}$ & $\lambda_{12}$ & $\cdots$ & $\lambda_{1(n-1)}$\\ 
$\xi_2$ & $\lambda_0$ &  & $\lambda_{22}$ & $\cdots$ & $\lambda_{2(n-1)}$ \\ 
$\vdots$ & $\vdots$ &  &  & $\ddots$ & $\vdots$ \\ 
$\xi_{n-1}$ & $\lambda_0$ & & &  & $\lambda_{(n-1)(n-1)}$ \\ 
		\end{tabular} \ .
	\end{equation}
 By Proposition \ref{2.01}, we can assume, without loss of generality, that $\sigma(\xi,\zeta)\in\mathbb{F}^*$ is an $n$-th root of unit, for any $\xi,\zeta\in \mathsf{G}$. On the other hand, taking $\xi=\xi_j$, $\zeta=\xi_i$ and $\varsigma=(\xi_j \xi_i)^{-1}$ in Definition \ref{1.10}, we have that 
\begin{equation}\nonumber
	\sigma(\xi_j,\xi_i)=\frac{\sigma(\xi_j,\xi_j^{-1})}{\sigma(\xi_j \xi_i,(\xi_j \xi_i)^{-1})}\cdot\sigma(\xi_i,(\xi_j \xi_i)^{-1})=\frac{\sigma(\xi_j^{-1} , \xi_j)}{\sigma((\xi_j \xi_i)^{-1}, \xi_j \xi_i)}\cdot\sigma(\xi_i,(\xi_j \xi_i)^{-1})
	\ ,	\end{equation}
where we use that $\sigma(\xi,\xi^{-1})=\sigma(\xi^{-1},\xi)$ for any $\xi\in \mathsf{G}$ (Proposition \ref{1.07}), and so $\sigma(\xi_j,\xi_i)$ depends on $\lambda_{il}$'s, $\lambda_{ll}$'s and $\lambda_0$, for all $1\leq i< j \leq n-1$. Consequently, we can build the following table 

	\begin{equation}\nonumber
		\begin{tabular}{c|ccccc}
$\sigma$ & $e$ & $\xi_1$ & $\xi_2$ & $\cdots$ & $\xi_{n-1}$\\ \hline 
$e$ & $n$ & $1$ & $1$ & $\cdots$ & $1$\\ 
$\xi_1$ & 1 & $n$ & $n$ & $\cdots$ & $n$\\ 
$\xi_2$ & $1$ & $1$  & $n$ & $\cdots$ & $n$ \\ 
$\vdots$ & $\vdots$ & $\vdots$ & $\ddots$ & $\ddots$ & $\vdots$ \\ 
$\xi_{n-1}$ & $1$ & $1$ & $\cdots$ & $1$  & $n$ \\ 
		\end{tabular} \ ,
	\end{equation}
which represents the total number of possibilities (possible combinations) for each $\sigma(\xi,\zeta)$, for $\xi,\zeta\in\mathsf{G}$.


\subsection{Restriction \texorpdfstring{$\mathsf{res}^\mathsf{G}_H$}{resGH}}\label{resGH}

In the previous subsection we present some results which relate the second cohomology groups of $\mathsf{G}$ and its subgroups. In what follows, we study a way to ensure the existence of a surjective homomorphism from $\mathcal{H}^2(\mathsf{G},\mathsf{M})$ onto $\mathcal{H}^2(H,\mathsf{M})$, for a subgroup $H$ of $\mathsf{G}$. Here, to simplify notation, we write $\xi m\coloneqq \eta_\xi m$ for any $\xi\in\mathsf{G}$ and $m\in\mathsf{M}$.

Consider the pair $(H,\mathsf{M})$, where $H$ is a subgroup of a group $\mathsf{G}$ and $\mathsf{M}$ is a $\mathsf{G}$-module. Given an element $\xi\in\mathsf{G}$, consider the maps $\gamma_\xi: \zeta \mapsto \xi\zeta\xi^{-1}$ and $f_\xi: m\mapsto \xi^{-1} m$. Observe that $\gamma_\xi$ is an isomorphism of groups (of $H$ in $\xi H\xi^{-1}$), and $f_\xi$ is an automorphism of the abelian group $\mathsf{M}$, for any $\xi\in\mathsf{G}$. We denote by $c(\xi)$ the bijection
	\begin{equation}\nonumber
\begin{array}{cccl}
c(\xi)\coloneqq (\gamma_\xi, f_\xi):& (H, \mathsf{M}) &\longrightarrow &  (\xi H \xi^{-1}, \mathsf{M}) \\
& (\zeta, m) & \longmapsto &  c(\xi)((\zeta, m))=(\gamma_\xi(\zeta), f_\xi(m))
\end{array}
\ . 
	\end{equation}
This application defines an action of $\mathsf{G}$ on $(H, \mathsf{M})$, called \textbf{conjugation action}. Note that for any $\zeta\in H$ and $m\in\mathsf{M}$, we have:
	\begin{equation}\nonumber
f_\xi(\gamma_\xi(\zeta)m)=f_\xi(\xi\zeta\xi^{-1}m)=\xi^{-1}\xi\zeta\xi^{-1}m=\zeta\xi^{-1}m=\zeta f_\xi(m)
\ ,
	\end{equation}
In this case, we say that the pair $(\gamma_\xi, f_\xi)$ is a \textbf{compatible pair}.

It is well-known (see \S III.8 in \cite{Brow12}, or \S 9.5 in \cite{Rotm08}, or \S 11.8 in \cite{Verm03}) that there exists an isomorphism of groups
	\begin{equation}\nonumber
c(\xi)^*: \mathcal{H}^2(\xi H \xi^{-1},\mathsf{M}) \longrightarrow \mathcal{H}^2(H,\mathsf{M})
	\end{equation}
induced by the conjugation map $c(\xi)=(\gamma_\xi, f_\xi)$. For more details about how $c(\xi)$ induces a homomorphism, see \S 9.5 in \cite{Rotm08}.

Define a map from $\mathsf{G}\times\mathcal{H}^2(H,\mathsf{M})$ to $\mathcal{H}^2(\xi H \xi^{-1},\mathsf{M})$ by 
\begin{equation}\label{2.02}
\xi \cdot\sigma=(c(\xi)^*)^{-1} (\sigma) \in \mathcal{H}^2(\xi H \xi^{-1},\mathsf{M}) \ ,
\end{equation}
which is induced by the conjugation action of $\mathsf{G}$ on $(H,\mathsf{M})$. Unless otherwise stated, we denote by $\xi\sigma$ the product defined in (\ref{2.02}).

Now, by (\ref{2.02}), $H$ acts trivially on $\mathcal{H}^2(H,\mathsf{M})$, and if $H$ is a normal subgroup of $\mathsf{G}$, then the conjugation action of $\mathsf{G}$ on $(H,\mathsf{M})$ defined above induces an action of $\mathsf{G}/H$ on $\mathcal{H}^2(H, \mathsf{M})$. For more details, see \S 9.5 in \cite{Rotm08}, or  \S III.9 in \cite{Brow12}. More precisely, we have the following result:

\begin{lemma}[Corollaries 8.3 and 8.4, \S III.8, \cite{Brow12}]\label{2.03}
	Let $H$ be a subgroup of $\mathsf{G}$. Then the conjugation action of $H$ on $(H, \mathsf{M})$ induces an action of $H$ on $\mathcal{H}^2(H, \mathsf{M})$, which is trivial. In addition, if $H\unlhd \mathsf{G}$ and $\mathsf{M}$ is a $\mathsf{G}$-module, then the conjugation action of $\mathsf{G}$ on $(H,\mathsf{M})$ induces an action of $\mathsf{G}/H$ on $\mathcal{H}^2(H, \mathsf{M})$.
\end{lemma}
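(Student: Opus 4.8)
The plan is to establish the two assertions separately, using the explicit description of $c(\xi)^*$ on cochains and reducing the first to the classical fact that inner automorphisms act trivially on cohomology. First I would record how the compatible pair $c(\xi)=(\gamma_\xi,f_\xi)$ acts on $2$-cocycles: the induced isomorphism $c(\xi)^*\colon\mathcal{H}^2(\xi H\xi^{-1},\mathsf{M})\to\mathcal{H}^2(H,\mathsf{M})$ is represented on cocycles by
\[
(c(\xi)^*\sigma)(\zeta_1,\zeta_2)=f_\xi\bigl(\sigma(\gamma_\xi(\zeta_1),\gamma_\xi(\zeta_2))\bigr)=\xi^{-1}\sigma(\xi\zeta_1\xi^{-1},\xi\zeta_2\xi^{-1}),\qquad \zeta_1,\zeta_2\in H.
\]
Using the cocycle identity of Definition \ref{1.10} together with the compatibility $f_\xi(\gamma_\xi(\zeta)m)=\zeta f_\xi(m)$, one checks routinely that $c(\xi)^*\sigma$ is again a $2$-cocycle on $H$ and that coboundaries are sent to coboundaries, so $c(\xi)^*$ is well defined on $\mathcal{H}^2$ and the product in (\ref{2.02}) makes sense.

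For the first assertion I specialize to $\xi=h\in H$: here $\gamma_h$ is the inner automorphism $\zeta\mapsto h\zeta h^{-1}$ of $H$ and $hHh^{-1}=H$, so $c(h)^*$ is an endomorphism of $\mathcal{H}^2(H,\mathsf{M})$. The claim $h\cdot\sigma=\sigma$ amounts to $c(h)^*=\mathrm{id}$, that is, to $\sigma$ and $c(h)^*\sigma$ differing by a $2$-coboundary. I would prove this in the usual way: on the bar resolution of $\mathbb{Z}$ over $\mathbb{Z}H$, the chain map induced by the pair $(\gamma_h,f_h)$ is $\mathbb{Z}H$-equivariantly chain homotopic to the identity, the homotopy being the classical one determined by the element $h$. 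Applying $\mathrm{Hom}_{\mathbb{Z}H}(-,\mathsf{M})$ then gives $c(h)^*=\mathrm{id}$ on cohomology; concretely the homotopy produces a function $f\colon H\to\mathsf{M}$ whose coboundary $(\zeta_1,\zeta_2)\mapsto\zeta_1 f(\zeta_2)-f(\zeta_1\zeta_2)+f(\zeta_1)$ equals $\sigma-c(h)^*\sigma$. This is exactly Corollary 8.3 of \cite{Brow12}, and it says that the induced action of $H$ on $\mathcal{H}^2(H,\mathsf{M})$ is trivial.

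For the second assertion I assume $H\unlhd\mathsf{G}$ and $\mathsf{M}$ a $\mathsf{G}$-module, so that $\xi H\xi^{-1}=H$ for every $\xi\in\mathsf{G}$ and each $c(\xi)^*$ is an automorphism of the single group $\mathcal{H}^2(H,\mathsf{M})$. It then remains only to see that $\xi\mapsto(c(\xi)^*)^{-1}$ is a genuine left action, which follows from the functoriality relation $c(\xi\eta)^*=c(\eta)^*\circ c(\xi)^*$; the latter is immediate from $\gamma_{\xi\eta}=\gamma_\xi\circ\gamma_\eta$ and $f_{\xi\eta}=f_\eta\circ f_\xi$ (the second using that $\mathsf{M}$ is a $\mathsf{G}$-module), read off from the cochain formula above. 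By the first assertion the restriction of this $\mathsf{G}$-action to $H$ is trivial; since $H$ is normal, the universal property of the quotient then shows the action factors through $\mathsf{G}/H$, giving the desired action of $\mathsf{G}/H$ on $\mathcal{H}^2(H,\mathsf{M})$. This is Corollary 8.4 of \cite{Brow12}.

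I expect the single genuine obstacle to be the first assertion, namely producing and verifying the chain homotopy (equivalently, the explicit $1$-cochain $f$) witnessing that inner automorphisms induce the identity on $\mathcal{H}^2(H,\mathsf{M})$; this is the only place where the element $h$ must be used with care. Once that is in place, the second assertion is purely formal, resting only on functoriality of the induced maps and on passing to the quotient by a normal subgroup that acts trivially.
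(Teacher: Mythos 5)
Your proposal is correct and takes essentially the same route as the paper: the paper gives no argument of its own for this lemma, deferring entirely to Corollaries 8.3 and 8.4 of \S III.8 in \cite{Brow12} (with Rotman's Lemma 9.82 as an alternative), and your argument --- the cochain formula for $c(\xi)^*$, the chain-homotopy proof that the inner compatible pair $(\gamma_h,f_h)$ induces the identity on $\mathcal{H}^2(H,\mathsf{M})$, and then functoriality $c(\xi\eta)^*=c(\eta)^*\circ c(\xi)^*$ together with normality of $H$ to factor the action through $\mathsf{G}/H$ --- is precisely the standard proof of those corollaries. You also correctly isolate the one nontrivial ingredient (the homotopy witnessing triviality of the $H$-action, Brown's Proposition III.8.1); everything else is formal, exactly as in the cited source.
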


Consequently, the next result it is immediate.

\begin{lemma}[Exercise 1, \S III.8, \cite{Brow12}]\label{2.04}
	If $H$ is central in $\mathsf{G}$ and $\mathsf{M}$ is an abelian group with the trivial $\mathsf{G}$-action, then $\mathsf{G}/H$ acts trivially on $\mathcal{H}^2(H, \mathsf{M})$.
\end{lemma}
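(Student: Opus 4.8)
The plan is to show directly that the conjugation action of $\mathsf{G}$ on $\mathcal{H}^2(H,\mathsf{M})$ from (\ref{2.02}) is already trivial, whence the induced action of $\mathsf{G}/H$ is trivial as well. First I would note that a central subgroup is automatically normal, so $H\unlhd\mathsf{G}$ and Lemma \ref{2.03} applies: the conjugation action of $\mathsf{G}$ on $(H,\mathsf{M})$ descends to a well-defined action of $\mathsf{G}/H$ on $\mathcal{H}^2(H,\mathsf{M})$. It therefore suffices to prove that $\xi\cdot\sigma=\sigma$ for every $\xi\in\mathsf{G}$ and every $[\sigma]\in\mathcal{H}^2(H,\mathsf{M})$.

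Next I would evaluate the compatible pair $c(\xi)=(\gamma_\xi,f_\xi)$ under the two hypotheses. Because $H$ is central in $\mathsf{G}$, for every $\zeta\in H$ we have $\gamma_\xi(\zeta)=\xi\zeta\xi^{-1}=\zeta$; in particular $\xi H\xi^{-1}=H$ and $\gamma_\xi=\mathrm{id}_H$. Because $\mathsf{G}$ acts trivially on $\mathsf{M}$, for every $m\in\mathsf{M}$ we have $f_\xi(m)=\xi^{-1}m=m$, so $f_\xi=\mathrm{id}_\mathsf{M}$. Hence $c(\xi)=(\mathrm{id}_H,\mathrm{id}_\mathsf{M})$ is the identity compatible pair for every $\xi\in\mathsf{G}$.

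Finally I would invoke functoriality of the assignment $c(\xi)\mapsto c(\xi)^*$: a functor sends an identity morphism to an identity morphism, so $c(\xi)^*=\mathrm{id}$ on $\mathcal{H}^2(H,\mathsf{M})$, and therefore $\xi\cdot\sigma=(c(\xi)^*)^{-1}(\sigma)=\sigma$. This yields triviality of the $\mathsf{G}$-action, hence of the induced $\mathsf{G}/H$-action. The result is genuinely immediate, as the text asserts, so there is no real obstacle; the only point deserving care is the last step, where one uses that the cohomological construction $c(\xi)^*$ sends the identity pair to the identity map. Should one prefer to avoid citing functoriality, one can check this at the cochain level: $c(\xi)^*$ is realized on a representative cocycle by $(c(\xi)^*\sigma)(\zeta,\eta)=f_\xi\bigl(\sigma(\gamma_\xi(\zeta),\gamma_\xi(\eta))\bigr)$, which equals $\sigma(\zeta,\eta)$ once $\gamma_\xi$ and $f_\xi$ are both the identity.
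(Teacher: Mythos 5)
Your proposal is correct and follows exactly the route the paper has in mind: the paper declares the lemma ``immediate'' from Lemma \ref{2.03} (citing Brown and Rotman), and your argument simply makes that immediacy explicit --- centrality forces $\gamma_\xi=\mathrm{id}_H$, triviality of the $\mathsf{G}$-action forces $f_\xi=\mathrm{id}_\mathsf{M}$, so the compatible pair $c(\xi)$ induces the identity on $\mathcal{H}^2(H,\mathsf{M})$ and the action (\ref{2.02}) is trivial. The cochain-level verification you add at the end is a sound way to avoid appealing to functoriality and removes any dependence on sign or direction conventions for $c(\xi)^*$.
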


Other proofs of Lemmas \ref{2.03} and \ref{2.04} can also be found in \cite{Rotm08}, written as Lemma 9.82. 

Let us show now a relation between $\mathcal{H}^2(H, \mathsf{M})$ and $\mathcal{H}^2(\mathsf{G}, \mathsf{M})$ for any subgroup $H$ of a group $\mathsf{G}$. But firstly, we need some definitions. The next definition can be founded in \cite{Rotm08}, page 566.

\begin{definition}
	Let $H$ be a subgroup of a group $\mathsf{G}$, $\mathsf{M}$ a $\mathsf{G}$-module, $\mathfrak{i}$ the inclusion map from $H$ into $\mathsf{G}$, and $1_{\mathsf{M}}$ is the identity map on $\mathsf{M}$. The pair $(\mathfrak{i},1_\mathsf{M})$ is compatible, i.e. $1_\mathsf{M}(\mathfrak{i}(\zeta)m)=\zeta m=\zeta 1_\mathsf{M}(m)$ for any $\zeta\in H$ and $m\in\mathsf{M}$. The homomorphism induced by the pair $(\mathfrak{i},1_\mathsf{M})$ is denoted by $\mathsf{res}^\mathsf{G}_H: \mathcal{H}^2(\mathsf{G}, \mathsf{M}) \longrightarrow \mathcal{H}^2(H, \mathsf{M})$ and is called \textbf{restriction homomorphism}.
\end{definition}

It is well-known that the restriction homomorphism is defined as follows: if $\sigma: \mathsf{G}\times\mathsf{G} \longrightarrow \mathsf{M}$ is a $2$-cocycle, then $\sigma$ restricted to $H\times H$, which we denote by $\sigma_H$, is also a $2$-cocycle and $\mathsf{res}^\mathsf{G}_H([\sigma])=[\sigma_{H}]$ in $\mathcal{H}^2(H,\mathsf{M})$ (see \cite{Verm03}, \S11.8, or \cite{Rotm08}, \S9.5). From now on, let us write $\sigma_H$ to mean the restriction of $\sigma$ to $H\times H$, i.e. $\sigma_H(\xi,\zeta)=\sigma(\xi,\zeta)$ for any $\xi,\zeta\in H$.

\begin{lemma}[Lemma 11.8.15, \cite{Verm03}]\label{2.05}
	Let $\mathsf{G}$ be a group, $H$ a subgroup of $\mathsf{G}$ and $\mathsf{M}$ a $\mathsf{G}$-module. If $H\unlhd\mathsf{G}$, then 
	\begin{equation}\nonumber
	\mathsf{res}^\mathsf{G}_H(\mathcal{H}^2(\mathsf{G}, \mathsf{M}))\subseteq \mathcal{H}^2(H, \mathsf{M})^\mathsf{G}
	\ ,
	\end{equation}
where $\mathcal{H}^2(H, \mathsf{M})^\mathsf{G}=\{\sigma\in\mathcal{H}^2(H, \mathsf{M}) : \xi \cdot \sigma= \sigma, \forall \xi\in\mathsf{G}\}$, and $\xi\cdot \sigma$ is the action defined in (\ref{2.02}).
\end{lemma}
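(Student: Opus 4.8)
The plan is to deduce the inclusion from the functoriality of the maps induced by compatible pairs, together with the classical fact that an inner automorphism of $\mathsf{G}$ induces the identity on $\mathcal{H}^2(\mathsf{G},\mathsf{M})$. First recall that for $[\sigma]\in\mathcal{H}^2(\mathsf{G},\mathsf{M})$ one has $\mathsf{res}^\mathsf{G}_H([\sigma])=[\sigma_H]$, and that the $\mathsf{G}$-action on $\mathcal{H}^2(H,\mathsf{M})$ is $\xi\cdot(-)=(c(\xi)^*)^{-1}$, where $c(\xi)=(\gamma_\xi,f_\xi)$; since $H\unlhd\mathsf{G}$ we have $\xi H\xi^{-1}=H$, so $c(\xi)^*$ is genuinely an automorphism of $\mathcal{H}^2(H,\mathsf{M})$. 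As a class is $\xi$-fixed if and only if it is fixed by $c(\xi)^*$, it suffices to prove that $c(\xi)^*$ fixes every class of the form $[\sigma_H]=\mathsf{res}^\mathsf{G}_H([\sigma])$, for each $\xi\in\mathsf{G}$.

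To this end I would introduce the inner-automorphism pair $C_\xi=(\gamma^\mathsf{G}_\xi,f_\xi)$ on $(\mathsf{G},\mathsf{M})$, where $\gamma^\mathsf{G}_\xi\colon g\mapsto\xi g\xi^{-1}$ and $f_\xi\colon m\mapsto\xi^{-1}m$; this is a compatible pair by the same computation as the one given for $c(\xi)$. The crucial point is the identity of compatible pairs
\[
(\mathfrak{i},1_\mathsf{M})\circ c(\xi)=C_\xi\circ(\mathfrak{i},1_\mathsf{M})\colon (H,\mathsf{M})\longrightarrow(\mathsf{G},\mathsf{M}),
\]
which holds because both sides act as $h\mapsto\xi h\xi^{-1}$ on groups (here normality of $H$ is exactly what guarantees $\xi h\xi^{-1}\in H$, so that the left-hand composite is defined) and both act as $f_\xi$ on $\mathsf{M}$. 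Applying the contravariant induced-map construction reverses the order of composition and yields
\[
c(\xi)^*\circ\mathsf{res}^\mathsf{G}_H=\mathsf{res}^\mathsf{G}_H\circ C_\xi^*.
\]

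Finally I would invoke the classical result that the inner-automorphism pair $C_\xi$ induces the identity on $\mathcal{H}^2(\mathsf{G},\mathsf{M})$, i.e. $C_\xi^*=\mathrm{id}$ (see \S III.8 in \cite{Brow12}; this is the same mechanism underlying Lemma \ref{2.03}). Combined with the displayed equality this gives $c(\xi)^*\circ\mathsf{res}^\mathsf{G}_H=\mathsf{res}^\mathsf{G}_H$, hence $c(\xi)^*([\sigma_H])=[\sigma_H]$ and therefore $\xi\cdot[\sigma_H]=[\sigma_H]$ for all $\xi\in\mathsf{G}$; since $[\sigma]$ was arbitrary, $\mathsf{res}^\mathsf{G}_H(\mathcal{H}^2(\mathsf{G},\mathsf{M}))\subseteq\mathcal{H}^2(H,\mathsf{M})^\mathsf{G}$. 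The only non-formal ingredient, and the step I expect to be the real obstacle, is the triviality $C_\xi^*=\mathrm{id}$. If one prefers to avoid quoting it, it can be established at the cochain level: writing $(c(\xi)^*\sigma_H)(h_1,h_2)=\xi^{-1}\sigma(\xi h_1\xi^{-1},\xi h_2\xi^{-1})$, one exhibits an explicit map $t\colon H\to\mathsf{M}$ (assembled from the values $\sigma(\xi,h)$ and $\sigma(\xi h\xi^{-1},\xi)$) whose coboundary equals $c(\xi)^*\sigma_H-\sigma_H$; the verification is a routine, if lengthy, application of the $2$-cocycle identity of Definition \ref{1.10} for $\sigma$ on $\mathsf{G}$, which I would carry out as a direct computation.
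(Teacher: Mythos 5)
Your proof is correct. The paper does not actually prove Lemma \ref{2.05} — it defers to Lemma 11.8.15 of \cite{Verm03} (and Corollary 9.83 of \cite{Rotm08}) — and the standard argument in those sources is exactly the one you give: restriction commutes with the maps induced by conjugation via the compatible-pair identity $(\mathfrak{i},1_\mathsf{M})\circ c(\xi)=C_\xi\circ(\mathfrak{i},1_\mathsf{M})$, and the inner automorphism $C_\xi$ acts trivially on $\mathcal{H}^2(\mathsf{G},\mathsf{M})$, so every restricted class is fixed by each $c(\xi)^*$. Note also that the one fact you quote rather than verify, $C_\xi^*=\mathrm{id}$, is already available inside the paper as Lemma \ref{2.03} applied with $H=\mathsf{G}$, so your argument is self-contained relative to the paper's toolkit.
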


Another proof of the previous lemma is given in Corollary 9.83, \cite{Rotm08}.

Let us now use the next result to define a homomorphism of $\mathcal{H}^2(H, \mathsf{M})$ to $\mathcal{H}^2(\mathsf{G}, \mathsf{M})$, called \textbf{corestriction} (also called \textbf{transfer}).

\begin{lemma}\label{2.06}
	Let $H$ be a subgroup of finite index in a group $\mathsf{G}$, and $\mathsf{M}$ a $\mathsf{G}$-module. There exists a homomorphisms of groups
	\begin{equation}\nonumber
	\mathsf{cores}^\mathsf{G}_H: \mathcal{H}^2(H, \mathsf{M}) \longrightarrow \mathcal{H}^2(\mathsf{G}, \mathsf{M})
	\end{equation}
satisfying
	\begin{enumerate}[i)]
		\item $\mathsf{cores}^\mathsf{G}_H \mathsf{res}^\mathsf{G}_H \sigma= [\mathsf{G}:H] \sigma$, for all $\sigma\in \mathcal{H}^2(\mathsf{G}, \mathsf{M})$;
		\item If $H\unlhd\mathsf{G}$, then $\mathsf{res}^\mathsf{G}_H \mathsf{cores}^\mathsf{G}_H \rho= \sum_{\xi \in\mathsf{G}/H} \xi\rho$, for all $\rho\in \mathcal{H}^2(H, \mathsf{M})$,
	\end{enumerate}
where $\xi\rho=\xi\cdot \rho$ is the action defined in (\ref{2.02}).
\end{lemma}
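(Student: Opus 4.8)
The plan is to build $\mathsf{cores}^\mathsf{G}_H$ out of two ingredients already available in this section: the Shapiro isomorphism (Theorem 12.1.3 of \cite{Verm03}, recalled above) and a \emph{trace} (norm) map on the coinduced module, and then to read off (i) and (ii) by naturality and a double coset computation. Write $n=[\mathsf{G}:H]<\infty$ and set $W:=\mathsf{Hom}_{\mathbb{Z} H}(\mathbb{Z}\mathsf{G},\mathsf{M})$, viewed as a $\mathsf{G}$-module via $(\xi\phi)(x)=\phi(x\xi)$. First I would introduce two $\mathsf{G}$-module homomorphisms: the unit $\iota:\mathsf{M}\to W$ given by $\iota(m)(x)=xm$, and the trace $\epsilon:W\to\mathsf{M}$ given by $\epsilon(\phi)=\sum_{\xi H\in\mathsf{G}/H}\xi\,\phi(\xi^{-1})$. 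The only routine check here is that $\epsilon$ is well defined (independent of the coset representative, using the $\mathbb{Z} H$-linearity of $\phi$) and $\mathsf{G}$-equivariant; both are short reindexing arguments. Denoting by $\mathsf{Sh}:\mathcal{H}^2(\mathsf{G},W)\xrightarrow{\ \sim\ }\mathcal{H}^2(H,\mathsf{M})$ the Shapiro isomorphism and by $\epsilon_*$ the map induced on $\mathcal{H}^2(\mathsf{G},-)$ by $\epsilon$, I would then define
\begin{equation}\nonumber
\mathsf{cores}^\mathsf{G}_H:=\epsilon_*\circ \mathsf{Sh}^{-1}:\ \mathcal{H}^2(H,\mathsf{M})\longrightarrow \mathcal{H}^2(\mathsf{G},\mathsf{M}),
\end{equation}
which is a composite of group homomorphisms, hence a homomorphism.

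For item (i), the key observation is the standard compatibility of Shapiro's isomorphism with restriction, namely that $\mathsf{res}^\mathsf{G}_H=\mathsf{Sh}\circ\iota_*$, so that $\mathsf{Sh}^{-1}\circ\mathsf{res}^\mathsf{G}_H=\iota_*$. Granting this, I would compute at the module level: for $m\in\mathsf{M}$,
\begin{equation}\nonumber
(\epsilon\circ\iota)(m)=\sum_{\xi H\in\mathsf{G}/H}\xi\,\iota(m)(\xi^{-1})=\sum_{\xi H\in\mathsf{G}/H}\xi(\xi^{-1}m)=\sum_{\xi H\in\mathsf{G}/H}m=[\mathsf{G}:H]\,m,
\end{equation}
so $\epsilon\circ\iota=[\mathsf{G}:H]\cdot\mathrm{id}_\mathsf{M}$. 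By functoriality of $\mathcal{H}^2(\mathsf{G},-)$ this gives $\mathsf{cores}^\mathsf{G}_H\circ\mathsf{res}^\mathsf{G}_H=\epsilon_*\circ\iota_*=(\epsilon\circ\iota)_*=[\mathsf{G}:H]\cdot\mathrm{id}$, which is exactly (i).

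For item (ii), I would invoke the Mackey double coset formula for $\mathsf{res}^\mathsf{G}_H\circ\mathsf{cores}^\mathsf{G}_H$ (see \S III.9--10 of \cite{Brow12}). In general it reads as a sum over $H$--$H$ double cosets of compositions of restrictions, corestrictions, and conjugation maps $c(\xi)^*$; when $H\unlhd\mathsf{G}$ each double coset $H\xi H$ collapses to the single coset $\xi H$ and $H\cap \xi H\xi^{-1}=H$, so the formula degenerates to $\mathsf{res}^\mathsf{G}_H\,\mathsf{cores}^\mathsf{G}_H\,\rho=\sum_{\xi H\in\mathsf{G}/H}\xi\cdot\rho$, with $\xi\cdot\rho$ the conjugation action of (\ref{2.02}). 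The summand is well defined on cosets precisely because $H$ acts trivially on $\mathcal{H}^2(H,\mathsf{M})$ (Lemma \ref{2.03}), so replacing $\xi$ by $\xi h$ does not change $\xi\cdot\rho$; this yields (ii).

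I expect the main obstacle to lie in item (ii): the Mackey double coset formula is the substantive input and, if one wishes to be fully self-contained rather than citing \cite{Brow12}, it requires producing an explicit chain-level transfer map on the bar resolution and verifying the double coset decomposition directly. The naturality statement $\mathsf{res}^\mathsf{G}_H=\mathsf{Sh}\circ\iota_*$ used in (i) is also a point that must be pinned down at the level of cochains, though it is standard. By contrast, the construction of $\epsilon$ and the index computation in (i) are entirely routine, so I would present those in full and delegate the double coset identity to the cited references.
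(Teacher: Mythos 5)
Your proposal is correct, and it takes a genuinely different (more constructive) route than the paper: the paper's entire proof is a citation, referring existence of $\mathsf{cores}^\mathsf{G}_H$ to Proposition 9.87 of \cite{Rotm08} and properties \textit{i)} and \textit{ii)} to Proposition 9.5 in \S III.8--9 of \cite{Brow12} (alternatively Theorem 9.88 of \cite{Rotm08} or Theorem 11.8.6 of \cite{Verm03}). Your construction $\mathsf{cores}^\mathsf{G}_H=\epsilon_*\circ\mathsf{Sh}^{-1}$, with the trace $\epsilon(\phi)=\sum_{\xi H\in\mathsf{G}/H}\xi\,\phi(\xi^{-1})$ on the coinduced module and the unit $\iota(m)(x)=xm$, is a standard alternative definition of the transfer, and it buys something real here: since the paper has already recalled Shapiro's Lemma (Theorem 12.1.3 of \cite{Verm03}) in this very section, your argument makes the existence statement and item \textit{i)} essentially self-contained --- the identity $\epsilon\circ\iota=[\mathsf{G}:H]\,\mathrm{id}_{\mathsf{M}}$ plus functoriality is a genuine proof, not a pointer. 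What it does not buy is item \textit{ii)}: there you appeal to the Mackey double coset formula from \cite{Brow12}, which is the same kind of reliance on the literature as the paper's citation; in fact Brown's cited proposition states the normal-subgroup case of \textit{ii)} directly, so quoting it outright is shorter than quoting Mackey and collapsing double cosets $H\xi H=\xi H$. Two points you rightly flag should be pinned down to make your version airtight: the compatibility $\mathsf{res}^\mathsf{G}_H=\mathsf{Sh}\circ\iota_*$ holds for the Shapiro map realized as evaluation-at-identity composed with restriction, so one must check that this is the isomorphism cited from \cite{Verm03}; and the well-definedness of $\sum_{\xi H}\xi\cdot\rho$ over cosets, which you correctly derive from the triviality of the $H$-action (Lemma \ref{2.03}) --- the same observation the paper records immediately after the lemma.
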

\begin{proof}
The existence of $\mathsf{cores}$ is ensured by Proposition 9.87 in \cite{Rotm08} (see also \S III.9 in \cite{Brow12} or \S11.8 in \cite{Verm03}). The properties \textit{i)} and \textit{ii)} are proved in \cite{Brow12}, \S III.8, Proposition 9.5, p.82  (see also Theorem 9.88 in \cite{Rotm08}, or Theorem 11.8.6 in \cite{Verm03}).
\end{proof}

Observe that Lemma \ref{2.03} ensures that $\sum_{\xi \in\mathsf{G}/H} \xi\rho$ (item {\it ii)} in Lemma \ref{2.06}) is well defined, for all $\rho\in \mathcal{H}^2(H, \mathsf{M})$. For more details about the homomorphism $\mathsf{cores}$, see \S III.9 in \cite{Brow12}, or \S9.6 in \cite{Rotm08}, or \S11.8 in \cite{Verm03}.
	
Suppose that $\mathsf{M}$ is a $\mathsf{G}$-module. We say that $\lambda\in\mathbb{Z}$, $\lambda>0$, is \textbf{invertible} in $\mathsf{M}$ if the multiplication by $\lambda$ is an automorphism of $\mathsf{M}$, i.e. the map given by
	\begin{equation}\nonumber
\begin{array}{cccl}
d_\lambda:& \mathsf{M} &\longrightarrow &  \mathsf{M} \\
& m & \longmapsto & d_\lambda (m)=\underbrace{m+\cdots+m}_{\lambda-times}
\end{array}
	\end{equation}
is an isomorphism of $\mathsf{G}$-modules. In this case, by item \textit{i)} in the previous lemma, if $[\mathsf{G}:H]< \infty$ and $[\mathsf{G}: H]$ is invertible in $\mathsf{M}$, then $\mathsf{res}^\mathsf{G}_H$ is an injective map. Let us show now that, under suitable conditions, $\mathsf{res}^\mathsf{G}_H$ is surjective.

\begin{theorem}\label{2.07}
	If $[\mathsf{G}:H]< \infty$, $H$ is central in $\mathsf{G}$, and $\mathsf{M}$ is an abelian group with the trivial $\mathsf{G}$-action, then $\mathsf{res}^\mathsf{G}_H \mathsf{cores}^\mathsf{G}_H \rho= [\mathsf{G}: H] \rho$ for all $\rho\in \mathcal{H}^2(H, \mathsf{M})$. In addition, if $[\mathsf{G}: H]$ is invertible in $\mathsf{M}$, then  $\mathsf{res}^\mathsf{G}_H$ is a surjection from $\mathcal{H}^2(\mathsf{G}, \mathsf{M})$ into $\mathcal{H}^2(H, \mathsf{M})$.
\end{theorem}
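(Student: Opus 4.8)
The plan is to combine the two identities for the corestriction map recorded in Lemma \ref{2.06} with the triviality of the induced $\mathsf{G}/H$-action furnished by Lemma \ref{2.04}, and then to leverage invertibility of the index to deduce surjectivity formally.

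First I would note that a central subgroup is automatically normal, so $H\unlhd\mathsf{G}$ and item \emph{ii)} of Lemma \ref{2.06} is available: for every $\rho\in\mathcal{H}^2(H,\mathsf{M})$,
\begin{equation}\nonumber
\mathsf{res}^\mathsf{G}_H \mathsf{cores}^\mathsf{G}_H \rho = \sum_{\xi\in\mathsf{G}/H}\xi\rho \ .
\end{equation}
Since $H$ is central and $\mathsf{M}$ carries the trivial $\mathsf{G}$-action, Lemma \ref{2.04} tells us that $\mathsf{G}/H$ acts trivially on $\mathcal{H}^2(H,\mathsf{M})$, whence $\xi\rho=\rho$ for each of the $[\mathsf{G}:H]$ cosets. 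The sum therefore collapses to $[\mathsf{G}:H]\rho$, which is the first assertion; here the hypothesis $[\mathsf{G}:H]<\infty$ is exactly what makes Lemma \ref{2.06} applicable and the sum finite.

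For the surjectivity claim I would assume in addition that $[\mathsf{G}:H]=:n$ is invertible in $\mathsf{M}$, that is, the map $d_n:\mathsf{M}\to\mathsf{M}$ is an automorphism of $\mathsf{G}$-modules. By functoriality of $\mathcal{H}^2(H,-)$ in the coefficient module, $d_n$ induces an automorphism of $\mathcal{H}^2(H,\mathsf{M})$; on cocycles this induced map sends $\sigma$ to $(\xi,\zeta)\mapsto n\,\sigma(\xi,\zeta)$, which is precisely $n$ times $\sigma$ in the abelian group $\mathcal{H}^2(H,\mathsf{M})$ (the group operation there being pointwise addition). Hence multiplication by $n$ is an automorphism, in particular a surjection, of $\mathcal{H}^2(H,\mathsf{M})$.

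I would then conclude formally: the first part gives $\mathsf{res}^\mathsf{G}_H\circ\mathsf{cores}^\mathsf{G}_H=\,(\text{multiplication by }n)$ on $\mathcal{H}^2(H,\mathsf{M})$, which we have just seen is surjective; since a composite $g\circ f$ can be surjective only if $g$ is, $\mathsf{res}^\mathsf{G}_H$ is a surjection of $\mathcal{H}^2(\mathsf{G},\mathsf{M})$ onto $\mathcal{H}^2(H,\mathsf{M})$. (Concretely, given $\rho$ one takes $\rho'$ with $n\rho'=\rho$ and checks $\mathsf{res}^\mathsf{G}_H(\mathsf{cores}^\mathsf{G}_H\rho')=n\rho'=\rho$.) The only step demanding genuine care — the rest being bookkeeping with the lemmas — is the identification of the endomorphism of $\mathcal{H}^2(H,\mathsf{M})$ induced by $d_n$ with multiplication by $n$; once ``invertible in $\mathsf{M}$'' is correctly transported to ``multiplication by $n$ is an automorphism of $\mathcal{H}^2(H,\mathsf{M})$,'' surjectivity of $\mathsf{res}^\mathsf{G}_H$ is immediate.
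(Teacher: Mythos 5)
Your proposal is correct and follows essentially the same route as the paper: the first part is identical (Lemma \ref{2.06}, item \emph{ii)}, combined with the trivial $\mathsf{G}/H$-action from Lemma \ref{2.04}), and for surjectivity the paper likewise produces the required preimage by applying $d_{[\mathsf{G}:H]}^{-1}$ pointwise to a representative cocycle and then invoking the first part, exactly as in your concrete check $\mathsf{res}^\mathsf{G}_H(\mathsf{cores}^\mathsf{G}_H\rho')=n\rho'=\rho$. The only cosmetic difference is that you phrase the key step as functoriality of $\mathcal{H}^2(H,-)$ making multiplication by $n$ an automorphism, whereas the paper verifies directly that $d_{[\mathsf{G}:H]}^{-1}\sigma$ is again a $2$-cocycle.
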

\begin{proof}
	Since $H$ is central in $\mathsf{G}$, it follows that $H\unlhd\mathsf{G}$ and, by  item \textit{ii)} in Lemma \ref{2.06}, $\mathsf{res}^\mathsf{G}_H \mathsf{cores}^\mathsf{G}_H \rho= \sum_{\xi\in\mathsf{G}/H} \xi\rho$ for all $\rho\in \mathcal{H}^2(H, \mathsf{M})$. On the other hand, since $\mathsf{G}/H$ acts trivially on $\mathcal{H}^2(H, \mathsf{M})$ (Lemma \ref{2.04}), it follows that 
	\begin{equation}\nonumber
	\sum_{\xi\in\mathsf{G}/H} \xi\rho=\sum_{\xi\in\mathsf{G}/H} \rho=[\mathsf{G}: H]\rho
	\ , \ \rho\in \mathcal{H}^2(H, \mathsf{M})
	\ .
	\end{equation}
Therefore, $\mathsf{res}^\mathsf{G}_H \mathsf{cores}^\mathsf{G}_H \rho= [\mathsf{G}: H]\rho$ for all $\rho\in \mathcal{H}^2(H, \mathsf{M})$.
	
Suppose now that $[\mathsf{G}: H]$ is invertible in $\mathsf{M}$. Fixed $\sigma\in \mathcal{H}^2(H, \mathsf{M})$, we have $d_{[\mathsf{G}: H]}^{-1}(\sigma(\xi,\zeta))\in\mathsf{M}$ for any $\xi,\zeta\in H$. Consider the map
	\begin{equation}\nonumber
	\varrho \coloneqq d_{[\mathsf{G}: H]}^{-1}\sigma: (\xi,\zeta)\mapsto d_{[\mathsf{G}: H]}^{-1}(\sigma(\xi,\zeta))
	\ , 
	\end{equation}
from $H\times H$ to $\mathsf{M}$. As $\mathsf{G}/H$ acts trivially on $\mathcal{H}^2(H, \mathsf{M})$ and any $2$-cocycle satisfies the equality in Definition \ref{1.10}, we have 
	\begin{equation}\nonumber
	\varrho(\xi,\zeta)+\varrho(\xi\zeta,\varsigma)= \varrho(\zeta,\varsigma)+\varrho(\xi,\zeta\varsigma)
	\ , 
	\end{equation}
	for any $\xi,\zeta,\varsigma\in H$, since $\mathsf{M}$ is an abelian group, and so $d_{[\mathsf{G}: H]}^{-1}$ is a homomorphism of groups. Hence, it follows that $d_{[\mathsf{G}: H]}^{-1}\sigma=\varrho\in \mathcal{H}^2(H, \mathsf{M})$. From the first part of this proof, it follows that
	\begin{equation}\nonumber
	\mathsf{res}^\mathsf{G}_H \mathsf{cores}^\mathsf{G}_H \varrho= [\mathsf{G}: H]\varrho= [\mathsf{G}: H][\mathsf{G}: H]^{-1}\sigma=\sigma
	\ ,
	\end{equation}
	and so $\sigma=\mathsf{res}^\mathsf{G}_H (\mathsf{cores}^\mathsf{G}_H \varrho)\in\mathsf{res}^\mathsf{G}_H\left(\mathcal{H}^2(\mathsf{G}, \mathsf{M}) \right)$. Therefore, $\mathsf{res}^\mathsf{G}_H$ is surjective.
\end{proof}

Let $H$ be a central subgroup of $\mathsf{G}$, and $\mathsf{M}$ an abelian group. Observe that when $\mathsf{M}$ is a multiplicative group, then the last result means that $\mathsf{res}^\mathsf{G}_H \mathsf{cores}^\mathsf{G}_H \rho= \rho^{[\mathsf{G}: H]}$ for all $\rho\in \mathcal{H}^2(H, \mathsf{M})$, and in the case when $[\mathsf{G}: H]$ is invertible in $\mathsf{M}$, then we have that $d_{[\mathsf{G}: H]}^{-1}(\rho(\xi,\zeta))=\sqrt[{[\mathsf{G}: H]}]{\rho(\xi,\zeta)}\in\mathsf{M}$ for any $\xi,\zeta \in\mathsf{G}$. In particular, when $\mathsf{M}=\mathbb{F}^*$ is the multiplicative group of a field $\mathbb{F}$, where $\mathsf{G}$ acts trivially on $\mathsf{M}$, then $p=[\mathsf{G}:H]$ invertible in $\mathbb{F}$ means that $\sqrt[p]{\lambda}\in\mathbb{F}$ for any $\lambda\in\mathbb{F}$.

Now, by Lemma \ref{2.06} and Theorem \ref{2.07}, it follows that 
	\begin{equation}\nonumber
|\mathcal{H}^2(H, \mathsf{M})|=|\mathsf{res}^\mathsf{G}_H \left(\mathcal{H}^2(\mathsf{G}, \mathsf{M})\right)|\ ,
	\end{equation}
when $[\mathsf{G}:H]$ is invertible in $\mathsf{M}$. Consequently, we have
\begin{equation}\label{2.08}
\{[\sigma_H] : [\sigma]\in\mathcal{H}^2(\mathsf{G}, \mathsf{M})\}\subseteq \mathcal{H}^2(H, \mathsf{M})
\ .
\end{equation}
The above result gives enough conditions to ensure the equality in (\ref{2.08}), since $\mathsf{res}^\mathsf{G}_H \left(\mathcal{H}^2(\mathsf{G}, \mathsf{M})\right) \subseteq \mathcal{H}^2(H, \mathsf{M})$. Thus, we have the following result.

\begin{corollary}\label{2.09}
	Let $H$ be a central subgroup of $\mathsf{G}$, $[\mathsf{G}:H]< \infty$, and $\mathsf{M}$ an abelian group with the trivial $\mathsf{G}$-action. If $[\mathsf{G}: H]$ is invertible in $\mathsf{M}$, then 
	\begin{equation}\nonumber
	\mathcal{H}^2(H, \mathsf{M})=\mathsf{res}^\mathsf{G}_H \left(\mathcal{H}^2(\mathsf{G}, \mathsf{M})\right)
	\ .
	\end{equation}
\end{corollary}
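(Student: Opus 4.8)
The plan is to obtain the stated equality as an immediate consequence of Theorem \ref{2.07}, by splitting it into the two set-inclusions. The hypotheses of the corollary---$H$ central in $\mathsf{G}$, $[\mathsf{G}:H]<\infty$, $\mathsf{M}$ abelian with the trivial $\mathsf{G}$-action, and $[\mathsf{G}:H]$ invertible in $\mathsf{M}$---are precisely those of Theorem \ref{2.07}, so essentially all the content is already available and only needs to be assembled.

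First I would record the inclusion $\mathsf{res}^\mathsf{G}_H(\mathcal{H}^2(\mathsf{G},\mathsf{M}))\subseteq \mathcal{H}^2(H,\mathsf{M})$. This holds with no further hypotheses: by definition $\mathsf{res}^\mathsf{G}_H$ is a homomorphism whose codomain is $\mathcal{H}^2(H,\mathsf{M})$, acting by $\mathsf{res}^\mathsf{G}_H([\sigma])=[\sigma_H]$, where the restriction $\sigma_H$ of a $2$-cocycle on $\mathsf{G}$ is again a $2$-cocycle on $H$. Hence the image of $\mathsf{res}^\mathsf{G}_H$ lies inside $\mathcal{H}^2(H,\mathsf{M})$.

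For the reverse inclusion I would invoke the surjectivity part of Theorem \ref{2.07}. Since $H$ is central it is in particular normal, so every hypothesis of the theorem is met, and the theorem asserts that $\mathsf{res}^\mathsf{G}_H$ maps $\mathcal{H}^2(\mathsf{G},\mathsf{M})$ onto $\mathcal{H}^2(H,\mathsf{M})$. Surjectivity says exactly that each class of $\mathcal{H}^2(H,\mathsf{M})$ is $\mathsf{res}^\mathsf{G}_H$ of some class in $\mathcal{H}^2(\mathsf{G},\mathsf{M})$, i.e. $\mathcal{H}^2(H,\mathsf{M})\subseteq \mathsf{res}^\mathsf{G}_H(\mathcal{H}^2(\mathsf{G},\mathsf{M}))$. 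Combining the two inclusions yields the claimed equality.

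Because Theorem \ref{2.07} already carries the weight, there is no genuine obstacle at the level of the corollary; the only thing to verify is that an inclusion together with a surjectivity statement combine into an equality of sets, which is routine. The delicate point lies upstream, inside the proof of surjectivity in Theorem \ref{2.07}: there one uses the corestriction map $\mathsf{cores}^\mathsf{G}_H$ together with the identity $\mathsf{res}^\mathsf{G}_H\mathsf{cores}^\mathsf{G}_H\rho=[\mathsf{G}:H]\rho$ (Lemma \ref{2.06}, item {\it ii)}, specialized via the trivial $\mathsf{G}/H$-action supplied by Lemma \ref{2.04}), and then inverts multiplication by $[\mathsf{G}:H]$ using its invertibility in $\mathsf{M}$ to exhibit each $\rho$ as $\mathsf{res}^\mathsf{G}_H(\mathsf{cores}^\mathsf{G}_H(d_{[\mathsf{G}:H]}^{-1}\rho))$. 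At the level of Corollary \ref{2.09} itself, nothing more than this bookkeeping is required.
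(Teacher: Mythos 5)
Your proposal is correct and matches the paper's own proof exactly: both derive the inclusion $\mathsf{res}^\mathsf{G}_H\left(\mathcal{H}^2(\mathsf{G},\mathsf{M})\right)\subseteq\mathcal{H}^2(H,\mathsf{M})$ from the definition of the restriction homomorphism and the reverse inclusion from the surjectivity statement of Theorem \ref{2.07}, then combine the two. Your extra remarks on the corestriction argument inside Theorem \ref{2.07} are accurate but not needed at the level of the corollary.
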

\begin{proof}
	By Theorem \ref{2.07}, it follows that $\mathcal{H}^2(H, \mathsf{M})\subseteq\mathsf{res}^\mathsf{G}_H \left(\mathcal{H}^2(\mathsf{G}, \mathsf{M})\right)$. Therefore, the result follows, since $\mathsf{res}^\mathsf{G}_H \left(\mathcal{H}^2(\mathsf{G}, \mathsf{M})\right)\subseteq\mathcal{H}^2(H, \mathsf{M})$ (by the definition of homomorphism $\mathsf{res}^\mathsf{G}_H$).
\end{proof}

The previous corollary means that, when $H$ is a central subgroup of $\mathsf{G}$ of finite index and $\mathsf{G}$ acts trivially on an abelian group $\mathsf{M}$, given $[\sigma]\in \mathcal{H}^2(H, \mathsf{M})$, there exists $[\varrho]\in\mathcal{H}^2(\mathsf{G}, \mathsf{M})$ such that $\varrho_H=\sigma$. Therefore, we deduce that
	\begin{equation}\nonumber
\mathcal{H}^2(H, \mathsf{M})=\left\{[\varrho_H] : \varrho\in\mathcal{Z}^2(\mathsf{G}, \mathsf{M})\right\}
\ .
	\end{equation}
In particular, given $\sigma\in \mathcal{Z}^2(H, \mathsf{M})$, there exists $\varrho\in\mathcal{Z}^2(\mathsf{G}, \mathsf{M})$ such that $\varrho_H=\sigma$.

Consider a finite abelian group $\mathsf{G}$, and a field $\mathbb{F}$. Suppose that $\mathsf{G}$ acts trivially on $\mathbb{F}^*$. If $\mathbb{F}$ is algebraically closed, by Corollary \ref{2.09}, for any subgroup $H$ of $\mathsf{G}$, we have that 
	\begin{equation}\nonumber
	\mathcal{H}^2(H, \mathbb{F}^*)=\mathsf{res}^\mathsf{G}_H \left(\mathcal{H}^2(\mathsf{G}, \mathbb{F}^*)\right)
	\ .
	\end{equation}
In special, by Proposition \ref{2.01}, if $|\mathsf{G}|=n$, it follows that $\mathcal{H}^2(\mathsf{G},\mathbb{F}^*)$ is finite, and so there exist $\sigma_1,\dots,\sigma_r\in\mathcal{Z}^2(\mathsf{G},\sqrt[n]{1_\mathbb{F}})$ such that $\mathcal{H}^2(\mathsf{G},\mathbb{F}^*)=\{[\sigma_1],\dots, [\sigma_r]\}$, with $[\sigma_i]\neq[\sigma_j]$ for $i\neq j$, where $\sigma\in\mathcal{Z}^2(\mathsf{G},\sqrt[n]{1_\mathbb{F}})$ means that $(\sigma(\xi,\zeta))^n=1_\mathbb{F}$ for any $\xi,\zeta\in \mathsf{G}$.

%

\subsection{A partial order on \texorpdfstring{$\mathsf{G}$}{G}}\label{1.53}
Let us use $\mathcal{H}^2(H,\mathbb{F}^*)$, $H\leq\mathsf{G}$, to define a partial order on $\mathsf{G}$. Recall that a nonempty set $P$ together with a binary relation ``$\leq$'' is called a \textbf{partially ordered set} (also called \textbf{poset}) if satisfies the following axioms: for any $a, b, c\in P$,
	\begin{enumerate}[{\it i)}]
		\item $a\leq a$;
		\item if $a\leq b$ and $b\leq c$, then $a\leq c$;
		\item if $a\leq b$ and $b\leq a$, then $a=b$.
	\end{enumerate}
It is important to say that given $a,b\in P$, we do not necessarily have $a\leq b$ or $b\leq a$. For more details, see \cite{Halm17} or \cite{Jaco12.1}. 

	\begin{example}
	 Let $X$ be a nonempty set and $\mathcal{P}(X)=\{\beta: \beta \mbox{ is a subset of }X\}$. We have that $(\mathcal{P}(X), \subseteq)$ is a poset. We say that $X$ has the ordering by inclusion.
	\end{example}

	Let $\mathsf{G}$ be a group and $\mathbb{F}$ a field. For each subgroup $H\leq\mathsf{G}$, consider $\mathcal{H}^2(H,\mathbb{F}^*)=\{[\sigma] : \sigma\in\mathcal{Z}^2(\mathsf{G},\mathbb{F}^*) \}$, where $[\sigma]=[\rho]$ iff there is a $2$-coboundary $\varrho\in\mathcal{B}^2(H,\mathbb{F}^*)$ such that $\sigma=\rho\varrho$. Define $\mathcal{U}_{\mathsf{G},\mathbb{F}}=\{(H, \sigma) : H \leq \mathsf{G}, \sigma\in\mathcal{Z}^2(H,\mathbb{F}^*)\}=\bigcup_{H\leq\mathsf{G}}\left\{(H,\sigma): \sigma\in\mathcal{H}^2(H,\mathbb{F}^*)\right\}$. Note that if $\mathsf{G}$ is a finite abelian group and $\mathbb{F}$ is an algebraically closed field (so satisfying the hypotheses of Corollary \ref{2.09}), then $\mathcal{U}_{\mathsf{G},\mathbb{F}}=\{(H, \sigma_H) : H \leq \mathsf{G}, \sigma\in\mathcal{Z}^2(\mathsf{G},\mathbb{F}^*)\}$. Now, in $\mathcal{U}_{\mathsf{G},\mathbb{F}}$, we define the relation ``$\preceq$'' as follows: given two pairs $(H,\sigma)$ and $(N, \rho)$ in $\mathcal{U}_{\mathsf{G},\mathbb{F}}$, we write $(H, \sigma)\preceq (N,\rho)$ when $H\leq N$ and $[\sigma]=[\rho_H]$, i.e. there is $\varrho\in\mathcal{B}^2(H,\mathbb{F}^*)$ such that $\sigma(\xi,\zeta)=\varrho\rho(\xi,\zeta)$ for any $\xi,\zeta\in H$. In addition, we write $(H, \sigma)\sim(N, \rho)$ when $(H, \sigma)\preceq (N, \rho)$ and $(N, \rho)\preceq (H, \sigma)$. It is not difficult to see that ``$\preceq$'' is a partial order relation of the elements of $\mathcal{U}_{\mathsf{G},\mathbb{F}}$. Therefore, $(\mathcal{U}_{\mathsf{G},\mathbb{F}}, \preceq)$ is a poset. Consequently, we can say that $\mathsf{G}$ is ordering by ``$\preceq$''.

%

\section{Graded Imbeddings in Finite Dimensional Simple Graded Algebras}\label{sec4}

In this section, we study $\mathsf{G}$-graded imbeddings of finite dimensional simple $\mathsf{G}$-graded $\mathbb{F}$-algebras, where $\mathbb{F}$ is a field and $\mathsf{G}$ is a group, both arbitrary. Here, the main aim is to give necessary and sufficient conditions for the existence of a $\mathsf{G}$-graded imbedding between two finite dimensional simple $\mathsf{G}$-graded $\mathbb{F}$-algebras.

From now on, let us use the partial order ``$\preceq$'' defined in \S\ref{1.53}, page \pageref{1.53}. Under the condition that $\mathbb{F}$ is an algebraically closed field and $\mathsf{G}$ is a group, Lemma \ref{teodivgradalg} ensures that $\mathbb{F}^{\sigma_1}[H_1] \cong_{\mathsf{G}} \mathbb{F}^{\sigma_2}[H_2]$ iff $(H_1,\sigma_1)\sim(H_2,\sigma_2)$, where $H_1,H_2$ two finite subgroups of $\mathsf{G}$, and $\sigma_1\in\mathcal{Z}^2(H_1,\mathbb{F}^*)$ and $\sigma_2\in\mathcal{Z}^2(H_2,\mathbb{F}^*)$ two $2$-cocycles. It is interesting to note that this last result is not the most general case of this problem. Below we present this result for an arbitrary field.

\begin{proposition}\label{4.04}
Let $\mathbb{F}$ be a field, $\mathsf{G}$ a group, and $\mathbb{F}^{\sigma_1}[H_1]$ and $\mathbb{F}^{\sigma_2}[H_2]$ two twisted group algebras, where $H_i$ is a subgroup of $\mathsf{G}$ and $\sigma_i\in\mathcal{Z}^2(H_i,\mathbb{F}^*)$ is a $2$-cocycle on $H_i$, for $i=1,2$. Then $\mathbb{F}^{\sigma_1}[H_1] \cong_{\mathsf{G}} \mathbb{F}^{\sigma_2}[H_2]$ iff $(H_1,\sigma_1)\sim(H_2, \sigma_2)$.
\end{proposition}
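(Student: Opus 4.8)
The plan is to reduce the equivalence relation to a concrete statement and then exhibit an explicit isomorphism built from a diagonal rescaling of the homogeneous basis. First I would unwind the definition of ``$\sim$'': since $(H_1,\sigma_1)\preceq(H_2,\sigma_2)$ forces $H_1\leq H_2$ and $(H_2,\sigma_2)\preceq(H_1,\sigma_1)$ forces $H_2\leq H_1$, the relation $(H_1,\sigma_1)\sim(H_2,\sigma_2)$ holds exactly when $H_1=H_2=:H$ and $[\sigma_1]=[\sigma_2]$ in $\mathcal{H}^2(H,\mathbb{F}^*)$ (the restrictions $(\sigma_i)_H$ are then just $\sigma_i$ themselves). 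So the proposition becomes: $\mathbb{F}^{\sigma_1}[H_1]\cong_{\mathsf{G}}\mathbb{F}^{\sigma_2}[H_2]$ iff $H_1=H_2$ and $[\sigma_1]=[\sigma_2]$. This is precisely the last assertion of Lemma \ref{teodivgradalg}, but now over an arbitrary field; the key observation is that algebraic closure plays no role in this particular equivalence.

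For the ($\Leftarrow$) direction, write $H:=H_1=H_2$ and denote by $\eta^{(i)}_\xi$ the canonical homogeneous basis of $\mathbb{F}^{\sigma_i}[H]$. Since $[\sigma_1]=[\sigma_2]$, there is a $2$-coboundary $\varrho\in\mathcal{B}^2(H,\mathbb{F}^*)$ with $\sigma_1(\xi,\zeta)=\varrho(\xi,\zeta)\sigma_2(\xi,\zeta)$, and by definition $\varrho(\xi,\zeta)=f(\xi)f(\zeta)/f(\xi\zeta)$ for some $f\colon H\to\mathbb{F}^*$ (in the multiplicative convention of the footnote to Definition \ref{1.10}). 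I would then define $\psi\colon\mathbb{F}^{\sigma_1}[H]\to\mathbb{F}^{\sigma_2}[H]$ by $\psi(\eta^{(1)}_\xi)=f(\xi)\eta^{(2)}_\xi$, extended linearly. Comparing $\psi(\eta^{(1)}_\xi\eta^{(1)}_\zeta)=\sigma_1(\xi,\zeta)f(\xi\zeta)\eta^{(2)}_{\xi\zeta}$ with $\psi(\eta^{(1)}_\xi)\psi(\eta^{(1)}_\zeta)=f(\xi)f(\zeta)\sigma_2(\xi,\zeta)\eta^{(2)}_{\xi\zeta}$ shows that $\psi$ is multiplicative precisely because of the coboundary relation; since each $f(\xi)\in\mathbb{F}^*$, the map $\psi$ sends the line $\mathbb{F}\eta^{(1)}_\xi$ onto $\mathbb{F}\eta^{(2)}_\xi$, hence is a bijective $\mathsf{G}$-graded homomorphism, i.e.\ a $\mathsf{G}$-graded isomorphism.

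For the ($\Rightarrow$) direction, suppose $\psi\colon\mathbb{F}^{\sigma_1}[H_1]\to\mathbb{F}^{\sigma_2}[H_2]$ is a $\mathsf{G}$-graded isomorphism. Because such a map satisfies $\psi((\mathbb{F}^{\sigma_1}[H_1])_\xi)=(\mathbb{F}^{\sigma_2}[H_2])_\xi$ and is bijective, it preserves supports; as the support of the canonical grading of $\mathbb{F}^{\sigma_i}[H_i]$ is exactly $H_i$, I obtain $H_1=H_2=:H$. For each $\xi\in H$, $\psi$ restricts to an isomorphism of the one-dimensional components $\mathbb{F}\eta^{(1)}_\xi\to\mathbb{F}\eta^{(2)}_\xi$, so $\psi(\eta^{(1)}_\xi)=f(\xi)\eta^{(2)}_\xi$ with $f(\xi)\in\mathbb{F}^*$. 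Running the same multiplicativity computation backwards yields $\sigma_1(\xi,\zeta)=\big(f(\xi)f(\zeta)/f(\xi\zeta)\big)\sigma_2(\xi,\zeta)$, so $\sigma_1$ and $\sigma_2$ differ by the coboundary attached to $f$; thus $[\sigma_1]=[\sigma_2]$ and hence $(H_1,\sigma_1)\sim(H_2,\sigma_2)$.

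The computational content is entirely routine, so I do not expect a genuine obstacle. The only points requiring care are bookkeeping ones: keeping the multiplicative convention for cocycles and coboundaries straight (as fixed in the footnotes to Definitions \ref{1.05} and \ref{1.10}), and recording at the outset that ``$\sim$'' collapses to ``$H_1=H_2$ and $[\sigma_1]=[\sigma_2]$'' so that the restriction maps $\sigma_i\mapsto(\sigma_i)_H$ carry no nontrivial information here. The conceptual heart is the standard dictionary between coboundaries $f(\xi)f(\zeta)/f(\xi\zeta)$ and diagonal rescalings $\eta_\xi\mapsto f(\xi)\eta_\xi$ of the homogeneous basis, which makes both implications two readings of the same identity.
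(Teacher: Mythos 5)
Your proposal is correct and follows essentially the same route as the paper's proof: identify $H_1=H_2$ by comparing supports of the one-dimensional homogeneous components, then translate the graded isomorphism into a diagonal rescaling $\eta_\xi\mapsto f(\xi)\tilde\eta_\xi$ whose multiplicativity is exactly the coboundary relation between $\sigma_1$ and $\sigma_2$. The only cosmetic differences are that you state up front that ``$\sim$'' collapses to $H_1=H_2$ and $[\sigma_1]=[\sigma_2]$ (the paper does this implicitly), and that you write the coboundary on the other side of the equation, which is immaterial since $\mathcal{B}^2(H,\mathbb{F}^*)$ is a group.
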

\begin{proof}
Put $\mathfrak{B}_1=\mathbb{F}^{\sigma_1}[H_1]$ and $\mathfrak{B}_2=\mathbb{F}^{\sigma_2}[H_2]$. Suppose $\varphi:\mathfrak{B}_1\rightarrow\mathfrak{B}_2$ a $\mathsf{G}$-graded isomorphism. For each $\xi\in H_1$, we have that $\varphi((\mathfrak{B}_1)_\xi)\subseteq(\mathfrak{B}_2)_\xi$, and hence, since $(\mathfrak{B}_1)_\xi=\mathsf{span}_\mathbb{F}\{\eta_\xi\}$ and $(\mathfrak{B}_2)_\xi=\mathsf{span}_\mathbb{F}\{\tilde\eta_\xi\}$, it follows that $H_1\subseteq H_2$. As $\varphi^{-1}:\mathfrak{B}_2\rightarrow\mathfrak{B}_1$ is also a $\mathsf{G}$-graded isomorphism, we conclude that $H_1 = H_2$. Now, for each $\xi\in H_1$, there exists $\lambda_\xi\in\mathbb{F}^*$ such that $\varphi(\eta_\xi)=\lambda_\xi \tilde\eta_{\xi}$. Take $\varrho:H_1\times H_1\rightarrow \mathbb{F}^*$ defined by $\varrho(\xi,\zeta)=\frac{\lambda_{\xi\zeta}}{\lambda_\xi\lambda_\zeta}$, $\xi,\zeta\in H_1$. Note that  $\varrho\in\mathcal{B}^2(H_1,\mathbb{F}^*)$. Since $\varphi$ is a homomorphism of algebras, we have that 
		\begin{equation}\nonumber
		\begin{split}
	& \ \varphi(\eta_\xi\eta_\zeta)=\varphi(\sigma_1(\xi,\zeta)\eta_{\xi\zeta})=\sigma_1(\xi,\zeta)\lambda_{\xi\zeta}\tilde\eta_{\xi\zeta}\\
	& \ \ \ \ \ \ \rotatebox{90}{=} \\
	& \varphi(\eta_\xi)\varphi(\eta_\zeta) =\lambda_\xi\tilde\eta_\xi \lambda_\zeta\tilde\eta_\zeta =\lambda_\xi\lambda_\zeta\sigma_2(\xi,\zeta)\tilde\eta_{\xi\zeta}
		\end{split}
	\end{equation}
for any $\xi,\zeta\in H_1$, and hence, $\sigma_2=\varrho\sigma_1$. Consequently, $[\sigma_1]=[\sigma_2]$, and so $(H_1,\sigma_1)\sim(H_2,\sigma_2)$.
	
	Reciprocally, suppose $(H_1,\sigma_1)\sim(H_2, \sigma_2)$. Hence $H_1=H_2$ and $[\sigma_1]=[\sigma_2]$. Take a map $f:H_1\rightarrow \mathbb{F}^*$ such $\varrho:H_1\times H_1\rightarrow \mathbb{F}^*$ defined by $\varrho(\xi,\zeta)=\frac{f(\xi\zeta)}{f(\xi)f(\zeta)}$, $\xi,\zeta\in H_1$, satisfies $\sigma_2=\varrho\sigma_1$. Now, consider the linear map $\psi:\mathfrak{B}_1\rightarrow\mathfrak{B}_2$ which extends $\psi(\eta_\xi)=f(\xi)\tilde\eta_\xi$ for any $\xi\in H_1$. Notice that $\psi$ is bijective, because $H_1=H_2$ and $\{\eta_\xi: \xi\in H_1\}$ and  $\{\tilde\eta_\zeta: \zeta\in H_2\}=\{\psi(\eta_\zeta): \zeta\in H_1\}$ are basis of $\mathfrak{B}_1$ and $\mathfrak{B}_2$, respectively. Besides that, since
	\begin{equation}\nonumber
		\begin{split}
	& \ \ \ \ \tilde\eta_{\xi}\tilde\eta_{\zeta}=\frac{1}{f(\xi)}\psi(\eta_{\xi})\frac{1}{f(\zeta)}\psi(\eta_{\zeta})=\frac{1}{f(\xi)f(\zeta)}\psi(\eta_{\xi})\psi(\eta_{\zeta})	\\
	& \ \ \ \ \ \ \rotatebox{90}{=} \\
	& \sigma_2(\xi,\zeta)\tilde\eta_{\xi\zeta}=\frac{f(\xi\zeta)}{f(\xi)f(\zeta)}\sigma_1(\xi,\zeta)\tilde\eta_{\xi\zeta}=\frac{\sigma_1(\xi,\zeta)}{f(\xi)f(\zeta)}\psi(\eta_{\xi\zeta})=\frac{1}{f(\xi)f(\zeta)}\psi(\eta_{\xi}\eta_{\zeta}) \ ,
		\end{split}
	\end{equation}
for any $\xi,\zeta\in H_1$, we have that  $\psi$ is a homomorphism of algebras. Therefore, $\psi$ is a $\mathsf{G}$-graded isomorphism from $\mathfrak{B}_1$ into $\mathfrak{B}_2$. The result follows.
\end{proof}

Although the previous proposition is a small generalization of the second part of Lemma \ref{teodivgradalg}, it is necessary for our next result.

\begin{lemma}[Imbedding]\label{4.16}
	Let $\mathbb{F}$ be a field and $\mathsf{G}$ a group. For any two subgroups $H_1$ and $H_2$ of $\mathsf{G}$ and any two $2$-cocycles $\sigma_1\in\mathcal{Z}^2(H_1,\mathbb{F}^*)$ and $\sigma_2\in\mathcal{Z}^2(H_2,\mathbb{F}^*)$, $\mathbb{F}^{\sigma_1}[H_1] \stackrel{\mathsf{G}}{\hookrightarrow} \mathbb{F}^{\sigma_2}[H_2]$ iff $(H_1,\sigma_1)\preceq (H_2, \sigma_2)$.
\end{lemma}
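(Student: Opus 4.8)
The plan is to adapt, essentially verbatim, the structure of the proof of Proposition \ref{4.04}, replacing the isomorphism hypothesis by an imbedding hypothesis and tracking how this weakens $H_1=H_2$ to $H_1\leq H_2$. Throughout, write $\mathfrak{B}_1=\mathbb{F}^{\sigma_1}[H_1]$ and $\mathfrak{B}_2=\mathbb{F}^{\sigma_2}[H_2]$, with canonical bases $\{\eta_\xi:\xi\in H_1\}$ and $\{\tilde\eta_\zeta:\zeta\in H_2\}$, so that $\eta_\xi$ is homogeneous of degree $\xi$ and $(\mathfrak{B}_2)_\xi=\mathsf{span}_\mathbb{F}\{\tilde\eta_\xi\}$ when $\xi\in H_2$, while $(\mathfrak{B}_2)_\xi=\{0\}$ otherwise. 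The one structural fact I will lean on is that every nonzero homogeneous component of a twisted group algebra is one-dimensional.

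For the forward implication, suppose $\psi:\mathfrak{B}_1\to\mathfrak{B}_2$ is a $\mathsf{G}$-graded imbedding. For each $\xi\in H_1$ the element $\psi(\eta_\xi)$ lies in degree $\xi$ and is nonzero by injectivity, so $(\mathfrak{B}_2)_\xi\neq\{0\}$; this forces $\xi\in H_2$, whence $H_1\leq H_2$, and moreover $\psi(\eta_\xi)=\lambda_\xi\tilde\eta_\xi$ for some $\lambda_\xi\in\mathbb{F}^*$. Applying $\psi$ to $\eta_\xi\eta_\zeta=\sigma_1(\xi,\zeta)\eta_{\xi\zeta}$ and comparing the coefficients of $\tilde\eta_{\xi\zeta}$ on both sides gives $\sigma_1(\xi,\zeta)=\frac{\lambda_\xi\lambda_\zeta}{\lambda_{\xi\zeta}}\sigma_2(\xi,\zeta)$ for all $\xi,\zeta\in H_1$. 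Since the factor $\varrho(\xi,\zeta)=\frac{\lambda_\xi\lambda_\zeta}{\lambda_{\xi\zeta}}$ is a $2$-coboundary on $H_1$ (with defining function $f=\lambda$), we obtain $[\sigma_1]=[(\sigma_2)_{H_1}]$ in $\mathcal{H}^2(H_1,\mathbb{F}^*)$, that is $(H_1,\sigma_1)\preceq(H_2,\sigma_2)$.

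For the converse, assume $(H_1,\sigma_1)\preceq(H_2,\sigma_2)$, so $H_1\leq H_2$ and there is $f:H_1\to\mathbb{F}^*$ with $\sigma_1(\xi,\zeta)=\frac{f(\xi)f(\zeta)}{f(\xi\zeta)}\sigma_2(\xi,\zeta)$ for all $\xi,\zeta\in H_1$. Because $H_1\leq H_2$, each $\tilde\eta_\xi$ with $\xi\in H_1$ already lives in $\mathfrak{B}_2$, so I may define the linear map $\psi:\mathfrak{B}_1\to\mathfrak{B}_2$ extending $\psi(\eta_\xi)=f(\xi)\tilde\eta_\xi$. This map carries degree $\xi$ into degree $\xi$, hence is $\mathsf{G}$-graded; it is injective because $\{f(\xi)\tilde\eta_\xi:\xi\in H_1\}$ is part of a basis of $\mathfrak{B}_2$; and the displayed cocycle relation is precisely what verifies $\psi(\eta_\xi\eta_\zeta)=\psi(\eta_\xi)\psi(\eta_\zeta)$, exactly as in the reciprocal part of Proposition \ref{4.04}. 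Thus $\psi$ is a $\mathsf{G}$-graded imbedding and $\mathbb{F}^{\sigma_1}[H_1]\stackrel{\mathsf{G}}{\hookrightarrow}\mathbb{F}^{\sigma_2}[H_2]$.

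The two coefficient comparisons are routine; the only point demanding genuine care is the degree argument in the forward direction, where the one-dimensionality of the homogeneous components of $\mathbb{F}^{\sigma_2}[H_2]$ simultaneously yields both $H_1\leq H_2$ and the scalar normal form $\psi(\eta_\xi)=\lambda_\xi\tilde\eta_\xi$. Once that is in place, the rest reduces to recognizing the relating factor as a $2$-coboundary, so I anticipate no substantial obstacle: this lemma is the imbedding analogue of the isomorphism statement already secured in Proposition \ref{4.04}.
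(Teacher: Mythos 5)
Your proof is correct and takes essentially the same route as the paper's: the forward direction uses the one-dimensionality of the homogeneous components to write $\psi(\eta_\xi)=\lambda_\xi\tilde\eta_\xi$ and recognizes $\varrho(\xi,\zeta)=\frac{\lambda_\xi\lambda_\zeta}{\lambda_{\xi\zeta}}$ as a $2$-coboundary, while the converse scales the basis by $f$ to build the graded monomorphism --- exactly the computations inside Proposition \ref{4.04}, which the paper's proof invokes by citation (together with the graded isomorphism theorem, Lemma \ref{4.13}) where you inline them. The difference is purely organizational, not mathematical.
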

\begin{proof}
	First, put $\mathfrak{B}_1=\mathbb{F}^{\sigma_1}[H_1]$ and $\mathfrak{B}_2=\mathbb{F}^{\sigma_2}[H_2]$. Suppose that  $H_1\leq H_2$ and $[\sigma_1]=[\sigma_2]_{H_1}$. Hence, there exists some $\varrho\in\mathcal{B}^2(H_1, \mathbb{F}^*)$ such that $(\sigma_2)_{H_1}=\varrho\sigma_1$. By Proposition \ref{4.04}, it is immediate that $\mathfrak{B}_1\cong_\mathsf{G} \mathbb{F}^{\tilde{\sigma}}[H_1]$, where $\tilde{\sigma}$ is the $2$-cocycle of $\mathcal{Z}^2(H_1, \mathbb{F}^*)$ defined by $\tilde{\sigma}(\xi,\zeta)=\varrho(\xi,\zeta)\sigma_1(\xi,\zeta)=\sigma_2(\xi,\zeta)$ for any $\xi,\zeta \in H_1$.

Since $H_1\leq H_2$, it follows from the definition of the twisted group algebra that $\mathbb{F}^{\tilde{\sigma}}[H_1]$ is a graded subspace of $\mathbb{F}^{\sigma_2}[H_2]$. Let us now show that $\mathbb{F}^{\tilde{\sigma}}[H_1]$ is an $H_2$-graded subalgebra of $\mathbb{F}^{\sigma_2}[H_2]$. In fact, let us denote by ``$*$'' the multiplication of $\mathbb{F}^{\sigma_2}[H_2]$ and by ``$\star$'' the multiplication of $\mathbb{F}^{\tilde{\sigma}}[H_1]$. Given $\eta_\zeta,\eta_\xi\in\mathbb{F}^{\tilde{\sigma}}[H_1]$, it follows that
	\begin{equation}\nonumber
		\eta_\zeta \star \eta_\xi=\tilde{\sigma}(\zeta,\xi)\eta_{\zeta\xi}=\varrho(\zeta,\xi)\sigma_1(\zeta,\xi)\eta_{\zeta\xi}=\sigma_2(\zeta,\xi)\eta_{\zeta\xi}=\eta_\zeta * \eta_\xi 
		\ .
	\end{equation}
	This shows that ``$*$'' and ``$\star$'' are equal in $\mathbb{F}^{\sigma_1}[H_1]$, and so we conclude that $\mathbb{F}^{\tilde{\sigma}}[H_1]$ is an $H_2$-graded subalgebra of $\mathbb{F}^{\sigma_2}[H_2]$. 
	Hence, we have that $\mathfrak{B}_1 \stackrel{\mathsf{G}}{\hookrightarrow} \mathfrak{B}_2$, because $\mathfrak{B}_1\cong_\mathsf{G} \mathbb{F}^{\tilde{\sigma}}[H_1]$.
	
	On the other hand, suppose that $\mathfrak{B}_1 \stackrel{\mathsf{G}}{\hookrightarrow} \mathfrak{B}_2$. Hence, by definition of ``$\stackrel{\mathsf{G}}{\hookrightarrow}$'' ($\mathsf{G}$-imbedding), there exists a graded homomorphism $\psi$ of $\mathfrak{B}_1$ to $\mathfrak{B}_2$ which is injective. Notice that $\psi(\eta_\xi)\in(\mathfrak{B}_2)_\xi$ is different to zero for any $\xi\in H_1$, because $\psi$ is injective. Hence, $H_1=\mathsf{Supp}(\Gamma_{\mathfrak{B}_1})\subseteq \mathsf{Supp}(\Gamma_{\mathfrak{B}_2})= H_2$, and so $H_1\leq H_2$. Applying Lemma \ref{4.13}, we have that $\mathfrak{B}_1\cong_\mathsf{G}  \mathsf{im}(\psi)$, where $\mathsf{im}(\psi)=\mathsf{span}_\mathbb{F}\{\tilde\eta_\xi \in\mathfrak{B}_2: \xi\in H_1\}$ is a graded subalgebra of $\mathfrak{B}_2$, and so $\mathbb{F}^{\sigma_1}[H_1]\cong_\mathsf{G}\mathbb{F}^{\hat{\sigma}}[H_1]$, where $\hat{\sigma}(\xi,\zeta)=\sigma_2(\xi,\zeta)$ for any $\xi,\zeta\in H_1$. By Lemma \ref{4.04}, it follows that $[\sigma_1]=[\hat{\sigma}]=[\sigma_2]_{H_1}$. 
Therefore, we conclude that $(H_1,\sigma_1)\preceq(H_2,\sigma_2)$. The result follows. %
\end{proof}

Observe that Corollary \ref{2.09} and Lemma \ref{4.16} play a central role for constructing $\mathsf{G}$-imbeddings of division graded algebras (of finite dimension). Let us now return to the following natural question: ``\textit{given a twisted group algebra $\mathfrak{A}=\mathbb{F}^{\sigma}[H]$, which is a division graded algebra, can we imbed $\mathfrak{A}$ in another division graded algebra?}''. The next two corollaries answer this question.

\begin{corollary}\label{4.05}
Let $\mathbb{F}$ be an algebraically closed field, $\mathsf{G}$ a group, and $\mathfrak{B}$ a finite dimensional algebra with a $\mathsf{G}$-grading $\Gamma$. Suppose that $\mathfrak{B}$ is a division graded algebra. If there exists a subgroup $H$ of $\mathsf{G}$ such that $\mathsf{Supp}(\Gamma)$ is a central subgroup of $H$, with $[H:\mathsf{Supp}(\Gamma)]< \infty$, then there exists a $2$-cocycle $\sigma\in\mathcal{Z}^2(H, \mathbb{F}^*)$ such that $\mathfrak{B}\stackrel{\mathsf{G}}{\hookrightarrow} \mathbb{F}^{\sigma}[H]$. In addition, if $\mathsf{Supp}(\Gamma)= H_1\subseteq H_2\subseteq\cdots\subseteq H_{n}\subseteq\mathsf{G}$ is a chain of subgroups of $\mathsf{G}$ such that $H_i$ is central in $H_{i+1}$ and $[H_{i+1}:H_i]< \infty$, $i=1,\dots,n-1$, then there exist $2$-cocycles $\sigma_1\in \mathcal{H}^2(H_1,\mathbb{F}^*)$, $\sigma_2\in \mathcal{H}^2(H_2,\mathbb{F}^*)$, $\dots$, $\sigma_n\in \mathcal{H}^2(H_n,\mathbb{F}^*)$ such that 
	\begin{equation}\nonumber
\mathfrak{B}\stackrel{\mathsf{G}}{\hookrightarrow} \mathbb{F}^{\sigma_1}[H_1]\stackrel{\mathsf{G}}{\hookrightarrow}\cdots\stackrel{\mathsf{G}}{\hookrightarrow} \mathbb{F}^{\sigma_{n-1}}[H_{n-1}]\stackrel{\mathsf{G}}{\hookrightarrow}\mathbb{F}^{\sigma_n}[H_n] \ 
	\end{equation}
where $[\sigma_{i+1}]_{H_i}=[\sigma_i]$ for all $i=1,\dots,n-1$.
\end{corollary}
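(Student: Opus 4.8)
The plan is to first reduce $\mathfrak{B}$ to a twisted group algebra, then translate the sought imbedding into a surjectivity statement for the restriction homomorphism, and finally invoke Corollary~\ref{2.09}. Since $\mathbb{F}$ is algebraically closed and $\mathfrak{B}$ is a finite dimensional division $\mathsf{G}$-graded algebra, Lemma~\ref{teodivgradalg} applies: the support $T \coloneqq \mathsf{Supp}(\Gamma)$ is a finite subgroup of $\mathsf{G}$ and $\mathfrak{B} \cong_\mathsf{G} \mathbb{F}^{\sigma_0}[T]$ for some $\sigma_0 \in \mathcal{Z}^2(T, \mathbb{F}^*)$. Hence it suffices to produce $\sigma \in \mathcal{Z}^2(H, \mathbb{F}^*)$ with $\mathbb{F}^{\sigma_0}[T] \stackrel{\mathsf{G}}{\hookrightarrow} \mathbb{F}^{\sigma}[H]$, and then transport this imbedding back to $\mathfrak{B}$ through the graded isomorphism.

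By the Imbedding Lemma~\ref{4.16}, such an imbedding exists if and only if $(T, \sigma_0) \preceq (H, \sigma)$, that is, if and only if $T \leq H$ (which holds by hypothesis) and $[\sigma_0] = [\sigma_T] = \mathsf{res}^H_T([\sigma])$. Thus the entire problem collapses to showing that $[\sigma_0] \in \mathcal{H}^2(T, \mathbb{F}^*)$ lies in the image of the restriction homomorphism $\mathsf{res}^H_T \colon \mathcal{H}^2(H, \mathbb{F}^*) \to \mathcal{H}^2(T, \mathbb{F}^*)$. This is precisely what Corollary~\ref{2.09} furnishes, applied with $H$ in the role of the ambient group and $T$ in the role of the central subgroup: $T$ is central in $H$, the index $[H : T]$ is finite, $\mathbb{F}^*$ carries the trivial $H$-action, and---because $\mathbb{F}$ is algebraically closed---$[H : T]$ is invertible in $\mathbb{F}^*$. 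Corollary~\ref{2.09} then yields $\mathcal{H}^2(T, \mathbb{F}^*) = \mathsf{res}^H_T\!\left(\mathcal{H}^2(H, \mathbb{F}^*)\right)$, so we may choose $\sigma \in \mathcal{Z}^2(H, \mathbb{F}^*)$ with $\mathsf{res}^H_T([\sigma]) = [\sigma_0]$. Feeding this back into Lemma~\ref{4.16} gives $\mathfrak{B} \cong_\mathsf{G} \mathbb{F}^{\sigma_0}[T] \stackrel{\mathsf{G}}{\hookrightarrow} \mathbb{F}^{\sigma}[H]$, which is the first assertion.

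For the chain statement I would simply iterate this argument. Set $\sigma_1 \coloneqq \sigma_0$, so that $\mathfrak{B} \cong_\mathsf{G} \mathbb{F}^{\sigma_1}[H_1]$. Given $\sigma_i \in \mathcal{Z}^2(H_i, \mathbb{F}^*)$, apply the step above with the pair $(H_i, H_{i+1})$ in place of $(T, H)$: since $H_i$ is central in $H_{i+1}$ and $[H_{i+1} : H_i] < \infty$, Corollary~\ref{2.09} produces $\sigma_{i+1} \in \mathcal{Z}^2(H_{i+1}, \mathbb{F}^*)$ with $[\sigma_{i+1}]_{H_i} = [\sigma_i]$, whence $(H_i, \sigma_i) \preceq (H_{i+1}, \sigma_{i+1})$ and therefore $\mathbb{F}^{\sigma_i}[H_i] \stackrel{\mathsf{G}}{\hookrightarrow} \mathbb{F}^{\sigma_{i+1}}[H_{i+1}]$ by Lemma~\ref{4.16}. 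Concatenating the imbeddings $\mathfrak{B} \stackrel{\mathsf{G}}{\hookrightarrow} \mathbb{F}^{\sigma_1}[H_1] \stackrel{\mathsf{G}}{\hookrightarrow} \cdots \stackrel{\mathsf{G}}{\hookrightarrow} \mathbb{F}^{\sigma_n}[H_n]$ delivers the displayed chain, and the relation $[\sigma_{i+1}]_{H_i} = [\sigma_i]$ holds by construction.

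The argument is a genuine corollary in the sense that essentially all the content is borrowed: Lemma~\ref{teodivgradalg} identifies $\mathfrak{B}$, the Imbedding Lemma~\ref{4.16} converts imbeddings into the order $\preceq$, and Corollary~\ref{2.09} supplies surjectivity of $\mathsf{res}^H_T$; everything else is bookkeeping. The only step that truly uses the hypotheses, rather than being formal, is the application of Corollary~\ref{2.09}, and the delicate point there is verifying that finiteness of the index $[H:T]$ (respectively each $[H_{i+1}:H_i]$) together with algebraic closedness of $\mathbb{F}$ meets its invertibility requirement on $\mathbb{F}^*$.
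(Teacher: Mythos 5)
Your proposal is correct and follows essentially the same route as the paper's own proof: identify $\mathfrak{B}\cong_\mathsf{G}\mathbb{F}^{\sigma_0}[\mathsf{Supp}(\Gamma)]$ via Lemma \ref{teodivgradalg}, use Corollary \ref{2.09} (with algebraic closedness supplying the invertibility of the finite index in $\mathbb{F}^*$) to extend the cohomology class to $H$, and conclude with the Imbedding Lemma \ref{4.16}, iterating along the chain for the second assertion. The only difference is that you spell out the iteration and the verification of the hypotheses of Corollary \ref{2.09} explicitly, where the paper compresses both into a single line.
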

\begin{proof}
By Lemma \ref{teodivgradalg}, there exist a subgroup $N$ of $\mathsf{G}$ and a $2$-cocycle $\sigma\in \mathcal{H}^2(N,\mathbb{F}^*)$ such that $\mathfrak{B}\cong_\mathsf{G} \mathbb{F}^\sigma[N]$. From this, we have that $N=\mathsf{Supp}(\Gamma)\subseteq H$, and hence, by Corollary \ref{2.09}, there exist a $2$-cocycle $\rho\in \mathcal{H}^2(H,\mathbb{F}^*)$ such that $[\rho]_N=[\sigma]$. Finally, by Lemma \ref{4.16}, it follows that $\mathfrak{B}\cong_\mathsf{G} \mathbb{F}^\sigma[N]\stackrel{\mathsf{G}}{\hookrightarrow} \mathbb{F}^{\sigma}[H]$. The second part of this result follows from first.
\end{proof}

By the previous lemma (and using Corollary \ref{2.09}), observe that, given $H_1$ and $H_2$ two central subgroups of finite index of a group $\mathsf{G}$ and $\sigma_1\in\mathcal{Z}^2(H_1,\mathbb{F}^*)$ and $\sigma_2\in\mathcal{Z}^2(H_2,\mathbb{F}^*)$ are $2$-cocycles, if there is a $2$-cocycle $\rho\in\mathcal{H}^2(\mathsf{G},\mathbb{F}^*)$ such that $[\rho]_{H_1}=[\sigma_1]$ and $[\rho]_{H_2}=[\sigma_2]$, then $\mathbb{F}^{\sigma_1}[H_1], \mathbb{F}^{\sigma_2}[H_2] \stackrel{\mathsf{G}}{\hookrightarrow} \mathbb{F}^{\rho}[\mathsf{G}]$, but not necessarily $\mathbb{F}^{\sigma_1}[H_1] \stackrel{\mathsf{G}}{\hookrightarrow} \mathbb{F}^{\sigma_2}[H_2]$ or $\mathbb{F}^{\sigma_2}[H_2] \stackrel{\mathsf{G}}{\hookrightarrow} \mathbb{F}^{\sigma_1}[H_1] $. This motivates the next result, which follows from Corollary \ref{4.05} and Lemma \ref{4.16}.

\begin{corollary}\label{4.06}
Let $\mathbb{F}$ be an algebraically closed field, $\mathsf{G}$ a finite abelian group, and $\mathfrak{D}_1$ and $\mathfrak{D}_2$ two finite dimensional division $\mathsf{G}$-graded algebras. If there is no $\sigma\in\mathcal{H}^2(\mathsf{G},\mathbb{F}^*)$ such that $\mathfrak{D}_1, \mathfrak{D}_2 \stackrel{\mathsf{G}}{\hookrightarrow} \mathbb{F}^{\sigma}[\mathsf{G}]$, then neither $\mathfrak{D}_1 \stackrel{\mathsf{G}}{\hookrightarrow} \mathfrak{D}_2$ nor $\mathfrak{D}_2 \stackrel{\mathsf{G}}{\hookrightarrow} \mathfrak{D}_1$.
\end{corollary}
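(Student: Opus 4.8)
The plan is to prove the contrapositive: assuming at least one of the two imbeddings exists, I will produce a single $\sigma\in\mathcal{H}^2(\mathsf{G},\mathbb{F}^*)$ into which both $\mathfrak{D}_1$ and $\mathfrak{D}_2$ imbed. Since the roles of $\mathfrak{D}_1$ and $\mathfrak{D}_2$ are symmetric, I may assume without loss of generality that $\mathfrak{D}_1\stackrel{\mathsf{G}}{\hookrightarrow}\mathfrak{D}_2$; the goal then becomes to exhibit $\sigma$ with $\mathfrak{D}_1,\mathfrak{D}_2\stackrel{\mathsf{G}}{\hookrightarrow}\mathbb{F}^{\sigma}[\mathsf{G}]$, contradicting the hypothesis.

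The central observation is that the full group $\mathsf{G}$ can serve as a universal overgroup. Writing $H_i=\mathsf{Supp}(\Gamma_{\mathfrak{D}_i})$, each $H_i$ is a finite subgroup of $\mathsf{G}$ (because $\mathfrak{D}_i$ is a finite dimensional division graded algebra), and by Lemma \ref{teodivgradalg} we have $\mathfrak{D}_i\cong_\mathsf{G}\mathbb{F}^{\sigma_i}[H_i]$ for some $\sigma_i\in\mathcal{Z}^2(H_i,\mathbb{F}^*)$. Because $\mathsf{G}$ is finite abelian, every such $H_i$ is automatically a central subgroup of $\mathsf{G}$ of finite index, so the hypotheses of Corollary \ref{4.05} are met with $H=\mathsf{G}$. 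Applying Corollary \ref{4.05} to $\mathfrak{D}_2$ therefore yields a $2$-cocycle $\sigma\in\mathcal{Z}^2(\mathsf{G},\mathbb{F}^*)$ with $\mathfrak{D}_2\stackrel{\mathsf{G}}{\hookrightarrow}\mathbb{F}^{\sigma}[\mathsf{G}]$.

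It then remains to carry $\mathfrak{D}_1$ into the same $\mathbb{F}^{\sigma}[\mathsf{G}]$, which I would do by transitivity. Either directly, since the composition of the injective $\mathsf{G}$-graded homomorphism realizing $\mathfrak{D}_1\stackrel{\mathsf{G}}{\hookrightarrow}\mathfrak{D}_2$ with the one realizing $\mathfrak{D}_2\stackrel{\mathsf{G}}{\hookrightarrow}\mathbb{F}^{\sigma}[\mathsf{G}]$ is again injective and $\mathsf{G}$-graded; or, equivalently, by translating through Lemma \ref{4.16}: the assumption $\mathfrak{D}_1\stackrel{\mathsf{G}}{\hookrightarrow}\mathfrak{D}_2$ gives $(H_1,\sigma_1)\preceq(H_2,\sigma_2)$, Corollary \ref{4.05} gives $(H_2,\sigma_2)\preceq(\mathsf{G},\sigma)$, and transitivity of the partial order $\preceq$ from \S\ref{1.53} yields $(H_1,\sigma_1)\preceq(\mathsf{G},\sigma)$, whence $\mathfrak{D}_1\stackrel{\mathsf{G}}{\hookrightarrow}\mathbb{F}^{\sigma}[\mathsf{G}]$ again by Lemma \ref{4.16}. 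Either way both $\mathfrak{D}_1,\mathfrak{D}_2\stackrel{\mathsf{G}}{\hookrightarrow}\mathbb{F}^{\sigma}[\mathsf{G}]$, completing the contrapositive.

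I do not anticipate a genuine obstacle: the whole content is packaged into Corollary \ref{4.05} (which in turn rests on the surjectivity of $\mathsf{res}^\mathsf{G}_H$ from Corollary \ref{2.09}), and the only things to get right are the logical structure of the contrapositive and the verification that $\mathsf{G}$ really does satisfy the centrality-and-finite-index hypothesis for both supports. The one mild point to be careful about is that Corollary \ref{4.05} produces a $\sigma$ tailored to $\mathfrak{D}_2$, so one must resist applying it separately to $\mathfrak{D}_1$ (which could return a different cohomology class); the given imbedding $\mathfrak{D}_1\stackrel{\mathsf{G}}{\hookrightarrow}\mathfrak{D}_2$ is precisely what forces $\mathfrak{D}_1$ into the \emph{same} $\mathbb{F}^{\sigma}[\mathsf{G}]$.
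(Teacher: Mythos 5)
Your proposal is correct and follows exactly the route the paper intends: the paper states that Corollary \ref{4.06} ``follows from Corollary \ref{4.05} and Lemma \ref{4.16}'', i.e.\ the contrapositive argument in which Corollary \ref{4.05} (applied with $H=\mathsf{G}$, legitimate since $\mathsf{G}$ is finite abelian so every support is central of finite index) imbeds $\mathfrak{D}_2$ into some $\mathbb{F}^{\sigma}[\mathsf{G}]$, and the assumed imbedding $\mathfrak{D}_1\stackrel{\mathsf{G}}{\hookrightarrow}\mathfrak{D}_2$ is then transported into the same $\mathbb{F}^{\sigma}[\mathsf{G}]$ by composition (equivalently, by Lemma \ref{4.16} and transitivity of $\preceq$). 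Your closing caution --- that one must not apply Corollary \ref{4.05} independently to $\mathfrak{D}_1$, since that could produce a different class --- is precisely the right point to flag, and your handling of it is what makes the argument work.
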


Let us now study the graded imbeddings in simple graded algebras (of finite dimension). Let $\mathsf{G}$ be a group, $H$ a subgroup of $\mathsf{G}$, and $\mathsf{N}_\mathsf{G}(H)$ the normalizer of $H$ in $\mathsf{G}$, i.e. $\mathsf{N}_\mathsf{G}(H)=\{\xi\in\mathsf{G}:\xi^{-1}H\xi=H\}$. Now, given $\theta=(\theta_1,\dots,\theta_k)\in\mathsf{G}^k$, consider the family 
\begin{equation}\nonumber
\Lambda^H_\theta=\{\left(\delta\xi_1\theta_{\alpha(1)},\dots,\delta\xi_k\theta_{\alpha(k)}\right)\in\mathsf{G}^k: \delta\in\mathsf{N}_\mathsf{G}(H), \xi_1,\dots,\xi_k\in H, \alpha\in Sym(k)\}
\ ,
\end{equation}
where $Sym(k)$ is the symmetric group of order $k$. Note that $\theta\in\Lambda^H_{\theta}$, and if $\tilde\theta\in\Lambda^H_{\theta}$, then $\Lambda^H_{\tilde\theta}=\Lambda^H_{\theta}$. Indeed, suppose $\tilde\theta=(\delta\xi_{\alpha(1)}\theta_{\alpha(1)},\dots,\delta\xi_{\alpha(k)}\theta_{\alpha(k)})$ for some $\alpha\in Sym(k)$, $\delta\in\mathsf{N}_\mathsf{G}(H)$ and $\xi_1,\dots,\xi_k\in H$. Given any $\hat\theta\in\Lambda^H_{\tilde\theta}$, there are $\hat\alpha\in Sym(k)$, $\hat\delta\in\mathsf{N}_\mathsf{G}(H)$ and $\hat\xi_1,\dots,\hat\xi_k\in H$ such that 
\begin{equation}\nonumber
\hat\theta=(\hat\delta\hat\xi_{\hat\alpha(\alpha(1))}(\delta\xi_{\hat\alpha(\alpha(1))}\theta_{\hat\alpha(\alpha(1))}),\dots,\hat\delta\hat\xi_{\hat\alpha(\alpha(k))}(\delta\xi_{\hat\alpha(\alpha(k))}\theta_{\hat\alpha(\alpha(k))})) \ .
\end{equation}
Since $H\hat\delta=\hat\delta H$, it follows that $\hat\delta\hat\xi_{\hat\alpha(\alpha(i))}\delta\xi_{\hat\alpha(\alpha(i))}=(\hat\delta\delta)(\hat{\hat{\xi}}_{\hat\alpha(\alpha(i))}\xi_{\hat\alpha(\alpha(i))})$ for all $i=1,\dots,k$, for some $\hat{\hat{\xi}}_{\hat\alpha(\alpha(i))}\in H$. From this $\hat\theta\in \Lambda_{\theta}^H$, and so $\Lambda_{\tilde\theta}^H\subseteq \Lambda_{\theta}^H$. Now, since $\tilde\theta\in\Lambda^H_{\theta}$ iff $\theta\in\Lambda^H_{\tilde\theta}$, analogously we can show that $\Lambda_{\theta}^H\subseteq \Lambda_{\tilde\theta}^H$.

\begin{lemma}\label{4.09}
Let $\mathbb{F}$ be a field and $\mathsf{G}$ a group, both arbitrary. Given $H$ a subgroup of $\mathsf{G}$ and $\sigma\in\mathcal{Z}^2(H, \mathbb{F}^*)$, consider the algebra $\mathfrak{B}=M_k(\mathbb{F}^\sigma[H])$ of $k\times k$ matrices over $\mathbb{F}^\sigma[H]$ with an elementary-canonical $\mathsf{G}$-grading defined by a $k$-tuple $\theta\in \mathsf{G}^k$. For any $\tilde\theta\in\Lambda^H_\theta$, then either $\theta$ and $\tilde\theta$ define equivalent elementary-canonical $\mathsf{G}$-gradings on $\mathfrak{B}$ or there exists a $2$-cocycle $\rho$ on $H$ such that $\mathfrak{B}\cong_{\mathsf{G}} M_k(\mathbb{F}^\rho[H])$, where $M_k(\mathbb{F}^\rho[H])$ is graded with the elementary-canonical $\mathsf{G}$-grading defined by $\tilde\theta$. In addition, if $\theta=(\theta_1, \dots, \theta_k)$ and $\theta_{i_0}\in\mathsf{N}_{\mathsf{G}}(H)$ for some $i_0\in\{1,\dots,k\}$, then there exists a $2$-cocycle $\varrho$ on $H$ such that $\mathfrak{B}\cong_{\mathsf{G}} M_k(\mathbb{F}^\varrho[H])$, where $M_k(\mathbb{F}^\varrho[H])$ is graded with the elementary-canonical $\mathsf{G}$-grading defined by $(e,\theta_{i_0}^{-1}\theta_{\alpha(2)}, \dots, \theta_{i_0}^{-1}\theta_{\alpha(k)})$, for some $\alpha\in Sym(k)$.
\end{lemma}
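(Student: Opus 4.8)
The plan is to reduce the passage $\theta\rightsquigarrow\tilde\theta$ to three elementary modifications of the defining tuple, one for each of the data $\alpha$, $(\xi_i)_i$ and $\delta$ occurring in $\Lambda^H_\theta$, and to treat each in turn. Write $\tilde\theta_i=\delta\xi_i\theta_{\alpha(i)}$ with $\delta\in\mathsf{N}_\mathsf{G}(H)$, $\xi_i\in H$, $\alpha\in Sym(k)$, and recall that $E_{ij}\eta_\zeta$ (for $\zeta\in H$) is homogeneous of degree $\theta_i^{-1}\zeta\theta_j$ in the $\theta$-grading. I would factor the change through the intermediate tuples $(\theta_{\alpha(i)})_i$, then $(\xi_i\theta_{\alpha(i)})_i$, and finally $(\delta\xi_i\theta_{\alpha(i)})_i=\tilde\theta$, showing that the first two steps produce equivalent gradings (with cocycle $\sigma$ unchanged) and the third produces a $\mathsf{G}$-isomorphism onto a twisted matrix algebra with a possibly different cocycle.

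For the permutation step I would conjugate by the permutation matrix $P=\sum_m E_{\alpha^{-1}(m)\,m}$ (with unit coefficients from $\mathbb{F}^\sigma[H]$), which is an algebra automorphism of $\mathfrak{B}$ sending $E_{ij}\eta_\zeta$ to $E_{\alpha^{-1}(i)\alpha^{-1}(j)}\eta_\zeta$; a direct degree check gives $(\theta_{\alpha(\alpha^{-1}(i))})^{-1}\zeta\,\theta_{\alpha(\alpha^{-1}(j))}=\theta_i^{-1}\zeta\theta_j$, so $\theta$ and $(\theta_{\alpha(i)})_i$ define equivalent gradings with the same cocycle. For the $H$-translation step I would conjugate by the invertible diagonal matrix $D=\mathsf{diag}(\eta_{\xi_1},\dots,\eta_{\xi_k})$; the key computation is $D(E_{ij}\eta_\zeta)D^{-1}=\lambda\,E_{ij}\eta_{\xi_i\zeta\xi_j^{-1}}$ for some $\lambda\in\mathbb{F}^*$, and since $(\xi_i\theta_{\alpha(i)})^{-1}(\xi_i\zeta\xi_j^{-1})(\xi_j\theta_{\alpha(j)})=\theta_{\alpha(i)}^{-1}\zeta\,\theta_{\alpha(j)}$, this conjugation is again an equivalence between the gradings of $(\theta_{\alpha(i)})_i$ and $(\xi_i\theta_{\alpha(i)})_i$, still with cocycle $\sigma$. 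Composing these two automorphisms yields the first alternative whenever $\delta$ is absent.

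The decisive step is the left translation by $\delta\in\mathsf{N}_\mathsf{G}(H)$, where the cocycle genuinely changes; this is the main obstacle, as one must keep the twisted product, the conjugation automorphism of $H$, and the degree bookkeeping mutually consistent. Since $\delta$ normalizes $H$, the map $\xi\mapsto\delta^{-1}\xi\delta$ is an automorphism of $H$, so $\rho(\xi,\chi):=\sigma(\delta^{-1}\xi\delta,\delta^{-1}\chi\delta)$ is again a $2$-cocycle on $H$. I would define the linear bijection $\Phi\colon M_k(\mathbb{F}^\sigma[H])\to M_k(\mathbb{F}^\rho[H])$ by $E_{ij}\eta_\zeta\mapsto E_{ij}\tilde\eta_{\delta\zeta\delta^{-1}}$, where $\tilde\eta$ denotes the basis of $\mathbb{F}^\rho[H]$ and $\delta\zeta\delta^{-1}\in H$. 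Two facts must be verified: that $\Phi$ respects products, which reduces exactly to the identity $\sigma(\zeta,\omega)=\rho(\delta\zeta\delta^{-1},\delta\omega\delta^{-1})$ built into the definition of $\rho$; and that $\Phi$ is $\mathsf{G}$-graded for the $\nu$-grading on the source and the $(\delta\nu)$-grading on the target $\bigl(\nu_i=\xi_i\theta_{\alpha(i)}\bigr)$, since $(\delta\nu_i)^{-1}(\delta\zeta\delta^{-1})(\delta\nu_j)=\nu_i^{-1}\zeta\nu_j$. This gives $\mathfrak{B}\cong_\mathsf{G} M_k(\mathbb{F}^\rho[H])$ with the $\tilde\theta$-grading, the second alternative; together with the previous paragraph (the case $\delta\in C_\mathsf{G}(H)$, where $\rho=\sigma$ and $\Phi$ becomes an equivalence of gradings on $\mathfrak{B}$ itself), this settles the first assertion.

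For the additional statement I would specialize the first part. Assuming $\theta_{i_0}\in\mathsf{N}_\mathsf{G}(H)$, take $\delta=\theta_{i_0}^{-1}\in\mathsf{N}_\mathsf{G}(H)$ (the normalizer is a subgroup), all $\xi_i=e$, and any $\alpha\in Sym(k)$ with $\alpha(1)=i_0$. Then $\tilde\theta:=(\delta\theta_{\alpha(i)})_i\in\Lambda^H_\theta$ has first entry $\theta_{i_0}^{-1}\theta_{i_0}=e$ and remaining entries $\theta_{i_0}^{-1}\theta_{\alpha(i)}$, that is, $\tilde\theta=(e,\theta_{i_0}^{-1}\theta_{\alpha(2)},\dots,\theta_{i_0}^{-1}\theta_{\alpha(k)})$. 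Applying the first part to this $\tilde\theta$ then furnishes a cocycle $\varrho$ on $H$ (namely $\varrho(\xi,\chi)=\sigma(\theta_{i_0}\xi\theta_{i_0}^{-1},\theta_{i_0}\chi\theta_{i_0}^{-1})$, or simply $\varrho=\sigma$ in the equivalent-grading case) with $\mathfrak{B}\cong_\mathsf{G} M_k(\mathbb{F}^\varrho[H])$ graded by $\tilde\theta$, which is exactly the claimed conclusion.
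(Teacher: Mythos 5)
Your proposal is correct and takes essentially the same route as the paper: the same three-step factorization of $\theta\rightsquigarrow\tilde\theta$ (permutation, translation by $\xi_i\in H$, left translation by $\delta\in\mathsf{N}_\mathsf{G}(H)$ with the conjugate cocycle $\rho(\xi,\chi)=\sigma(\delta^{-1}\xi\delta,\delta^{-1}\chi\delta)$), followed by the identical specialization $\delta=\theta_{i_0}^{-1}$, $\alpha(1)=i_0$ for the last claim. The only cosmetic differences are that you realize the first two steps as conjugation by a permutation matrix and a diagonal matrix $\mathsf{diag}(\eta_{\xi_1},\dots,\eta_{\xi_k})$ in one stroke, where the paper writes the same maps entry by entry with an induction, and you define $\rho$ directly instead of invoking the cohomological conjugation isomorphism $c(\xi)^*$.
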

\begin{proof}
First, fix $\theta=(\theta_1, \dots, \theta_k)\in\mathsf{G}^k$. Let us show the first part of this lemma in three steps: 1) for any $\alpha\in Sym(k)$, $\theta$ and $(\theta_{\alpha(1)}, \dots, \theta_{\alpha(k)})$ define equivalent elementary-canonical $\mathsf{G}$-gradings on $\mathfrak{B}$; 2) given any $\xi_1,\dots,\xi_k\in H$, $\theta$ and $(\xi_1\theta_1, \dots, \xi_k\theta_k)$ define equivalent elementary-canonical $\mathsf{G}$-gradings on $\mathfrak{B}$; and 3) given $\delta\in\mathsf{N}_{\mathsf{G}}(H)$, there exists $\rho\in\mathcal{Z}^2(H,\mathbb{F}^*)$ such that $\mathfrak{B}\cong_{\mathsf{G}} M_k(\mathbb{F}^\rho[H])$, where $M_k(\mathbb{F}^\rho[H])$ is graded with the elementary-canonical $\mathsf{G}$-grading defined by $(\delta\theta_1, \dots, \delta\theta_k)$.

{\it Step 1:} Take any $\alpha\in Sym(k)$, and consider the algebra $\widehat{\mathfrak{B}}=M_k(\mathbb{F}^\sigma[H])$ with the elementary-canonical $\mathsf{G}$-grading defined by $\left(\theta_{\alpha(1)},\dots,\theta_{\alpha(k)}\right)$. It is easy to see that the linear application $\psi$ which extends the map $E_{ij}\eta_\zeta\mapsto E_{\alpha^{-1}(i)\alpha^{-1}(j)}\tilde{\eta}_{\zeta}$, for any $i,j\in\{1,\dots,k\}$ and $\zeta\in H$, is a $\mathsf{G}$-graded isomorphism of $\mathfrak{B}$ in $\widehat{\mathfrak{B}}$.

{\it Step 2:} Let $\xi_1,\dots,\xi_k\in H$. First, consider the algebra $\mathfrak{B}_1=M_k(\mathbb{F}^\sigma[H])$ with the elementary-canonical $\mathsf{G}$-grading defined by $\left(\xi_1\theta_1,\theta_2, \dots,\theta_k \right)$. Consider the linear application $\psi$ that extends the following relations: {\it i)} $E_{1j}\eta_\zeta \mapsto \sigma(\xi_1,\zeta) E_{1j}\tilde{\eta}_{\xi_1 \zeta}$ for $j\neq1$; 
{\it ii)} $E_{i1}\eta_\zeta \mapsto \frac{\sigma(\zeta,\xi_1^{-1})}{\sigma(\xi_1^{-1},\xi_1)\sigma(e,e)} E_{i1}\tilde{\eta}_{\zeta \xi_1^{-1}}$ for $i\neq1$; 
{\it iii)} $E_{11}\eta_\zeta \mapsto \frac{\sigma(\xi_1,\zeta)\sigma(\xi_1\zeta,\xi_1^{-1})}{\sigma(\xi_1^{-1},\xi_1)\sigma(e,e)} E_{11}\tilde{\eta}_{\xi_1 \zeta \xi_1^{-1}}$; and 
{\it iv)} $E_{ij}\eta_\zeta \mapsto E_{ij}\tilde{\eta}_\zeta$ for $i,j\neq1$, 
for all $i,j=1,\dots,k$ and $\zeta\in H$. It is easy to check that $\psi$ is a $\mathsf{G}$-graded isomorphism of $\mathfrak{B}$ in $\mathfrak{B}_1$. Now, consider the algebra $\mathfrak{B}_2=M_k(\mathbb{F}^\sigma[H])$ with the elementary-canonical $\mathsf{G}$-grading defined by $\left(\xi_2\theta_2,\xi_1\theta_1,\theta_3, \dots,\theta_k \right)$. By {\it Step 1}, we have that $\left(\xi_1\theta_1,\theta_2, \dots,\theta_k \right)$ and $\left(\theta_2,\xi_1\theta_1,\theta_3, \dots,\theta_k \right)$ define equivalent elementary-canonical $\mathsf{G}$-grading on $\mathfrak{B}_1$. So, as in the first part of this step, we can ensure that $\mathfrak{B}_1$ and $\mathfrak{B}_2$ are $\mathsf{G}$-graded isomorphism. Proceeding by induction, we conclude that $\left(\theta_1,\theta_2, \dots,\theta_k \right)$ and $\left(\xi_1\theta_1,\xi_2\theta_2, \dots,\xi_k\theta_k \right)$ define equivalent elementary-canonical $\mathsf{G}$-gradings on $\mathfrak{B}$.

{\it Step 3:} In \S\ref{resGH}, for each $\delta\in \mathsf{G}$, we present the maps $c(\xi)^*: \mathcal{H}^2(\delta H \delta^{-1},\mathbb{F}^*) \rightarrow \mathcal{H}^2(H,\mathbb{F}^*)$ (here, $\mathsf{M}=\mathbb{F}^*$), which is an isomorphism of groups, and so there exists a $2$-cocycle $\rho\in \mathcal{Z}^2(\delta H \delta^{-1},\mathbb{F}^*)$ such that $\rho(\delta \xi\delta^{-1},\delta \zeta\delta^{-1})=\sigma(\xi,\zeta)$ for any $\xi,\zeta\in H$. From this, taking $\delta\in\mathsf{N}_{\mathsf{G}}(H)$, we have that $\mathcal{Z}^2(\delta H \delta^{-1},\mathbb{F}^*)=\mathcal{Z}^2(H,\mathbb{F}^*)$. Now, consider the algebra $\widetilde{\mathfrak{B}}=M_k(\mathbb{F}^\rho[H])$ with the elementary-canonical $\mathsf{G}$-grading defined by $\left(\delta\theta_1,\dots,\delta\theta_k \right)$. It is not difficult to see that the linear map $\psi$ which extends the map $E_{ij}\eta_\xi\mapsto E_{ij}\tilde{\eta}_{\delta\xi \delta^{-1}}$, for any $i,j\in\{1,\dots,k\}$ and $\xi\in H$, is a $\mathsf{G}$-graded isomorphism of $\mathfrak{B}$ in $\widetilde{\mathfrak{B}}$.

Finally, assume that $\theta_{i_0}\in\mathsf{N}_{\mathsf{G}}(H)$ for some $i_0\in\{1,\dots,k\}$. Take $\alpha=(1 \ i_0)\in Sym(k)$ a transposition, i.e. a permutation that satisfies $\alpha(1)=i_0$, $\alpha(i_0)=1$ and $\alpha(j)=j$ when $j\in\{1,\dots,k\}\setminus\{1,i_0\}$. Note that $\theta_{i_0}^{-1}\theta_{\alpha(1)}=e$ and $(\theta_{i_0}^{-1}\theta_{\alpha(1)}, \dots, \theta_{i_0}^{-1}\theta_{\alpha(k)})\in\Lambda^H_\theta$, and so the result follows from the first part of the lemma.
\end{proof}

From the discussion above, being $\mathfrak{B}=M_k(\mathbb{F}^\sigma[H])$ an algebra with an elementary-canonical $\mathsf{G}$-grading $\Gamma$ defined by a $k$-tuple $(\theta_1, \dots, \theta_k)\in\mathsf{G}^k$, when at lest one of $\theta_1, \dots, \theta_k$ belongs to normalizer of $H$ in $\mathsf{G}$, we can assume, without loss of generality, that $\theta_1=e$. Consequently, it is easy to see that $\theta_i,\theta_i^{-1},\theta_i^{-1}\theta_j\in\mathsf{Supp}(\Gamma)$ for all $i,j=1,\dots,k$. On the other hand, let $\mathsf{G}$ be a group that contains a non-normal subgroup $H$ (for example, the group $Sym(3)$ has this property). Take $\delta\in \mathsf{G}$ such that $H\neq H^\delta$, i.e. $\delta\notin\mathsf{N}_\mathsf{G}(H)$. Considering $\hat\theta_1=\hat\theta_2=\delta$, take $\hat\theta=(\hat\theta_1,\hat\theta_2)$ and $\tilde\theta=(\delta^{-1}\hat\theta_1,\delta^{-1}\hat\theta_2)=(e,e)$ two $2$-tuples in $\mathsf{G}^2$. \textbf{Claim:} $\hat\theta$ and $\tilde\theta$ not define equivalent elementary-canonical $\mathsf{G}$-gradings on $M_2(\mathbb{F}[H])$. In fact, assume that $\widehat\Gamma$ and $\widetilde\Gamma$ are the elementary-canonical $\mathsf{G}$-gradings on $M_2(\mathbb{F}[H])$ defined by $\hat\theta$ and $\tilde\theta$, respectively. We have that $\mathsf{Supp}(\widehat\Gamma)=\{\hat\theta_i^{-1}\xi\hat\theta_j: i,j=1,2,\ \xi\in H\}=\{\delta^{-1}\xi\delta:\xi\in H\}=H^\delta$, and $\mathsf{Supp}(\widetilde\Gamma)=\{\tilde\theta_r^{-1}\zeta\tilde\theta_s: r,s=1,2,\ \zeta\in H\}=\{\zeta:\zeta\in H\}=H$. Thus, since $H\neq H^\delta$, it follows that $\mathsf{Supp}(\widehat\Gamma)\neq\mathsf{Supp}(\widetilde\Gamma)$, and so $\widehat\Gamma$ and $\widetilde\Gamma$ are not equivalent elementary-canonical $\mathsf{G}$-gradings. Therefore, the condition ``$\delta\in\mathsf{N}_\mathsf{G}(H)$'' in the second part of Lemma \ref{4.09} is necessary.

The next result is key in our work. It allows us to determine, when given two matrices algebras $\mathfrak{A}=M_{k_1}(\mathbb{F}^{\sigma_1}[H_1])$ and $\mathfrak{B}=M_{k_2}(\mathbb{F}^{\sigma_2}[H_2])$ under suitable conditions, a (graded) homomorphism of $\mathfrak{A}$ in $\mathfrak{B}$ such that the diagonal matrices $E_{ii}\eta_{\zeta}$'s and  $E_{jj}\tilde\eta_{\xi}$'s of $\mathfrak{A}$ and $\mathfrak{B}$, respectively, are associated.

\begin{lemma}\label{4.18}
Let $\mathsf{G}$ be a group, $\mathbb{F}$ a field and $\mathfrak{A}$ and $\mathfrak{B}$ two algebras with $\mathsf{G}$-gradings. Suppose that $\mathfrak{A}$ and $\mathfrak{B}$ have generator sets $S=\bigcup_{\xi\in \mathsf{G}} S_\xi$ and $R=\bigcup_{\xi\in \mathsf{G}} R_\xi$, respectively, where $S_\xi\subset \mathfrak{A}_\xi$ and $R_\xi\subset \mathfrak{B}_\xi$ are countable sets, for all $\xi\in\mathsf{G}$. If $\mathsf{T}^{\mathsf{G}}(\mathfrak{A})\subseteq \mathsf{T}^{\mathsf{G}}(\mathfrak{B})$, then there is a $\mathsf{G}$-graded epimorphism of $\mathfrak{A}$ in $\mathfrak{B}$.
\end{lemma}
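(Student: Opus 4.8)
The plan is to realize both $\mathfrak{A}$ and $\mathfrak{B}$ as $\mathsf{G}$-graded quotients of the free $\mathsf{G}$-graded algebra $\mathbb{F}\langle X^{\mathsf{G}}\rangle$ by their respective $\mathsf{G}$T-ideals, and then to obtain the desired epimorphism as the canonical projection between these two quotients induced by the inclusion $\mathsf{T}^{\mathsf{G}}(\mathfrak{A})\subseteq \mathsf{T}^{\mathsf{G}}(\mathfrak{B})$. First I would fix enumerations $S_\xi=\{s^{(\xi)}_1, s^{(\xi)}_2,\dots\}$ and $R_\xi=\{r^{(\xi)}_1, r^{(\xi)}_2,\dots\}$ of the given countable homogeneous generators and use them to define maps $f_{\mathfrak{A}}\colon X^{\mathsf{G}}\to \mathfrak{A}$ and $f_{\mathfrak{B}}\colon X^{\mathsf{G}}\to \mathfrak{B}$ sending $x^{(\xi)}_n$ to $s^{(\xi)}_n$ and to $r^{(\xi)}_n$, respectively. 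By the universal property recorded in diagram (\ref{4.17}), these extend to $\mathsf{G}$-graded homomorphisms $\varphi_{\mathfrak{A}},\varphi_{\mathfrak{B}}\colon \mathbb{F}\langle X^{\mathsf{G}}\rangle\to \mathfrak{A},\mathfrak{B}$, each of which is surjective since $S$ and $R$ generate $\mathfrak{A}$ and $\mathfrak{B}$ as $\mathbb{F}$-algebras.

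Using these surjections together with the identification recorded in \S\ref{pre} (the remark following diagram (\ref{4.17})), the fact that $\mathfrak{A}$ and $\mathfrak{B}$ each possess a countable homogeneous generating set yields the $\mathsf{G}$-graded isomorphisms
\begin{equation}\nonumber
\mathfrak{A}\cong_{\mathsf{G}}\frac{\mathbb{F}\langle X^{\mathsf{G}}\rangle}{\mathsf{T}^{\mathsf{G}}(\mathfrak{A})}
\qquad\text{and}\qquad
\mathfrak{B}\cong_{\mathsf{G}}\frac{\mathbb{F}\langle X^{\mathsf{G}}\rangle}{\mathsf{T}^{\mathsf{G}}(\mathfrak{B})}.
\end{equation}
Now, since $\mathsf{T}^{\mathsf{G}}(\mathfrak{A})$ and $\mathsf{T}^{\mathsf{G}}(\mathfrak{B})$ are $\mathsf{G}$-graded T-ideals of $\mathbb{F}\langle X^{\mathsf{G}}\rangle$ with $\mathsf{T}^{\mathsf{G}}(\mathfrak{A})\subseteq \mathsf{T}^{\mathsf{G}}(\mathfrak{B})$, the graded Correspondence and Isomorphism Theorems (Lemma \ref{4.13}) guarantee that the assignment $f+\mathsf{T}^{\mathsf{G}}(\mathfrak{A})\mapsto f+\mathsf{T}^{\mathsf{G}}(\mathfrak{B})$ is a well-defined $\mathsf{G}$-graded epimorphism
\begin{equation}\nonumber
\pi\colon \frac{\mathbb{F}\langle X^{\mathsf{G}}\rangle}{\mathsf{T}^{\mathsf{G}}(\mathfrak{A})}\longrightarrow \frac{\mathbb{F}\langle X^{\mathsf{G}}\rangle}{\mathsf{T}^{\mathsf{G}}(\mathfrak{B})}.
\end{equation}
Composing $\pi$ with the two isomorphisms above then produces a $\mathsf{G}$-graded epimorphism from $\mathfrak{A}$ onto $\mathfrak{B}$, which is exactly the claimed conclusion.

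I expect the delicate point of the argument to be the second step, namely the identification $\mathfrak{A}\cong_{\mathsf{G}}\mathbb{F}\langle X^{\mathsf{G}}\rangle/\mathsf{T}^{\mathsf{G}}(\mathfrak{A})$: this is precisely what the countability hypothesis on the homogeneous generating sets is meant to supply, and it is the only place where that hypothesis enters in an essential way. In particular, one must be sure that the kernel of $\varphi_{\mathfrak{A}}$ coincides with the $\mathsf{G}$T-ideal $\mathsf{T}^{\mathsf{G}}(\mathfrak{A})$ (and likewise for $\mathfrak{B}$), so that the quotient by $\mathsf{T}^{\mathsf{G}}(\mathfrak{A})$ really reproduces $\mathfrak{A}$. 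Granting that identification, the construction of the surjections $\varphi_{\mathfrak{A}},\varphi_{\mathfrak{B}}$ and the verification that $\pi$ respects the $\mathsf{G}$-grading are routine consequences of the universal property (\ref{4.17}) and of Lemma \ref{4.13}, so the remainder of the proof reduces to a formal manipulation of quotients by nested $\mathsf{G}$T-ideals.
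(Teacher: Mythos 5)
Your proof follows essentially the same route as the paper's: both realize $\mathfrak{A}$ and $\mathfrak{B}$ as the $\mathsf{G}$-graded quotients $\mathbb{F}\langle X^{\mathsf{G}}\rangle/\mathsf{T}^{\mathsf{G}}(\mathfrak{A})$ and $\mathbb{F}\langle X^{\mathsf{G}}\rangle/\mathsf{T}^{\mathsf{G}}(\mathfrak{B})$ via the universal property applied to bijections between the variables and the countable homogeneous generating sets, and then obtain the epimorphism by composing the canonical projection induced by $\mathsf{T}^{\mathsf{G}}(\mathfrak{A})\subseteq\mathsf{T}^{\mathsf{G}}(\mathfrak{B})$ with these identifications (the paper factors your single map $\pi$ as $\psi^{-1}\circ\pi$ using Lemma \ref{4.13}, but this is the same homomorphism). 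The delicate point you flag, namely that $\mathsf{ker}(\varphi_{\mathfrak{A}})=\mathsf{T}^{\mathsf{G}}(\mathfrak{A})$ so that the quotient by the $\mathsf{G}$T-ideal really reproduces $\mathfrak{A}$, is exactly the identification the paper itself invokes from the remark following diagram (\ref{4.17}) in \S\ref{pre}, so your argument matches the paper's in both substance and level of detail.
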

\begin{proof}
For each $\xi\in \mathsf{G}$, we have that $R_\xi$ and $S_\xi$ are countable, and so there exist bijective maps $\tau_{R,\xi}: X_\xi\rightarrow R_\xi$ and $\tau_{S,\xi}: X_\xi \rightarrow S_\xi$. Consequently, the maps $\tau_R: X^\mathsf{G}\rightarrow R$ and $\tau_S: X^\mathsf{G}\rightarrow S$ defined by $\tau_R(x_i^{(\xi)})=\tau_{R,\xi}(x_i^{(\xi)})$ and $\tau_S(x_i^{(\xi)})=\tau_{S,\xi}(x_i^{(\xi)})$, for any $x_i^{(\xi)}\in X^\mathsf{G}$, are bijective.

Let $inc:X^{\mathsf{G}}\rightarrow \mathbb{F}\langle X^{\mathsf{G}} \rangle$ be the inclusion map, $f_{\mathfrak{A}}: X^{\mathsf{G}}\rightarrow \mathfrak{A}$ defined by $f_{\mathfrak{A}}(x_i^{(\xi)})= \tau_R(x_i^{(\xi)})$ and $f_{\mathfrak{B}}:X^{\mathsf{G}}\rightarrow \mathfrak{B}$ defined by $f_{\mathfrak{B}}(x_i^{(\xi)})= \tau_S(x_i^{(\xi)})$. By the universal property of $\mathbb{F}\langle X^{\mathsf{G}} \rangle$, there exist $\mathsf{G}$-graded homomorphisms of algebras $\varphi_{\mathfrak{A}}:\mathbb{F}\langle X^{\mathsf{G}} \rangle \rightarrow \mathfrak{A}$ and $\varphi_{\mathfrak{B}}:\mathbb{F}\langle X^{\mathsf{G}} \rangle \rightarrow \mathfrak{B}$ which extend $f_{\mathfrak{A}}$ and $f_{\mathfrak{B}}$, respectively. Similarly to diagram (\ref{4.17}), we have the following diagram:
\begin{equation}\label{4.14}
\begin{tikzcd}
& X^{\mathsf{G}} \arrow[swap]{d}{inc} \arrow{rd}{f_{\mathfrak{B}}} \arrow[swap]{dl}{f_{\mathfrak{A}}} & \\
\mathfrak{A}  & \mathbb{F}\langle X^{\mathsf{G}} \rangle \arrow[swap,dashed]{r}{\varphi_{\mathfrak{B}}} \arrow[dashed]{l}{\varphi_{\mathfrak{A}}} & \mathfrak{B} 
\end{tikzcd} \ .
\end{equation}
As $R$ and $S$ generate $\mathfrak{A}$ and $\mathfrak{B}$, respectively, we have that $\varphi_\mathfrak{A}$ and $\varphi_\mathfrak{B}$ are also epimorphisms.

By Lemma \ref{4.13}, there exist $\mathsf{G}$-graded isomorphisms $\bar\varphi_\mathfrak{A}: \displaystyle\frac{\mathbb{F}\langle X^{\mathsf{G}} \rangle}{\mathsf{T}^{\mathsf{G}}(\mathfrak{A})}\rightarrow \mathfrak{A}$, $\bar\varphi_\mathfrak{B}: \displaystyle\frac{\mathbb{F}\langle X^{\mathsf{G}} \rangle}{\mathsf{T}^{\mathsf{G}}(\mathfrak{B})}\rightarrow \mathfrak{B}$, and $\psi: \displaystyle\frac{\mathbb{F}\langle X^{\mathsf{G}} \rangle}{\mathsf{T}^{\mathsf{G}}(\mathfrak{B})}\rightarrow \displaystyle\frac{\frac{\mathbb{F}\langle X^{\mathsf{G}} \rangle}{\mathsf{T}^{\mathsf{G}}(\mathfrak{A})}}{\frac{\mathsf{T}^{\mathsf{G}}(\mathfrak{B})}{\mathsf{T}^{\mathsf{G}}(\mathfrak{A})}}$, since $\mathsf{T}^{\mathsf{G}}(\mathfrak{A})\subseteq \mathsf{T}^{\mathsf{G}}(\mathfrak{B})$. Let $\pi$ be the projection homomorphism of $\displaystyle\frac{\mathbb{F}\langle X^{\mathsf{G}} \rangle}{\mathsf{T}^{\mathsf{G}}(\mathfrak{A})}$ in $\displaystyle\frac{\frac{\mathbb{F}\langle X^{\mathsf{G}} \rangle}{\mathsf{T}^{\mathsf{G}}(\mathfrak{A})}}{\frac{\mathsf{T}^{\mathsf{G}}(\mathfrak{B})}{\mathsf{T}^{\mathsf{G}}(\mathfrak{A})}}$, $\pi_1$ the projection homomorphism of $\mathbb{F}\langle X^{\mathsf{G}} \rangle$ in $\displaystyle\frac{\mathbb{F}\langle X^{\mathsf{G}} \rangle}{\mathsf{T}^{\mathsf{G}}(\mathfrak{A})}$, and $\pi_2$ the projection homomorphism of $\mathbb{F}\langle X^{\mathsf{G}} \rangle$ in $\displaystyle\frac{\mathbb{F}\langle X^{\mathsf{G}} \rangle}{\mathsf{T}^{\mathsf{G}}(\mathfrak{B})}$. Hence, we have the following diagram
\begin{equation}\label{4.15}
\begin{tikzcd}
&  & \mathfrak{B} \\
& \mathbb{F}\langle X^{\mathsf{G}} \rangle \arrow{ru}{\varphi_{\mathfrak{B}}} \arrow[swap]{dl}{\varphi_{\mathfrak{A}}} \arrow[swap]{r}{\pi_2} \arrow{d}{\pi_1} & \frac{\mathbb{F}\langle X^{\mathsf{G}} \rangle}{\mathsf{T}^{\mathsf{G}}(\mathfrak{B})} \arrow{d}{\psi} \arrow[swap,dashed]{u}{\bar\varphi_{\mathfrak{B}}}  \\
\mathfrak{A} \arrow[bend left=50,dashed]{rruu}{\varphi} & \frac{\mathbb{F}\langle X^{\mathsf{G}} \rangle}{\mathsf{T}^{\mathsf{G}}(\mathfrak{A})} \arrow{r}{\pi} \arrow[swap,dashed]{l}{\bar\varphi_{\mathfrak{A}}} & \displaystyle\frac{\frac{\mathbb{F}\langle X^{\mathsf{G}} \rangle}{\mathsf{T}^{\mathsf{G}}(\mathfrak{A})}}{\frac{\mathsf{T}^{\mathsf{G}}(\mathfrak{B})}{\mathsf{T}^{\mathsf{G}}(\mathfrak{A})}} 
\end{tikzcd}
\end{equation}
where $\varphi: \mathfrak{A} \rightarrow \mathfrak{B}$ is the homomorphism given by $\varphi \coloneqq \bar\varphi_\mathfrak{B}\circ\psi^{-1}\circ\pi\circ\bar\varphi_\mathfrak{A}^{-1}$. It is easy to see that $\varphi$ is a $\mathsf{G}$-graded epimorphism of $\mathfrak{A}$ in $\mathfrak{B}$.
\end{proof}

In what follows, let us use the idea of the proof of Lemma \ref{4.18} to prove that, given $\mathfrak{B}_1=M_{k_1}(\mathbb{F}^{\sigma_1}[H_1])$ and $\mathfrak{B}_2=M_{k_2}(\mathbb{F}^{\sigma_2}[H_2])$ with elementary-canonical $\mathsf{G}$-gradings and under suitable conditions, we can ``choose'' a maps ``$\varphi$'' from $\mathfrak{B}_1$ into $\mathfrak{B}_2$ satisfying, for any $\xi\in H_2$ and $i,j\in\{1,\dots,k_2\}$,
\begin{equation}\nonumber
E_{\alpha(i)\alpha(j)}\eta_{\zeta_{i,j,\xi}} \mapsto \lambda_{i,j,\xi}  E_{ij}\tilde\eta_\xi \ ,
\end{equation}
for some $\alpha\in Sym(k_1)$, $\zeta_{i,j,\xi}\in H_1$ and $\lambda_{i,j,\xi}\in\mathbb{F}^*$.

\begin{lemma}\label{4.08}
Let $\mathsf{G}$ be a group, $\mathbb{F}$ a field, and $\mathfrak{B}_1=M_{k_1}(\mathbb{F}^{\sigma_1}[H_1])$ and $\mathfrak{B}_2=M_{k_2}(\mathbb{F}^{\sigma_2}[H_2])$ two matrices algebras with elementary-canonical $\mathsf{G}$-gradings, where $H_1$ and $H_2$ are subgroups of $\mathsf{G}$, and $\sigma_i\in \mathcal{H}^2(H_i,\mathbb{F}^*)$ is a $2$-cocycle. Suppose $\mathsf{T}^{\mathsf{G}}(\mathfrak{B}_1)\subseteq \mathsf{T}^{\mathsf{G}}(\mathfrak{B}_2)$. If $k_1,k_2>1$, then there exists a $\mathsf{G}$-graded epimorphism $\varphi: \mathfrak{B}_1 \rightarrow \mathfrak{B}_2$ satisfying:
	\begin{enumerate}[{\it i)}]
	\item $\varphi(E_{\alpha(1)\alpha(i)}\eta_{\zeta_{1,i,\xi}}) = E_{1i}\tilde\eta_\xi$ and $\varphi(E_{\alpha(i)\alpha(1)}\eta_{\zeta_{i,1,e}}) = E_{i1}\tilde\eta_e$, for any $i=2,\dots,k_2$ and $\xi\in H_2$,
	\item $\varphi(E_{\alpha(j)\alpha(j)}\eta_{\zeta_{j,j,\xi}}) = \lambda_{j,j,\xi} E_{jj}\tilde\eta_\xi$  for any $\xi\in H_2$ and $j=1,2,\dots,k_2$, where $\zeta_{1,1,\xi}=\zeta_{1,2,\xi}\zeta_{2,1,e}$, $\lambda_{1,1,\xi}=\frac{\sigma_2(e,e)}{\sigma_1(\zeta_{1,2,\xi}, \zeta_{2,1,e})}$, $\zeta_{i,i,\xi}=\zeta_{i,1,e}\zeta_{1,i,\xi}$ and $\lambda_{i,i,\xi}=\frac{\sigma_2(e,e)}{\sigma_1(\zeta_{i,1,e}, \zeta_{1,i,\xi})}$, for $i=2,\dots,k_2$,
	\item $\varphi(E_{\alpha(l)\alpha(l)}\eta_e) = \frac{\sigma_2(e,e)}{\sigma_1(e,e)}  E_{ll}\tilde\eta_e$ for all $l=1,2,\dots,k_2$,
	\end{enumerate}
for some $\alpha\in Sym(k_1)$ and $\zeta_{1,i,\xi}, \zeta_{i,1,e}\in H_1$ which depend on $\varphi$.

Moreover, if $k_1=1$, then $k_2=1$. In addition, if $k_2=1$, then there exists a $\mathsf{G}$-graded epimorphism $\psi: \mathfrak{B}_1 \rightarrow \mathfrak{B}_2$ such that $\psi(E_{rr}\eta_{\zeta_\xi}) = \tilde\eta_\xi$, $\xi\in H_2$, for some $r\in\{1,\dots,k_1\}$ and $\zeta_\xi\in H_1$.
\end{lemma}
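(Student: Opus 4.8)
The plan is to produce the structured epimorphism from a single abstract one furnished by Lemma \ref{4.18}, and then to normalize it on a distinguished family of matrix units. Throughout I would write $v_{ab}=\sigma_1(e,e)^{-1}E_{ab}\eta_e$ and $f_j=\sigma_2(e,e)^{-1}E_{jj}\tilde\eta_e$ for the standard homogeneous (degree $e$) matrix idempotents and units of $\mathfrak{B}_1$ and $\mathfrak{B}_2$; these satisfy the usual matrix-unit relations and $\sum_a v_{aa}=1_{\mathfrak{B}_1}$, $\sum_j f_j=1_{\mathfrak{B}_2}$. Since $\mathfrak{B}_1$ and $\mathfrak{B}_2$ are finite dimensional they have finite homogeneous generating sets, so Lemma \ref{4.18} applies and yields a $\mathsf{G}$-graded epimorphism $\varphi\colon\mathfrak{B}_1\to\mathfrak{B}_2$ which I shall modify. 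I note that each corner $v_{aa}\mathfrak{B}_1 v_{aa}=E_{aa}\mathbb{F}^{\sigma_1}[H_1]$ is graded isomorphic to $\mathbb{F}^{\sigma_1}[H_1]$, hence a division graded (in particular graded-simple) algebra.

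First I would dispose of the degenerate cases. If $k_1=1$, then $\mathfrak{B}_1=\mathbb{F}^{\sigma_1}[H_1]$ is division graded, so $\ker\varphi$ is a graded ideal of a graded-simple algebra and, as $\varphi\neq 0$, $\varphi$ is injective, hence a graded isomorphism onto $\mathfrak{B}_2$. Then $\mathfrak{B}_2$ would be division graded, which fails for $k_2>1$ because $E_{11}\tilde\eta_e$ is a nonzero non-invertible homogeneous element; thus $k_2=1$. If instead $k_2=1$, then $(\mathfrak{B}_2)_e=\mathbb{F}\cdot 1$, so the orthogonal idempotents $\varphi(v_{aa})$ lie in a one-dimensional algebra and sum to $\varphi(1)=1$; exactly one, say $\varphi(v_{rr})$, equals $1$ and the rest vanish. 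The corner $v_{rr}\mathfrak{B}_1 v_{rr}\cong\mathbb{F}^{\sigma_1}[H_1]$ is then carried by $\varphi$ onto $\mathfrak{B}_2$ (surjectivity from $x=\varphi(v_{rr}yv_{rr})$, injectivity from graded-simplicity of the corner), and reading off the unique homogeneous preimage of each $\tilde\eta_\xi$, $\xi\in H_2$, inside this corner produces $E_{rr}\eta_{\zeta_\xi}$ and the map $\psi:=\varphi$ with the stated property.

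The core is the case $k_1,k_2>1$. Here the idempotents $e_a:=\varphi(v_{aa})$ form a complete orthogonal system in the neutral component of $\mathfrak{B}_2$, and since $\varphi$ maps the graded-division corner $v_{aa}\mathfrak{B}_1 v_{aa}$ onto $e_a\mathfrak{B}_2 e_a$, each nonzero $e_a\mathfrak{B}_2 e_a$ is again division graded; that is, each nonzero $e_a$ is a graded rank-one idempotent of $M_{k_2}(\mathbb{F}^{\sigma_2}[H_2])$. As any orthogonal decomposition of $1_{\mathfrak{B}_2}$ into graded rank-one idempotents has exactly $k_2$ terms, precisely $k_2$ of the $e_a$ are nonzero, whence $k_1\geq k_2$; choosing a permutation $\alpha\in Sym(k_1)$ that lists these first, both $\{e_{\alpha(j)}\}_{j=1}^{k_2}$ and $\{f_j\}_{j=1}^{k_2}$ are complete orthogonal systems of graded rank-one idempotents, hence conjugate by a homogeneous unit $P$ of degree $e$. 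Replacing $\varphi$ by its composite with the inner graded automorphism $x\mapsto PxP^{-1}$ of $\mathfrak{B}_2$ (still a graded epimorphism), I may assume $\varphi(v_{\alpha(j)\alpha(j)})=f_j$ for $j=1,\dots,k_2$.

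Finally I would transport the matrix units. For each $i$, $\varphi$ sends $E_{\alpha(1)\alpha(i)}\mathbb{F}^{\sigma_1}[H_1]$ onto $f_1\mathfrak{B}_2 f_i=E_{1i}\mathbb{F}^{\sigma_2}[H_2]$ and $E_{\alpha(i)\alpha(1)}\mathbb{F}^{\sigma_1}[H_1]$ onto $E_{i1}\mathbb{F}^{\sigma_2}[H_2]$; since every graded component of these rank-one bimodules is one-dimensional, each basis vector $E_{1i}\tilde\eta_\xi$ (resp. $E_{i1}\tilde\eta_e$) has, up to a nonzero scalar, a unique homogeneous preimage $E_{\alpha(1)\alpha(i)}\eta_{\zeta_{1,i,\xi}}$ (resp. $E_{\alpha(i)\alpha(1)}\eta_{\zeta_{i,1,e}}$). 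I would then normalize the first row and the neutral first column to scalar $1$, obtaining i); once i) holds, properties ii) and iii) are forced rather than free, for multiplying the chosen first-column and first-row preimages and using $\eta_\zeta\eta_{\zeta'}=\sigma_1(\zeta,\zeta')\eta_{\zeta\zeta'}$ together with $\tilde\eta_e\tilde\eta_\xi=\sigma_2(e,e)\tilde\eta_\xi$ yields the diagonal images with $\zeta_{i,i,\xi}=\zeta_{i,1,e}\zeta_{1,i,\xi}$ and $\lambda_{i,i,\xi}=\sigma_2(e,e)/\sigma_1(\zeta_{i,1,e},\zeta_{1,i,\xi})$, while iii) comes from the normalization $\varphi(v_{\alpha(l)\alpha(l)})=f_l$. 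The main obstacle is exactly this normalization of i): within each block the degree pins down the basis vector, and hence the scalar, rigidly, so carrying out one simultaneous normalization for all $\xi\in H_2$ requires the two corner isomorphisms $\mathbb{F}^{\sigma_1}[H_1]\to\mathbb{F}^{\sigma_2}[H_2]$ to be chosen compatibly. This is the point at which the hypothesis $\mathsf{T}^{\mathsf{G}}(\mathfrak{B}_1)\subseteq\mathsf{T}^{\mathsf{G}}(\mathfrak{B}_2)$, via the cocycle relation it imposes between $\sigma_1$ and $\sigma_2$ (cf. Proposition \ref{4.04} and Lemma \ref{4.09}), must be brought to bear, and I expect the careful scalar bookkeeping there to be the genuinely delicate step.
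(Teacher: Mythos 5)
Your structural setup (corners, primitive homogeneous idempotents, counting the nonzero $e_a$, the case $k_1=1\Rightarrow k_2=1$) is sound as far as it goes, and is cleaner than the paper's route for those coarse facts. But the actual content of the lemma is the exact scalar values in \textit{i)}--\textit{iii)}: coefficient $1$ on the entire first row and on the neutral first column, and the precise $\lambda$'s forced on the diagonal. This is exactly what you do not prove: you end by saying you would ``normalize the first row and the neutral first column to scalar $1$'' and that you ``expect the careful scalar bookkeeping'' to work, which leaves the essential step as an expectation. Moreover, the tools you put on the table cannot carry out that normalization. Pre- or post-composing with conjugation by a homogeneous degree-$e$ diagonal unit $\sum_j d_j E_{jj}\tilde\eta_e$ multiplies the whole block $(1,i)$ by the single constant $d_1d_i^{-1}$ (uniform in $\xi$, since $\sigma_2(e,\xi)=\sigma_2(e,e)$ by Proposition \ref{1.07}), so it can adjust one scalar per block, not the whole family $c_{1,i,\xi}$, $\xi\in H_2$; and rescaling the basis of a twisted group algebra, $\eta_\zeta\mapsto\mu(\zeta)\eta_\zeta$, is an algebra automorphism only when $\mu$ is a character. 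The same gap already infects your $k_2=1$ case: the corner isomorphism only gives $\varphi(E_{rr}\eta_{\zeta_\xi})=c_\xi\tilde\eta_\xi$ with $c_\xi\in\mathbb{F}^*$, so ``$\psi:=\varphi$'' does not have the stated property; forcing all $c_\xi=1$ for a fixed graded map would require $\sigma_1(\zeta_\xi,\zeta_{\xi'})=\sigma_2(\xi,\xi')$ on the nose, i.e.\ equality of cocycles rather than mere cohomology, which your hypotheses do not provide.

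This deferred step is precisely where the paper's proof lives, and the paper handles it by a different mechanism than normalization: it does not modify a given epimorphism at all, but manufactures a new one through the quotients $\mathbb{F}\langle X^{\mathsf{G}}\rangle/\mathsf{T}^{\mathsf{G}}(\mathfrak{B}_1)$ and $\mathbb{F}\langle X^{\mathsf{G}}\rangle/\mathsf{T}^{\mathsf{G}}(\mathfrak{B}_2)$. Concretely, it chooses monomial matrix units $E_{r_{i,\xi}s_{i,\xi}}\eta_{\zeta_{i,\xi}}$ and $E_{p_jq_j}\eta_{\mu_j}$ in $\mathfrak{B}_1$ of the correct degrees whose classes modulo $\ker(\varphi)$ are linearly independent, sends distinguished free variables to these on one side and exactly to $E_{1i}\tilde\eta_\xi$, $E_{j1}\tilde\eta_e$ on the other, and lets the induced map $\hat\varphi$ have prescribed values by construction; the identities $q_j=r_{i,\xi}$, $p_i=s_{i,\xi}$, the permutation $\alpha$, and items \textit{ii)}--\textit{iii)} then come out of the multiplicativity computation you anticipated. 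A secondary point: your conjugacy step also silently uses a graded Krull--Schmidt matching (the minimal graded right ideals $e_{\alpha(j)}\mathfrak{B}_2$ must be paired with the $f_j\mathfrak{B}_2$ by degree-preserving isomorphisms before a degree-$e$ conjugator exists), which is repairable but not in the paper. To complete your argument you would need either to reproduce a prescription-style construction like the paper's, or to prove directly that some graded epimorphism has block scalars identically $1$ on the first row and neutral first column; neither is in your text, so as it stands the proposal establishes the preimages only up to nonzero scalars and not the lemma.
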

\begin{proof}
Let us use the diagrams (\ref{4.14}) and (\ref{4.15}) from Lemma \ref{4.18} to construct the desired graded epimorphism. Constructing $\bar\varphi_{\mathfrak{B}_2}$, $\bar\varphi_{\mathfrak{B}_1}$, $\psi$ and $\pi$ similar to $\bar\varphi_{\mathfrak{B}}$, $\bar\varphi_{\mathfrak{A}}$, $\psi$ and $\pi$ from the proof of Lemma \ref{4.18}, respectively, we have that
\begin{equation}\nonumber
\mathfrak{B}_1 \stackrel{\bar\varphi_{\mathfrak{B}_1}^{-1}}{\longrightarrow} \displaystyle\frac{\mathbb{F}\langle X^{\mathsf{G}} \rangle}{\mathsf{T}^{\mathsf{G}}(\mathfrak{B}_1)} \stackrel{\pi}{\longrightarrow} \displaystyle\frac{\frac{\mathbb{F}\langle X^{\mathsf{G}} \rangle}{\mathsf{T}^{\mathsf{G}}(\mathfrak{B}_1)}}{\frac{\mathsf{T}^{\mathsf{G}}(\mathfrak{B}_2)}{\mathsf{T}^{\mathsf{G}}(\mathfrak{B}_1)}} \stackrel{\psi^{-1}}{\longrightarrow} \displaystyle\frac{\mathbb{F}\langle X^{\mathsf{G}} \rangle}{\mathsf{T}^{\mathsf{G}}(\mathfrak{B}_2)} \stackrel{\bar\varphi_{\mathfrak{B}_2}}{\longrightarrow} \mathfrak{B}_2 \ ,
\end{equation}
and so the maps $\varphi: \mathfrak{B}_1\rightarrow \mathfrak{B}_2$ defined by $\varphi \coloneqq \bar\varphi_{\mathfrak{B}_2}\circ\psi^{-1}\circ\pi\circ\bar\varphi_{\mathfrak{B}_1}^{-1}$ is a $\mathsf{G}$-graded epimorphism of $\mathsf{G}$-graded algebras.

Let us now construct another $\mathsf{G}$-graded epimorphism of $\mathfrak{B}_1$ in $\mathfrak{B}_2$ which satisfies the desired conclusions of the lemma.

First, assume $\mathfrak{B}_1$ with an elementary-canonical $\mathsf{G}$-grading defined by a $k_1$-tuple $\theta_1=(\theta_{11},\dots,\theta_{1k_1})\in \mathsf{G}^{k_1}$, and $\mathfrak{B}_2$ with an elementary-canonical $\mathsf{G}$-grading defined by a $k_2$-tuple $\theta_2=(\theta_{21},\dots,\theta_{2k_2})\in \mathsf{G}^{k_2}$. Observe that $\mathfrak{B}_1=\langle E_{1i}\eta_\zeta, E_{i1}\eta_e : i=2,\dots,k_1, \zeta\in H_1\rangle_\mathbb{F}$ and $\mathfrak{B}_2=\langle E_{1j}\tilde\eta_\xi, E_{j1}\tilde\eta_e : j=2,\dots,k_2, \xi\in H_2\rangle_\mathbb{F}$, where ``$\langle S \rangle_\mathbb{F}$'' means ``the subalgebra generated by the subset $S$''. Take a maps $f_2: X^\mathsf{G}\rightarrow \mathfrak{B}_2$ such that $f_2(X_{\varsigma})\subset(\mathfrak{B}_2)_\varsigma$ for any $\varsigma\in \mathsf{G}$, and $f_2$ satisfies $x_i^{(\theta_{21}^{-1}\xi\theta_{2i})}\mapsto E_{1i}\tilde\eta_\xi$ and $x_{k_2+j}^{(\theta_{2j}^{-1}\theta_{21})}\mapsto E_{j1}\tilde\eta_e $, for any $\xi\in H_2$ and $i,j\in\{2,\dots,k_2\}$. By the universal property of $\mathbb{F}\langle X^\mathsf{G} \rangle$, there exists a $\mathsf{G}$-graded homomorphism $\varphi_2: \mathbb{F}\langle X^\mathsf{G} \rangle \rightarrow \mathfrak{B}_2$ which extends $f_2$. Observe that $\varphi_2$ is surjective.

On the other hand, by first part of this proof, because $\varphi$ is a graded epimorphism, there exist homogeneous $a_{1,2,\xi},\dots,a_{1,k_2,\xi},a_{2,1,e},\dots,a_{k_2, 1,e}\in\mathfrak{B}_1$, $\xi\in H_2$, such that $\varphi(a_{1,i,\xi})=E_{1i}\tilde\eta_\xi$ and $\varphi(a_{j,1,e})=E_{j1}\tilde\eta_e$, for any $\xi\in H_2$ and $i,j=2,\dots,k_2$. As the set $\{E_{1i}\tilde\eta_\xi, E_{j1}\tilde\eta_e : i,j=2,\dots,k_2, \xi\in H_2\}\subset\mathfrak{B}_2$ is linearly independent over $\mathbb{F}$, we have that the subset $\{a_{1,i,\xi}+\mathsf{ker}(\varphi), a_{j,1,e}+\mathsf{ker}(\varphi): i,j=2,\dots,k_2, \xi\in H_2\}$ of $\mathfrak{B}_1/\mathsf{ker}(\varphi)$ is linearly independent over $\mathbb{F}$. From this, for each $\xi\in H_2$ and $i,j=2,\dots,k_2$, there exist $r_{i,\xi},s_{i,\xi},p_j,q_j\in \{1,\dots,k_1\}$ and $\zeta_{i,\xi}, \mu_j \in H_1$ such that $\mathsf{deg}_\mathsf{G}(E_{r_{i,\xi} s_{i,\xi}}\eta_{\zeta_{i,\xi}})=\mathsf{deg}_\mathsf{G}(E_{1 i}\tilde\eta_{\xi})$, $\mathsf{deg}_\mathsf{G}(E_{p_j q_j}\eta_{\mu_j})=\mathsf{deg}_\mathsf{G}(E_{j 1}\tilde\eta_e)$, and $\{E_{r_{i,\xi} s_{i,\xi}}\eta_{\zeta_{i,\xi}}+\mathsf{ker}(\varphi),E_{p_j q_j}\eta_{\mu_j}+\mathsf{ker}(\varphi): i,j=2,\dots,k_2 , \xi\in H_2 \}\subset\mathfrak{B}_1/\mathsf{ker}(\varphi)$ is linearly independent over $\mathbb{F}$. Now, take a maps $f_1: X^\mathsf{G}\rightarrow \mathfrak{B}_1$ such that $f_1(X_{\varsigma})\subset(\mathfrak{B}_1)_\varsigma$ for any $\varsigma\in \mathsf{G}$, and $f_1$ satisfies $x_i^{(\theta_{21}^{-1}\xi\theta_{2i})}\mapsto E_{r_{i,\xi} s_{i,\xi}}\eta_{\zeta_{i,\xi}}$ and $x_{k_2+j}^{(\theta_{2j}^{-1}\theta_{21})}\mapsto E_{p_j q_j}\eta_{\mu_j} $, for any $\xi\in H_2$ and $i,j\in\{2,\dots,k_2\}$, and $x_{2k_2+t}^{(\theta_{11}^{-1}\zeta\theta_{1t})}\mapsto E_{1t}\eta_\zeta$ and $x_{2k_2+k_1+t}^{(\theta_{1t}^{-1}\theta_{11})}\mapsto E_{t1}\eta_e $, for any $\zeta\in H_1$ and $t\in\{2,\dots,k_1\}$. By the universal property of $\mathbb{F}\langle X^\mathsf{G} \rangle$, there exists a $\mathsf{G}$-graded homomorphism $\varphi_1: \mathbb{F}\langle X^\mathsf{G} \rangle \rightarrow \mathfrak{B}_1$ which extends $f_1$. Observe that $\varphi_1$ is surjective.

Finally, as in the diagram (\ref{4.15}) (and using Lemma \ref{4.13}), let $\bar\varphi_1: \displaystyle\frac{\mathbb{F}\langle X^{\mathsf{G}} \rangle}{\mathsf{T}^{\mathsf{G}}(\mathfrak{B}_1)}\rightarrow \mathfrak{B}_1$, $\bar\varphi_2: \displaystyle\frac{\mathbb{F}\langle X^{\mathsf{G}} \rangle}{\mathsf{T}^{\mathsf{G}}(\mathfrak{B}_2)}\rightarrow \mathfrak{B}_2$, and $\psi: \displaystyle\frac{\mathbb{F}\langle X^{\mathsf{G}} \rangle}{\mathsf{T}^{\mathsf{G}}(\mathfrak{B}_2)}\rightarrow \displaystyle\frac{\frac{\mathbb{F}\langle X^{\mathsf{G}} \rangle}{\mathsf{T}^{\mathsf{G}}(\mathfrak{B}_1)}}{\frac{\mathsf{T}^{\mathsf{G}}(\mathfrak{B}_2)}{\mathsf{T}^{\mathsf{G}}(\mathfrak{B}_1)}}$ be $\mathsf{G}$-graded isomorphisms that satisfy $\bar\varphi_1(a+\mathsf{T}^{\mathsf{G}}(\mathfrak{B}_1))=\varphi_1(a)$, $\bar\varphi_2(a+\mathsf{T}^{\mathsf{G}}(\mathfrak{B}_2))=\varphi_2(a)$, and $\psi(a+\mathsf{T}^{\mathsf{G}}(\mathfrak{B}_2))=(a+\mathsf{T}^{\mathsf{G}}(\mathfrak{B}_1))+ \frac{\mathsf{T}^{\mathsf{G}}(\mathfrak{B}_2)}{\mathsf{T}^{\mathsf{G}}(\mathfrak{B}_1)}$ for any $a\in \mathbb{F}\langle X^{\mathsf{G}} \rangle$, which are well defined because $\mathsf{ker}(\varphi_1)=\mathsf{T}^\mathsf{G}(\mathfrak{B}_1)$, $\mathsf{ker}(\varphi_2)=\mathsf{T}^\mathsf{G}(\mathfrak{B}_2)$ and $\mathsf{T}^\mathsf{G}(\mathfrak{B}_1)\subseteq\mathsf{T}^\mathsf{G}(\mathfrak{B}_2)$, and let $\pi:\displaystyle\frac{\mathbb{F}\langle X^{\mathsf{G}} \rangle}{\mathsf{T}^{\mathsf{G}}(\mathfrak{B}_1)}\rightarrow\displaystyle\frac{\frac{\mathbb{F}\langle X^{\mathsf{G}} \rangle}{\mathsf{T}^{\mathsf{G}}(\mathfrak{B}_1)}}{\frac{\mathsf{T}^{\mathsf{G}}(\mathfrak{B}_2)}{\mathsf{T}^{\mathsf{G}}(\mathfrak{B}_1)}}$ be the projection homomorphism. Let $\hat\varphi: \mathfrak{B}_1\rightarrow \mathfrak{B}_2$ be the maps defined by $\hat\varphi \coloneqq \bar\varphi_2\circ\psi^{-1}\circ\pi\circ\bar\varphi_1^{-1}$. By this construction, $\hat\varphi$ is a $\mathsf{G}$-graded epimorphism of $\mathsf{G}$-graded algebras.

By definitions of the maps $\hat\varphi$, $\varphi_1$ and $\varphi_2$, it is not difficult to see that $\hat\varphi(E_{p_jq_j}\eta_{\mu_j})=E_{j1}\tilde\eta_e$ and $\hat\varphi(E_{r_{i,\xi} s_{i,\xi}}\eta_{\zeta_{i,\xi}})=E_{1i}\tilde\eta_\xi$, for any $\xi\in H_2$ and $i,j=2,\dots,k_2$. Consequently, for any $\xi\in H_2$ and $i,j=1,\dots,k_2$, we have that 
\begin{equation}\nonumber
	\begin{split}
E_{ji}\tilde\eta_\xi &= (\sigma_2(e,\xi))^{-1} (E_{j1}\tilde\eta_e) (E_{1i}\tilde\eta_\xi)=(\sigma_2(e,\xi))^{-1}\hat\varphi(E_{p_j q_j}\eta_{\mu_j}) \hat\varphi(E_{r_{i,\xi} s_{i,\xi}}\eta_{\zeta_{i,\xi}}) \\
	&=(\sigma_2(e,\xi))^{-1}\hat\varphi(E_{p_j q_j}\eta_{\mu_j} \cdot E_{r_{i,\xi} s_{i,\xi}}\eta_{\zeta_{i,\xi}}) \ .
	\end{split}
\end{equation}
From this, since $E_{ji}\tilde\eta_\xi \neq 0$, we must have that $q_j=r_{i,\xi}$ for all $i,j=2,\dots,k_2$ and $\xi\in H_2$. And hence, $E_{ji}\tilde\eta_\xi=(\sigma_2(e,\xi))^{-1}\sigma_1(\mu_j,\zeta_{i,\xi})\hat\varphi(E_{p_j s_{i,\xi}}\eta_{\mu_j\zeta_{i,\xi}})$. In addition, as $(E_{ii}\tilde\eta_\xi)^2=\sigma_2(\xi,\xi) E_{ii}\tilde\eta_{\xi^2}\neq0$, we must have $p_i=s_{i,\xi}$ for all $i=2,\dots,k_2$ and $\xi\in H_2$, and so $E_{ii}\tilde\eta_\xi=(\sigma_2(e,\xi))^{-1}\sigma_1(\mu_i,\zeta_{i,\xi})\hat\varphi(E_{p_i p_i}\eta_{\mu_i\zeta_{i,\xi}})$ for $i=2,\dots,k_2$ and $\xi\in H_2$; and 
\begin{equation}\nonumber
	\begin{split}
	E_{11}\tilde\eta_\xi &=(\sigma_2(\xi,e))^{-1}(E_{12}\tilde\eta_\xi) (E_{21}\tilde\eta_e)=(\sigma_2(\xi,e))^{-1} \hat\varphi(E_{r_{2,\xi} s_{2,\xi}}\eta_{\zeta_{2,\xi}})  \hat\varphi(E_{p_2 q_2}\eta_{\mu_2}) \\
	&=(\sigma_2(\xi,e))^{-1} \sigma_1(\zeta_{2,\xi},\mu_2) \hat\varphi(E_{q_2 q_2} \eta_{\zeta_{2,\xi}\mu_2}) 
	\ .
	\end{split}
\end{equation}
because $p_2=s_{2,\xi}$ and $q_2=r_{2,\xi}$ for any $\xi\in H_2$. From this, when $\xi=e$, it is easy to see that $E_{11}\tilde\eta_e=(\sigma_2(e,e))^{-1} \sigma_1(\mu_2^{-1},\mu_2) \hat\varphi(E_{q_2 q_2} \eta_e)$, where $\zeta_{2,e}=\mu_2^{-1}$ and $\sigma_1(\mu_2^{-1},\mu_2)=\sigma_1(e,e)$; and for $i=2,\dots,k_2$, $E_{ii}\tilde\eta_e=(\sigma_2(e,e))^{-1}\sigma_1(\mu_i,\mu_i^{-1})\hat\varphi(E_{p_i p_i}\eta_e)$, where $\mu_i^{-1}=\zeta_{i,e}$ and $\sigma_1(\mu_i,\mu_i^{-1})=\sigma_1(e,e)$. Consequently $p_2\neq q_2$, and so there exists $\alpha\in Sym(k_1)$ such that $\hat\varphi(E_{\alpha(i)\alpha(i)}\eta_e) = \sigma_1(e,e)(\sigma_2(e,e))^{-1}E_{ii}\tilde\eta_e$. Therefore, the first part of the result follows.

To prove the last part of the lemma, suppose first that $k_1=1$. Hence, $\mathfrak{B}_1=\mathbb{F}^{\sigma_1}[H_1]$ is a division $\mathsf{G}$-graded algebra. By the first part of this proof, we have that $\mathfrak{B}_1/\mathsf{ker}(\varphi)\cong_\mathsf{G}\mathfrak{B}_2$. Consequently, $\mathfrak{B}_2$ is a graded division algebra, and so $k_2=1$, otherwise $E_{22}\tilde\eta_e$ should be an invertible matrix in $\mathfrak{B}_2$. Now, suppose that $k_2=1$. Thus, $\mathfrak{B}_2=\mathbb{F}^{\sigma_2}[H_2]$ is a division $\mathsf{G}$-graded algebra. Consider a map $f_{2'}: X^{\mathsf{G}}\rightarrow \mathfrak{B}_2$ such that $f_{2'}(X_{\varsigma})\subset(\mathfrak{B}_2)_{\varsigma}$ for any $\varsigma\in \mathsf{G}$, and $f_{2'}$ satisfies $f_{2'}(x_1^{(\xi)})=\tilde\eta_\xi$ for any $\xi\in H_2$. In turn, since $\varphi$ is a graded epimorphism, for each $\xi\in H_2$, there exist $r_\xi,s_\xi\in\{1,\dots,k_1\}$ and $\zeta_\xi\in H_1$ such that $E_{r_\xi s_\xi}\eta_{\zeta_\xi}\in (\mathfrak{B}_1)_\xi$. Take a maps $f_{1'}: X^\mathsf{G}\rightarrow \mathfrak{B}_1$ such that $f_{1'}(X_{\varsigma})\subset(\mathfrak{B}_1)_\varsigma$ for any $\varsigma\in \mathsf{G}$, and $f_{1'}$ satisfies $x_1^{(\xi)}\mapsto E_{r_{\xi} s_{\xi}}\eta_{\zeta_{\xi}}$ for any $\xi\in H_2$, and $x_{t}^{(\theta_{11}^{-1}\zeta\theta_{1t})}\mapsto E_{1t}\eta_\zeta$ and $x_{k_1+t}^{(\theta_{1t}^{-1}\theta_{11})}\mapsto E_{t1}\eta_e $, for any $\zeta\in H_1$ and $t\in\{2,\dots,k_1\}$. As discussed previously, in consequence of the universal property of $\mathbb{F}\langle X^\mathsf{G} \rangle$, there exist $\mathsf{G}$-graded isomorphisms $\bar\varphi_\mathfrak{2'}: \frac{\mathbb{F}\langle X^{\mathsf{G}} \rangle}{\mathsf{T}^{\mathsf{G}}(\mathfrak{B}_2)}\rightarrow \mathfrak{B}_2$ and $\bar\varphi_{1'}: \frac{\mathbb{F}\langle X^{\mathsf{G}} \rangle}{\mathsf{T}^{\mathsf{G}}(\mathfrak{B}_1)}\rightarrow \mathfrak{B}_1$ which satisfy $\bar\varphi_\mathfrak{2'}(x_1^{(\xi)}+\mathsf{T}^{\mathsf{G}}(\mathfrak{B}_2))=\tilde\eta_\xi$ for any $\xi\in H_2$ and $\bar\varphi_\mathfrak{1'}(x_1^{(\xi)}+\mathsf{T}^{\mathsf{G}}(\mathfrak{B}_1))=E_{r_{\xi} s_{\xi}}\eta_{\zeta_{\xi}}$ for any $\xi\in H_2$. Let $\breve\varphi: \mathfrak{B}_1\rightarrow \mathfrak{B}_2$ be the maps defined by $\breve\varphi \coloneqq \bar\varphi_{2'}\circ\psi^{-1}\circ\pi\circ\bar\varphi_{1'}^{-1}$. It is easy to see that $\breve\varphi$ is a $\mathsf{G}$-graded epimorphism of $\mathsf{G}$-graded algebras such that $\breve\varphi(E_{r_{\xi} s_{\xi}}\eta_{\zeta_{\xi}})=\tilde\eta_\xi$. Note that $r_{\xi}=s_{\hat\xi}$ for any $\xi,{\hat\xi}\in H_2$, since $\breve\varphi(E_{r_{\xi} s_{\xi}}\eta_{\zeta_{\xi}}\cdot E_{r_{\hat\xi} s_{\hat\xi}}\eta_{\zeta_{\hat\xi}})=\breve\varphi(E_{r_{\xi} s_{\xi}}\eta_{\zeta_{\xi}})\cdot\breve\varphi(E_{r_{\hat\xi} s_{\hat\xi}}\eta_{\zeta_{\hat\xi}})=\tilde\eta_\xi\cdot \tilde\eta_{\hat\xi}=\sigma_2(\xi,{\hat\xi})\tilde\eta_{\xi{\hat\xi}}\neq0$. The result follows.
\end{proof}

By Lemma \ref{4.08}, it is easy to conclude that $\varphi(E_{\alpha(i)\alpha(j)}\eta_{\zeta_{i,1,e}\zeta_{1,j,\xi}}) = \frac{\sigma_2(e,e)}{\sigma_1(\zeta_{i,1,e}, \zeta_{1,j,\xi})}  E_{ij}\tilde\eta_\xi$ for any $i,j=2,\dots,k_2$ and $\xi\in H_2$. Furthermore, in Lemmas \ref{4.18} and \ref{4.08}, the hypothesis ``$\mathsf{T}^{\mathsf{G}}(\mathfrak{A})\subseteq \mathsf{T}^{\mathsf{G}}(\mathfrak{B})$'' can be replaced by ``$\mathfrak{B}\stackrel{\mathsf{G}}{\hookrightarrow} \mathfrak{A}$'', since the latter implies the former.

\begin{lemma}\label{4.01}
Let $\mathbb{F}$ be a field and $\mathsf{G}$ a group. For $i=1,2$, consider $\mathfrak{B}_i=M_{k_i}(\mathbb{F}^{\sigma_i}[H_i])$ the ${k_i}\times {k_i}$ matrix algebra over $\mathbb{F}^{\sigma_i}[H_i]$ with an elementary-canonical $\mathsf{G}$-grading defined by a $k_i$-tuple $\theta_i=(\theta_{i1},\dots,\theta_{i{k_i}})\in\mathsf{G}^{k_i}$, where $H_i$ is a subgroup of $\mathsf{G}$, $\sigma_i\in \mathcal{H}^2(H_i,\mathbb{F}^*)$ is a $2$-cocycle. Suppose that all $\theta_{ij}$'s belong to normalizer of $H_i$ in $\mathsf{G}$. If $\mathfrak{B}_1 \stackrel{\mathsf{G}}{\hookrightarrow} \mathfrak{B}_2$, then $k_1\leq k_2$, $\mathbb{F}^{\sigma_1}[H_1] \stackrel{\mathsf{G}}{\hookrightarrow} \mathbb{F}^{\sigma_2}[H_2]$ and there exist $\alpha\in Sym(k_2)$, $\delta\in \mathsf{N}_{\mathsf{G}}(H_2)$ and $\xi_1,\dots,\xi_{k_2}\in H_2$ such that $\theta_{1j}=\delta\xi_{j}\theta_{2 \alpha(j)}$ for all $j=1,\dots,k_1$.
\end{lemma}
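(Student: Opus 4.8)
The plan is to reduce the statement to the explicit graded epimorphism furnished by Lemma~\ref{4.08} and then read off the three assertions from the homogeneous degrees and the structure constants of that map. Since a $\mathsf{G}$-imbedding realises $\mathfrak{B}_1$ as a $\mathsf{G}$-graded subalgebra of $\mathfrak{B}_2$, every graded identity of $\mathfrak{B}_2$ is satisfied by $\mathfrak{B}_1$, that is $\mathsf{T}^{\mathsf{G}}(\mathfrak{B}_2)\subseteq\mathsf{T}^{\mathsf{G}}(\mathfrak{B}_1)$ (the implication recorded right after Lemma~\ref{4.08}). I would then invoke Lemma~\ref{4.08} with the two algebras interchanged, obtaining a $\mathsf{G}$-graded epimorphism $\varphi\colon\mathfrak{B}_2\to\mathfrak{B}_1$ together with a permutation $\alpha\in Sym(k_2)$ whose values $\alpha(1),\dots,\alpha(k_1)$ are pairwise distinct elements of $\{1,\dots,k_2\}$; this injectivity already forces $k_1\le k_2$, the first assertion. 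The degenerate cases are disposed of by the last paragraph of Lemma~\ref{4.08}: if $k_1=1$ then $\mathfrak{B}_1$ is graded--division and the whole claim collapses to the second assertion, while if $k_2=1$ both algebras are graded--division and everything reduces to the Imbedding Lemma~\ref{4.16}.

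Assuming $k_1,k_2>1$, I would next exploit that $\varphi$ preserves degrees. By part~i) of Lemma~\ref{4.08} one has $\varphi(E_{\alpha(1)\alpha(j)}\eta_{\zeta_{1,j,e}})=E_{1j}\eta_e$ for $j=2,\dots,k_1$, and comparing the degrees in the two elementary--canonical gradings gives $\theta_{2\alpha(1)}^{-1}\zeta_{1,j,e}\theta_{2\alpha(j)}=\theta_{11}^{-1}\theta_{1j}$, whence $\theta_{1j}=\delta\,\xi_j\,\theta_{2\alpha(j)}$ with $\delta:=\theta_{11}\theta_{2\alpha(1)}^{-1}$ and $\xi_j:=\zeta_{1,j,e}\in H_2$ (the case $j=1$ being $\xi_1=e$). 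This is the third assertion, once we know $\delta\in\mathsf{N}_{\mathsf{G}}(H_2)$. Reading the diagonal relation of part~ii) for $\xi\in H_1$ gives $\theta_{2\alpha(1)}^{-1}\zeta_{1,1,\xi}\theta_{2\alpha(1)}=\theta_{11}^{-1}\xi\theta_{11}$, i.e. $\zeta_{1,1,\xi}=\delta^{-1}\xi\delta$ with $\zeta_{1,1,\xi}\in H_2$; thus $\delta^{-1}H_1\delta\subseteq H_2$, and if $\delta\in\mathsf{N}_{\mathsf{G}}(H_2)$ then $\xi=\delta\,\zeta_{1,1,\xi}\,\delta^{-1}\in H_2$, giving $H_1\le H_2$.

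For the second assertion I would compare the multiplicative structures on the corners. The map $\xi\mapsto\zeta_{1,1,\xi}=\delta^{-1}\xi\delta$ is a group monomorphism $H_1\hookrightarrow H_2$, so applying the homomorphism $\varphi$ to the product $(E_{\alpha(1)\alpha(1)}\eta_{\zeta_{1,1,\xi}})(E_{\alpha(1)\alpha(1)}\eta_{\zeta_{1,1,\xi'}})$ and using part~ii) of Lemma~\ref{4.08} yields $\sigma_1(\xi,\xi')=\tfrac{\lambda_{1,1,\xi\xi'}}{\lambda_{1,1,\xi}\lambda_{1,1,\xi'}}\,\sigma_2(\delta^{-1}\xi\delta,\delta^{-1}\xi'\delta)$, where the first factor is a $2$-coboundary on $H_1$. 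Hence $[\sigma_1]$ equals the class on $H_1$ of the pull-back of $\sigma_2$ along conjugation by $\delta$. Identifying this with the plain restriction $[\sigma_2]_{H_1}$ then lets me invoke Lemma~\ref{4.16} to conclude $(H_1,\sigma_1)\preceq(H_2,\sigma_2)$ and hence $\mathbb{F}^{\sigma_1}[H_1]\stackrel{\mathsf{G}}{\hookrightarrow}\mathbb{F}^{\sigma_2}[H_2]$.

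The hard part, on which both the third assertion and the last identification rest, is the control of the conjugating element $\delta=\theta_{11}\theta_{2\alpha(1)}^{-1}$: from the supports one only gets $\delta^{-1}H_1\delta\subseteq H_2$, whereas the proof needs $\delta\in\mathsf{N}_{\mathsf{G}}(H_2)$ (for the tuple relation and for $H_1\le H_2$) and, more finely, that conjugation by $\delta$ induces the trivial map after restriction to $H_1$ (for the cocycle class). Since $\theta_{11}$ only normalises $H_1$, this does not follow formally, and the key is to use the residual freedom in the construction --- reordering the blocks of $\mathfrak{B}_2$ via Step~1 of Lemma~\ref{4.09} and adjusting by elements of $H_2$ via Step~2, both of which leave $\sigma_2$ unchanged --- to align the distinguished block so that $\delta$ may be taken inside $H_2$. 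Once the conjugator lies in $H_2$, Lemma~\ref{2.03} (the conjugation action of $H_2$ on $\mathcal{H}^2(H_2,\mathbb{F}^*)$ is trivial) produces a $2$-coboundary on $H_2$ that restricts to one on $H_1$, forcing $[\sigma_1]=[\sigma_2]_{H_1}$ and simultaneously placing $\delta$ in $\mathsf{N}_{\mathsf{G}}(H_2)$. This alignment step is where I expect essentially all of the work to lie.
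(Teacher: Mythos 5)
Your opening matches the paper's proof step for step: pass to $\mathsf{T}^{\mathsf{G}}(\mathfrak{B}_2)\subseteq\mathsf{T}^{\mathsf{G}}(\mathfrak{B}_1)$, apply Lemma \ref{4.08} with the roles of the two algebras interchanged to get a graded epimorphism $\varphi:\mathfrak{B}_2\to\mathfrak{B}_1$, read $k_1\le k_2$ off the diagonal data, dispose of $k_1=1$ or $k_2=1$ by the last part of that lemma, extract the tuple relation with $\delta=\theta_{11}\theta_{2\alpha(1)}^{-1}$ from the degrees, and get the cocycle relation from multiplicativity on the $(1,1)$-corner. However, two points in the remainder go wrong. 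First, you make $H_1\le H_2$ contingent on $\delta\in\mathsf{N}_{\mathsf{G}}(H_2)$ (``this does not follow formally''); in fact it follows immediately from the hypotheses, and this is exactly how the paper argues: in the degree equation $\theta_{11}^{-1}\xi\theta_{11}=\theta_{2\alpha(1)}^{-1}\zeta_{1,1,\xi}\theta_{2\alpha(1)}$ the right-hand side lies in $H_2$ because $\zeta_{1,1,\xi}\in H_2$ and $\theta_{2\alpha(1)}\in\mathsf{N}_{\mathsf{G}}(H_2)$, while the left-hand side sweeps out all of $H_1$ because $\theta_{11}\in\mathsf{N}_{\mathsf{G}}(H_1)$; hence $H_1=\theta_{11}^{-1}H_1\theta_{11}\subseteq H_2$, with no reference to $\delta$ at all.

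Second --- and this is the genuine gap --- the ``alignment'' step on which you place essentially all the weight cannot work as described. You propose to bring $\delta$ into $H_2$ using only the cocycle-preserving moves of Lemma \ref{4.09} (Step 1, permutations; Step 2, left multiplication of the entries of $\theta_2$ by elements of $H_2$, and dually of $\theta_1$ by elements of $H_1$). Those moves replace $\delta=\theta_{11}\theta_{2\alpha(1)}^{-1}$ by $\zeta\delta\xi$ with $\zeta\in H_1$ and $\xi\in H_2$, so $\delta$ never leaves the double coset $H_1\delta H_2$; since $H_1\le H_2$, that double coset meets $H_2$ (or even has the chance of producing a conjugator acting trivially on cohomology) only if $\delta\in H_2$ to begin with, which is precisely what is unknown. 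The alignment is therefore circular. The paper resolves the conjugator problem by a different mechanism: the last part of Lemma \ref{4.09} (Step 3), i.e.\ conjugation by $\theta_{11}^{-1}$, respectively $\theta_{2\alpha(1)}^{-1}$, which is legitimate because these elements normalize $H_1$, respectively $H_2$. This move does replace the cocycles by conjugate ones, but it allows one to assume $\theta_{11}=\theta_{2\alpha(1)}=e$, whence $\delta=e\in\mathsf{N}_{\mathsf{G}}(H_2)$ and $\zeta_{1,1,\xi}=\xi$, and then the corner computation gives $[\sigma_1]=[\sigma_2]_{H_1}$ with no conjugation twist appearing. Without this (or some other argument actually producing $\delta\in\mathsf{N}_{\mathsf{G}}(H_2)$), your argument only yields $[\sigma_1]=[\sigma_2^{\delta}]_{H_1}$, where $\sigma_2^{\delta}(\xi,\zeta)=\sigma_2(\delta^{-1}\xi\delta,\delta^{-1}\zeta\delta)$, together with the tuple relation for an uncontrolled $\delta$; it does not reach the stated conclusion.
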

\begin{proof}
Let us first suppose $k_1,k_2>1$. Since $\mathfrak{B}_1 \stackrel{\mathsf{G}}{\hookrightarrow} \mathfrak{B}_2$, we have that $\mathsf{T}^{\mathsf{G}}(\mathfrak{B}_2)\subseteq\mathsf{T}^{\mathsf{G}}(\mathfrak{B}_1)$. Hence, by Lemma \ref{4.08}, there exists a $\mathsf{G}$-graded epimorphism $\varphi: \mathfrak{B}_2 \rightarrow \mathfrak{B}_1$ satisfying the 3 conditions below:
	\begin{enumerate}[{\it i)}]
	\item $\varphi(E_{\alpha(1)\alpha(i)}\tilde\eta_{\zeta_{1,i,\xi}}) = E_{1i}\eta_\xi$ and $\varphi(E_{\alpha(i)\alpha(1)}\tilde\eta_{\zeta_{i,1,e}}) = E_{i1}\eta_e$, for any $i=2,\dots,k_1$ and $\xi\in H_1$;
	\item $\varphi(E_{\alpha(j)\alpha(j)}\tilde\eta_{\zeta_{j,j,\xi}}) = \lambda_{j,j,\xi} E_{jj}\eta_\xi$  for any $\xi\in H_1$ and $j=1,2,\dots,k_1$, where $\zeta_{1,1,\xi}=\zeta_{1,2,\xi}\zeta_{2,1,e}$, $\lambda_{1,1,\xi}=\frac{\sigma_1(e,e)}{\sigma_2(\zeta_{1,2,\xi}, \zeta_{2,1,e})}$, $\zeta_{i,i,\xi}=\zeta_{i,1,e}\zeta_{1,i,\xi}$ and $\lambda_{i,i,\xi}=\frac{\sigma_1(e,e)}{\sigma_2(\zeta_{i,1,e}, \zeta_{1,i,\xi})}$, for $i=2,\dots,k_1$;
	\item $\varphi(E_{\alpha(l)\alpha(l)}\tilde\eta_e) = \frac{\sigma_1(e,e)}{\sigma_2(e,e)} E_{ll}\eta_e$ for all $l=1,2,\dots,k_1$,
	\end{enumerate}
for some $\alpha\in Sym(k_2)$ and $\zeta_{1,i,\xi}, \zeta_{i,1,e}\in H_2$ which depend on $\varphi$.

The item $iii)$ above ensures that $k_2\geq k_1$. By item $ii)$ above, we have that $\varphi(E_{\alpha(1)\alpha(1)}\tilde\eta_{\zeta_{1,1,\xi}}) = \frac{\sigma_1(e,e)}{\sigma_2(\zeta_{1,2,\xi}, \zeta_{2,1,e})} E_{11}\eta_\xi$ for any $\xi\in H_1$, and so $\theta_{11}^{-1}\xi\theta_{11}=\theta_{2\alpha(1)}^{-1}\zeta_{1,1,\xi}\theta_{2\alpha(1)}\in H_2$ because $\theta_{2\alpha(1)}\in\mathsf{N}_{\mathsf{G}}(H_2)$ and $\zeta_{1,1,\xi}=\zeta_{1,2,\xi}\zeta_{2,1,e}\in H_2$. Consequently, $\theta_{11}^{-1} H_1 \theta_{11}\subseteq H_2$. As $\theta_{11}\in\mathsf{N}_{\mathsf{G}}(H_1)$, we have that $\theta_{11}^{-1}H_1 \theta_{11}=H_1$, and thus, $H_1\subseteq H_2$.

On the other hand, by Lemma \ref{4.09} and its proof, we can assume, without loss of generality, that $\theta_{2 \alpha(1)}=\theta_{1 1}=e$, and so $\zeta_{1,1,\xi}=\xi$, i.e. $\varphi(E_{\alpha(1)\alpha(1)}\tilde\eta_\xi) = \lambda_{1,1,\xi} E_{11}\eta_\xi$ for any $\xi\in H_1$, where $\lambda_{1,1,\xi}=\frac{\sigma_1(e,e)}{\sigma_2(\zeta_{1,2,\xi}, \zeta_{2,1,e})}$ (see item {\it ii)} above). Hence, for any $\xi,{\hat\xi}\in H_1$, we have that
\begin{equation}\nonumber
	\begin{split}
	\varphi(E_{\alpha(1)\alpha(1)}\tilde\eta_\xi \cdot E_{\alpha(1)\alpha(1)}\tilde\eta_{\hat\xi}) &=\sigma_2(\xi,{\hat\xi})\varphi(E_{\alpha(1)\alpha(1)}\tilde\eta_{\xi{\hat\xi}})=\sigma_2(\xi,{\hat\xi})\lambda_{1,1,\xi{\hat\xi}} E_{11}{\eta}_{\xi{\hat\xi}} \\
	&=\sigma_2(\xi,{\hat\xi})\lambda_{1,1,\xi{\hat\xi}}\sigma_1(\xi,{\hat\xi})^{-1}\lambda_{1,1,{\hat\xi}}^{-1}\lambda_{1,1,\xi}^{-1}\left(\lambda_{1,1,\xi} E_{11}{\eta}_{\xi}\right)\left(\lambda_{1,1,{\hat\xi}}\ E_{11}{\eta}_{{\hat\xi}}\right) \\
	&=\frac{\lambda_{1,1,\xi{\hat\xi}}}{\lambda_{1,1,\xi}\lambda_{1,1,{\hat\xi}}} \cdot \frac{\sigma_2(\xi,{\hat\xi})}{\sigma_1(\xi,{\hat\xi})} \varphi(E_{\alpha(1)\alpha(1)}\tilde\eta_\xi) \cdot \varphi(E_{\alpha(1)\alpha(1)}\tilde\eta_{\hat\xi}) \ .
	\end{split}
\end{equation}
As $\varphi$ is a homomorphism of algebras, it follows that $\displaystyle\frac{\sigma_2(\xi,{\hat\xi})}{\sigma_1(\xi,{\hat\xi})}=\frac{\lambda_{1,1,\xi}\lambda_{1,1,{\hat\xi}}}{\lambda_{1,1,\xi{\hat\xi}}}$ for any $\xi,{\hat\xi}\in H_1$, i.e. $[\sigma_2]_{H_1}=[\sigma_1]$. Thus, by Lemma \ref{4.16}, we conclude that $\mathbb{F}^{\sigma_1}[H_1]\stackrel{\mathsf{G}}{\hookrightarrow} \mathbb{F}^{\sigma_2}[H_2]$. Furthermore, by item $i)$ above, we have that $\varphi(E_{\alpha(i)\alpha(1)}\tilde\eta_{\zeta_{i,1,e}}) = E_{i1}\eta_e$ for all $i=2,\dots,k_1$. Hence, it follows that $\theta_{2\alpha(i)}^{-1}\zeta_{i,1,e}\theta_{2\alpha(1)} = \theta_{1i}^{-1}\theta_{11}$, and so $\theta_{1i}=\theta_{11}\theta_{2\alpha(1)}^{-1}\zeta_{i,1,e}^{-1}\theta_{2\alpha(i)}$ for all $i=2,\dots,k_1$. Put $\delta=\theta_{11}\theta_{2\alpha(1)}^{-1}=e\in\mathsf{N}_{\mathsf{G}}(H_2)$ and $\xi_i=\zeta_{i,1,e}^{-1}\in H_1$ for all $i=2,\dots,k_1$, and taking $\xi_1=e$, we have that $\theta_{1i}=\delta\xi_i\theta_{2 \alpha(i)}$ for all $i=1,2,\dots,k_1$. Therefore, the result is true when $k_1,k_2>1$.

Finally, again by Lemma \ref{4.08}, if $k_2=1$, then $k_1=1$, and so the result is obvious. Assume that $k_1=1$. Once more by Lemma \ref{4.08}, there exists a $\mathsf{G}$-graded epimorphism $\psi: \mathfrak{B}_2 \rightarrow \mathfrak{B}_1$ such that $\psi(E_{r r}\tilde\eta_{\zeta_\xi}) = \eta_\xi$, $\xi\in H_1$, for some $\zeta_\xi\in H_2$ and $r\in\{1,\dots,k_2\}$. In this case, the result follows similarly to 
the reasoning above.
\end{proof}
%

A generalization of Proposition \ref{4.04} is given below. The next theorem is an (almost) immediate consequence from Lemmas \ref{4.09} and \ref{4.01} and Proposition \ref{4.04}.

\begin{theorem}\label{4.02}
Let $\mathbb{F}$ be a field and $\mathsf{G}$ a group. For $i=1,2$, consider $\mathfrak{B}_i=M_{k_i}(\mathbb{F}^{\sigma_i}[H_i])$ the ${k_i}\times {k_i}$ matrix algebra over $\mathbb{F}^{\sigma_i}[H_i]$ with an elementary-canonical $\mathsf{G}$-grading defined by a $k_i$-tuple $\theta_i=(\theta_{i1},\dots,\theta_{i{k_i}})\in\mathsf{G}^{k_i}$, where $H_i$ is a subgroup of $\mathsf{G}$, $\sigma_i\in \mathcal{H}^2(H_i,\mathbb{F}^*)$ is a $2$-cocycle. Suppose that $\theta_{i1},\dots,\theta_{ik_i}\in \mathsf{N}_{\mathsf{G}}(H_i)$, for $i=1,2$. Then $\mathfrak{B}_1 \cong_\mathsf{G} \mathfrak{B}_2$ iff $k_1=k_2$, $\mathbb{F}^{\sigma_1}[H_1]\cong_\mathsf{G} \mathbb{F}^{\sigma_2}[H_2]$, and $\Lambda^{H_1}_{\theta_1}=\Lambda^{H_2}_{\theta_2}$.
\end{theorem}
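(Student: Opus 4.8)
The plan is to derive both implications from the two already-available technical tools, Lemma \ref{4.01} (which governs imbeddings of these matrix algebras) and Lemma \ref{4.09} (which governs the effect of replacing $\theta$ by a member of $\Lambda^H_\theta$), glued together by Proposition \ref{4.04} and Lemma \ref{4.16}. The ``only if'' direction should be essentially formal; the real content lives in the ``if'' direction.

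For the forward implication I would first note that a $\mathsf{G}$-graded isomorphism $\mathfrak{B}_1\cong_\mathsf{G}\mathfrak{B}_2$, together with its inverse, furnishes two $\mathsf{G}$-imbeddings $\mathfrak{B}_1\stackrel{\mathsf{G}}{\hookrightarrow}\mathfrak{B}_2$ and $\mathfrak{B}_2\stackrel{\mathsf{G}}{\hookrightarrow}\mathfrak{B}_1$. Applying Lemma \ref{4.01} to each yields $k_1\le k_2$ and $k_2\le k_1$, hence $k_1=k_2=:k$; it also yields $\mathbb{F}^{\sigma_1}[H_1]\stackrel{\mathsf{G}}{\hookrightarrow}\mathbb{F}^{\sigma_2}[H_2]$ together with the reverse imbedding, so that Lemma \ref{4.16} forces $(H_1,\sigma_1)\sim(H_2,\sigma_2)$ and hence, by Proposition \ref{4.04}, $\mathbb{F}^{\sigma_1}[H_1]\cong_\mathsf{G}\mathbb{F}^{\sigma_2}[H_2]$ with $H_1=H_2=:H$. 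Finally, the tuple relation produced by Lemma \ref{4.01}, namely $\theta_{1j}=\delta\xi_j\theta_{2\alpha(j)}$ for all $j$ with $\delta\in\mathsf{N}_\mathsf{G}(H)$, $\xi_j\in H$ and $\alpha\in Sym(k)$, says precisely that $\theta_1\in\Lambda^{H}_{\theta_2}$; the closure property of $\Lambda$ recorded before Lemma \ref{4.09} then upgrades this to $\Lambda^{H_1}_{\theta_1}=\Lambda^{H_2}_{\theta_2}$, settling this direction.

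For the converse I would argue as follows. Proposition \ref{4.04} turns $\mathbb{F}^{\sigma_1}[H_1]\cong_\mathsf{G}\mathbb{F}^{\sigma_2}[H_2]$ into $H_1=H_2=:H$ together with a graded isomorphism $\psi:\mathbb{F}^{\sigma_1}[H]\to\mathbb{F}^{\sigma_2}[H]$, while $\Lambda^{H_1}_{\theta_1}=\Lambda^{H_2}_{\theta_2}$ gives $\theta_2\in\Lambda^H_{\theta_1}$. The key elementary observation is a transport principle: since $\psi$ is graded, $\psi(\eta_\zeta)\in\mathsf{span}_\mathbb{F}\{\tilde\eta_\zeta\}$, so the entrywise map $E_{ij}\eta_\zeta\mapsto E_{ij}\psi(\eta_\zeta)$ is an algebra isomorphism that leaves every degree $\theta_i^{-1}\zeta\theta_j$ unchanged; hence, for any fixed tuple $\theta$, the algebras $M_k(\mathbb{F}^{\sigma_1}[H])$ and $M_k(\mathbb{F}^{\sigma_2}[H])$ carrying the elementary-canonical grading defined by $\theta$ are $\mathsf{G}$-graded isomorphic. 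Applying this with the tuple $\theta_2$ reduces the goal to showing that $M_k(\mathbb{F}^{\sigma_1}[H])$ with $\theta_1$ is $\mathsf{G}$-graded isomorphic to $M_k(\mathbb{F}^{\sigma_1}[H])$ with $\theta_2$. Here I would invoke Lemma \ref{4.09} for $\tilde\theta=\theta_2\in\Lambda^H_{\theta_1}$: in its first alternative $\theta_1$ and $\theta_2$ define equivalent gradings and we are done, and in its second alternative we obtain a cocycle $\rho$ with $\mathfrak{B}_1\cong_\mathsf{G} M_k(\mathbb{F}^\rho[H])$ graded by $\theta_2$, after which (again by the transport principle and Proposition \ref{4.04}) it remains to establish $[\rho]=[\sigma_1]$.

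The main obstacle is exactly this last equality. The cocycle $\rho$ produced by the conjugation step of Lemma \ref{4.09} is the image of $\sigma_1$ under the isomorphism $c(\delta)^*$ induced by conjugation by the element $\delta\in\mathsf{N}_\mathsf{G}(H)$ that appears in the passage from $\theta_1$ to $\theta_2$, so $[\rho]$ is the $\delta$-conjugate of $[\sigma_1]$ in $\mathcal{H}^2(H,\mathbb{F}^*)$, and $[\rho]=[\sigma_1]$ holds precisely when this conjugation fixes $[\sigma_1]$. For inner $\delta$ this is automatic by Lemma \ref{2.03}, and it is automatic whenever $H$ is central in $\mathsf{G}$ (Lemma \ref{2.04}) or $\mathsf{G}$ is abelian; in general an outer $\delta$ may move $[\sigma_1]$. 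Thus the genuinely delicate part of the ``if'' direction is to show that, under the standing hypothesis that all entries $\theta_{ij}$ lie in $\mathsf{N}_\mathsf{G}(H)$, one may choose the representative $\delta$ to act trivially on $[\sigma_1]$, or equivalently that only the first alternative of Lemma \ref{4.09} can arise. I would therefore concentrate the effort on controlling the conjugation action of $\mathsf{N}_\mathsf{G}(H)$ on $\mathcal{H}^2(H,\mathbb{F}^*)$, since this is the one place where the statement does not reduce to a purely formal consequence of Lemmas \ref{4.01} and \ref{4.09} and Proposition \ref{4.04}.
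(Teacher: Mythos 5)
Your ``only if'' direction is complete and is exactly the paper's argument: two applications of Lemma \ref{4.01}, combined with Lemma \ref{4.16}, Proposition \ref{4.04}, and the closure property $\tilde\theta\in\Lambda^H_\theta\Rightarrow\Lambda^H_{\tilde\theta}=\Lambda^H_\theta$, give $k_1=k_2$, $\mathbb{F}^{\sigma_1}[H_1]\cong_\mathsf{G}\mathbb{F}^{\sigma_2}[H_2]$ and $\Lambda^{H_1}_{\theta_1}=\Lambda^{H_2}_{\theta_2}$. Your ``if'' direction also follows the paper's route: align the tuples via Lemma \ref{4.09} and then transport along the coboundary (your entrywise map $E_{ij}\eta_\zeta\mapsto E_{ij}\psi(\eta_\zeta)$ is the paper's explicit map $E_{ij}\eta_\xi\mapsto f(\xi)E_{ij}\tilde\eta_\xi$). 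But you stop at the equality $[\rho]=[\sigma_1]$, where $\rho$ is the $\delta$-conjugate cocycle produced by the third step of Lemma \ref{4.09}; so, as written, the proposal does not prove the converse.

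The important point, however, is that the hole you flag is precisely the point at which the paper's own proof is defective. The paper writes ``we can assume, without loss of generality, that $\theta_1=\theta_2$'' and then keeps using $\sigma_2=\varrho\sigma_1$; but in the second alternative of Lemma \ref{4.09} the cocycle $\sigma_2$ has been replaced by its conjugate $\rho$, and the hypothesis $[\sigma_1]=[\sigma_2]$ gives no information about $[\sigma_1]$ versus $[\rho]$. Moreover, no argument can close this gap, because the statement fails under its stated hypotheses whenever the conjugation action of $\mathsf{N}_\mathsf{G}(H)$ on $\mathcal{H}^2(H,\mathbb{F}^*)$ is non-trivial. Concretely, take $\mathbb{F}=\mathbb{C}$, $H=\mathbb{Z}_3\times\mathbb{Z}_3$, $\mathsf{G}=H\rtimes\langle\delta\rangle$ with $\delta^2=e$ acting by $(i,j)\mapsto(i,-j)$, and $\sigma((i,j),(k,l))=\omega^{jk}$ with $\omega$ a primitive cube root of unity, so that $[\sigma]$ generates $\mathcal{H}^2(H,\mathbb{C}^*)\cong\mathbb{Z}_3$. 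With $k_1=k_2=1$, $\sigma_1=\sigma_2=\sigma$, $\theta_1=(e)$, $\theta_2=(\delta)$, all entries lie in $\mathsf{N}_\mathsf{G}(H)=\mathsf{G}$, the two twisted group algebras coincide, and $\Lambda^{H}_{\theta_1}=\Lambda^{H}_{\theta_2}=\mathsf{G}$; yet $\mathfrak{B}_2\cong_\mathsf{G}\mathbb{C}^{\sigma^\delta}[H]$ with its canonical grading, where $\sigma^\delta(x,y)=\sigma(\delta x\delta^{-1},\delta y\delta^{-1})$ has inverted alternating bicharacter, so $[\sigma^\delta]=[\sigma]^{-1}\neq[\sigma]$ and $\mathfrak{B}_1\not\cong_\mathsf{G}\mathfrak{B}_2$ by Proposition \ref{4.04}. (Every element carrying $\theta_1$ to $\theta_2$ lies in the coset $\delta H$, and each of them inverts $[\sigma]$, so no cleverer choice of $\delta$ exists.) Hence your closing plan of ``controlling the conjugation action'' can only succeed by adding a hypothesis forcing that action to fix the relevant classes --- $\mathsf{G}$ abelian, $H$ central (Lemma \ref{2.04}), or outright triviality of the $\mathsf{N}_\mathsf{G}(H)$-action on $\mathcal{H}^2(H,\mathbb{F}^*)$ --- under which both your argument and the paper's go through; without such a hypothesis it is the theorem itself, not merely the proof, that breaks.
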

\begin{proof}
First, assume that $\mathfrak{B}_1 \cong_{\mathsf{G}} \mathfrak{B}_2$. Hence, $\mathfrak{B}_1 \stackrel{\mathsf{G}}{\hookrightarrow} \mathfrak{B}_2$ and $\mathfrak{B}_2 \stackrel{\mathsf{G}}{\hookrightarrow} \mathfrak{B}_1$. By Lemmas \ref{4.09} and \ref{4.01} and Proposition \ref{4.04}, it is immediate that $k_1=k_2$, $\mathbb{F}^{\sigma_1}[H_1]\cong_\mathsf{G} \mathbb{F}^{\sigma_2}[H_2]$, and $\Lambda^{H_1}_{\theta_1}=\Lambda^{H_2}_{\theta_2}$.

Reciprocally, suppose that $k_1=k_2$, $\mathbb{F}^{\sigma_1}[H_1]\cong_\mathsf{G} \mathbb{F}^{\sigma_2}[H_2]$, and $\Lambda^{H_1}_{\theta_1}=\Lambda^{H_2}_{\theta_2}$. Hence, $H_1=H_2$, $\sigma_2=\varrho\sigma_1$ for some $2$-coboundary $\varrho$ on $H_1$, and $\theta_1\in\Lambda^{H_2}_{\theta_2}$ (and also $\theta_2\in\Lambda^{H_1}_{\theta_1}$). Put $k=k_1=k_2$ and $H=H_1=H_2$. By Lemma \ref{4.09}, we have that $\theta_1$ and $\theta_2$ define equivalent elementary-canonical $\mathsf{G}$-gradings on $M_{k}(\mathbb{F}^{\sigma_2}[H])$ or there exists a $2$-cocycle $\rho$ on $H$ such that $M_{k}(\mathbb{F}^{\sigma_2}[H])\cong_{\mathsf{G}} M_k(\mathbb{F}^\rho[H])$, where $M_k(\mathbb{F}^\rho[H])$ is graded with the elementary-canonical $\mathsf{G}$-grading defined by $\tilde\theta$. Thus, we can assume, without loss of generality, that $\theta_1=\theta_2$, namely equal to $\theta$. Hence, $M_{k}(\mathbb{F}^{\sigma_1}[H])$ and $M_{k}(\mathbb{F}^{\sigma_2}[H])$ are graded by an elementary-canonical $\mathsf{G}$-gradings defined by $\theta$. Now, as $\varrho$ is a $2$-coboundary on $H$, let $f:H\rightarrow\mathbb{F}^*$ be a map such that $\varrho(\xi,\zeta)=\frac{f(\xi\zeta)}{f(\xi)f(\zeta)}$, for any $\xi,\zeta\in H$. Consider the linear map $\psi: M_{k}(\mathbb{F}^{\sigma_1}[H]) \rightarrow M_{k}(\mathbb{F}^{\sigma_2}[H])$ which extends the relation $E_{ij}\eta_\xi\mapsto f(\xi)E_{ij}\tilde\eta_\xi$ for any $i,j=1,\dots,k$ and $\xi\in H$. Observe that $\psi$ is a bijection. Since $\sigma_2=\varrho\sigma_1$, it is easy to prove that $\psi$ is a $\mathsf{G}$-graded homomorphism of algebras (see the proof of Proposition \ref{4.04}). The result follows.
\end{proof}

Under the conditions of Theorem \ref{4.02}, and also using Proposition \ref{4.04}, we have that $M_{k_1}(\mathbb{F}^{\sigma_1}[H_1])\cong_\mathsf{G} M_{k_2}(\mathbb{F}^{\sigma_2}[H_2])$ iff $k_1=k_2$, $H_1=H_2$, $[\sigma_1]=[\sigma_2]$ in $\mathcal{H}^2(H_1,\mathbb{F}^*)$, and $\theta_{1}=\left(\delta\xi_1\theta_{2\alpha(1)}, \dots, \delta\xi_{k_1}\theta_{2\alpha(k_1)} \right)$ for some $\delta\in\mathsf{N}_\mathsf{G}(H_1)$, $\xi_1,\dots,\xi_{k_1}\in H_1$, $\alpha\in Sym(k_1)$.

The main result of this section is presented below. It is a generalization of Imbedding Lemma (Lemma \ref{4.16}).

\begin{theorem}[Imbedding]\label{1.39}
Let $\mathbb{F}$ be a field and $\mathsf{G}$ a group. For $i=1,2$, consider $\mathfrak{B}_i=M_{k_i}(\mathbb{F}^{\sigma_i}[H_i])$ the ${k_i}\times {k_i}$ matrix algebra over $\mathbb{F}^{\sigma_i}[H_i]$ with an elementary-canonical $\mathsf{G}$-grading defined by a $k_i$-tuple $\theta_i=(\theta_{i1},\dots,\theta_{i{k_i}})\in\mathsf{G}^{k_i}$, where $H_i$ is a subgroup of $\mathsf{G}$, $\sigma_i\in \mathcal{H}^2(H_i,\mathbb{F}^*)$ is a $2$-cocycle. Suppose that $\theta_{i1},\dots,\theta_{ik_i}\in \mathsf{N}_{\mathsf{G}}(H_i)$, for $i=1,2$. Then $\mathfrak{B}_1 \stackrel{\mathsf{G}}{\hookrightarrow} \mathfrak{B}_2$ iff $k_1\leq k_2$, $\mathbb{F}^{\sigma_1}[H_1] \stackrel{\mathsf{G}}{\hookrightarrow} \mathbb{F}^{\sigma_2}[H_2]$ and there exist $\alpha\in Sym(k_2)$, $\delta\in \mathsf{N}_{\mathsf{G}}(H_2)$ and $\xi_1,\dots,\xi_{k_1}\in H_2$ such that $\theta_{1j}=\delta\xi_{j}\theta_{2 \alpha(j)}$ for all $j=1,\dots,k_1$.
\end{theorem}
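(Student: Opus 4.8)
The statement is an equivalence whose forward implication is already in hand: under the standing hypothesis that every $\theta_{ij}$ lies in $\mathsf{N}_\mathsf{G}(H_i)$, the assumption $\mathfrak{B}_1 \stackrel{\mathsf{G}}{\hookrightarrow} \mathfrak{B}_2$ gives precisely the three listed conclusions by Lemma \ref{4.01}. So the plan is to devote the whole argument to the converse, assuming $k_1 \le k_2$, $\mathbb{F}^{\sigma_1}[H_1] \stackrel{\mathsf{G}}{\hookrightarrow} \mathbb{F}^{\sigma_2}[H_2]$, and the existence of $\alpha \in Sym(k_2)$, $\delta \in \mathsf{N}_\mathsf{G}(H_2)$ and $\xi_1, \dots, \xi_{k_1} \in H_2$ with $\theta_{1j} = \delta\xi_j\theta_{2\alpha(j)}$, and then exhibiting a graded imbedding explicitly.

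First I would normalize the target. Since $\delta$, the $\xi_j$ and the $\theta_{2i}$ all lie in $\mathsf{N}_\mathsf{G}(H_2)$, Lemma \ref{4.09} (Steps $1$--$3$) lets me replace $\mathfrak{B}_2$ by a $\mathsf{G}$-graded isomorphic algebra $M_{k_2}(\mathbb{F}^{\rho}[H_2])$ whose defining tuple $\theta_2'$ satisfies $\theta_{2j}' = \theta_{1j}$ for $1 \le j \le k_1$: apply the permutation $\alpha$ (Step $1$), conjugate by $\delta$ (Step $3$, which is what turns $\sigma_2$ into $\rho$), and left-translate the first $k_1$ coordinates by the $\xi_j$ (Step $2$). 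Because a graded isomorphism carries graded imbeddings to graded imbeddings, it suffices to imbed $\mathfrak{B}_1$ into this normalized copy, where the defining tuples now literally agree on the first $k_1$ positions.

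On the normalized copy the construction is the expected one. The first two hypotheses give $(H_1,\sigma_1) \preceq (H_2,\sigma_2)$, so by Lemma \ref{4.16} (and Proposition \ref{4.04}) there is a graded imbedding $\psi$ of the division part $\mathbb{F}^{\sigma_1}[H_1]$ into the division part of the target. I would apply $\psi$ entrywise and place the result in the upper-left $k_1\times k_1$ block, sending $E_{ij}\eta_\zeta \mapsto c_{ij}(\zeta)\,E_{ij}\tilde\eta_{w(\zeta)}$ and all other blocks to zero. The exponent $w(\zeta)$ and the scalars $c_{ij}(\zeta)$ are forced by degree matching; since $\theta_{2j}' = \theta_{1j}$ on the first $k_1$ coordinates, the matrix-unit part is automatically homogeneous of the correct degree. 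Injectivity is then immediate, because distinct homogeneous basis elements go to nonzero scalar multiples of distinct basis elements, and the block placement is manifestly multiplicative; so the only genuine content is that the entrywise rule is an algebra homomorphism.

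The main obstacle is exactly this last verification, which is a bookkeeping of $2$-cocycles. Degree matching forces the division-part rule to be $\eta_\zeta \mapsto c(\zeta)\,\tilde\eta_{(\delta\xi_1)^{-1}\zeta(\delta\xi_1)}$ rather than the naive $\eta_\zeta \mapsto c(\zeta)\,\tilde\eta_\zeta$, so one must check that the conjugated cocycle $(\zeta,\chi) \mapsto \sigma_2((\delta\xi_1)^{-1}\zeta(\delta\xi_1),(\delta\xi_1)^{-1}\chi(\delta\xi_1))$ is cohomologous on $H_1$ to $\sigma_1$; the existence of a suitable $c$ is equivalent to this. I expect to settle it using the conjugation action studied in \S\ref{resGH}: the conjugating element lies in $\mathsf{N}_\mathsf{G}(H_2)$ by the normalizer hypotheses, and Lemma \ref{2.03} says inner conjugation acts trivially on $\mathcal{H}^2(H_2,\mathbb{F}^*)$, so restricting to $H_1$ preserves the class $[(\sigma_2)_{H_1}] = [\sigma_1]$ and produces the coboundary from which the scalars $c_{ij}(\zeta)$ are read off (this step is automatic when $\mathsf{G}$ is abelian, since then the conjugation is trivial outright, which is the case relevant to the paper's applications). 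Once the $(i,j)=(1,1)$ corner relation is secured, the relations $\theta_{1j} = \delta\xi_j\theta_{2\alpha(j)}$ propagate it to every block by the same multiplicativity already exploited in the proof of Lemma \ref{4.01}, completing the homomorphism check and hence the converse.
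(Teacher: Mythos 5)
You follow the same route as the paper's proof: the forward implication is delegated to Lemma \ref{4.01}, and for the converse you normalize the defining tuple of $\mathfrak{B}_2$ via Lemma \ref{4.09} and then write down an explicit upper-left-block monomorphism whose scalars come from a coboundary witnessing $[\sigma_1]=[\sigma_2]_{H_1}$ --- exactly what the paper does with the map $E_{ij}\eta_\xi\mapsto f(\xi)E_{ij}\tilde\eta_\xi$ modeled on Theorem \ref{4.02}. You are moreover right, and more careful than the paper, that this normalization is not free: Step 3 of Lemma \ref{4.09} replaces the target cocycle $\sigma_2$ by the conjugate $\sigma_2^{\delta}\colon(\zeta,\chi)\mapsto\sigma_2(\delta^{-1}\zeta\delta,\delta^{-1}\chi\delta)$, so the multiplicativity of the block map requires $[\sigma_1]=[(\sigma_2^{\delta\xi_1})_{H_1}]$ and not merely the hypothesis $[\sigma_1]=[\sigma_2]_{H_1}$.

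The gap is in how you discharge this requirement. Lemma \ref{2.03} gives triviality of the conjugation action on $\mathcal{H}^2(H_2,\mathbb{F}^*)$ only for conjugation by elements of $H_2$ itself; your $\delta$ lies in $\mathsf{N}_\mathsf{G}(H_2)$ but need not lie in $H_2$, and the induced action of $\mathsf{N}_\mathsf{G}(H_2)/H_2$ on $\mathcal{H}^2(H_2,\mathbb{F}^*)$ is nontrivial in general (Lemma \ref{2.04} needs centrality, not normality). So the $\xi_1$-part of the conjugation is harmless, but the $\delta$-part is not, and the hole cannot be filled: take $\mathbb{F}=\mathbb{C}$, $H_1=H_2=H=\mathbb{Z}_3\times\mathbb{Z}_3$, $\mathsf{G}=H\rtimes\langle\delta\rangle$ with $\delta$ of order two acting by interchanging the coordinates, $\sigma_1=\sigma_2=\sigma$ a cocycle whose class generates $\mathcal{H}^2(H,\mathbb{C}^*)\cong\mathbb{Z}_3$, $k_1=k_2=1$, $\theta_1=(\delta)$, $\theta_2=(e)$. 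All hypotheses and all three right-hand conditions of Theorem \ref{1.39} hold (with $\alpha=\mathrm{id}$, $\xi_1=e$ and this $\delta$), yet conjugation by $\delta$ inverts the alternating bicharacter $(\zeta,\chi)\mapsto\sigma(\zeta,\chi)\sigma(\chi,\zeta)^{-1}$, whence $[\sigma^{\delta}]=[\sigma]^{-1}\neq[\sigma]$; since any nonzero graded homomorphism $\mathfrak{B}_1\rightarrow\mathfrak{B}_2$ must send $\eta_\zeta$ to a nonzero multiple of $\tilde\eta_{\delta^{-1}\zeta\delta}$ and would therefore exhibit $\sigma\cdot(\sigma^{\delta})^{-1}$ as a coboundary, we get $\mathfrak{B}_1\stackrel{\mathsf{G}}{\not\hookrightarrow}\mathfrak{B}_2$. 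So your argument is incomplete at exactly the point you flagged, and it can only be completed under an additional hypothesis forcing this conjugation action to be trivial (for instance $\mathsf{G}$ abelian, your parenthetical fallback, or more generally $[\sigma_2^{\delta}]=[\sigma_2]$). For what it is worth, the paper's own proof passes over the identical point in silence when, after the Lemma \ref{4.09} normalization, it declares the map $E_{ij}\eta_\xi\mapsto f(\xi)E_{ij}\tilde\eta_{\xi}$ ``easy to see'' to be a graded monomorphism; the difficulty you uncovered is therefore a genuine one for the statement itself, not an artifact of your route.
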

\begin{proof}
By Lemma \ref{4.01}, if $M_{k_1}(\mathbb{F}^{\sigma_1}[H_1]) \stackrel{\mathsf{G}}{\hookrightarrow} M_{k_2}(\mathbb{F}^{\sigma_2}[H_2])$, then $k_1\leq k_2$, $\mathbb{F}^{\sigma_1}[H_1] \stackrel{\mathsf{G}}{\hookrightarrow} \mathbb{F}^{\sigma_2}[H_2]$ and there exist $\alpha\in Sym(k_2)$, $\delta\in \mathsf{N}_{\mathsf{G}}(H_2)$ and $\xi_1,\dots,\xi_{k_1}\in H_2$ such that $\theta_{1j}=\delta\xi_{j}\theta_{2 \alpha(j)}$ for all $j=1,\dots,k_1$.

	On the other hand, suppose that $k_1\leq k_2$, $\mathbb{F}^{\sigma_1}[H_1] \stackrel{\mathsf{G}}{\hookrightarrow} \mathbb{F}^{\sigma_2}[H_2]$ and there exist $\alpha\in Sym(k_2)$, $\delta\in \mathsf{N}_{\mathsf{G}}(H_2)$ and $\xi_1,\dots,\xi_{k_1}\in H_2$ such that $\theta_{1j}=\delta\xi_{j}\theta_{2 \alpha(j)}$ for $j=1,\dots,k_1$. From this, since $[\sigma_1]=[\sigma_2]_{H_1}$, there exists a map $f:H_1\rightarrow \mathbb{F}^*$ such that $\rho(\xi,\zeta)=\frac{f(\xi)f(\zeta)}{f(\xi\zeta)}$ defines a $2$-coboundary $\rho$ which satisfies $\sigma_1=\rho\sigma_2$, and by Lemma \ref{4.09}, we can assume, without restriction of generality, that $\theta_2=(\theta_{11},\dots,\theta_{1k_1},\theta_{2(k_1+1)},\theta_{2{k_2}})$. The homomorphism $\psi$ from the proof of Theorem \ref{4.02} gives us an idea of what a graded monomorphism from $\mathfrak{B}_1$ to $\mathfrak{B}_2$ should be like. Therefore, it is easy to see that the linear maps $\psi:\mathfrak{B}_1\rightarrow \mathfrak{B}_2$ which extends the map $E_{ij}\eta_\xi \mapsto f(\xi) E_{ij}\tilde\eta_{\xi}$ is a $\mathsf{G}$-graded monomorphism of algebras. The result follows.
\end{proof}

Let us conclude this section by presenting two results which combine Theorem \ref{1.39} and Corollary \ref{2.09}. The second of these is for simple graded algebras of finite dimension, the first not necessarily. Furthermore, the second result generalizes Corollary \ref{4.05}.

\begin{corollary}\label{4.07}
Let $\mathbb{F}$ be an algebraically closed field, $\mathsf{G}$ a group, $H$ a subgroup of $\mathsf{G}$, $\sigma\in \mathcal{H}^2(H,\mathbb{F}^*)$ a $2$-cocycle, and $\mathfrak{B}=M_{k}(\mathbb{F}^{\sigma}[H])$ the ${k}\times {k}$ matrix algebra with an elementary-canonical $\mathsf{G}$-grading defined by a $k$-tuple $\theta\in\mathsf{G}^{k}$. For any subgroup $N$ of $\mathsf{G}$ such that $H$ is a central subgroup of $N$ of finite index, there exists a $2$-cocycle $\rho\in\mathcal{Z}^2(N, \mathbb{F}^*)$ such that 
\begin{equation}\nonumber
   \begin{tikzcd}
\mathfrak{B} \arrow[r,hook]{l}{\mathsf{G}}\arrow[d,hook]{l}{\rotatebox{-90}{$\mathsf{G}$}} &
M_{k}(\mathbb{F}^{\rho}[N]) \arrow[d,hook]{l}{\rotatebox{-90}{$\mathsf{G}$}} \\
M_{t}(\mathbb{F}^{\sigma}[H]) \arrow[r,hook]{l}{\mathsf{G}} & M_{t}(\mathbb{F}^{\rho}[N])  
    \end{tikzcd}
\end{equation}
for all integer $t\geq k$, where $M_{k}(\mathbb{F}^{\rho}[N])$ is graded with the elementary-canonical $\mathsf{G}$-grading defined by $\theta$, and $M_{t}(\mathbb{F}^{\sigma}[H])$ and $M_{t}(\mathbb{F}^{\rho}[N])$ are graded with elementary-canonical $\mathsf{G}$-gradings defined by some $t$-tuple $\phi\in\mathsf{G}^t$.
\end{corollary}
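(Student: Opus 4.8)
The plan is to first manufacture the cocycle $\rho$ on $N$ by extending $\sigma$, then to realize each of the four arrows of the square as an explicit map, and finally to check that the square commutes. I expect everything except the extension step to be a routine entrywise/block verification.

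First I would apply Corollary \ref{2.09} with $N$ in the role of $\mathsf{G}$ and $\mathsf{M}=\mathbb{F}^*$ equipped with the trivial action. Since $H$ is central in $N$, $[N:H]<\infty$, and $\mathbb{F}$ is algebraically closed (so $[N:H]$ is invertible in $\mathbb{F}^*$, exactly as invoked in the proof of Corollary \ref{4.05}), we obtain $\mathcal{H}^2(H,\mathbb{F}^*)=\mathsf{res}^N_H\left(\mathcal{H}^2(N,\mathbb{F}^*)\right)$. Hence there is a $2$-cocycle $\rho\in\mathcal{Z}^2(N,\mathbb{F}^*)$ with $[\rho_H]=[\sigma]$, that is, $(H,\sigma)\preceq(N,\rho)$. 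By the Imbedding Lemma (Lemma \ref{4.16}) this already gives $\mathbb{F}^{\sigma}[H]\stackrel{\mathsf{G}}{\hookrightarrow}\mathbb{F}^{\rho}[N]$; more precisely, reading off the map built in the proof of Proposition \ref{4.04}, I would fix $f\colon H\to\mathbb{F}^*$ with $f(\zeta\xi)\sigma(\zeta,\xi)=f(\zeta)f(\xi)\rho(\zeta,\xi)$ for all $\zeta,\xi\in H$ (such $f$ exists because $[\rho_H]=[\sigma]$), so that $\iota\colon\mathbb{F}^{\sigma}[H]\to\mathbb{F}^{\rho}[N]$, $\eta_\zeta\mapsto f(\zeta)\tilde\eta_\zeta$, is an injective, degree-preserving algebra homomorphism.

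Next I would fix a $t$-tuple $\phi=(\theta_1,\dots,\theta_k,\phi_{k+1},\dots,\phi_t)\in\mathsf{G}^t$ whose first $k$ entries coincide with $\theta$ (the remaining entries chosen arbitrarily), and grade both $M_t(\mathbb{F}^{\sigma}[H])$ and $M_t(\mathbb{F}^{\rho}[N])$ by $\phi$. The four arrows are then: (a) $\Phi_1\colon M_k(\mathbb{F}^{\sigma}[H])\to M_k(\mathbb{F}^{\rho}[N])$ applying $\iota$ entrywise; (b) $\Phi_2\colon M_k(\mathbb{F}^{\sigma}[H])\hookrightarrow M_t(\mathbb{F}^{\sigma}[H])$ and (c) $\Phi_3\colon M_k(\mathbb{F}^{\rho}[N])\hookrightarrow M_t(\mathbb{F}^{\rho}[N])$, the upper-left-block inclusions $A\mapsto\mathrm{diag}(A,0)$; and (d) $\Phi_4\colon M_t(\mathbb{F}^{\sigma}[H])\to M_t(\mathbb{F}^{\rho}[N])$ applying $\iota$ entrywise. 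Each is injective because $\iota$ is injective and the block inclusions are. Each is $\mathsf{G}$-graded: the entrywise maps preserve degrees because $\deg(E_{ij}\eta_\zeta)=\theta_i^{-1}\zeta\theta_j=\deg(E_{ij}\tilde\eta_\zeta)$, and the block inclusions preserve degrees because $\phi_i=\theta_i$ for $i\le k$. That $\Phi_1$ and $\Phi_4$ are algebra homomorphisms is the entrywise version of the computation in Proposition \ref{4.04}, using precisely $f(\zeta\xi)\sigma(\zeta,\xi)=f(\zeta)f(\xi)\rho(\zeta,\xi)$.

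Finally, commutativity of the square reduces to evaluating both composites on the spanning set $\{E_{ij}\eta_\zeta:\ 1\le i,j\le k,\ \zeta\in H\}$ of $\mathfrak{B}$. Along the top-then-right path one gets $E_{ij}\eta_\zeta\mapsto f(\zeta)E_{ij}\tilde\eta_\zeta\mapsto f(\zeta)E_{ij}\tilde\eta_\zeta$ in the upper-left block of $M_t(\mathbb{F}^{\rho}[N])$, while along the left-then-bottom path $\Phi_2$ leaves $E_{ij}\eta_\zeta$ unchanged (now in the block of $M_t(\mathbb{F}^{\sigma}[H])$) and $\Phi_4$ then produces the same $f(\zeta)E_{ij}\tilde\eta_\zeta$; hence $\Phi_3\circ\Phi_1=\Phi_4\circ\Phi_2$, and the diagram commutes. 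The only genuine obstacle I foresee is the extension step: one must be sure the hypotheses of Corollary \ref{2.09} (centrality of $H$ in $N$, finite index, and divisibility by $[N:H]$ in $\mathbb{F}^*$) are in force so that $\sigma$ really lifts to a cocycle $\rho$ on all of $N$; once $\rho$ and the resulting $\iota$ are in hand, the four maps and their compatibility are immediate.
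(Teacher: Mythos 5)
Your proof is correct, and its first half is exactly the paper's: the paper's entire proof of Corollary \ref{4.07} reads ``Corollary \ref{2.09} ensures the existence of $\rho\in\mathcal{Z}^2(N, \mathbb{F}^*)$ such that $\mathbb{F}^{\sigma}[H] \stackrel{\mathsf{G}}{\hookrightarrow} \mathbb{F}^{\rho}[N]$; therefore, the result follows from Theorem \ref{1.39}.'' Where you genuinely differ is the second half: instead of citing the Imbedding Theorem \ref{1.39} as a black box, you realize all four arrows concretely (entrywise application of $\iota\colon\eta_\zeta\mapsto f(\zeta)\tilde\eta_\zeta$ for the horizontal maps, upper-left block inclusions for the vertical ones, with $\phi$ chosen to extend $\theta$). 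This buys two things. First, Theorem \ref{1.39} carries the hypothesis $\theta_{i1},\dots,\theta_{ik_i}\in \mathsf{N}_{\mathsf{G}}(H_i)$, which Corollary \ref{4.07} does not impose on $\theta$ (nor on $\phi$); the paper's one-line citation is therefore not literally licensed, whereas your direct construction needs no normalizer condition at all, because the defining tuples coincide or extend one another, so the adjustment via Lemma \ref{4.09} --- the only step of the relevant direction of Theorem \ref{1.39} where the normalizer hypothesis is used --- is vacuous. Second, you verify that the square actually commutes, which the paper's proof never addresses (the diagram there is only asserted as existence of the four imbeddings). One caveat you share with the paper rather than resolve: both arguments hinge on applying Corollary \ref{2.09} to $\mathsf{M}=\mathbb{F}^*$, which requires reading ``$[N:H]$ invertible in $\mathbb{F}^*$'' as mere existence of $[N:H]$-th roots (the reading the paper adopts after Theorem \ref{2.07}), not as bijectivity of $x\mapsto x^{[N:H]}$ as the formal definition of invertibility demands; since you invoke that corollary exactly as the paper does, this does not distinguish the two proofs, but it is the one step neither argument makes airtight.
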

\begin{proof}
Corollary \ref{2.09} ensures the existence of $\rho\in\mathcal{Z}^2(N, \mathbb{F}^*)$ such that $\mathbb{F}^{\sigma}[H] \stackrel{\mathsf{G}}{\hookrightarrow} \mathbb{F}^{\rho}[N]$. Therefore, the result follows from Theorem \ref{1.39}.
\end{proof}

\begin{corollary}\label{4.10}
Let $\mathbb{F}$ be an algebraically closed field, $\mathsf{G}$ a group, and $\mathfrak{B}$ a finite dimensional algebra with a $\mathsf{G}$-grading $\Gamma$. Suppose that $\mathfrak{B}$ is simple graded. If there exists a finite subgroup $H$ of $\mathsf{G}$ such that $\mathsf{Supp}(\Gamma)$ is a central subset of $H$, then there exists a $2$-cocycle $\sigma\in\mathcal{Z}^2(H, \mathbb{F}^*)$ such that $\mathfrak{B}\stackrel{\mathsf{G}}{\hookrightarrow} M_k(\mathbb{F}^{\sigma}[H])$, for some $k\in\mathbb{N}$, where the $\mathsf{G}$-grading on $M_k(\mathbb{F}^{\sigma}[H])$ is the elementary-canonical $\mathsf{G}$-grading defined by some $k$-tuple $\theta\in \mathsf{G}^k$. In addition, if $H= H_1\subseteq H_2\subseteq\cdots\subseteq H_{n}\subseteq\mathsf{G}$ is a chain of subgroups of $\mathsf{G}$ such that $H_i$ is central in $H_{i+1}$ and $[H_{i+1}:H_i]< \infty$ for $i=1,\dots,n-1$, then there exist $2$-cocycles $\sigma_1\in \mathcal{H}^2(H_1,\mathbb{F}^*)$, $\dots$, $\sigma_n\in \mathcal{H}^2(H_n,\mathbb{F}^*)$ such that
\begin{equation}\nonumber
   \begin{tikzcd}
 \mathbb{F} \arrow[r,hook]{l}{\mathsf{G}} \arrow[d,hook]{l}{\rotatebox{-90}{$\mathsf{G}$}} &
\mathbb{F}^{\sigma_1}[H_1] \arrow[r,hook]{l}{\mathsf{G}} \arrow[d,hook]{l}{\rotatebox{-90}{$\mathsf{G}$}} &
 \cdots \arrow[r,hook]{l}{\mathsf{G}} &
\mathbb{F}^{\sigma_{n}}[H_{n}] \arrow[d,hook]{l}{\rotatebox{-90}{$\mathsf{G}$}} \\
M_{k_1}(\mathbb{F})\arrow[r,hook]{l}{\mathsf{G}} \arrow[bend right=60,dd,hook]{l}{\rotatebox{-90}{$\mathsf{G}$}} &
M_{k_1}(\mathbb{F}^{\sigma_1}[H_1]) \arrow[r,hook]{l}{\mathsf{G}} \arrow[d,hook]{l}{\rotatebox{-90}{$\mathsf{G}$}} &
 \cdots \arrow[r,hook]{l}{\mathsf{G}} &
M_{k_1}(\mathbb{F}^{\sigma_{n}}[H_{n}]) \arrow[d,hook]{l}{\rotatebox{-90}{$\mathsf{G}$}} \\
 \mathfrak{B} \arrow[r,hook]{l}{\mathsf{G}} &
M_{k}(\mathbb{F}^{\sigma_1}[H_1]) \arrow[r,hook]{l}{\mathsf{G}} \arrow[d,hook]{l}{\rotatebox{-90}{$\mathsf{G}$}} &
 \cdots \arrow[r,hook]{l}{\mathsf{G}} &
M_{k}(\mathbb{F}^{\sigma_{n}}[H_{n}]) \arrow[d,hook]{l}{\rotatebox{-90}{$\mathsf{G}$}} \\
M_{k_2}(\mathbb{F}) \arrow[r,hook]{l}{\mathsf{G}} &
M_{k_2}(\mathbb{F}^{\sigma_1}[H_1]) \arrow[r,hook]{l}{\mathsf{G}} &
 \cdots \arrow[r,hook]{l}{\mathsf{G}} &
M_{k_2}(\mathbb{F}^{\sigma_{n}}[H_{n}])
    \end{tikzcd}
\end{equation}
for all $1\leq k_1 \leq k \leq k_2$, where each $M_{l}(\mathbb{F}^{\sigma_i}[H_i])$, $l\in\{k,k_1,k_2\}$ and $i=1,\dots,n$, is graded with an elementary-canonical $\mathsf{G}$-grading defined by some $l$-tuple $\theta_l\in \mathsf{G}^{l}$.
\end{corollary}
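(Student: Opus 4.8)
The plan is to reduce everything to the structure theorem and then build the imbeddings one block at a time. First I would apply Lemma \ref{teosimpgradalg} to write $\mathfrak{B}\cong_{\mathsf{G}} M_k(\mathbb{F}^{\tau}[T])$, where $T$ is a finite subgroup of $\mathsf{G}$, $\tau\in\mathcal{Z}^2(T,\mathbb{F}^*)$, and the grading is the elementary-canonical one attached to a $k$-tuple $\theta=(\theta_1,\dots,\theta_k)\in\mathsf{G}^k$; thus $\mathsf{Supp}(\Gamma)=\{\theta_i^{-1}\zeta\theta_j:\zeta\in T,\ 1\le i,j\le k\}$. The goal of the first assertion is then to imbed $M_k(\mathbb{F}^{\tau}[T])$ into $M_k(\mathbb{F}^{\sigma}[H])$; by the Imbedding Theorem \ref{1.39} this amounts to producing $\sigma$ with $\mathbb{F}^{\tau}[T]\stackrel{\mathsf{G}}{\hookrightarrow}\mathbb{F}^{\sigma}[H]$ together with a compatible choice of defining tuples, and the only delicate point is to arrange the hypotheses of that theorem, namely that all entries of the tuples lie in the appropriate normalizers.

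Here the centrality of the support does the work. Since $\mathsf{Supp}(\Gamma)\subseteq \mathsf{Z}(H)$, taking $\zeta=e$ gives $\theta_1^{-1}\theta_i\in\mathsf{Z}(H)$ for every $i$, and taking $i=j$ gives $\theta_i^{-1}T\theta_i\subseteq\mathsf{Z}(H)$. Writing $T_0:=\theta_1^{-1}T\theta_1\subseteq\mathsf{Z}(H)$, the identity $\theta_i^{-1}T\theta_i=(\theta_1^{-1}\theta_i)^{-1}T_0(\theta_1^{-1}\theta_i)$ together with the fact that both $\theta_1^{-1}\theta_i$ and $T_0$ lie in the abelian group $\mathsf{Z}(H)$ forces $\theta_i^{-1}T\theta_i=T_0$ for all $i$. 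Conjugating the presentation by $\theta_1$ (this is exactly the isomorphism $c(\theta_1)^{*}$ of \S\ref{resGH}, used as in Step 3 of the proof of Lemma \ref{4.09}) I would replace it by $\mathfrak{B}\cong_{\mathsf{G}} M_k(\mathbb{F}^{\tau_0}[T_0])$ with defining tuple $\theta'=(e,\theta_1^{-1}\theta_2,\dots,\theta_1^{-1}\theta_k)$, whose entries now lie in $\mathsf{Z}(H)$. In particular every $\theta_i'$ centralizes both $T_0$ and $H$, so $\theta_i'\in\mathsf{N}_{\mathsf{G}}(T_0)\cap\mathsf{N}_{\mathsf{G}}(H)$, as required. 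Since $T_0$ is a central subgroup of $H$ of finite index and $\mathbb{F}$ is algebraically closed (so $[H:T_0]$ is invertible in $\mathbb{F}^{*}$), Corollary \ref{2.09} yields $\sigma=\sigma_1\in\mathcal{H}^2(H,\mathbb{F}^*)$ with $[\sigma_1]_{T_0}=[\tau_0]$; Lemma \ref{4.16} gives $\mathbb{F}^{\tau_0}[T_0]\stackrel{\mathsf{G}}{\hookrightarrow}\mathbb{F}^{\sigma_1}[H]$, and Theorem \ref{1.39} (with $\alpha=\mathrm{id}$, $\delta=e$, $\xi_j=e$, and the same tuple $\theta'$ on both sides) upgrades this to $\mathfrak{B}\stackrel{\mathsf{G}}{\hookrightarrow}M_k(\mathbb{F}^{\sigma_1}[H])$, proving the first assertion.

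For the chain I would first construct the cocycles inductively: given $\sigma_i\in\mathcal{H}^2(H_i,\mathbb{F}^*)$, the hypotheses $H_i$ central in $H_{i+1}$, $[H_{i+1}:H_i]<\infty$ and $\mathbb{F}$ algebraically closed let me invoke Corollary \ref{2.09} (applied to the pair $H_i\le H_{i+1}$) to obtain $\sigma_{i+1}\in\mathcal{H}^2(H_{i+1},\mathbb{F}^*)$ with $[\sigma_{i+1}]_{H_i}=[\sigma_i]$. A key elementary observation is that the chain conditions give $H_1\subseteq H_{i-1}\subseteq\mathsf{Z}(H_i)$ for every $i\ge 2$, whence $\mathsf{Z}(H_1)\subseteq\mathsf{Z}(H_i)\subseteq\mathsf{N}_{\mathsf{G}}(H_i)$; since all entries of the chosen tuple $\theta'$ lie in $\mathsf{Z}(H_1)$, they normalize every $H_i$. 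Choosing the defining tuples for the various matrix sizes to be nested (truncations and $e$-paddings of $\theta'$), the condition $\theta_{1j}=\delta\xi_j\theta_{2\alpha(j)}$ of Theorem \ref{1.39} holds trivially with $\delta=e$, $\xi_j=e$, $\alpha=\mathrm{id}$ for every horizontal step $M_l(\mathbb{F}^{\sigma_i}[H_i])\stackrel{\mathsf{G}}{\hookrightarrow} M_l(\mathbb{F}^{\sigma_{i+1}}[H_{i+1}])$ and every vertical step $M_l\stackrel{\mathsf{G}}{\hookrightarrow} M_{l'}$ with $l\le l'$. Thus every arrow of the diagram is furnished by Theorem \ref{1.39} (with Lemma \ref{4.16} on the twisted-group-algebra row and the trivial cocycle on the $\mathbb{F}$-column), and the natural block inclusions realize the maps so that the squares commute.

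The main obstacle is the content of the second paragraph above: verifying that the centrality of $\mathsf{Supp}(\Gamma)$ in $H$ is exactly strong enough to force the defining tuple, after the conjugation normalization, into $\mathsf{Z}(H)$, so that the normalizer hypotheses of the Imbedding Theorem \ref{1.39} are met. Once this normalization is in place, the remainder is a bookkeeping assembly of Corollary \ref{2.09}, Lemma \ref{4.16} and Theorem \ref{1.39}, entirely parallel to (and generalizing) Corollary \ref{4.05}.
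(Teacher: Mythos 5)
Your proposal is correct and follows essentially the same route as the paper, whose entire proof of Corollary \ref{4.10} is the single line ``Analogous to the proof of Corollary \ref{4.07} and Corollary \ref{4.05}'' — that is, the structure theorem (Lemma \ref{teosimpgradalg}), cocycle extension via Corollary \ref{2.09}, and then Lemma \ref{4.16} together with Theorem \ref{1.39}. Your conjugation normalization, which replaces $T$ by $T_0=\theta_1^{-1}T\theta_1\subseteq\mathsf{Z}(H)$ and forces the defining tuple into $\mathsf{Z}(H)$ so that the normalizer hypotheses of Theorem \ref{1.39} actually hold (and likewise the observation $H_1\subseteq\mathsf{Z}(H_i)$ for the chain), supplies precisely the verification that the paper's one-line proof leaves implicit.
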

\begin{proof}
Analogous to the proof of Corollary \ref{4.07} and Corollary \ref{4.05}.
\end{proof}

We conclude by stating that a result similar to Corollary \ref{4.06} is also true for the general case of finite dimensional simple graded algebras, but the reciprocal is not true. Indeed, let $\mathbb{F}$ be an arbitrary field, $\mathsf{G}$ a non-trivial finite group, and $\mathfrak{A}=\mathbb{F}[\mathsf{G}]$ and $\mathfrak{B}=M_2(\mathbb{F})$ two finite dimensional algebras. Suppose $\mathfrak{A}$ with its canonical $\mathsf{G}$-grading and $\mathfrak{B}$ with an elementary-canonical $\mathsf{G}$-grading defined by some $2$-tuple $\theta\in\mathsf{G}^2$. Obviously $\mathfrak{A}$ and $\mathfrak{B}$ are simple graded. Observe that neither $\mathfrak{A} \stackrel{\mathsf{G}}{\hookrightarrow} \mathfrak{B}$ nor $\mathfrak{B} \stackrel{\mathsf{G}}{\hookrightarrow} \mathfrak{A}$ (see Theorem \ref{1.39}), but $M_{2}(\mathbb{F}[\mathsf{G}])$ with some elementary-canonical $\mathsf{G}$-grading is such that $\mathfrak{A}, \mathfrak{B} \stackrel{\mathsf{G}}{\hookrightarrow} M_{2}(\mathbb{F}[\mathsf{G}])$.


\section{Graded Imbeddings and Graded Polynomial Identities of Finite Dimensional Simple Graded Algebras}\label{sec5}

In the previous section, Theorem \ref{1.39} presents conditions that guarantee when two matrix algebras over twisted group algebras can be $\mathsf{G}$-imbedding into each other, where such conditions are related to the form of these matrix algebras. In what follows, let us exhibit some results that, based on the $\mathsf{G}$T-ideals of graded polynomial identities, ensure other conditions for the existence of $\mathsf{G}$-imbeddings between $\mathsf{G}$-simple algebras of finite dimension.

\begin{lemma}[Imbedding]\label{4.20}
Let $\mathbb{F}$ be field, $\mathsf{G}$ a group, $H_1$ and $H_2$ two subgroups of $\mathsf{G}$ and $\sigma_1$ and $\sigma_2$ their $2$-cocycles, respectively. Then $\mathbb{F}^{\sigma_1}[H_1] \stackrel{\mathsf{G}}{\hookrightarrow} \mathbb{F}^{\sigma_2}[H_2]$ iff $\mathsf{T}^{\mathsf{G}}(\mathbb{F}^{\sigma_2}[H_2])\subseteq \mathsf{T}^{\mathsf{G}}(\mathbb{F}^{\sigma_1}[H_1])$.
\end{lemma}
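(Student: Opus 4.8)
The plan is to reduce everything to the Imbedding Lemma (Lemma \ref{4.16}), which already identifies $\mathbb{F}^{\sigma_1}[H_1]\stackrel{\mathsf{G}}{\hookrightarrow}\mathbb{F}^{\sigma_2}[H_2]$ with the order relation $(H_1,\sigma_1)\preceq(H_2,\sigma_2)$, that is, with ``$H_1\leq H_2$ and $[\sigma_1]=[\sigma_2]_{H_1}$''. Hence it is enough to show that this order relation is equivalent to the $\mathsf{G}$T-ideal inclusion in the statement. One implication is the routine inheritance of graded identities by graded subalgebras: if $\psi$ is a $\mathsf{G}$-graded monomorphism of $\mathbb{F}^{\sigma_1}[H_1]$ into $\mathbb{F}^{\sigma_2}[H_2]$ and $f\equiv_{\mathsf{G}}0$ on $\mathbb{F}^{\sigma_2}[H_2]$, then for homogeneous $a_{\xi_i}\in(\mathbb{F}^{\sigma_1}[H_1])_{\xi_i}$ we get $\psi(f(a_{\xi_1},\dots,a_{\xi_n}))=f(\psi(a_{\xi_1}),\dots,\psi(a_{\xi_n}))=0$, and injectivity of $\psi$ forces $f(a_{\xi_1},\dots,a_{\xi_n})=0$; thus $\mathsf{T}^{\mathsf{G}}(\mathbb{F}^{\sigma_2}[H_2])\subseteq\mathsf{T}^{\mathsf{G}}(\mathbb{F}^{\sigma_1}[H_1])$.

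For the converse I assume $\mathsf{T}^{\mathsf{G}}(\mathbb{F}^{\sigma_2}[H_2])\subseteq\mathsf{T}^{\mathsf{G}}(\mathbb{F}^{\sigma_1}[H_1])$ and first recover the supports. Were there some $\xi\in H_1\setminus H_2$, then $(\mathbb{F}^{\sigma_2}[H_2])_\xi=\{0\}$ would put the single variable $x_1^{(\xi)}$ in $\mathsf{T}^{\mathsf{G}}(\mathbb{F}^{\sigma_2}[H_2])$, whereas the evaluation $x_1^{(\xi)}\mapsto\eta_\xi\neq0$ shows $x_1^{(\xi)}\notin\mathsf{T}^{\mathsf{G}}(\mathbb{F}^{\sigma_1}[H_1])$, against the inclusion. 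Hence $H_1\subseteq H_2$, i.e. $H_1\leq H_2$, and it remains only to prove $[\sigma_1]=[\sigma_2]_{H_1}$.

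This last equality is the heart of the argument, and it rests on the fact that every homogeneous component of a twisted group algebra is at most one-dimensional, so that the twisted structure constants are precisely the cocycle values, visible up to coboundary. Concretely, for commuting $\xi,\zeta\in H_1$ the binomial
\[
x^{(\xi)}y^{(\zeta)}-\frac{\sigma_2(\xi,\zeta)}{\sigma_2(\zeta,\xi)}\,y^{(\zeta)}x^{(\xi)}
\]
lies in $\mathsf{T}^{\mathsf{G}}(\mathbb{F}^{\sigma_2}[H_2])$, hence in $\mathsf{T}^{\mathsf{G}}(\mathbb{F}^{\sigma_1}[H_1])$; evaluating at $\eta_\xi,\eta_\zeta$ shows that a binomial $x^{(\xi)}y^{(\zeta)}-c\,y^{(\zeta)}x^{(\xi)}$ can be an identity of $\mathbb{F}^{\sigma_1}[H_1]$ only for $c=\sigma_1(\xi,\zeta)/\sigma_1(\zeta,\xi)$, so the coboundary-invariant commutation factors of $\sigma_1$ and $\sigma_2$ agree on $H_1$. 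More generally I would attach to each relation among elements of $H_1$ a binomial graded identity comparing two words of the same $\mathsf{G}$-degree built from the same homogeneous variables; its unique admissible coefficient is a coboundary invariant of $\sigma_2$, and the T-ideal inclusion forces the matching invariant of $\sigma_1$ to coincide. Showing that this family of invariants determines the class $[\sigma_2]_{H_1}$ and hence gives $[\sigma_1]=[\sigma_2]_{H_1}$ --- equivalently, by Proposition \ref{4.04}, $\mathbb{F}^{\sigma_1}[H_1]\cong_{\mathsf{G}}\mathbb{F}^{(\sigma_2)_{H_1}}[H_1]$ --- together with $H_1\leq H_2$ lets Lemma \ref{4.16} deliver $\mathbb{F}^{\sigma_1}[H_1]\stackrel{\mathsf{G}}{\hookrightarrow}\mathbb{F}^{\sigma_2}[H_2]$.

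The main obstacle is exactly this recovery of the full cohomology class from the graded identities when $H_1$ is nonabelian and $\mathbb{F}$ is arbitrary: the commutation factors above pin down only the alternating part of $[\sigma]$ and suffice by themselves merely in the abelian case over suitable fields, so the genuinely noncommutative relations of $H_1$ must be exploited, and one has to verify both the coboundary-invariance of the resulting coefficients and that the whole family separates classes in $\mathcal{H}^2(H_1,\mathbb{F}^*)$. A cleaner route I would pursue in parallel is to work inside the graded subalgebra $\mathbb{F}^{(\sigma_2)_{H_1}}[H_1]\subseteq\mathbb{F}^{\sigma_2}[H_2]$ and isolate, as a separate lemma, the sharp statement that for two twisted group algebras on one and the same group a containment of $\mathsf{G}$T-ideals already forces equality of cocycle classes; this reduces the whole problem to a single, self-contained separation result for $\mathcal{H}^2(H_1,\mathbb{F}^*)$, after which Lemma \ref{4.16} and the support step of the previous paragraph finish the proof.
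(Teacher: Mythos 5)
Your first two steps are fine: graded identities do pass to graded subalgebras (the easy direction), and the single-variable identity $x_1^{(\xi)}$ correctly forces $H_1\leq H_2$. The genuine gap is exactly the step you yourself flag: from $\mathsf{T}^{\mathsf{G}}(\mathbb{F}^{\sigma_2}[H_2])\subseteq\mathsf{T}^{\mathsf{G}}(\mathbb{F}^{\sigma_1}[H_1])$ you must deduce $[\sigma_1]=[\sigma_2]_{H_1}$, and your argument establishes this only for the commutation factors $\sigma(\xi,\zeta)/\sigma(\zeta,\xi)$ of commuting pairs; the promised extension (attach a binomial invariant to every relation, check coboundary-invariance, and show the family separates classes in $\mathcal{H}^2(H_1,\mathbb{F}^*)$) is a program, not a proof. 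The paper's proof is genuinely different and never attempts such a separation-of-invariants argument: it applies its Lemma \ref{4.18} to manufacture, from the $\mathsf{T}$-ideal inclusion alone, a $\mathsf{G}$-graded epimorphism $\varphi:\mathbb{F}^{\sigma_2}[H_2]\rightarrow\mathbb{F}^{\sigma_1}[H_1]$; since homogeneous components are one-dimensional, $\varphi(\eta_\xi)=\lambda_\xi\tilde\eta_\xi$ with $\lambda_\xi\in\mathbb{F}^*$ for $\xi\in H_1$, and multiplicativity of $\varphi$ displays $\sigma_2/\sigma_1$ on $H_1\times H_1$ as the coboundary of $\xi\mapsto\lambda_\xi$, after which Lemma \ref{4.16} finishes. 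So all the weight of the proof sits precisely on the part you postponed.

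Worse, the postponed step cannot be carried out at the stated level of generality (arbitrary $\mathbb{F}$), so no elaboration of the invariant-matching strategy will close the gap. Take $\mathbb{F}=\mathbb{R}$, $\mathsf{G}=H_1=H_2=\mathbb{Z}_2=\{e,g\}$, $\sigma_2$ trivial and $\sigma_1(g,g)=-1$, so that $\mathbb{F}^{\sigma_2}[H_2]=\mathbb{R}[\mathbb{Z}_2]\cong\mathbb{R}\times\mathbb{R}$ and $\mathbb{F}^{\sigma_1}[H_1]=\mathbb{C}=\mathbb{R}\oplus\mathbb{R}i$, with $[\sigma_1]\neq[\sigma_2]$ in $\mathcal{H}^2(\mathbb{Z}_2,\mathbb{R}^*)\cong\mathbb{R}^*/(\mathbb{R}^*)^2$. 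Both algebras are commutative, so all your binomial invariants are trivial for both; in fact they satisfy exactly the same graded identities: over the infinite field $\mathbb{R}$ one may assume an identity is multihomogeneous, commutativity collapses any multihomogeneous polynomial to its coefficient sum times a single monomial, and that monomial has nonzero evaluations in either algebra, so in both cases ``identity'' means ``every multihomogeneous component has coefficient sum zero''. Yet there is no graded embedding in either direction: one would need an odd element of $\mathbb{R}\times\mathbb{R}$ squaring to $-(1,1)$, respectively an odd element of $\mathbb{C}$ squaring to $1$, and both are impossible over $\mathbb{R}$. This refutes, for general $\mathbb{F}$, precisely the ``clean separate lemma'' (same group, $\mathsf{T}$-ideal containment forces equality of classes) to which you propose to reduce; some hypothesis giving roots, such as algebraic closure, is indispensable for the cohomological step. (The same example shows the conclusion of the paper's Lemma \ref{4.18} also fails here --- the two algebras are $2$-dimensional and non-isomorphic, so no epimorphism between them exists --- the flaw being the identification $\mathfrak{A}\cong_{\mathsf{G}}\mathbb{F}\langle X^{\mathsf{G}}\rangle/\mathsf{T}^{\mathsf{G}}(\mathfrak{A})$; so the obstacle you ran into is real and is not overcome by the paper either, merely hidden in Lemma \ref{4.18}.)
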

\begin{proof}
Put $\mathfrak{A}=\mathbb{F}^{\sigma_2}[H_2]$ and $\mathfrak{B}=\mathbb{F}^{\sigma_1}[H_1]$. Obviously $\mathfrak{B} \stackrel{\mathsf{G}}{\hookrightarrow} \mathfrak{A}$ implies $\mathsf{T}^{\mathsf{G}}(\mathfrak{A})\subseteq \mathsf{T}^{\mathsf{G}}(\mathfrak{B})$.

Reciprocally, assume $\mathsf{T}^{\mathsf{G}}(\mathfrak{A})\subseteq \mathsf{T}^{\mathsf{G}}(\mathfrak{B})$. By Lemma \ref{4.18}, there exists a $\mathsf{G}$-graded epimorphism $\varphi: \mathfrak{A}\rightarrow\mathfrak{B}$. Consequently, for any $\xi\in H_1$, there exist $\lambda_\xi\in\mathbb{F}^*$ such that $\varphi(\eta_\xi)=\lambda_\xi\tilde\eta_\xi$. Hence, it follows that $H_1\subseteq H_2$ and
	\begin{equation}\nonumber
		\begin{split}
	& \ \varphi(\eta_\xi\eta_\zeta)=\varphi(\sigma_2(\xi,\zeta)\eta_{\xi\zeta})=\sigma_2(\xi,\zeta)\lambda_{\xi\zeta}\tilde\eta_{\xi\zeta}\\
	& \ \ \ \ \ \ \rotatebox{90}{=} \\
	& \varphi(\eta_\xi)\varphi(\eta_\zeta) =\lambda_\xi\tilde\eta_\xi \lambda_\zeta\tilde\eta_\zeta =\lambda_\xi\lambda_\zeta\sigma_1(\xi,\zeta)\tilde\eta_{\xi\zeta}
		\end{split}
	\end{equation}
for any $\xi,\zeta\in H_1$, and so $\sigma_2(\xi,\zeta)=\rho(\xi,\zeta)\sigma_1(\xi,\zeta)$ for any $\xi,\zeta\in H_1$, where $\rho(\xi,\zeta)=\frac{\lambda_\xi\lambda_\zeta}{\lambda_{\xi\zeta}}$ is a $2$-coboundary in $\mathcal{B}^2(H_1,\mathbb{F}^*)$. Therefore, by Lemma \ref{4.16}, we can conclude that $\mathfrak{B} \stackrel{\mathsf{G}}{\hookrightarrow} \mathfrak{A}$.
\end{proof}

Combining the Lemmas of Imbedding (Lemmas \ref{4.16} and \ref{4.20}), we have that the following conditions are equivalent:
\begin{enumerate}[{\it i)}] 
\item $\mathbb{F}^{\sigma_1}[H_1] \stackrel{\mathsf{G}}{\hookrightarrow} \mathbb{F}^{\sigma_2}[H_2]$;
\item $H_1\leq H_2$ and $[\sigma_1]=[\sigma_2]_{H_1}$;
\item $\mathsf{T}^{\mathsf{G}}(\mathbb{F}^{\sigma_2}[H_2])\subseteq \mathsf{T}^{\mathsf{G}}(\mathbb{F}^{\sigma_1}[H_1])$,
\end{enumerate}
where $\mathbb{F}$ is a field, $\mathsf{G}$ is a group, both arbitrary, $H_1$ and $H_2$ are two subgroups of $\mathsf{G}$ and $\sigma_1$ and $\sigma_2$ their $2$-cocycles, respectively.

Now, recall that, in finite dimension and $\mathbb{F}$ algebraically closed, Lemma \ref{teodivgradalg} guarantees the one-to-one relationship between division graded algebras and twisted group algebras. Let us use this fact to prove the proposition below.
 
\begin{proposition}\label{4.11}
Let $\mathfrak{A}$ and $\mathfrak{B}$ be two finite dimensional division $\mathsf{G}$-graded $\mathbb{F}$-algebras, with $\mathbb{F}$ algebraically closed. Then $\mathsf{T}^{\mathsf{G}}(\mathfrak{A})\subseteq \mathsf{T}^{\mathsf{G}}(\mathfrak{B})$ iff $\mathfrak{B} \stackrel{\mathsf{G}}{\hookrightarrow} \mathfrak{A}$. In addition, $\mathsf{T}^{\mathsf{G}}(\mathfrak{A})= \mathsf{T}^{\mathsf{G}}(\mathfrak{B})$ iff $\mathfrak{A} \cong_{\mathsf{G}} \mathfrak{B}$.
\end{proposition}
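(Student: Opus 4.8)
The plan is to reduce the problem to twisted group algebras and then invoke the Imbedding Lemma \ref{4.20}. First I would use Lemma \ref{teodivgradalg}: since $\mathbb{F}$ is algebraically closed and $\mathfrak{A},\mathfrak{B}$ are finite dimensional division $\mathsf{G}$-graded algebras, their supports $H_2=\mathsf{Supp}(\Gamma_{\mathfrak{A}})$ and $H_1=\mathsf{Supp}(\Gamma_{\mathfrak{B}})$ are finite subgroups of $\mathsf{G}$, and there are $2$-cocycles $\sigma_2\in\mathcal{Z}^2(H_2,\mathbb{F}^*)$ and $\sigma_1\in\mathcal{Z}^2(H_1,\mathbb{F}^*)$ with $\mathfrak{A}\cong_{\mathsf{G}}\mathbb{F}^{\sigma_2}[H_2]$ and $\mathfrak{B}\cong_{\mathsf{G}}\mathbb{F}^{\sigma_1}[H_1]$. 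This is the only place where the hypotheses ``finite dimensional'' and ``$\mathbb{F}$ algebraically closed'' are genuinely used.

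The two facts I would record next are routine but essential: a $\mathsf{G}$-graded isomorphism carries graded identities to graded identities, so $\mathsf{T}^{\mathsf{G}}(\cdot)$ is a graded-isomorphism invariant; and a $\mathsf{G}$-imbedding remains one after pre- and post-composition with graded isomorphisms. Hence $\mathsf{T}^{\mathsf{G}}(\mathfrak{A})=\mathsf{T}^{\mathsf{G}}(\mathbb{F}^{\sigma_2}[H_2])$, $\mathsf{T}^{\mathsf{G}}(\mathfrak{B})=\mathsf{T}^{\mathsf{G}}(\mathbb{F}^{\sigma_1}[H_1])$, and $\mathfrak{B}\stackrel{\mathsf{G}}{\hookrightarrow}\mathfrak{A}$ holds if and only if $\mathbb{F}^{\sigma_1}[H_1]\stackrel{\mathsf{G}}{\hookrightarrow}\mathbb{F}^{\sigma_2}[H_2]$. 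Chaining these identifications with Lemma \ref{4.20} gives the first equivalence at once:
\[
\mathsf{T}^{\mathsf{G}}(\mathfrak{A})\subseteq \mathsf{T}^{\mathsf{G}}(\mathfrak{B})
\iff \mathsf{T}^{\mathsf{G}}(\mathbb{F}^{\sigma_2}[H_2])\subseteq \mathsf{T}^{\mathsf{G}}(\mathbb{F}^{\sigma_1}[H_1])
\iff \mathbb{F}^{\sigma_1}[H_1]\stackrel{\mathsf{G}}{\hookrightarrow}\mathbb{F}^{\sigma_2}[H_2]
\iff \mathfrak{B}\stackrel{\mathsf{G}}{\hookrightarrow}\mathfrak{A}.
\]

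For the second statement I would apply the first equivalence in both directions. Indeed $\mathsf{T}^{\mathsf{G}}(\mathfrak{A})=\mathsf{T}^{\mathsf{G}}(\mathfrak{B})$ is the conjunction of $\mathsf{T}^{\mathsf{G}}(\mathfrak{A})\subseteq\mathsf{T}^{\mathsf{G}}(\mathfrak{B})$ and $\mathsf{T}^{\mathsf{G}}(\mathfrak{B})\subseteq\mathsf{T}^{\mathsf{G}}(\mathfrak{A})$, which by the first part is equivalent to having both $\mathfrak{B}\stackrel{\mathsf{G}}{\hookrightarrow}\mathfrak{A}$ and $\mathfrak{A}\stackrel{\mathsf{G}}{\hookrightarrow}\mathfrak{B}$. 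To upgrade these mutual imbeddings to a graded isomorphism I would use a dimension count: each imbedding is an injective $\mathbb{F}$-linear map, so $\dim_{\mathbb{F}}\mathfrak{B}\le\dim_{\mathbb{F}}\mathfrak{A}$ and $\dim_{\mathbb{F}}\mathfrak{A}\le\dim_{\mathbb{F}}\mathfrak{B}$, whence the (finite) dimensions agree and any of the two injective graded homomorphisms is automatically bijective, giving $\mathfrak{A}\cong_{\mathsf{G}}\mathfrak{B}$. Alternatively, one may translate through Lemma \ref{4.16}, obtaining $(H_1,\sigma_1)\preceq(H_2,\sigma_2)$ and $(H_2,\sigma_2)\preceq(H_1,\sigma_1)$, hence $(H_1,\sigma_1)\sim(H_2,\sigma_2)$, and then conclude $\mathbb{F}^{\sigma_1}[H_1]\cong_{\mathsf{G}}\mathbb{F}^{\sigma_2}[H_2]$ by Proposition \ref{4.04}. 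The reverse implication $\mathfrak{A}\cong_{\mathsf{G}}\mathfrak{B}\Rightarrow\mathsf{T}^{\mathsf{G}}(\mathfrak{A})=\mathsf{T}^{\mathsf{G}}(\mathfrak{B})$ is immediate from the invariance of $\mathsf{T}^{\mathsf{G}}$ under graded isomorphism. I do not expect any serious obstacle here: the content is entirely carried by Lemma \ref{teodivgradalg} and Lemma \ref{4.20}, and the only care points are the two invariance observations and the dimension argument that turns a pair of imbeddings into an isomorphism.
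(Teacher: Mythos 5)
Your proposal is correct and follows essentially the same route as the paper: reduce to twisted group algebras via Lemma \ref{teodivgradalg} and then apply the Imbedding Lemma \ref{4.20}. The only difference is one of explicitness—the paper compresses everything into ``the result follows from Lemma \ref{4.20}'', whereas you spell out the isomorphism-invariance of $\mathsf{T}^{\mathsf{G}}$ and give two valid ways (dimension count, or Lemma \ref{4.16} plus Proposition \ref{4.04}) to upgrade the mutual imbeddings to the graded isomorphism in the ``in addition'' clause.
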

\begin{proof}
By Lemma \ref{teodivgradalg}, there are finite subgroups $H_\mathfrak{A}$ and $H_\mathfrak{B}$ of $\mathsf{G}$ and $2$-cocycles $\sigma_\mathfrak{A}$ on $H_\mathfrak{A}$ and $\sigma_\mathfrak{B}$ on $H_\mathfrak{B}$, such that $\mathfrak{A}\cong_\mathsf{G}\mathbb{F}^{\sigma_\mathfrak{A}}[H_\mathfrak{A}]$ and $\mathfrak{B}\cong_\mathsf{G}\mathbb{F}^{\sigma_\mathfrak{B}}[H_\mathfrak{B}]$. So the result follows from Lemma \ref{4.20}.
\end{proof}

Using once again the second Imbedding Lemma (Lemmas \ref{4.20}), and Lemma \ref{4.01} and Imbedding Theorem (Theorem \ref{1.39}), in the next lemma we analyse when two matrix algebras (over twisted group algebras) are $\mathsf{G}$-imbeddable into each other from their graded polynomial identities.


\begin{lemma}[Imbedding]\label{4.19}
Let $\mathbb{F}$ be a field and $\mathsf{G}$ a group. For $i=1,2$, consider $\mathfrak{B}_i=M_{k_i}(\mathbb{F}^{\sigma_i}[H_i])$ the ${k_i}\times {k_i}$ matrix algebra over $\mathbb{F}^{\sigma_i}[H_i]$ with an elementary-canonical $\mathsf{G}$-grading defined by a $k_i$-tuple $\theta_i=(\theta_{i1},\dots,\theta_{i{k_i}})\in\mathsf{G}^{k_i}$, where $H_i$ is a subgroup of $\mathsf{G}$, $\sigma_i\in \mathcal{H}^2(H_i,\mathbb{F}^*)$ is a $2$-cocycle. Suppose that $\theta_{i1},\dots,\theta_{ik_i}\in \mathsf{N}_{\mathsf{G}}(H_i)$, for $i=1,2$. Then $\mathfrak{B}_1 \stackrel{\mathsf{G}}{\hookrightarrow} \mathfrak{B}_2$ iff $\mathsf{T}^{\mathsf{G}}(\mathfrak{B}_2)\subseteq \mathsf{T}^{\mathsf{G}}(\mathfrak{B}_1)$.
\end{lemma}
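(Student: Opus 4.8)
The plan is to treat this lemma as a packaging of the already-established Imbedding Theorem (Theorem \ref{1.39}) together with the T-ideal machinery of Lemmas \ref{4.18}, \ref{4.08} and \ref{4.01}; no genuinely new computation should be needed, only a careful assembly of the directions of the various implications.

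For the forward implication I would argue directly from the definitions. Suppose $\psi:\mathfrak{B}_1\to\mathfrak{B}_2$ is a $\mathsf{G}$-graded monomorphism. Given $f\in\mathsf{T}^{\mathsf{G}}(\mathfrak{B}_2)$ and any homogeneous elements $b_{\xi_1}\in(\mathfrak{B}_1)_{\xi_1},\dots,b_{\xi_n}\in(\mathfrak{B}_1)_{\xi_n}$, the fact that $\psi$ respects the grading gives $\psi(f(b_{\xi_1},\dots,b_{\xi_n}))=f(\psi(b_{\xi_1}),\dots,\psi(b_{\xi_n}))$; since each $\psi(b_{\xi_i})\in(\mathfrak{B}_2)_{\xi_i}$ and $f$ vanishes on $\mathfrak{B}_2$, the right-hand side is $0$, and injectivity of $\psi$ then forces $f(b_{\xi_1},\dots,b_{\xi_n})=0$. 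Hence $\mathsf{T}^{\mathsf{G}}(\mathfrak{B}_2)\subseteq\mathsf{T}^{\mathsf{G}}(\mathfrak{B}_1)$.

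For the converse, the key observation is that the proof of Lemma \ref{4.01} never uses the imbedding $\mathfrak{B}_1\stackrel{\mathsf{G}}{\hookrightarrow}\mathfrak{B}_2$ beyond the inclusion $\mathsf{T}^{\mathsf{G}}(\mathfrak{B}_2)\subseteq\mathsf{T}^{\mathsf{G}}(\mathfrak{B}_1)$ it immediately extracts from it, this inclusion being precisely what feeds the graded epimorphism $\mathfrak{B}_2\to\mathfrak{B}_1$ supplied by Lemma \ref{4.08} applied to the pair $(\mathfrak{B}_2,\mathfrak{B}_1)$. Therefore, assuming directly that $\mathsf{T}^{\mathsf{G}}(\mathfrak{B}_2)\subseteq\mathsf{T}^{\mathsf{G}}(\mathfrak{B}_1)$, that same argument yields $k_1\leq k_2$, $\mathbb{F}^{\sigma_1}[H_1]\stackrel{\mathsf{G}}{\hookrightarrow}\mathbb{F}^{\sigma_2}[H_2]$, and the existence of $\alpha\in Sym(k_2)$, $\delta\in\mathsf{N}_{\mathsf{G}}(H_2)$ and $\xi_1,\dots,\xi_{k_1}\in H_2$ with $\theta_{1j}=\delta\xi_j\theta_{2\alpha(j)}$ for all $j$. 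These are exactly the right-hand conditions of the Imbedding Theorem (Theorem \ref{1.39}), whose sufficiency direction then produces the desired $\mathsf{G}$-graded monomorphism $\mathfrak{B}_1\stackrel{\mathsf{G}}{\hookrightarrow}\mathfrak{B}_2$.

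Two bookkeeping points remain, and they are where the only real care is needed. First, the degenerate sizes must be isolated exactly as in Lemma \ref{4.01}: when $k_1=k_2=1$ the statement is precisely the earlier Imbedding Lemma \ref{4.20}, while otherwise the final clauses of Lemmas \ref{4.08} and \ref{4.01} dispose of the case $k_1=1$ (recalling that $k_2=1$ already forces $k_1=1$). Second, I would keep the direction of the T-ideal containment rigorously aligned with the hypothesis of Lemma \ref{4.08}, since it is the reversal $\mathsf{T}^{\mathsf{G}}(\mathfrak{B}_2)\subseteq\mathsf{T}^{\mathsf{G}}(\mathfrak{B}_1)$, and not $\mathsf{T}^{\mathsf{G}}(\mathfrak{B}_1)\subseteq\mathsf{T}^{\mathsf{G}}(\mathfrak{B}_2)$, that furnishes the epimorphism \emph{onto} $\mathfrak{B}_1$. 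Beyond this matching of directions there is no substantive obstacle: all the heavy lifting has already been carried out in Lemma \ref{4.01} and Theorem \ref{1.39}, so the proof amounts to invoking them with the correct inclusion in hand.
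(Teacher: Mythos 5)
Your proposal is correct and follows essentially the same route as the paper: the forward direction is the standard evaluation argument, and the converse is obtained by observing that the proof of Lemma \ref{4.01} uses only the inclusion $\mathsf{T}^{\mathsf{G}}(\mathfrak{B}_2)\subseteq\mathsf{T}^{\mathsf{G}}(\mathfrak{B}_1)$ (via the epimorphism of Lemma \ref{4.08}), so its conclusions hold under that hypothesis alone and then feed into the sufficiency direction of Theorem \ref{1.39}, with the degenerate cases $k_1=1$ or $k_2=1$ handled exactly as in the paper via Lemma \ref{4.08} and Lemma \ref{4.20}.
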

\begin{proof}
First, obviously $\mathfrak{B}_1 \stackrel{\mathsf{G}}{\hookrightarrow} \mathfrak{B}_2$ implies $\mathsf{T}^{\mathsf{G}}(\mathfrak{B}_2)\subseteq \mathsf{T}^{\mathsf{G}}(\mathfrak{B}_1)$. On the other hand, suppose $\mathsf{T}^{\mathsf{G}}(\mathfrak{B}_2)\subseteq \mathsf{T}^{\mathsf{G}}(\mathfrak{B}_1)$. Assume $k_1,k_2>1$. By the first part of the proof of Lemma \ref{4.01}, the following conditions are true: 1) $k_1\leq k_2$; 2) $H_1\subseteq H_2$; 3) $[\sigma_2]_{H_1}=[\sigma_1]$; 4) there exist $\delta\in H_1$, $\alpha\in Sym(k_1)$ and $\xi_1,\dots,\xi_{k_2}\in H_2$ such that $\theta_{2i}=\delta \xi_i\theta_{1\alpha(i)}$. Consequently, by Imbedding Theorem (Theorem \ref{1.39}), it follows that $\mathfrak{B}_1 \stackrel{\mathsf{G}}{\hookrightarrow} \mathfrak{B}_2$. Now, when $k_2=1$, we have that $k_1=1$ (see final part of the proof of Lemma \ref{4.01}), and so the result follows from the second Imbedding Lemma (Lemma \ref{4.20}). If $k_1=1$, the proof is similar to first part of this proof.
\end{proof}

Under the same hypotheses of Lemma \ref{4.19}, we can combine Theorem \ref{1.39} and Lemma \ref{4.19} to prove that the following statements are equivalent:
	\begin{enumerate}[{\it i)}]
	\item $\mathsf{T}^{\mathsf{G}}(M_{k_2}(\mathbb{F}^{\sigma_2}[H_2]))\subseteq \mathsf{T}^{\mathsf{G}}(M_{k_1}(\mathbb{F}^{\sigma_1}[H_1]))$;
	\item $M_{k_1}(\mathbb{F}^{\sigma_1}[H_1]) \stackrel{\mathsf{G}}{\hookrightarrow} M_{k_2}(\mathbb{F}^{\sigma_2}[H_2])$;
	\item $k_1\leq k_2$, $H_1\subseteq H_2$, $[\sigma_1]=[\sigma_2]_{H_1}$ and $\theta_{1i}=\delta\xi_i\theta_{2\alpha(i)}$ for some $\delta\in\mathsf{N}_\mathsf{G}(H_2)$, $\xi_1,\dots,\xi_{k_1}\in H_2$ and $\alpha\in Sym(k_2)$.
	\end{enumerate}

Recall that a non-abelian group is called a \textit{Hamiltonian group} if every subgroup is normal. It is well-known that the quaternion group is the smallest Hamiltonian group. For further details about the Hamiltonian groups, we suggest the book \cite{Rotm10}, p.44, or \cite{Robi96}, p.143. With this in mind, let us go to the next result.

\begin{theorem}[Imbedding]\label{4.21}
Let $\mathsf{G}$ be a group, $\mathbb{F}$ an algebraically closed field, and $\mathfrak{A}$ and $\mathfrak{B}$ two finite dimensional simple $\mathsf{G}$-graded $\mathbb{F}$-algebras. Suppose that either $\mathsf{char}(\mathbb{F}) = 0$ or $\mathsf{char} (\mathbb{F})$ is coprime with the order of each finite subgroup of $\mathsf{G}$, and that any subgroup of $\mathsf{G}$ is normal. Then $\mathsf{T}^{\mathsf{G}}(\mathfrak{A})\subseteq \mathsf{T}^{\mathsf{G}}(\mathfrak{B})$ iff $\mathfrak{B} \stackrel{\mathsf{G}}{\hookrightarrow} \mathfrak{A}$.
\end{theorem}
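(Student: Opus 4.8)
The plan is to reduce the statement to Lemma \ref{4.19}, whose hypotheses are tailored precisely to the situation produced by the structure theory of finite dimensional simple graded algebras, and to use the normality assumption only to supply the normalizer condition that Lemma \ref{4.19} demands.

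First I would invoke Lemma \ref{teosimpgradalg}, the graded Wedderburn-type classification: since $\mathbb{F}$ is algebraically closed and $\mathsf{char}(\mathbb{F})$ is either $0$ or coprime with the order of every finite subgroup of $\mathsf{G}$, each of the finite dimensional simple $\mathsf{G}$-graded algebras $\mathfrak{A}$ and $\mathfrak{B}$ is $\mathsf{G}$-graded isomorphic to a matrix algebra over a twisted group algebra. Thus I would write $\mathfrak{A} \cong_{\mathsf{G}} M_{k_1}(\mathbb{F}^{\sigma_1}[H_1])$ and $\mathfrak{B} \cong_{\mathsf{G}} M_{k_2}(\mathbb{F}^{\sigma_2}[H_2])$, where each $H_i$ is a finite subgroup of $\mathsf{G}$, $\sigma_i \in \mathcal{Z}^2(H_i, \mathbb{F}^*)$, and the grading on $M_{k_i}(\mathbb{F}^{\sigma_i}[H_i])$ is the elementary-canonical one defined by some tuple $\theta_i = (\theta_{i1}, \dots, \theta_{ik_i}) \in \mathsf{G}^{k_i}$.

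The next step is to note that the normality hypothesis makes the normalizer condition of Lemma \ref{4.19} automatic. Since every subgroup of $\mathsf{G}$ is normal, $\mathsf{N}_{\mathsf{G}}(H_i) = \mathsf{G}$ for $i = 1, 2$, so every entry $\theta_{ij}$ trivially lies in $\mathsf{N}_{\mathsf{G}}(H_i)$. Hence the matrix algebras $M_{k_1}(\mathbb{F}^{\sigma_1}[H_1])$ and $M_{k_2}(\mathbb{F}^{\sigma_2}[H_2])$ satisfy all the hypotheses of Lemma \ref{4.19}. Applying that lemma with $\mathfrak{B}_1 = M_{k_2}(\mathbb{F}^{\sigma_2}[H_2])$ and $\mathfrak{B}_2 = M_{k_1}(\mathbb{F}^{\sigma_1}[H_1])$ yields $M_{k_2}(\mathbb{F}^{\sigma_2}[H_2]) \stackrel{\mathsf{G}}{\hookrightarrow} M_{k_1}(\mathbb{F}^{\sigma_1}[H_1])$ if and only if $\mathsf{T}^{\mathsf{G}}(M_{k_1}(\mathbb{F}^{\sigma_1}[H_1])) \subseteq \mathsf{T}^{\mathsf{G}}(M_{k_2}(\mathbb{F}^{\sigma_2}[H_2]))$.

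Finally, since both $\mathsf{G}$-imbeddability and the $\mathsf{G}$T-ideal $\mathsf{T}^{\mathsf{G}}(-)$ are invariants of $\mathsf{G}$-graded isomorphism, I would transport this equivalence along the isomorphisms $\mathfrak{A} \cong_{\mathsf{G}} M_{k_1}(\mathbb{F}^{\sigma_1}[H_1])$ and $\mathfrak{B} \cong_{\mathsf{G}} M_{k_2}(\mathbb{F}^{\sigma_2}[H_2])$, obtaining $\mathfrak{B} \stackrel{\mathsf{G}}{\hookrightarrow} \mathfrak{A}$ iff $\mathsf{T}^{\mathsf{G}}(\mathfrak{A}) \subseteq \mathsf{T}^{\mathsf{G}}(\mathfrak{B})$, which is the assertion. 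The genuine content of the theorem is already packaged inside Lemma \ref{4.19} (and, behind it, Lemma \ref{4.01} and Theorem \ref{1.39}), so there is no substantial obstacle remaining at this level; the only points requiring care are verifying that the characteristic hypothesis is exactly the one Lemma \ref{teosimpgradalg} needs and confirming that the blanket normality assumption is what discharges, for free, the normalizer condition on the tuples $\theta_1, \theta_2$.
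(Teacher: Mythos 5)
Your proposal is correct and follows essentially the same route as the paper's own proof: apply Lemma \ref{teosimpgradalg} to write $\mathfrak{A}$ and $\mathfrak{B}$ as matrix algebras over twisted group algebras with elementary-canonical gradings, then conclude by Lemma \ref{4.19}. In fact, you are more explicit than the paper on the one point it leaves implicit, namely that the blanket normality assumption forces $\mathsf{N}_{\mathsf{G}}(H_i)=\mathsf{G}$ and so discharges the normalizer hypothesis of Lemma \ref{4.19}.
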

\begin{proof}
By Lemma \ref{teosimpgradalg}, there are finite subgroups $H_1$ and $H_2$ of $\mathsf{G}$, $2$-cocycles $\sigma_1$ on $H_1$ and $\sigma_2$ on $H_2$, and $k_1,k_2\in\mathbb{N}$ such that $\mathfrak{A}\cong_\mathsf{G} M_{k_1}(\mathbb{F}^{\sigma_1}[H_1])$ and $\mathfrak{B}\cong_\mathsf{G} M_{k_2}(\mathbb{F}^{\sigma_2}[H_2])$, where the $\mathsf{G}$-grading on $M_{k_i} (\mathbb{F}^{\sigma_i} [H_i])$ is an elementary-canonical grading defined by some $k_i$-tuple $\theta_i\in \mathsf{G}^{k_i}$, for $i=1,2$. Therefore, the result follows from Lemma \ref{4.19}.
\end{proof}

Consequently, we have that

\begin{corollary}
Under the hypotheses of Theorem \ref{4.21}, $\mathsf{T}^{\mathsf{G}}(\mathfrak{A})= \mathsf{T}^{\mathsf{G}}(\mathfrak{B})$ iff $\mathfrak{A} \cong_{\mathsf{G}} \mathfrak{B}$.
\end{corollary}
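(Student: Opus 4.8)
The plan is to derive this corollary directly from Theorem \ref{4.21}, supplemented by an elementary dimension count, since $\mathfrak{A}$ and $\mathfrak{B}$ are finite dimensional. As the assertion is an equivalence, I would handle the two implications separately, and only the reverse one requires any work.

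The implication $\mathfrak{A} \cong_{\mathsf{G}} \mathfrak{B} \Rightarrow \mathsf{T}^{\mathsf{G}}(\mathfrak{A})= \mathsf{T}^{\mathsf{G}}(\mathfrak{B})$ is immediate and uses nothing beyond the existence of a graded isomorphism. If $\varphi:\mathfrak{A}\to\mathfrak{B}$ is a $\mathsf{G}$-graded isomorphism, then both $\varphi$ and $\varphi^{-1}$ carry homogeneous substitutions to homogeneous substitutions of the same degree; hence a graded polynomial vanishes on $\mathfrak{A}$ exactly when it vanishes on $\mathfrak{B}$, and the two $\mathsf{G}$T-ideals coincide.

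For the converse, suppose $\mathsf{T}^{\mathsf{G}}(\mathfrak{A})= \mathsf{T}^{\mathsf{G}}(\mathfrak{B})$. Then both inclusions $\mathsf{T}^{\mathsf{G}}(\mathfrak{A})\subseteq \mathsf{T}^{\mathsf{G}}(\mathfrak{B})$ and $\mathsf{T}^{\mathsf{G}}(\mathfrak{B})\subseteq \mathsf{T}^{\mathsf{G}}(\mathfrak{A})$ hold, and applying Theorem \ref{4.21} to each (the hypotheses on $\mathbb{F}$, on $\mathsf{char}(\mathbb{F})$, and on the normality of the subgroups of $\mathsf{G}$ are precisely those in force) yields mutual $\mathsf{G}$-graded imbeddings $\mathfrak{B} \stackrel{\mathsf{G}}{\hookrightarrow} \mathfrak{A}$ and $\mathfrak{A} \stackrel{\mathsf{G}}{\hookrightarrow} \mathfrak{B}$. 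Since a $\mathsf{G}$-graded imbedding is in particular an injective linear map, these force $\mathsf{dim}_{\mathbb{F}}(\mathfrak{A})=\mathsf{dim}_{\mathbb{F}}(\mathfrak{B})$, both finite. Fixing one of them, say $\psi:\mathfrak{B}\to\mathfrak{A}$, it is an injective graded homomorphism between spaces of equal finite dimension, hence bijective; the injectivity of $\psi$ restricted to each $\mathfrak{B}_\xi$ together with $\psi(\mathfrak{B}_\xi)\subseteq\mathfrak{A}_\xi$ gives $\mathsf{dim}_{\mathbb{F}}(\mathfrak{B}_\xi)\leq\mathsf{dim}_{\mathbb{F}}(\mathfrak{A}_\xi)$ for every $\xi\in\mathsf{G}$, and summing over $\xi$ returns the equality of total dimensions, so each degreewise inequality is in fact an equality. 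Therefore $\psi(\mathfrak{B}_\xi)=\mathfrak{A}_\xi$ for all $\xi$, whence $\psi^{-1}$ is graded and $\mathfrak{A}\cong_{\mathsf{G}}\mathfrak{B}$.

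I do not anticipate any genuine obstacle: essentially all the content is carried by Theorem \ref{4.21}, and the only step requiring care is the routine observation that a bijective $\mathsf{G}$-graded homomorphism of finite dimensional graded algebras automatically has a graded inverse, which the degreewise dimension comparison above makes precise. An alternative that avoids the dimension count would be to invoke Lemmas \ref{teosimpgradalg} and \ref{4.19} together with Theorem \ref{4.02}: the mutual imbeddings force $k_1=k_2$, $H_1=H_2$, $[\sigma_1]=[\sigma_2]$ and $\Lambda^{H_1}_{\theta_1}=\Lambda^{H_2}_{\theta_2}$, so that $\mathfrak{A}\cong_{\mathsf{G}}\mathfrak{B}$ by Theorem \ref{4.02}; however, the dimension argument is shorter and relies only on hypotheses already assumed.
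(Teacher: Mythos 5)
Your proposal is correct and follows essentially the same route as the paper, which presents this corollary as an immediate consequence of Theorem \ref{4.21}: equality of the $\mathsf{G}$T-ideals gives both inclusions, hence mutual $\mathsf{G}$-imbeddings $\mathfrak{A} \stackrel{\mathsf{G}}{\hookrightarrow} \mathfrak{B}$ and $\mathfrak{B} \stackrel{\mathsf{G}}{\hookrightarrow} \mathfrak{A}$. Your degreewise dimension count showing that mutual imbeddings of finite dimensional $\mathsf{G}$-graded algebras force a $\mathsf{G}$-graded isomorphism is exactly the routine verification the paper leaves implicit, so no gap remains.
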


Finally, combining Corollary \ref{4.10} and Theorem \ref{4.21}, when the group $\mathsf{G}$ is abelian, we have that
\begin{corollary}
Let $\mathsf{G}$ be an abelian group, $\mathbb{F}$ an algebraically closed field, and $\mathfrak{A}$ a finite dimensional simple $\mathsf{G}$-graded $\mathbb{F}$-algebra. Suppose that either $\mathsf{char}(\mathbb{F}) = 0$ or $\mathsf{char} (\mathbb{F})$ is coprime with the order of each finite subgroup of $\mathsf{G}$. Then there exist $k\in\mathbb{N}$ and a $2$-cocycle $\sigma\in\mathcal{Z}^2(\mathsf{G},\mathbb{F}^*)$ such that $\mathfrak{A} \stackrel{\mathsf{G}}{\hookrightarrow} M_{k}(\mathbb{F}^{\sigma}[\mathsf{G}])$ and $\mathsf{T}^{\mathsf{G}}(M_{k}(\mathbb{F}^{\sigma}[\mathsf{G}]))\subseteq \mathsf{T}^{\mathsf{G}}(\mathfrak{A})$.
\end{corollary}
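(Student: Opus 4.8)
The plan is to reduce $\mathfrak{A}$ to a matrix algebra over a twisted group algebra, extend the defining $2$-cocycle from a finite subgroup up to all of $\mathsf{G}$, and then feed the result into the Imbedding Theorem; this parallels the proof of Corollary \ref{4.10}. First I would apply Lemma \ref{teosimpgradalg}: since $\mathbb{F}$ is algebraically closed and $\mathsf{char}(\mathbb{F})$ is $0$ or coprime with the order of every finite subgroup of $\mathsf{G}$, the simple $\mathsf{G}$-graded algebra $\mathfrak{A}$ is $\mathsf{G}$-graded isomorphic to $M_k(\mathbb{F}^{\sigma_0}[H])$ for some finite subgroup $H\leq\mathsf{G}$, some $\sigma_0\in\mathcal{Z}^2(H,\mathbb{F}^*)$ and some $k\in\mathbb{N}$, the grading being the elementary-canonical one defined by a $k$-tuple $\theta=(\theta_1,\dots,\theta_k)\in\mathsf{G}^k$. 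Because $\mathsf{G}$ is abelian, $H$ is central and $\mathsf{N}_{\mathsf{G}}(H)=\mathsf{N}_{\mathsf{G}}(\mathsf{G})=\mathsf{G}$, so every entry $\theta_j$ lies in both normalizers, which is exactly the hypothesis required to run the Imbedding Theorem.

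The central step is to produce $\sigma\in\mathcal{Z}^2(\mathsf{G},\mathbb{F}^*)$ whose restriction to $H$ is cohomologous to $\sigma_0$, i.e. $[\sigma_0]=[\sigma_H]$. This is precisely the surjectivity of the restriction homomorphism $\mathsf{res}^\mathsf{G}_H\colon\mathcal{H}^2(\mathsf{G},\mathbb{F}^*)\to\mathcal{H}^2(H,\mathbb{F}^*)$ supplied by Corollary \ref{2.09}: as $\mathsf{G}$ acts trivially on $\mathbb{F}^*$ and $H$ is central, once $[\mathsf{G}:H]$ is finite and invertible in $\mathbb{F}^*$ one may select $\sigma$ with $[\sigma_H]=[\sigma_0]$. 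With such a $\sigma$ fixed, Lemma \ref{4.16} gives $(H,\sigma_0)\preceq(\mathsf{G},\sigma)$ and hence $\mathbb{F}^{\sigma_0}[H]\stackrel{\mathsf{G}}{\hookrightarrow}\mathbb{F}^{\sigma}[\mathsf{G}]$.

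Then I would equip $M_k(\mathbb{F}^{\sigma}[\mathsf{G}])$ with the elementary-canonical grading defined by the same tuple $\theta$ and apply Theorem \ref{1.39} with $H_1=H$, $H_2=\mathsf{G}$ and $k_1=k_2=k$: choosing $\alpha=\mathrm{id}$, $\delta=e$ and $\xi_1=\cdots=\xi_k=e$ makes the relation $\theta_{1j}=\delta\xi_j\theta_{2\alpha(j)}$ hold trivially, and the twisted-algebra imbedding from the previous step furnishes the remaining hypothesis. This yields $\mathfrak{A}\cong_\mathsf{G} M_k(\mathbb{F}^{\sigma_0}[H])\stackrel{\mathsf{G}}{\hookrightarrow}M_k(\mathbb{F}^{\sigma}[\mathsf{G}])$. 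Finally, any $\mathsf{G}$-imbedding reverses inclusion of graded identities, so $\mathsf{T}^{\mathsf{G}}(M_k(\mathbb{F}^{\sigma}[\mathsf{G}]))\subseteq\mathsf{T}^{\mathsf{G}}(\mathfrak{A})$ is immediate, which finishes the proof; note that invoking Theorem \ref{4.21} for this last inclusion is not available when $\mathsf{G}$ is infinite, since $M_k(\mathbb{F}^{\sigma}[\mathsf{G}])$ need not be finite dimensional, so I would use the trivial reversal instead.

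I expect the main obstacle to be the cocycle-extension step. For $\mathsf{G}$ finite abelian the index $[\mathsf{G}:H]$ is finite and the coprimality and algebraic-closedness hypotheses make Corollary \ref{2.09} apply verbatim. For an infinite abelian $\mathsf{G}$, however, $H$ need not have finite index, so Corollary \ref{2.09} does not apply as stated; there I would either factor the extension through a central subgroup of finite index containing $H$, or argue directly that $\mathsf{res}^\mathsf{G}_H$ is onto using that $\mathbb{F}^*$ is divisible, hence an injective $\mathbb{Z}$-module. Confirming that the specific class $[\sigma_0]$ genuinely lifts to $\mathcal{H}^2(\mathsf{G},\mathbb{F}^*)$ — rather than merely that the restriction is surjective in favorable cases — is the delicate point on which the whole argument rests.
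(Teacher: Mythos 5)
You have reproduced the route the paper itself intends (the paper derives this corollary by ``combining Corollary~\ref{4.10} and Theorem~\ref{4.21}'', which unwind to exactly your three steps: Lemma~\ref{teosimpgradalg}, then Corollary~\ref{2.09}, then Theorem~\ref{1.39}), and you correctly flagged the cocycle-lifting step as the delicate point. But that step is not merely delicate: it is a genuine gap, and neither of your proposed repairs can close it, because the restriction map $\mathsf{res}^{\mathsf{G}}_H\colon\mathcal{H}^2(\mathsf{G},\mathbb{F}^*)\to\mathcal{H}^2(H,\mathbb{F}^*)$ is simply not surjective in general, even for $\mathsf{G}$ finite abelian and $\mathbb{F}=\mathbb{C}$. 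Take $\mathsf{G}=\mathbb{Z}_4\times\mathbb{Z}_2=\langle a\rangle\times\langle b\rangle$, $H=\langle a^2\rangle\times\langle b\rangle\cong\mathbb{Z}_2\times\mathbb{Z}_2$, and let $\sigma_0\in\mathcal{Z}^2(H,\mathbb{C}^*)$ be the cocycle of Example~\ref{1.06}, so that $\mathfrak{A}=\mathbb{C}^{\sigma_0}[H]\cong M_2(\mathbb{C})$ with $\eta_{a^2}\eta_b=-\eta_b\eta_{a^2}$. For a $2$-cocycle $\tau$ on an abelian group with trivial action, $\beta_\tau(x,y)=\tau(x,y)\tau(y,x)^{-1}$ is multiplicative in each variable, is unchanged when $\tau$ is multiplied by a coboundary (coboundaries are symmetric here), and is natural under restriction. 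Hence, if some $\sigma\in\mathcal{Z}^2(\mathsf{G},\mathbb{C}^*)$ satisfied $[\sigma_0]=[\sigma_H]$, then
\begin{equation}\nonumber
-1=\beta_{\sigma_0}(a^2,b)=\beta_{\sigma}(a^2,b)=\beta_{\sigma}(a,b)^2=\beta_{\sigma}(a,b^2)=\beta_{\sigma}(a,e)=1,
\end{equation}
a contradiction; so the class $[\sigma_0]$ has no lift. The reason Corollary~\ref{2.09} does not apply ``verbatim'' is its invertibility hypothesis: by the paper's own definition, $[\mathsf{G}:H]$ invertible in $\mathbb{F}^*$ means that $z\mapsto z^{[\mathsf{G}:H]}$ is an \emph{automorphism} of $\mathbb{F}^*$, and over an algebraically closed field this map is never injective when $[\mathsf{G}:H]>1$ is prime to $\mathsf{char}(\mathbb{F})$ (its kernel consists of the $[\mathsf{G}:H]$-th roots of unity). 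Divisibility of $\mathbb{F}^*$ --- your fallback --- only gives surjectivity of that map, and the example shows this is not enough; the same example shows the paper's own route shares this gap.

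The statement is nevertheless true, because a $\mathsf{G}$-imbedding $\mathfrak{A}\stackrel{\mathsf{G}}{\hookrightarrow}M_k(\mathbb{F}^{\sigma}[\mathsf{G}])$ need not restrict to an imbedding $\mathbb{F}^{\sigma_0}[H]\stackrel{\mathsf{G}}{\hookrightarrow}\mathbb{F}^{\sigma}[\mathsf{G}]$ of the underlying twisted group algebras --- which is exactly what routing the argument through Lemma~\ref{4.16} and Theorem~\ref{1.39} forces, and is where the cohomological obstruction enters. A direct construction avoids cohomology entirely: let $n=\dim_{\mathbb{F}}\mathfrak{A}$, let $L\colon\mathfrak{A}\to M_n(\mathbb{F})$ be the left regular representation (faithful, since $\mathfrak{A}$ is unital by Lemma~\ref{teosimpgradalg}), and set $\psi(a)=\sum_{g\in\mathsf{G}}L(a_g)\otimes\eta_g\in M_n(\mathbb{F})\otimes\mathbb{F}[\mathsf{G}]=M_n(\mathbb{F}[\mathsf{G}])$, where $a=\sum_g a_g$ is the homogeneous decomposition. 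Since $a_gb_h\in\mathfrak{A}_{gh}$, one checks $\psi(a)\psi(b)=\sum_{g,h}L(a_gb_h)\otimes\eta_{gh}=\psi(ab)$, and $\psi$ is injective and graded for the elementary-canonical grading of $M_n(\mathbb{F}[\mathsf{G}])$ defined by the $n$-tuple $(e,\dots,e)$. So the corollary holds with $k=n$ and $\sigma$ the \emph{trivial} cocycle, the containment $\mathsf{T}^{\mathsf{G}}(M_n(\mathbb{F}[\mathsf{G}]))\subseteq\mathsf{T}^{\mathsf{G}}(\mathfrak{A})$ then being the trivial reversal you already noted --- indeed no simplicity, abelianness, algebraic closure, or characteristic hypothesis is needed for this. (Applied to the example above, this construction also shows that the ``only if'' direction of Theorem~\ref{1.39} cannot hold as stated, so it should not be leaned on for converse implications either.)
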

%
%



\subsection{The Semisimple Case}

To finalize this text, let us briefly analyse the case of $\mathsf{G}$-imbeddings of semisimple $\mathsf{G}$-graded algebras of finite dimension. Firstly, given any $\mathsf{G}$-graded $\mathbb{F}$-algebra $\mathfrak{A}$ of finite dimension, it is easy to see that $\mathfrak{A}\times\mathfrak{A}\not\stackrel{\mathsf{G}}{\hookrightarrow} \mathfrak{A}$, since $\mathsf{dim}_\mathbb{F}(\mathfrak{A}\times\mathfrak{A})=2\cdot\mathsf{dim}_\mathbb{F}(\mathfrak{A})$, but $\mathsf{T^G}(\mathfrak{A}\times\mathfrak{A}) = \mathsf{T^G}(\mathfrak{A})$ because $\mathsf{T^G}(\mathfrak{A}\times\mathfrak{A})=\mathsf{T^G}(\mathfrak{A})\cap \mathsf{T^G}(\mathfrak{A})$. With this in mind, we present in the next (and last) result with a generalization of Theorem \ref{4.21} for a semisimple graded case:

\begin{proposition}
Let $\mathsf{G}$ be a group, $\mathbb{F}$ an algebraically closed field, $\mathfrak{A}_1, \dots, \mathfrak{A}_r$ and $\mathfrak{B}_1,\dots, \mathfrak{B}_s$ finite dimensional simple $\mathsf{G}$-graded $\mathbb{F}$-algebras. Suppose that either $\mathsf{char}(\mathbb{F}) = 0$ or $\mathsf{char} (\mathbb{F})$ is coprime with the order of each finite subgroup of $\mathsf{G}$, and that any subgroup of $\mathsf{G}$ is normal. If $\mathfrak{B}_i \not\stackrel{\mathsf{G}}{\hookrightarrow} \mathfrak{B}_j$ for all $i\neq j$, then are equivalent:
	\begin{enumerate}[i)]
	\item $\mathsf{T}^{\mathsf{G}}(\mathfrak{A}_1 \times \cdots \times \mathfrak{A}_r)\subseteq \mathsf{T}^{\mathsf{G}}(\mathfrak{B}_1 \times \cdots \times  \mathfrak{B}_s)$;
	\item $\mathfrak{B}_1 \times \cdots \times  \mathfrak{B}_s \stackrel{\mathsf{G}}{\hookrightarrow} \mathfrak{A}_1 \times \cdots \times \mathfrak{A}_r$;
	\item there exist $i_1,\dots,i_r\in\{1,\dots,s\}$, not necessarily distinct, such that $\mathfrak{B}_j \stackrel{\mathsf{G}}{\hookrightarrow} \mathfrak{A}_{i_j}$ for $j=1,\dots,s$.
	\end{enumerate}
\end{proposition}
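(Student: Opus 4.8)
The plan is to run the cycle $ii)\Rightarrow i)\Rightarrow iii)\Rightarrow ii)$, leaning throughout on the elementary fact that a graded polynomial vanishes on a finite direct product exactly when it vanishes on each factor, so that
\[
\mathsf{T}^{\mathsf{G}}(\mathfrak{C}_1\times\cdots\times\mathfrak{C}_m)=\bigcap_{l=1}^{m}\mathsf{T}^{\mathsf{G}}(\mathfrak{C}_l).
\]
The implication $ii)\Rightarrow i)$ is then immediate, since an imbedding $\mathfrak{B}_1\times\cdots\times\mathfrak{B}_s\stackrel{\mathsf{G}}{\hookrightarrow}\mathfrak{A}_1\times\cdots\times\mathfrak{A}_r$ realises the source as a graded subalgebra of the target, and a subalgebra satisfies every graded identity of the ambient algebra. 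By Lemma \ref{teosimpgradalg} I would fix graded isomorphisms $\mathfrak{A}_i\cong_{\mathsf{G}}M_{l_i}(\mathbb{F}^{\tau_i}[K_i])$ and $\mathfrak{B}_j\cong_{\mathsf{G}}M_{k_j}(\mathbb{F}^{\sigma_j}[H_j])$, so that Theorem \ref{4.21} applies to each pair and gives the dictionary $\mathfrak{B}_j\stackrel{\mathsf{G}}{\hookrightarrow}\mathfrak{A}_i \iff \mathsf{T}^{\mathsf{G}}(\mathfrak{A}_i)\subseteq\mathsf{T}^{\mathsf{G}}(\mathfrak{B}_j)$.

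For $i)\Rightarrow iii)$, translating $i)$ through the displayed identity yields $\bigcap_{i}\mathsf{T}^{\mathsf{G}}(\mathfrak{A}_i)\subseteq\mathsf{T}^{\mathsf{G}}(\mathfrak{B}_j)$ for every fixed $j$. The key point is that the $\mathsf{G}$T-ideal of a finite dimensional $\mathsf{G}$-simple algebra is \emph{verbally prime}: if a finite intersection of $\mathsf{G}$T-ideals is contained in $\mathsf{T}^{\mathsf{G}}(\mathfrak{B}_j)$, then one of them already is. One may either invoke this known property or argue it directly: if, for contradiction, $\mathsf{T}^{\mathsf{G}}(\mathfrak{A}_i)\not\subseteq\mathsf{T}^{\mathsf{G}}(\mathfrak{B}_j)$ for every $i$, choose multihomogeneous $f_i\in\mathsf{T}^{\mathsf{G}}(\mathfrak{A}_i)\setminus\mathsf{T}^{\mathsf{G}}(\mathfrak{B}_j)$ on pairwise disjoint variable sets and splice them into a single word $F=f_1u_1f_2u_2\cdots u_{r-1}f_r$ with fresh homogeneous connecting variables $u_t$. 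Since $\mathfrak{B}_j\cong M_{k_j}(\mathbb{F}^{\sigma_j}[H_j])$ is prime, the $u_t$ can be specialised to matrix units linking nonzero values of the $f_i$, so $F$ is a nonidentity of $\mathfrak{B}_j$; yet each factor $f_i$ annihilates $\mathfrak{A}_i$, whence $F\in\bigcap_i\mathsf{T}^{\mathsf{G}}(\mathfrak{A}_i)$, a contradiction. Thus some $\mathsf{T}^{\mathsf{G}}(\mathfrak{A}_{i_j})\subseteq\mathsf{T}^{\mathsf{G}}(\mathfrak{B}_j)$, and Theorem \ref{4.21} gives $\mathfrak{B}_j\stackrel{\mathsf{G}}{\hookrightarrow}\mathfrak{A}_{i_j}$, which is $iii)$. (The reverse $iii)\Rightarrow i)$ is equally quick: $\bigcap_i\mathsf{T}^{\mathsf{G}}(\mathfrak{A}_i)\subseteq\mathsf{T}^{\mathsf{G}}(\mathfrak{A}_{i_j})\subseteq\mathsf{T}^{\mathsf{G}}(\mathfrak{B}_j)$ for all $j$.)

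For $iii)\Rightarrow ii)$, the naive attempt fails: several $\mathfrak{B}_j$ may imbed into the \emph{same} $\mathfrak{A}_i$, and orthogonal block-diagonal copies need not fit inside a single simple factor. The idea is therefore to upgrade the mere existence of targets in $iii)$ to a \emph{matching with distinct targets}. I would form the bipartite graph on $\{\mathfrak{B}_j\}$ and $\{\mathfrak{A}_i\}$ with an edge $j\sim i$ whenever $\mathfrak{B}_j\stackrel{\mathsf{G}}{\hookrightarrow}\mathfrak{A}_i$, and seek a matching saturating the $\mathfrak{B}$-side; given such a matching $j\mapsto i_j$ with the $i_j$ distinct, the block map $\Psi=\bigoplus_j\psi_j$ sending $\mathfrak{B}_j$ into the factor $\mathfrak{A}_{i_j}$ (and zero on unmatched factors) is an injective $\mathsf{G}$-graded homomorphism, giving $ii)$. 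The matching is produced by verifying Hall's condition, and \textbf{this is the main obstacle}: one must show $|N(S)|\ge|S|$ for every $S\subseteq\{1,\dots,s\}$, and it is precisely here that the hypothesis $\mathfrak{B}_i\not\stackrel{\mathsf{G}}{\hookrightarrow}\mathfrak{B}_j$ $(i\ne j)$ must carry the whole weight. Concretely, whenever $\mathfrak{B}_j,\mathfrak{B}_{j'}$ share a common target $\mathfrak{A}_i$ one reads off, via Theorem \ref{1.39}, the resulting relations among the subgroups $H_j,H_{j'},K_i$, the cocycle classes and the defining tuples, and uses pairwise non-imbeddability to keep neighbourhoods from collapsing. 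I expect the delicate and genuinely restrictive part of the argument to lie exactly in extracting Hall's inequality from the non-imbeddability assumption, since without it the step is false; the block-diagonal construction and the graded verification of $\Psi$ are then routine and modelled on the homomorphism built in the proof of Theorem \ref{4.02}.
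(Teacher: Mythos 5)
Your implication $iii)\Rightarrow ii)$ is a genuine gap, and it is not a fillable one: the implication is false under the stated hypotheses, so no verification of Hall's condition can succeed. Take $\mathsf{G}=\mathbb{Z}_2\times\mathbb{Z}_2$, $\mathbb{F}$ algebraically closed of characteristic $0$, $N_1=\langle(\bar{1},\bar{0})\rangle$, $N_2=\langle(\bar{0},\bar{1})\rangle$, and set $\mathfrak{B}_1=\mathbb{F}[N_1]$, $\mathfrak{B}_2=\mathbb{F}[N_2]$ (group algebras with their canonical gradings regarded as $\mathsf{G}$-gradings), $r=1$ and $\mathfrak{A}_1=\mathbb{F}[\mathsf{G}]$. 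All three algebras are finite dimensional division graded, hence $\mathsf{G}$-simple, and every subgroup of $\mathsf{G}$ is normal. Since a $\mathsf{G}$-imbedding forces an inclusion of supports, $\mathfrak{B}_1\stackrel{\mathsf{G}}{\not\hookrightarrow}\mathfrak{B}_2$ and $\mathfrak{B}_2\stackrel{\mathsf{G}}{\not\hookrightarrow}\mathfrak{B}_1$, so the standing hypothesis holds. Condition $iii)$ holds with $i_1=i_2=1$ (the natural inclusions $\mathbb{F}[N_j]\subseteq\mathbb{F}[\mathsf{G}]$), and therefore $i)$ holds too, because $\mathsf{T}^{\mathsf{G}}(\mathfrak{A}_1)\subseteq\mathsf{T}^{\mathsf{G}}(\mathfrak{B}_1)\cap\mathsf{T}^{\mathsf{G}}(\mathfrak{B}_2)=\mathsf{T}^{\mathsf{G}}(\mathfrak{B}_1\times\mathfrak{B}_2)$. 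But $ii)$ fails: the neutral component of $\mathfrak{B}_1\times\mathfrak{B}_2$ has dimension $2$ while that of $\mathbb{F}[\mathsf{G}]$ has dimension $1$, so no injective graded homomorphism $\mathfrak{B}_1\times\mathfrak{B}_2\rightarrow\mathfrak{A}_1$ exists. In your bipartite graph this is exactly a failure of Hall's condition, $|N(\{1,2\})|=1<2$, occurring even though the pairwise non-imbeddability hypothesis is satisfied; so that hypothesis cannot carry the weight you assign to it. Your instinct that the step is false without a matching was correct; what fails is the expectation that the hypothesis produces one.

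For comparison, the paper's own proof is weaker than your attempt at precisely this point: it disposes of the step with the single phrase ``obviously $iii)$ implies $ii)$'', which the example above refutes, so the proposition itself, and the paper's proof of it, is in error. The parts you did complete are correct and follow a genuinely different route from the paper's. For $i)\Rightarrow iii)$ the paper invokes Lemma \ref{4.18} to obtain graded epimorphisms $\mathfrak{A}_1\times\cdots\times\mathfrak{A}_r\rightarrow\mathfrak{B}_j$ and then kills all factors but one using graded simplicity; your splicing argument instead reduces to $\mathsf{T}^{\mathsf{G}}(\mathfrak{A}_{i_j})\subseteq\mathsf{T}^{\mathsf{G}}(\mathfrak{B}_j)$ and quotes Theorem \ref{4.21}, thereby avoiding Lemma \ref{4.18} altogether --- an advantage, since that lemma rests on the identification $\mathfrak{A}\cong_{\mathsf{G}}\mathbb{F}\langle X^{\mathsf{G}}\rangle/\mathsf{T}^{\mathsf{G}}(\mathfrak{A})$, which fails already for $\mathfrak{A}=M_2(\mathbb{F})$, whose relatively free algebra is infinite dimensional. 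Two small repairs to your splicing step: work with the $\mathsf{G}$-homogeneous components of the $f_i$ (these stay in $\mathsf{T}^{\mathsf{G}}(\mathfrak{A}_i)$ because $\mathsf{G}$T-ideals are graded, and at least one component remains a non-identity of $\mathfrak{B}_j$), so that all values of the $f_i$ are homogeneous; and replace ``prime'' by ``graded prime'' --- $M_k(\mathbb{F}^{\sigma}[H])$ need not be prime as an ungraded algebra (e.g. $\mathbb{F}[\mathbb{Z}_2]\cong\mathbb{F}\times\mathbb{F}$), but graded simplicity gives graded primeness, which suffices to find homogeneous linking elements $u_t$ with $f_1u_1f_2\cdots u_{r-1}f_r$ a non-identity. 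With those fixes your implications $ii)\Rightarrow i)$ and $i)\Leftrightarrow iii)$ stand; the equivalence with $ii)$ cannot be proved by you, by the paper, or by anyone, because it is false.
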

\begin{proof}
First, by Lemma \ref{teosimpgradalg}, there are finite subgroups $H_1,\dots, H_r, N_1,\dots,N_s$ of $\mathsf{G}$, $2$-cocycles $\sigma_i$ on $H_i$, $i=1,\dots,r$, and $\rho_j$ on $H_j$, $j=1,\dots,s$, and $k_1,\dots,k_r,l_1,\dots,l_s\in\mathbb{N}$ such that $\mathfrak{A}_i\cong_\mathsf{G} M_{k_i}(\mathbb{F}^{\sigma_i}[H_i])$ and $\mathfrak{B}_j\cong_\mathsf{G} M_{l_j}(\mathbb{F}^{\rho_j}[N_j])$, for $i=1,\dots,r$ and $j=1,\dots,s$, where $M_{k_i} (\mathbb{F}^{\sigma_i} [H_i])$ and $M_{l_j}(\mathbb{F}^{\rho_j}[N_j])$ are graded with elementary-canonical gradings defined by tuples $\theta_i\in \mathsf{G}^{k_i}$ and $\hat\theta_j\in \mathsf{G}^{l_j}$.

Obviously {\it iii)} implies {\it ii)}, and {\it ii)} implies {\it i)}. Let us now show that {\it i)} implies {\it iii)}. Indeed, put $\mathfrak{A}=\mathfrak{A}_1, \dots, \mathfrak{A}_r$ and $\mathfrak{B}=\mathfrak{B}_1 \times \cdots \times  \mathfrak{B}_s$, and assume that $\mathsf{T}^{\mathsf{G}}(\mathfrak{A})\subseteq \mathsf{T}^{\mathsf{G}}(\mathfrak{B})$. Recall that $\mathsf{T}^{\mathsf{G}}(\mathfrak{B})=\bigcap_{i=1}^s\mathsf{T}^{\mathsf{G}}(\mathfrak{B}_i)$, and so $\mathsf{T}^{\mathsf{G}}(\mathfrak{B}) \subseteq \mathsf{T}^{\mathsf{G}}(\mathfrak{B}_i)$ for all $i=1,\dots,s$. Consequently, by Lemma \ref{4.18}, for each $i=1,\dots,s$, there exists a $\mathsf{G}$-graded epimorphism $\varphi_i: \mathfrak{A} \rightarrow \mathfrak{B}_i$. Finally, proceeding as in the proof of Lemma \ref{4.08}, and observing that $\varphi_i(ab)=0$ when $a\in\mathfrak{A}_p$ and $b\in\mathfrak{A}_q$ for $p\neq q$, the result follows.
\end{proof}
%
%
%
\section*{ACKNOWLEDGMENTS}

The author is grateful to Diogo Diniz and deeply thankful to Irina Sviridova and Igor Lima for their support in this work. 


\bibliographystyle{amsplain}
%

\end{document}